\documentclass[reqno,11pt]{amsart}

\topmargin -0.40in%
\oddsidemargin -0.05in%
\evensidemargin -0.05in%
\textwidth 17cm%
\textheight 24cm%

\usepackage{amsmath,amssymb,amsthm,amsfonts}
\usepackage{mathrsfs}
\usepackage{bbm}
\usepackage{hyperref}

\usepackage{color}

\newtheorem{lemma}{Lemma}[section]
\newtheorem{theorem}{Theorem}[section]

\newtheorem{proposition}{Proposition}[section]

\newtheorem{corollary}{Corollary}[section]

\numberwithin{equation}{section}

\arraycolsep=1.5pt

\newcommand{\dis}{\displaystyle}

\newcommand{\R}{\mathbb{R}}

\renewcommand{\S}{\mathbb{S}}
\newcommand{\T}{\mathbb{T}}


\newcommand{\FF}{\mathbf{F}}
\newcommand{\FG}{\mathbf{G}}
\newcommand{\FH}{\mathbf{H}}
\newcommand{\FM}{\mathbf{M}}
\newcommand{\FP}{\mathbf{P}}
\newcommand{\FQ}{\mathbf{Q}}
\newcommand{\FL}{\mathbf{L}}

\newcommand{\FR}{\mathbf{R}}

\newcommand{\CA}{\mathcal{A}}
\newcommand{\CB}{\mathcal{B}}

\newcommand{\CE}{\mathcal{E}}

\newcommand{\CI}{\mathcal{I}}
\newcommand{\CJ}{\mathcal{J}}
\newcommand{\CK}{\mathcal{K}}

\newcommand{\CN}{\mathcal{N}}
\newcommand{\CM}{\mathcal{M}}

\newcommand{\al}{\alpha}
\newcommand{\be}{\beta}
\newcommand{\ga}{\gamma}

\newcommand{\la}{\lambda}
\newcommand{\de}{\delta}
\newcommand{\si}{\sigma}
\newcommand{\pa}{\partial}
\newcommand{\ka}{\kappa}
\newcommand{\eps}{\epsilon}

\newcommand{\ta}{\theta}

\newcommand{\De}{\Delta}

\begin{document}
\title[The two-component Vlasov-Poissonn-Boltzmann system]{
The Vlasov-Poisson-Boltzmann system for a disparate mass binary mixture
}

\author[R.-J. Duan]{Renjun Duan}
\address[RJD]{Department of Mathematics, The Chinese University of Hong Kong,
Shatin, Hong Kong, P.R.~China}
\email{rjduan@math.cuhk.edu.hk}

\author[S.-Q. Liu]{Shuangqian Liu}
\address[SQL]{Department of Mathematics, Jinan University, Guangzhou 510632, P.R.~China}
\email{tsqliu@jnu.edu.cn}


\begin{abstract}
The Vlasov-Poisson-Boltzmann system is often used to govern the motion of plasmas consisting of electrons and ions with disparate masses when collisions of charged particles are described by the two-component Boltzmann collision operator. The perturbation theory of the system around global Maxwellians recently has been well established in \cite{G}. It should be more interesting to further study the existence and stability of nontrivial large time asymptotic profiles for the system even with slab symmetry in space, particularly understanding the effect of the self-consistent potential on the non-trivial long-term dynamics of the binary system. In the paper, we consider the problem in the setting of rarefaction waves. The analytical tool is based on the macro-micro decomposition introduced in \cite{LYY} that we can be able to develop into the case for the two-component Boltzmann equations around local bi-Maxwellians. Our focus is to explore how the disparate masses and charges of particles play a role in the analysis of the approach of the complex coupling system time-asymptotically toward a non-constant equilibrium state whose macroscopic quantities satisfy the quasineutral nonisentropic Euler system.
\end{abstract}

\keywords{Vlasov-Poisson-Boltzmann system, disparate mass, two-component collision, bi-Maxwellian, macro-micro decomposition, diffusion, heat-conductivity, quasineutral Euler system, rarefaction wave, time-asymptotic stability, energy method, a priori estimates}

\subjclass[2010]{Primary:  35Q20, 76P05; Secondary: 35B35, 35B40.}

\maketitle
\thispagestyle{empty}

\tableofcontents

\section{Introduction}

\subsection{Presentation of the problem}
In the paper we are concerned with the nontrivial long-time dynamics of the Vlasov-Poisson-Boltzmann  (VPB for short) system used for describing the motion of charged particles in plasma (e.g., ions and electrons) when collisions between particles are taken into account, cf.~\cite{CC,KT}. Compared to the close-to-constant-equilibrium framework (cf.~\cite{G}), the perturbation theory around the non-constant equilibrium state would be more interesting and difficult due to the appearance of disparate masses and charges for gas mixtures, cf.~\cite{AAP,ABT,Ta,TaAo}. In the case of three space dimensions with slab symmetry, the governing equations take the form of
\begin{eqnarray}\label{VPB}
\begin{array}{rcccccccc}
\dis \pa_t F_{i} &+&\xi_1\pa_x F_{i}&-&\dis\frac{q_i}{m_i}\pa_x\phi\,\pa_{\xi_1}F_i &=&Q_{ii}(F_i,F_i)&+&Q_{ie}(F_i,F_e),\\[3mm]
\dis \pa_t F_{e} &+&\xi_1\pa_x F_{e}&-&\dis\frac{q_e}{m_e}\pa_x\phi\,\pa_{\xi_1}F_e &=&Q_{ee}(F_e,F_e)&+&Q_{ei}(F_e,F_i).
\end{array}
\end{eqnarray}
The self-consistent potential function $\phi=\phi(t,x)$ satisfies the Poisson equation
\begin{equation}
\label{PE}
-\pa_x^2\phi=q_i\dis{\int_{\R^3}F_i}\,d\xi+q_e\dis{\int_{\R^3}F_e}\,d\xi.
\end{equation}
Here $F_{i}(t,x,\xi)$ and  $F_{i}(t,x,\xi)$ stand for the nonnegative number distribution functions for ions and electrons which have position $x\in \R$ and velocity $\xi=(\xi_1,\xi_2,\xi_3)\in \R^3$ at time $t\geq 0$.  Ions and electrons are assumed to have masses $m_{i}>0$, $m_e>0$ and charges $q_i>0$, $q_e<0$, respectively. Without loss of generality we suppose $m_i\geq m_e$ which is consistent with the physical situation where ions are much heavier than electrons. 

Regarding the binary collisions between like or unlike particles on the right-hand side of \eqref{VPB}, we assume that they are described by the Boltzmann operator for the hard-sphere model whose exact form reads
\begin{equation}\label{g.cop}
Q_{\mathcal {A}\mathcal {B}}(F_\CA,F_\CB)=\int_{\R^3\times \S^2} B_{\CA\CB}(|\xi-\xi_\ast|,\omega)[F_\CA(\xi')F_\CB(\xi'_\ast)-F_\CA(\xi)F_\CB(\xi_\ast)]\, d\xi_\ast d\omega ,
\end{equation}
for $\CA,\CB\in\{i,e\}$. Here  $\S^2$ is the unit sphere of $\R^3$. The collision kernel is given by
\begin{equation}\notag
B_{\CA\CB}=\frac{(\si_\CA+\si_\CB)^2}{4}|(\xi-\xi_\ast)\cdot\omega|,
\end{equation}
with
$\si_\CA>0$ denoting the dimeter of particles of $\CA$ species, and through the paper we always take $\si_\CA=\si_\CB=\si$ without loss of generality. The pre-collisional velocity pair $(\xi,\xi_{\ast})$ and the post-collisional velocity pair $(\xi',\xi_{\ast}')$ corresponding to the integrand of \eqref{g.cop} satisfy the relationship
\begin{equation}\notag
\begin{array}{rccccc}
   \xi'&=&\xi&-&\dis \frac{2m_\CB}{m_\CA+m_\CB}&[(\xi-\xi_\ast)\cdot\omega]\,\omega,\\[5mm]
\xi_\ast'&=&\xi_\ast&+&\dis \frac{2m_\CA}{m_\CA+m_\CB}&[(\xi-\xi_\ast)\cdot\omega]\,\omega,
\end{array}
\end{equation}
which follows from conversation of momentum and energy
\begin{equation}\notag
\begin{split}
m_\CA\xi+m_\CB\xi_{\ast}&=m_\CA\xi'+m_\CB\xi'_{\ast},\\ 
m_\CA|\xi|^2+m_\CB|\xi_{\ast}|^2&=m_\CA|\xi'|^2+m_\CB|\xi'_{\ast}|^2,
\end{split}
\end{equation}
for two colliding particles $\CA$ and $\CB$. Note that collisions between particles in plasma physics are often modelled by the long-range collision operator, for instance, the Boltzmann operator for soft potentials or the Landau operator for the Coulomb potential, cf.~\cite{Vi}.
One may expect that the techniques of analysis developed in the paper together with the ones in \cite{DLYZ, DYZ-h,DYZ-s,Guo-VPL} could also be applied to those more physical situations.

For notational convenience,  as in \cite{Guo3}, we denote in the sequel
$$
\FF(t,x,\xi)=\left[\begin{array}{rll}
&F_i(t,x,\xi)\\[3mm]
&F_e(t,x,\xi)
\end{array}\right].
$$
The system \eqref{VPB}, \eqref{PE} is supplemented with initial data
\begin{eqnarray}\label{id.be}
\FF(0,x,\xi)=\FF_0(x,\xi)
=\left[\begin{array}{rll}
&F_{0i}(x,\xi)\\[3mm]
&F_{0e}(x,\xi)
\end{array}\right],
\end{eqnarray}
and with boundary data at far fields
\begin{eqnarray}
\label{bd.be}
\lim\limits_{x\rightarrow\pm\infty}\FF_0(x,\xi)=\FF_{0\pm\infty}(\xi),
\end{eqnarray}
and
\begin{equation}\label{bd.phi}
\lim\limits_{x\rightarrow\pm\infty}\phi(t,x)=\phi_\pm.
\end{equation}
Through the paper, due to the basic property of the two-component Boltzmann collision operator as discussed in the next section, we assume that $\FF_{0\pm\infty}(\xi)$ are the spatially homogeneous bi-Maxwellians whose exact definition will be introduced in \eqref{def.bim} and \eqref{def.sgm}, that is, 
\begin{equation}
\label{id.be.bim}
\FF_{0\pm\infty}=\FM_{\pm\infty}=
\left[
\begin{array}{rll}
      &M_{[n_{i\pm},u_{\pm},\theta_\pm;m_i]}(\xi)\\[3mm]
      &M_{[n_{e\pm},u_{\pm},\theta_\pm;m_e]}(\xi)
\end{array}\right],
\end{equation}
where $n_{i\pm}>0$, $n_{e\pm}$, $u_\pm=(u_{1\pm,0,0})$, $\theta_\pm>0$ are given constants, with the quasineutral assumption
\begin{equation}\notag
q_in_{i\pm}+q_en_{e\pm}=0.
\end{equation}
For later use, for brevity  we always take 
$$
n_{e\pm}=n_{\pm},\quad n_{i\pm}=-\frac{q_e}{q_i}n_{\pm},
$$ 
with given constants $n_\pm>0$. 

A general question is to investigate the existence, uniqueness, regularity and large-time behavior of solutions to the Cauchy problem on the above VPB system in terms of given initial data with general far fields. Note that the far-field data at $x=\pm\infty$ could be distinct, and hence the long-term dynamics could be nontrivial with spatial variation along the direction of $x$ variable.

\subsection{Literature and background}

In what follows we review some relevant literature. First of all, in general settings for large initial data,  the Cauchy problem or the IBVP on the VPB system related to its one-dimensional version \eqref{VPB}, \eqref{PE} has been studied by many people. Among them, we would only mention Desvillettes-Dolbeault \cite{DD} for the long time asymptotics of the system,  Bernis-Desvillettes \cite{BD} for the propagation of regularity of solutions, Mischler \cite{Mis} for the initial boundary value problem, Bostan-Gamba-Goudon-Vasseur \cite{BGGV} for the stationary problem on the bounded domain, and Guo \cite{G-va} for global existence of classical solutions near vacuum. Note that the existence of renormalised solutions of the much more complex Vlasov-Maxwell-Boltzmann system with a defect measure has been recently studied in Arsenio-Saint-Raymond \cite{AS}. 

In perturbation regime around global Maxwellians on the spatially periodic domain $\T^3$, a number of  progresses have been made by Guo   \cite{Guo3,G, Guo-VPL}.  
His approach is based on the robust energy method through constructing the appropriate energy functional and energy dissipation rate functional so that the nonlinear collision terms can be controlled along the linearised dynamics under smallness assumption, where the mathematical analysis strongly relies on both the structure of the system and the dissipative property of the linearised operator. A general technique in the proof is to design good velocity weight functions for closing the a priori estimates.  In the case of the whole space, the Poincar\'e inequality fails to capture the dissipation of solutions over the low-frequency domain, and hence the energy method is also extent to further study the local stability and convergence rates of solutions around global Maxwellians in $\R^3$, for instance, Strain \cite{S}, Duan-Strain \cite{DS}, Strain-Zhu \cite{SZ}, Wang \cite{Wa}, Duan-Liu \cite{DL-CMP}, Duan-Lei-Yang-Zhao \cite{DLYZ}, and many references therein. Recently, the decay structure of the linearized system is characterized by the spectral analysis in  Li-Yang-Zhong \cite{LYZ} and Huang \cite{Hu} following the classical works by  Ellis-Pinsky \cite{EP} and Ukai \cite{U74}; see also Glassey-Struass \cite{GlS} for an early study of spectrum of the VPB system.
%
We should point out that the appearance of the self-consistent electric field or the magnetic field makes the dissipative structure of  system more complicated.

A common feature in most of works in perturbative regime mentioned above is that the large-time behavior of solutions to the VPB system is trivial, namely, $F_{i,e}(t,x,\xi)$ are global Maxwellians and $\phi(t,x)$ is a constant. Unfortunately, this property may not be true in the general situation where regarding the VPB system \eqref{VPB} and \eqref{PE}, either initial data $F_{0\al}(x,\xi)$ with $\al=i,e$ tend to two distinct global Maxwellians
\begin{equation*}
M_{[n_{\al\pm},u_\pm,\theta_\pm;m_\al]}(\xi)=
n_{\al\pm} \left(\frac{m_\al}{2\pi k_B\theta_\pm}\right)^{3/2}\exp\left\{-\frac{m_\al (|\xi_1-u_\pm|^2+|\xi_2|^2+|\xi_3|^2)}{2k_B\theta_\pm}\right\},
\end{equation*}
for a binary gas-mixture or $\phi(t,x)$ tends to two distinct constant states $\phi_\pm$, as $x$ goes to $\pm\infty$, where $k_B>0$ is the Boltzmann constant. Here, as pointed out before, the fact that two Maxwellians have the same bulk velocities and the same temperatures is due to the Boltzmann's  $H$-theorem in the two-component situation; see details in the next section. In such cases, from the local macroscopic balance laws, $F_\al(t,x,\xi)$ and $\phi(t,x)$ are no longer global Maxwellians and constant in large time, respectively. This is the situation considered in the paper, and our main objective is to construct the non-trivial rarefaction wave profile under certain compatibility conditions on far-field data, and further show the local time-asymptotic stability. As a byproduct, those results in the case of the constant-equilibrium state (cf.~\cite{G}) can be recovered when the strength of rarefaction wave reduces to zero.

We further recall  a few literatures for the existence and stability of wave patterns in the content of the pure Boltzmann equation without any force as one may expect to extend them to the VPB system under consideration. These include the shock wave (cf., Caflisch-Nicolaenko \cite{CN}, Liu-Yu \cite{LY}, Yu \cite{Yu2}, Liu-Yu \cite{LY-im}), rarefaction wave (cf., Liu-Yang-Yu-Zhao \cite{LYYZ}, Xin-Yang-Yu \cite{XYY}), contact discontinuity (cf., Huang-Xin-Yang \cite{HXY}); see also many other references therein. Note that the construction of solutions with a general BV data corresponding to the celebrated work Bianchini-Bressan \cite{BB} on the finite-dimensional conservation laws at the fluid level is a big open problem, cf.~\cite{SR}. Regarding the rarefaction wave of the pure Boltzmann equation, one can take it as a local Maxwellian with the macroscopic fluid quantities solving the Riemann problem on the corresponding Euler system with initial data given by both far-field global Maxwellians. For \eqref{VPB}, \eqref{PE} we will explain later on how to construct the rarefaction wave through the quasineutral Euler equations.  To study the local stability of such local Maxwellian, another type of energy method is proposed in Liu-Yu \cite{LY} and developed by Liu-Yang-Yu \cite{LYY}. Here, different from the previous approach by setting perturbations around global Maxwellians, the key idea in \cite{LY,LYY} is to make the macro-micro decomposition for the single-component Boltzmann equation
$$
F(t,x,\xi)=M(t,x,\xi)+G(t,x,\xi),
$$ 
with the local Maxwellian $M(t,x,\xi)$ determined by the solution $F(t,x,\xi)$ itself through conservation laws of mass, momentum and energy, and hence write the Boltzmann equation in the form of the compressible Navier-Stokes equations coupling to high-order moments of the microscopic part $G(t,x,\xi)$. A priori estimates can be made by a combination of the stability analysis of fluid dynamic equations and the kinetic dissipation of $G(t,x,\xi)$ from the $H$-theorem. We note that the nonlinear stability  in large time of wave patterns for the viscous compressible fluid on the whole line has been well studied, for instance, Goodman \cite{Go}, Matsumura-Nishihara \cite{MN86,MN92,MN-S},
Huang-Xin-Yang \cite{HXY}, see also the monograph \cite{CF} for the general theory. Moreover, hydrodynamic limits of the Boltzmann equation to the classical Euler or Navier-Stokes equations have been also extensively studied by many people in different settings, for instance, see the recent works \cite{GJ,HWWY} in perturbation framework and the monograph \cite{SR} in non perturbation framework.

When there is a self-consistent force, few results are known on the stability of wave patterns for the kinetic equation. A natural starting point is to look at the corresponding fluid dynamic approximate equations. In what follows, let us mainly focus on the rarefaction wave; the issue on the shock wave or contact discontinuity, even only regarding the existence, should be a completely different problem; see the Sone's book  \cite{Sone} and reference therein. In \cite{DY}, Duan-Yang proposed to study the following two-fluid system in the isothermal case
\begin{equation}\label{nsp}
\left\{\begin{array}{l}
\pa_tn_\al+\pa_x(n_\al u_\al)=0,\\[3mm]
m_\al n_\al (\pa_t u_\al+u_\al\pa_x u_\al)+T_\al\pa_x n_\al+q_\al n_\al \pa_x\phi=\mu_\al \pa_x^2u_\al,\quad \al=i,e,\\[3mm]
-\pa_x^2\phi=q_in_i+q_en_e,
\end{array}
\right.
\end{equation}
which is called the Navier-Stokes-Poisson system due to the appearance of diffusion terms. Here $T_\al>0$, $\mu_\al>0$ are constant temperatures and viscosity coefficients, respectively. Note, as pointed out in \cite{Ch}, that for a collisionless fluid plasma, the Euler-Poisson system is enough to describe the monition of charged particles, and the global existence of classical solutions close to constant steady state has been recently proved in Guo-Ionescu-Pausader \cite{GIP} in the case of the whole space $\R^3$. Since we are interested in the study of \eqref{VPB}, \eqref{PE} in the context of collisional plasma, it could be a good way to make use of the theory of the viscous compressible fluid with self-consistent  forces. We established in \cite{DY} the global-in-time stability of the rarefaction wave and the boundary layer for the outflow problem on  \eqref{nsp} on the half line. A drawback of the result is that the large-time behavior of the electric field is zero, due to an artificial choice of physical constants, namely, 
$$
m_i=m_e,\ T_i=T_e,\ \mu_i=\mu_e,\ q_i+q_e=0,
$$
and hence the dynamics of the two-fluid NSP system is the same as the one of the single NS system. However, we recovered a good dissipative property of the electric field, that is, although $\pa_x\phi$ is not time-space integrable, it can be true for $(\pa_xu^r)^{1/2}\pa_x\phi$ by using the two-fluid coupling property, where $\pa_xu^r>0$ has a good sign.

Recently,  we  removed in \cite{DL} the restrictions on those physical constants. Particularly, it is found that as long as initial data satisfy some compatibility conditions related to the construction of the rarefaction wave, the dynamics of system \eqref{nsp} can be described in large time by the
corresponding quasineutral Euler system
\begin{equation}\notag
\left\{\begin{array}{l}
\pa_tn_\al+\pa_x(n_\al u_\al)=0,\\[3mm]
m_\al n_\al (\pa_t u+u\pa_x u)+T_\al\pa_x n_\al+q_\al n_\al \pa_x\phi=0,\quad \al=i,e,\\[3mm]
q_in_i+q_en_e=0,
\end{array}
\right.
\end{equation}
by formally assuming $u_i=u=u_e$ and ignoring all the second-order derivative terms. Note that by letting $n_e=n$ and $n_i=-\frac{q_e}{q_i}n$, the above quasineutral Euler system can further reduce to the form of
\begin{equation}
\left\{\begin{array}{l}
\dis \pa_tn+\pa_x(n u)=0,\\[3mm]
\dis n(\pa_t u+u\pa_x u)+\frac{T_i |q_e|+T_e|q_i|}{m_i |q_e|+m_e|q_i|}\pa_x n=0,
\end{array}
\right.\notag
\end{equation}
with the potential function $\phi$ given by
$$
\phi=\frac{m_iT_e-m_e T_i}{m_i|q_e|+m_e|q_i|} \ln n.
$$
The similar result has been also extent to the non-isentropic two-fluid case in \cite{DLiuYZ} with some technical restriction on the ratio of masses of two fluids. Furthermore, in a parallel work \cite{DL-VPB} we also make use of the same idea to further have studied the stability of the rarefaction wave of the VPB system for ions' dynamics governed by the model of the form
\begin{equation}
\left\{\begin{array}{l}
\dis \pa_t F_{i} +\xi_1\pa_x F_{i}-\dis\frac{q_i}{m_i}\pa_x\phi\,\pa_{\xi_1}F_i =Q_{ii}(F_i,F_i),\\[3mm]
\dis -\pa_x^2\phi=q_i\dis{\int_{\R^3}F_i}\,d\xi+q_en_e,
\end{array}
\right.\notag
\end{equation}
where compared to the two-component VPB system \eqref{VPB}, the dynamical equation of electrons and the ions-electrons collisions have been omitted, and the number density $n_e=\int_{\R^3} f_e\,d\xi $ has been replaced by an analogue of the classical Boltzmann relation
$$
n_e=\exp\{\phi/T_e\},
$$
or  a general function depending on the potential function $\phi$. We remark that the Boltzmann relation has been recently extensively visited in a lot of studies of kinetic and related fluid dynamic equations, for instance,  \cite{CDPS,GP,GPu,HK-VP,HK,Su}.

Inspired by our previous works \cite{DL,DL-VPB,DLiuYZ}, we expect in the paper to further consider the  much more physical two-component VPB system, particularly extending the results in \cite{G,Guo-VPL} to the case of perturbations of the non-constant equilibrium state. In fact, besides its own importance in physics, the two-component collisional kinetic system enjoys  more complex  dissipation structure compared to either the modelling system studied in \cite{DLiuYZ} or the single-component kinetic system, cf.~\cite{AAP,CDMS,DL1,DL2,EGM-1,EGM-2,GS,SoY}. For the numerical and mathematical investigations on non-trivial profiles of a gas mixture with the Boltzmann collision, we would mention \cite{ABT,BG,Br,KAT,Ta,TaAo} and reference therein; see also discussions in \cite{BiDo,BiDe,Do,JM} on the limit of the gas mixture kinetic equations to the fluid dynamical equations. 

For the two-component VPB system \eqref{VPB}, \eqref{PE} under consideration, disparate masses and charges play a key role in the stability analysis of non-constant time-asymptotic profiles, which is different from the one for considering perturbations around constant equilibrium states where all physical constants can be normalised to be one without loss of generality, cf.~\cite{Guo3}. Moreover, as discussed in \cite{GS},  the dissipation by the two-component  Boltzmann collisions behaves in a complex way, and the approach to equilibrium can be divided roughly into two processes: one is called the Maxwellization which occurs due to either self-collisions alone, or cross-collisions, or a combination of both, and the other is called  equilibration of two species, i.e., the vanishing of differences in velocity and temperature in the species. In the paper, we expect to provide an analytical view to this issue by further developing the macro-micro decomposition in the two-component case.

\subsection{Main result}

We now begin to state the main result of the paper. Before doing that, we first introduce some notations. Let  $[n^R(x/t),u^R(x/t),\ta^R(x/t)]$ and $[n^r(t,x),u^r(t,x),\ta^r(t,x)]$ with $u^R=(u^R_1,0,0)$ and $u^r=(u_1^r,0,0)$, where the far-field data  at $x=\pm\infty$ are given by $[n_\pm,u_{1\pm},\theta_\pm]$, be the $3$-family centred rarefaction wave and the corresponding smooth rarefaction wave, respectively,
in connection with the quasineutral Euler system
\begin{eqnarray}\label{qne}
\left\{
\begin{array}{clll}
&\dis \pa_tn+\pa_x (nu_1)=0,\\[3mm]
&\dis \pa_t u_1+u_1\pa_xu_1+\frac{2(|q_i|+|q_e|)}{3(m_e|q_i|+m_i|q_e|)}\frac{1}{n}\pa_x (n\theta)=0,\\[3mm]
&\dis \pa_t \theta+u_1\pa_x \theta+\frac{2}{3} \theta \pa_x u_1=0.
\end{array}
\right.
\end{eqnarray}
See Section \ref{sec.rw} for more details to the derivation of the system \eqref{qne}. As in \cite{XZ}, we define
\begin{eqnarray}\label{def.mb}
\left[\begin{array}{cc}
      M_{\ast i}    \\[3mm]
      M_{\ast e}
\end{array}\right]
=\left[\begin{array}{cc}
      M_{[n_{*i},u_{*},\ta_{*};m_i]}(\xi)    \\[3mm]
      M_{[n_{*e},u_{*},\ta_{*};m_e]}(\xi)
\end{array}\right],
\end{eqnarray}
with constants $n_{*i}$, $n_{*e}$,  $u_*=[u_{*1},0,0]$, $\ta_*$ satisfying
\begin{eqnarray}\label{def.mbp}
\left\{\begin{array}{rll}
\begin{split}
&\frac{1}{2}\sup\limits_{(t,x)\in \R_+\times \R}\ta^r(t,x)<\ta_*<\sup\limits_{(t,x)\in \R_+\times \R}\ta^r(t,x),\\[3mm]
&\sup\limits_{(t,x)\in \R_+\times \R}\left\{\left|-\frac{q_e}{q_i}n^r(t,x)-n_{*i}\right|+|n^r(t,x)-n_{*e}|+|u^r(t,x)-u_*|+|\ta^r(t,x)-\ta_*|\right\}<\eta_0,
\end{split}
\end{array}\right.
\end{eqnarray}
for a constant $\eta_0>0$ which is suitably small.

Then the main result of this paper can be stated as follows. More notations will be explained later on.

\begin{theorem}\label{main.Res.}
Consider the Cauchy problem on the VPB system \eqref{VPB}, \eqref{PE}, \eqref{id.be}, \eqref{bd.be}, \eqref{bd.phi}, \eqref{id.be.bim}. Assume $[n_+,u_{1+},\theta_+]\in R_3(n_-,u_{1-},\theta_-)$, $\phi_+=\phi_-$, and $q_i\leq 9 |q_e|$, where $R_3(n_-,u_{1-},\theta_-)$ is defined in \eqref{def.r3} denoting the set of right constant states connected with the left constant state $[n_-,u_{1-},\theta_-]$ through the $3$-family rarefaction wave of the quasineutral Euler system \eqref{qne}.
Let
\begin{equation}
\label{def.rws}
\delta_r=|n_+-n_-|+|u_{1+}-u_{1-}|+|\ta_+-\ta_-|,
\end{equation}
be the wave strength which is suitably small. There are constants $\eps_0>0$, $0<\eta_0\leq\de_r$ and $C_0>0$,
such that if $F_{0i}(x,\xi)\geq 0$, $F_{0e}(x,\xi)\geq 0$,  and
\begin{equation*}
\begin{split}
&\sum\limits_{|\al|+|\be|\leq2\atop{0\leq \al_0\leq1}}\left\|\pa^{\al}\pa^\be\left(F_{0i}(x,\xi)
-M_{[-\frac{q_e}{q_i}n^r,u^r,\ta^r;m_i](0,x)}(\xi)\right)\right\|^2_{L_x^2\left(L^2_\xi\left(\frac{1}{\sqrt{M_{*i}(\xi)}}\right)\right)}
\\&\quad+\sum\limits_{|\al|+|\be|\leq2\atop{0\leq \al_0\leq1}}\left\|\pa^{\al}\pa^\be\left(F_{0e}(x,\xi)
-M_{[n^r,u^r,\ta^r;m_e](0,x)}(\xi)\right)\right\|^2_{L_x^2\left(L^2_\xi\left(\frac{1}{\sqrt{M_{*e}(\xi)}}\right)\right)}
\\&\quad+\sum\limits_{|\al|\leq1}\left\|\pa^\al\pa_x\phi(0,x)\right\|_{H^1}^2
+\de_r\\
&\leq\eps^2_0,
\end{split}
\end{equation*}
then 
the  Cauchy problem admits a unique global solution $[F_i(t,x,\xi)\geq 0,F_e(t,x,\xi)\geq0, \phi(t,x)]$ satisfying
\begin{equation}\label{VPB.sol.}
\begin{split}
&\sup\limits_{t\geq0}\sum\limits_{|\al|+|\be|\leq2\atop{0\leq \al_0\leq1}}\left\|\pa^\al \pa^\be\left(F_i(t,x,\xi)-M_{[-\frac{q_e}{q_i}n^r,u^r,\ta^r;m_i](t,x)}(\xi)\right)
\right\|^2_{L_x^2\left(L^2_\xi\left(\frac{1}{\sqrt{M_{*i}(\xi)}}\right)\right)}
\\&\quad+\sup\limits_{t\geq0}\sum\limits_{|\al|+|\be|\leq2\atop{0\leq \al_0\leq1}}\left\|\pa^\al \pa^\be\left(F_e(t,x,\xi)-M_{[n^r,u^r,\ta^r;m_e](t,x)}(\xi)\right)
\right\|^2_{L_x^2\left(L^2_\xi\left(\frac{1}{\sqrt{M_{*e}(\xi)}}\right)\right)}
\\&\quad+\sup\limits_{t\geq0}\sum\limits_{|\al|\leq1}\|\pa^\al\pa_x\phi(t,x)\|^2_{H^1}\\
&\leq C_0\eps_0^{2}.
\end{split}
\end{equation}
Moreover, it holds that
\begin{multline}\label{sol.Lab}
\sup\limits_{t\rightarrow +\infty}\sup\limits_{x\in\R}\Bigg\{\left\|F_i(t,x,\xi)-M_{[-\frac{q_e}{q_i}n^R,u^R,\ta^R](x/t)}(\xi)
\right\|_{L^2_\xi\left(\frac{1}{\sqrt{M_{*i}(\xi)}}\right)}
\\+\left\|F_e(t,x,\xi)-M_{[n^R,u^R,\ta^R](x/t)}(\xi)
\right\|_{L^2_\xi\left(\frac{1}{\sqrt{M_{*e}(\xi)}}\right)}\Bigg\}
=0.
\end{multline}
\end{theorem}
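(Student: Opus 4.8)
The plan is to prove Theorem~\ref{main.Res.} by combining a construction of the smooth rarefaction wave profile for the quasineutral Euler system \eqref{qne} with a macro-micro decomposition of the two-component VPB system around suitable local bi-Maxwellians, followed by a continuation argument based on uniform a priori energy estimates. First I would fix the background. Using the approximate-rarefaction-wave technology (smoothing of the Riemann solution, e.g. via the Burgers equation), I would construct $[n^r,u^r,\theta^r](t,x)$ solving \eqref{qne} with the prescribed far fields in $R_3(n_-,u_{1-},\theta_-)$, and record the standard decay and integrability properties: $\|\partial_x[n^r,u^r,\theta^r]\|_{L^\infty}\lesssim \delta_r/(1+t)$, $\partial_x u_1^r>0$, together with $L^1_t L^\infty_x$ and $L^2_tL^2_x$ bounds on higher derivatives, all with $\delta_r$-smallness. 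The associated self-consistent potential at the profile level is $\phi^r=\frac{m_i T_e - m_e T_i}{\cdots}\ln n^r$-type expression adapted to the nonisentropic case; here $\phi_+=\phi_-$ is exactly the compatibility condition ensuring the far-field potential is consistent.

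Next I would set up the perturbation. Write $F_i = M_{[-\frac{q_e}{q_i}n^r,u^r,\theta^r;m_i]}+\{\text{correction}\}+G_i$ and similarly for $F_e$, where the local bi-Maxwellians carry the fluid part and $G_{i,e}$ are the microscopic remainders, and let $[\tilde n_\alpha,\tilde u,\tilde\theta,\tilde\phi]$ denote the deviation of the true macroscopic quantities from the profile. The conservation laws of mass, momentum and energy (for each species plus the combined energy), coupled with the Poisson equation, give a viscous-dispersive system for the deviations of Navier–Stokes–Poisson type with error terms sourced by the profile and by high moments of $G_{i,e}$; the equations for $G_{i,e}$ are obtained by projecting \eqref{VPB} onto the microscopic space and involve the linearized two-component collision operator $\mathbf{L}$. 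The key structural inputs are the coercivity of $\mathbf{L}$ on the microscopic space (with a species-weighted norm reflecting the disparate masses) and the precise identification of which macroscopic modes are dissipated — this is where the two-component structure and the hypothesis $q_i\le 9|q_e|$ enter, since the equilibration of velocities/temperatures between species and the weak dissipation of $\partial_x\tilde\phi$ (only $(\partial_x u_1^r)^{1/2}\partial_x\tilde\phi$ is controllable, as in \cite{DY,DL}) both require quantitative smallness of the mass/charge ratios.

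Then I would run the energy method: design an energy functional $\mathcal{E}(t)$ comprising $L^2$ norms of $\partial^\alpha\partial^\beta$ of $F_{i,e}-(\text{profile})$ weighted by $1/\sqrt{M_{*\alpha}}$ up to two derivatives (with $\alpha_0\le1$ time-derivative) plus $\|\partial_x\phi\|$-type terms, and a dissipation functional $\mathcal{D}(t)$ controlling $\|\partial_x\tilde\theta\|^2$, $\|\partial_x\tilde u\|^2$, $\|(\partial_x u_1^r)^{1/2}\tilde{}\,\|^2$-type rarefaction interaction terms, $\|\partial_x\tilde\phi\|$-type weak dissipation, and the full microscopic dissipation $\|G_{i,e}\|^2_\nu$. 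The heart of the argument is the zeroth-order estimate: multiply the macroscopic system by the appropriate combinations (à la Kawashima), integrate, and use the rarefaction structure $\partial_x u_1^r\ge 0$ to absorb the bad linear terms; the self-consistent field is handled by the two-fluid cancellation that produces $(\partial_x u_1^r)^{1/2}\partial_x\tilde\phi$ with a good sign, and the species-coupling (friction-type) terms from cross-collisions give the equilibration dissipation $|\tilde u_i-\tilde u_e|$, $|\tilde\theta_i-\tilde\theta_e|$. Higher-order estimates up to two derivatives follow by differentiating, using the microscopic coercivity, and the usual weighted $L^\infty_\xi$ / Cauchy–Schwarz bookkeeping to close the nonlinear terms under $\varepsilon_0+\delta_r$ smallness. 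Finally a standard local existence plus continuity argument upgrades the a priori bound \eqref{VPB.sol.} to global existence, and \eqref{sol.Lab} follows from the uniform bound together with the decay $\|\partial_x[n^r,u^r,\theta^r]\|_{L^\infty}+\|\partial_x\tilde{}\,\|_{L^2}\to 0$ and the $L^\infty$-interpolation $\|g\|_{L^\infty_x}^2\lesssim\|g\|_{L^2_x}\|\partial_x g\|_{L^2_x}$, plus the known convergence of the smooth rarefaction wave to the centered one.

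The main obstacle I anticipate is the zeroth-order macroscopic estimate in the presence of both the self-consistent potential and the disparate-mass cross-collision terms simultaneously: unlike the single-species or modeled cases \cite{DL-VPB,DLiuYZ}, here one must show that the "equilibration" dissipation from $Q_{ie},Q_{ei}$ (degenerate because it only sees velocity/temperature \emph{differences}) together with the degenerate field dissipation $(\partial_x u_1^r)^{1/2}\partial_x\tilde\phi$ and the rarefaction-induced dissipation are jointly strong enough to close the linear part — and the constant $9$ in $q_i\le9|q_e|$ is presumably the threshold that makes a specific quadratic form (built from the mass-weighted momentum and energy combinations) nonnegative. Getting that algebraic inequality, and correctly choosing the many free parameters in the Lyapunov functional so that all cross terms are controlled, is the technical crux; the rest is a (lengthy but) routine adaptation of the macro-micro energy framework of \cite{LYY} to the two-component setting with weights $1/\sqrt{M_{*\alpha}}$.
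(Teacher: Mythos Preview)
Your overall skeleton—smooth rarefaction wave via Burgers, two-component macro-micro decomposition around a local bi-Maxwellian, a priori energy estimates closed under $\varepsilon_0+\delta_r$ smallness, then local existence plus continuation—is exactly the paper's strategy, and your identification of the $q_i\le 9|q_e|$ threshold as a sign condition on a quadratic form in the zeroth-order estimate is correct (it is the positive-definiteness of a $4\times 4$ matrix in $[\tilde n_i,\tilde n_e,\tilde u_1,\tilde\theta]$ multiplying $\partial_x u_1^r$). However, several of the specific mechanisms you propose are not the ones that actually work here, and two of them would lead you down dead ends.

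First, there is \emph{no} nontrivial potential profile $\phi^r$. The paper takes the background potential to be the constant $\phi_\pm$, and the condition $\phi_+=\phi_-$ is imposed precisely because a $\ln n^r$-type profile \emph{cannot} be constructed in this two-component setting: the microscopic correction $\overline{\FG}$ (needed because $\|\FM^{-1/2}\FG\|_{L^2}$ is not time-integrable) must be built so as to involve no linear term in $\phi$, and this forces the asymptotic potential to be trivial. Relatedly, the dissipation of $\partial_x\phi$ is \emph{not} the degenerate $(\partial_x u_1^r)^{1/2}\partial_x\phi$ mechanism from \cite{DY}. Instead one forms the charge-weighted combination $q_i\times(\text{$i$-momentum})+q_e\times(\text{$e$-momentum})$ of the macroscopic balance laws; the forcing term becomes $(q_i^2 n_i+q_e^2 n_e)\partial_x\phi$, and testing against $\partial_x\phi$ yields full $\|\partial_x\phi\|^2+\|\partial_x^2\phi\|^2$ dissipation directly, with the remaining terms controlled by $\|\partial_x[\tilde u_1,\tilde\theta]\|^2$ and microscopic quantities. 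The same charge-weighted equation tested against $\partial_x(q_i\tilde n_i+q_e\tilde n_e)$ gives the missing $\|q_i\tilde n_i+q_e\tilde n_e\|^2$ dissipation.

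Second, there is no ``equilibration dissipation'' $|\tilde u_i-\tilde u_e|$, $|\tilde\theta_i-\tilde\theta_e|$ at the macroscopic level: the bi-Maxwellian $\FM$ carries a \emph{single} bulk velocity $u$ and temperature $\theta$ shared by both species (only $n_i,n_e$ differ), so the macroscopic unknowns are $[\tilde n_i,\tilde n_e,\tilde u,\tilde\theta]$ with one $\tilde u$ and one $\tilde\theta$. The cross-collisions $Q_{ie},Q_{ei}$ contribute to the coercivity of the full linearized operator $\FL_\FM$ on the six-dimensional kernel's orthogonal complement, not to a separate friction term. A subtler point you do not mention: to extract viscosity and heat-conductivity one cannot simply invert $\FL_\FM$ on $\FP_1^\FM(\xi_1\partial_x\FM)$, because the remainder $\FR_\phi=\FL_\FM^{-1}\FP_1^\FM(q_0\partial_x\phi\,\partial_{\xi_1}\FM)$ would reintroduce $\phi$ linearly into the right-hand sides. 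The paper instead applies the \emph{single-species} projections $P_1^{M_\CA}$ to each component equation separately; since $P_1^{M_\CA}(\partial_x\phi\,\partial_{\xi_1}M_\CA)=0$ identically, the resulting remainders $\overline R_\CA$ contain no linear $\phi$-term, and one obtains species-wise viscosities $\mu_\CA(\theta)$ and conductivities $\kappa_\CA(\theta)$ that sum to give the dissipative structure. This device is the main structural novelty and is what makes the fluid-part estimate close.
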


We give a few remarks on the above theorem. The estimate  \eqref{sol.Lab} indeed shows the convergence of the two-component VPB system \eqref{VPB}, \eqref{PE} to the quasineural Euler system \eqref{def.r3} in the setting of rarefaction waves for well-prepared small and smooth initial data. Thus, the long-term dynamcis of the VPB system can be a non-trivial time-asymptotic profile connecting two distinct constant equilibrium states. As seen in \eqref{qne},  disparate masses and charges of particles also enter into the asymptotic profile and hence they can take the effect on the nontrivial large time behavior of the complex VPB system. The obtained result may be regarded as a generalisation of the existing perturbation theory for the VPB system in the cases either for initial data around constant equilibrium states in \cite{G} or for the single-component Boltzmann collision in \cite{LYYZ} and \cite{DL-VPB}. More importantly, although we may only provide a preliminary understanding of the stability of the rarefaction wave profile for the VPB system, it is expected that the analysis developed in the paper could be adopted to treat many other relevant problems in connection with those fluid-type systems derived in Section \ref{sec2}, cf.~\cite{Gr}. 

In the end we point out that the condition $q_i\leq 9|q_e|$ is only a technical assumption used in the proof of the zero-order energy estimate; see \eqref{def.matrM} for its positivity in Section \ref{sec6.1}. On the other hand, the condition $\phi_+=\phi_-$ is essentially required in the proof of the theorem, and it is indeed unknown how to construct a non-trivial large-time profile of the potential function $\phi$ associated with $\phi_+\neq \phi_-$ as we did in \cite{DL,DL-VPB,DLiuYZ}.

\subsection{Outline and key points of the proof}


The proof of Theorem \ref{main.Res.} is based on the two-component decomposition as well as the refined energy method. First of all, $H$-theorem of the two-component Boltzmann equations implies that the large-time behavior of the VPB system should be in connection with a bi-Maxwellian $\FM$ determined by six local fluid quantities $n_i$, $n_e$, $u=(u_1,u_2,u_3)$, and $\theta$. This induces one to define $\FM$ in terms of $\FF$ such that they have the same average values with respect to all six two-component collision invariants, namely, 
$$
\int_{\R^3}\psi_j(\xi)\cdot \FF\,d\xi=\int_{\R^3}\psi_j(\xi)\cdot \FM\,d\xi,\quad j=1,2,...,6.
$$
Therefore, the energy dissipation of the non-fluid part $\FG:=\FF-\FM$ can be obtained by the linearised $H$-theorem. See the coercivity estimate \eqref{co.est00.} in Lemma \ref{co.est0.} whose proof is based on the compactness argument as in \cite{Guo3}. In most applications of \eqref{co.est00.}, one has to vary the weight function such that the modified macroscopic quantities are sufficiently close to those of the background bi-Maxwellian, and this has been done in Lemma \ref{co.est.}. Moreover, as in \cite{XZ}, it  can not be direct to make the zero-order energy estimate on $\FG$, because $\FM^{-1/2}\FG$ is not  integrable in $L^2_{t,x,\xi}$. To treat this trouble, one has to construct a background non-fluid function $\overline{\FG}$ in terms of the time-asymptotic fluid profile, see \eqref{def.ng} for the exact formula. We would emphasise that as the large-time profile of $\phi(t,x)$ under the assumption $\phi_+=\phi_-$ is expected to be constant, $\overline{\FG}$ does not involve any term of the potential function, which is quite different from the previous work \cite{DL-VPB} in the single-component case. Due to this technique, it seems impossible for us to construct a non-trivial large-time potential function $\phi^r(t,x)$ accounting for some distinct far-field data $\phi_\pm$ similar to the two-fluid models considered in \cite{DL,DLiuYZ}. 

The a priori estimates on the fluid part $\FM$ of the solution $\FF$ is much more technical. The key point is to find out the appropriate viscous fluid-type equations of the macroscopic quantities of $\FM$ such that the energy estimates on the fluid part can be controled in terms of the non-fluid part in a good way; see Proposition \ref{mac.eng.lem.}. Considering the two-component moment equations with respect to all collision invariants, cf.~\eqref{co.i} and \eqref{co.e}, 
and using the two-component macro-micro decomposition, it is straightforward to obtain the two-fluid Euler-Poisson type system \eqref{cons.law.i}, \eqref{cons.law.e}, \eqref{co.po}. To capture the diffusion and heat-conductivity, we essentially have used the dissipation effect of like-particle collisions. In fact, using the decomposition, one can rewrite $Q_{\CA}(\FF,\FF)$ in the way on the right-hand side of \eqref{mmBE}, and hence get the representation \eqref{PGA}, where we note that the right-hand first term is exactly responsible for diffusion and heat-conductivity and the remaining term $\overline{R}_\CA$ does not involve any linear term in $\phi(t,x)$. Therefore, by plugging \eqref{PGA} into the two-fluid Euler-Poisson type system, one can further obtain the two-fluid Navier-Stokes-Poisson type system \eqref{BE-NSi}, \eqref{BE-NSe} and \eqref{co.po}, which becomes the key step for making the energy estimates on the fluid part as in \cite{DLiuYZ}.   


\medskip

The rest of the paper is arranged as follows. In the following three sections we make some preparations for the proof of the main result. Particularly, in Section \ref{sec2}, we introduce the macro-micro decomposition for the two-component Boltzmann equation with disparate masses. In terms of the decomposition, we derive the zero-order and first-order approximate fluid-type systems, which is a crucial step for both the construction of large-time rarefaction wave profiles and the energy estimates on the stability of profiles. Note that we also make use of the single-component projections to find out the diffusion and heat-conductivity of the fluid part. In Section \ref{sec.rw}, we deduce the quasineutral Euler system as the time-asympotic equations of the VPB system, and further construct the corresponding rarefaction wave profile and study the basic properties of the profile. In Section \ref{sec4}, we consider the two-component Boltzmann collision operator and provide estimates on dissipation of the linearised operator and also upper bound estimates on the nonlinear term both with respect to some local bi-Maxwellians. In Section \ref{sec5}, we give a sketch of the proof of Theorem \ref{main.Res.} basing on two main propositions whose proofs are postponed to  Section \ref{sec.est.f} and Section \ref{sec.est.nf}, respectively.

\medskip

\noindent {\it Notations.} Throughout the paper,  $C$ denotes some generic positive (generally large) constant and $\la$ denotes some generic positive (generally small) constant, where both $C$ and $\la$ may take different values in different places. $D\lesssim E$ means that  there is a generic constant $C>0$
such that $D\leq CE$. $D\sim E$
means $D\lesssim E$ and $E\lesssim D$. $\|\cdot\|_{L^p}$ $(1\leq p\leq+\infty)$
stands for the $L_x^p-$norm. Sometimes, for convenience, we use $\|\cdot\|$ to denote $L_x^2$-norm, and use $(\cdot,\cdot)$ to denote the inner product in $L^2_x$ or $L^2_{x,\xi}$.
We also use $H^{k}$ $(k\geq0)$ to denote the usual Sobolev space with respect to $x$ variable. We denote  $\pa^\al\pa^\be=\pa_t^{\al_0}\pa_x^{\al_1}\pa_\xi^{\be}$ and $\pa_\xi^\be=\pa_{\xi_1}^{\be_1}\pa_{\xi_2}^{\be_2}\pa_{\xi_3}^{\be_3}$, with $|\al|=\al_0+\al_1$ and  $|\be|=\be_1+\be_2+\be_3.$
We call $\be'\leq \be$ if each
component of $\be'$ is not greater than that of $\be$. 
We also call $\be'<\be$ if $\be'\leq \be$ and $|\be'|<|\be|$.
 For $\be'\leq \be$, we also use $C^{\be}_{\be'}$ to denote the usual binomial coefficient.
The same notations  also apply to $\al$ and $\al'$. For the notational simplicity, we use $\FM_\ast^{-1}$ to denote the $2\times2$ diagonal matrix
${\rm diag}(1/M_{\ast i}, 1/M_{\ast e})$. Similarly, the $2\times2$ diagonal matrix
${\rm diag}(1/\sqrt{M_{\ast i}}, 1/\sqrt{M_{\ast e}})$ is denoted by $\FM_\ast^{-1/2}$. The same notations  also apply to all bi-Maxwellians used in the paper, for instance, $\FM$, $\FM_\sharp$ and $\widehat{\FM}$, etc.



\section{Two-component macro-micro decomposition}\label{sec2}

In this section we introduce the two-component macro-micro decomposition. First of all, we list the elementary properties of the collision operator, including the local equilibrium state, an identity, collision invariants, and the entropy inequality. An important and interesting concept is the bi-Maxwellian, cf.~\cite{ABT}. After that, we introduce the fluid quantities for a disparate mass binary mixture, define the macro-micro decomposition of the solution, and derive the zero-order macroscopic balance laws. In the end, we discuss how to capture the velocity diffusion and heat conductivity.

\subsection{Elementary properties of collisions}
%


In what follows we list a few elementary properties of the two-component Boltzmann collision operator without any proof. Interested readers may refer to \cite{ABT,CC}. To the end, we always denote
\begin{equation}
\label{def.sgm}
M_{[n(t,x),u(t,x),\theta(t,x);m]}(\xi)=n(t,x) \left(\frac{m}{2\pi k_B \theta(t,x)}\right)^{3/2} \exp \left(-\frac{m |\xi-u(t,x)|^2}{2k_B\theta(t,x)}\right),
\end{equation}
to be a local Maxwellian with the fluid density $n(t,x)$, bulk velocity $u(t,x)$, and temperature $\theta(t,x)$ as well as the particle mass $m>0$.

\bigskip

\noindent {\bf [P1].} For the like-particles collision  ($\CA=i$ or $e$),
$$
Q_{\CA\CA}(F_\CA,F_\CA)=0\ \ \text{iff}\ \ F_\CA=\overline{M}_\CA,
$$
where
\begin{equation}
 \overline{M}_\CA:=M_{[n_\CA(t,x),u_\CA(t,x),\theta_\CA(t,x);m_\CA]}(\xi),\notag
\end{equation}
is a general local Maxwellian of $\CA$-species.
For later use it is also convenient to rewrite $\overline{M}_\CA$  as
\begin{equation}\notag
 \overline{M}_\CA=\frac{n_\CA(t,x)}{\big(2\pi k_\CA \theta_\CA(t,x)\big)^{3/2}} \exp \left\{-\frac{|\xi-u_\CA(t,x)|^2}{2k_\CA\theta_\CA(t,x)}\right\},
\end{equation}
with $k_\CA:=\frac{k_B}{m_\CA}$, and for brevity we always take $k_B=\frac{2}{3}$. For the unlike-particles collision ($\CA\neq \CB$),
\begin{equation*}
Q_{\CA\CB}(\overline{M}_\CA,\overline{M}_\CB)=0, \ \ \text{provided that}\ u_\CA=u_\CB \ \text{and}\  \theta_\CA=\theta_\CB.
\end{equation*}


\medskip
\noindent {\bf [P2].}  For $\FF=[F_i,F_e]^T$,  we set
$$
\FQ(\FF,\FF):=
\left[\begin{array}{l}
\FQ_i(\FF,\FF)\\[3mm]
\FQ_e(\FF,\FF)\end{array}\right]
=\left[\begin{array}{c}
Q_{ii}(F_i,F_i)+Q_{ie}(F_i,F_e)\\[3mm]
Q_{ee}(F_e,F_e)+Q_{ei}(F_e,F_i)\end{array}\right].
$$
Then, for $\FG=[G_i,G_e]^T$, one has
\begin{eqnarray}
&\dis \big(\FQ(\FF,\FF),\FG\big)_{L^2_\xi\times L^2_\xi}=-\frac{1}{4}I_{ii}(F_i,G_i)-\frac{1}{2}I_{ie}(F_i,F_e,G_i,G_e)-\frac{1}{4}I_{ee}(F_e,G_e),\notag
\end{eqnarray}
with
\begin{eqnarray*}
I_{ii}(F_i,G_i)  &=&  \int_{\R^3\times \R^3\times \S^2} [F_i(\xi_\ast')F_i(\xi')-F_i(\xi_\ast)F_i(\xi)]\\
&&\qquad\qquad\quad \times [G_i(\xi_\ast')+G_i(\xi')-G_i(\xi_\ast)-G_i(\xi)] B(|\xi-\xi_\ast|,\omega)\, d \xi d\xi_\ast d\omega, \\
 I_{ee}(F_e,G_e)  &=&  \int_{\R^3\times \R^3\times \S^2}  [F_e(\xi_\ast')F_e(\xi')-F_e(\xi_\ast)F_e(\xi)]\\
&&\qquad\qquad\quad \times   [G_e(\xi_\ast')+G_e(\xi')-G_e(\xi_\ast)-G_e(\xi)] B(|\xi-\xi_\ast|,\omega)\,d \xi d\xi_\ast d\omega, \\
  I_{ie}(F_i,F_e,G_i,G_e)& =&\int_{\R^3\times \R^3\times \S^2}  [F_i(\xi_\ast')F_e(\xi')-F_i(\xi_\ast)F_e(\xi)]\\
&&\qquad\qquad\quad \times  [G_i(\xi_\ast')+G_e(\xi')-G_i(\xi_\ast)-G_e(\xi)] B(|\xi-\xi_\ast|,\omega)\, d \xi d\xi_\ast d\omega.
\end{eqnarray*}

\medskip
\noindent {\bf [P3].} Two-component Boltzmann collision operator $\FQ$ has six collision invariants:
\begin{eqnarray*}
\psi_{1}=\left[\begin{array}{cc}
      m_i    \\
   0
\end{array}\right],\
\psi_{2}=\left[\begin{array}{cc}
      0    \\
      m_e
\end{array}\right],\
\psi_{j}=\left[\begin{array}{cc}
      m_i \xi_j    \\
      m_e\xi_j
\end{array}\right],\ j=3,4,5,\
\psi_{6} =\left[\begin{array}{cc}
      \frac{1}{2}m_i|\xi|^2    \\[3mm]
      \frac{1}{2}m_e|\xi|^2
\end{array}\right],
\end{eqnarray*}
satisfying
$$
\int_{\R^3}\psi_j\cdot \FQ(\FF,\FF)\,d\xi=0,\quad j=1,2,...,6.
$$
Specifically, it holds that
$$
\int_{{\R}^3} \psi_{1i}Q_{\CA\CB}(F_\CA, F_{\CB})\,d\xi=\int_{{\R}^3} \psi_{2e}Q_{\CA\CB}(F_\CA, F_{\CB})\,d\xi=0,\ \
\textrm{for} \ \CA\in\{i,e\},
$$
$$
\int_{{\R}^3}\psi_{j\CA}
Q_{\CA\CA}(F_\CA, F_{\CA})  \,d\xi=0,\ j=3,4,5,6,\ \ \textrm{for} \ \CA,\CB\in\{i,e\},
$$
and
$$
\int_{{\R}^3}\psi_{j\CA}
Q_{\CA\CB}(F_\CA, F_{\CB})  \,d\xi+\int_{{\R}^3}\psi_{j\CB}
Q_{\CB\CA}(F_\CB, F_{\CA})  \,d\xi=0,\ \ \textrm{for}\  \CA\neq\CB.
$$
Here for $1\leq j\leq6$, $\psi_{ji}$ and $\psi_{je}$ stand for the first and second component of the vector-valued function $\psi_{j}$.

\medskip
\noindent {\bf [P4].} For any $\FF=[F_i,F_e]^T$,
$$
\big(\FQ(\FF,\FF),\ln \FF\big)_{L^2_\xi\times L^2_\xi}:=\left(\FQ(\FF,\FF),\left[\begin{array}{l}
\ln F_i\\
\ln F_e
\end{array}\right]\right)_{L^2_\xi\times L^2_\xi}=\sum_{\CA=i,e}\int_{\R^3}Q_\CA(\FF,\FF)\ln F_\CA\,d\xi\leq 0,
$$
and ``$=$" holds iff $\FF=\FM$ is a bi-Maxwellian defined by
\begin{eqnarray}
\label{def.bim}
\FM=\left[\begin{array}{cc}
      M_i    \\[3mm]
      M_e
\end{array}\right]
=\left[\begin{array}{cc}
      M_{[n_i,u,\theta;m_i]}(\xi)    \\[3mm]
      M_{[n_e,u,\theta;m_e]}(\xi)
\end{array}\right].
\end{eqnarray}
Particularly, if $\FQ(\FF,\FF)=0$ then $\FF$ is a bi-Maxwellian. Here, we emphasise that for $\CA=i$ or $e$, $M_\CA$ is different from $\overline{M}_\CA$. In fact, the bi-Maxwellian $\FM$ is a two-component equilibrium state, with $M_i$, $M_e$ being the first and second component of  $\FM$, and  $\overline{M}_i$, $\overline{M}_e$ are  Maxwellians of $i$-species and $e$-species, respectively, which are single-species equilibrium states.

\subsection{Decomposition around local bi-Maxwellian}

As in the single-component case \cite{LYY}, we introduce the two-component macro-micro decomposition around local bi-Maxwellians in the following way. Let $\FF=\FF(t,x,\xi)$ be a function satisfying the two-component VPB system \eqref{VPB}. We decompose it as
\begin{equation}\label{dec.2}
\FF(t,x,\xi)=\FM(t,x,\xi)+\FG(t,x,\xi).
\end{equation}
Here $\FM=\FM(t,x,\xi)$ is the macroscopic (or fluid) part
represented
by the local bi-Maxwellian
\begin{eqnarray}\notag
\FM=\left[\begin{array}{cc}
      M_i    \\[3mm]
      M_e
\end{array}\right]
=\left[\begin{array}{cc}
      M_{[n_i(t,x),u(t,x),\theta(t,x);m_i]}(\xi)    \\[3mm]
      M_{[n_e(t,x),u(t,x),\theta(t,x);m_e]}(\xi)
\end{array}\right],
\end{eqnarray}
such that for all fix collision invariants,
\begin{equation}\notag
\int_{\R^3} \psi_j (\xi)\cdot [\FF(t,x,\xi)-\FM(t,x,\xi)]\,d\xi=0,\quad j=1,2,...,6.
\end{equation}
Note that $\FM(t,x,\xi)$ involves the exact six macroscopic quantities
$$
\Big[n_i(t,x),n_e(t,x),u(t,x)=\big(u_1(t,x),u_2(t,x),u_3(t,x)\big),\theta(t,x)\Big],
$$
which can be determined by
\begin{eqnarray*}
\left\{
\begin{array}{rll}
\begin{split}
m_in_i &\equiv \int_{{\R}^3}\psi_{1}\cdot \FF(t,x,\xi)\,d\xi,\\
 m_en_e &\equiv \int_{{\R}^3}\psi_{2}\cdot \FF(t,x,\xi)\,d\xi, \\
(m_{i}n_i+m_en_e)u_j &\equiv\int_{{\R}^3}
\psi_{j}\cdot \FF(t,x,\xi)\,d\xi,\ \ j=3,4,5,\\
n_i\left(\ta+\frac{1}{2}m_i|u(t,x)|^2\right)+n_e\left(\ta+\frac{1}{2}m_e|u(t,x)|^2\right) &\equiv
\int_{{\R}^3}\psi_{6}\cdot \FF(t,x,\xi)\,d\xi.
\end{split}
\end{array}\right.
\end{eqnarray*}
Therefore, $\FM$ is well defined, and then $\FG:=\FF-\FM$ is the microscopic (or non-fluid) part denoted by
\begin{equation*}
\FG=\FG(t,x,\xi)=\left[\begin{array}{cc}
      G_i (t,x,\xi)\\[3mm]
      G_e (t,x,\xi)
\end{array}\right].
\end{equation*}
We remark that  $\FF$ also enjoys another kind of the decomposition with each component being around the single-species local equilibrium state
 \begin{equation}
\label{dec.1}
\FF=\left[\begin{array}{cc}
      M_{i}^{(1)}+G_{i}^{(1)}    \\[3mm]
      M_{e}^{(1)}+G_{e}^{(1)}
\end{array}\right],
\end{equation}
where for $\CA=i$ or $e$, $M_{\CA}^{(1)}:=M_{[n_\CA(t,x),u_\CA(t,x),\theta_\CA(t,x);m_\CA]}(\xi)$ is the local  equilibrium state of collisions of like-particles with the fluid quantities determined by
\begin{eqnarray*}
\left\{
\begin{array}{cll}
\begin{split}
n_\CA(t,x) &:=  \int_{\R^3} F_\CA (t,x,\xi)\,d\xi,\\
u_{\CA j}(t,x)  &:=   \frac{1}{n_\CA(t,x)}\int_{\R^3} \xi_j F_\CA (t,x,\xi)\,d\xi,\quad j=1,2,3,\\
\theta_\CA(t,x) &:=  \frac{1}{3 k_\CA n_\CA}\int_{\R^3}|\xi-u_\CA(t,x)|^2F_\CA (t,x,\xi)\,d\xi.
\end{split}
\end{array}\right.
\end{eqnarray*}
It should be pointed out that \eqref{dec.1} is different from \eqref{dec.2}. One can also obtain the link of the two-component fluid quantities $[n_i,n_e,u,\theta]$ and the single-component fluid quantities $[n_\CA, u_\CA,\theta_\CA]$ $(\CA=i,e)$ in the way that
\begin{eqnarray}
u&=&\frac{m_in_iu_i + m_en_eu_e}{m_in_i+m_en_e},\notag\\
\theta&=& \frac{n_i\theta_i+n_e\theta_e}{n_i+n_e} +\frac{m_im_en_in_e}{3k_B (n_i+n_e)(m_in_i+m_en_e)}|u_i-u_e|^2. \notag
\end{eqnarray}
We further remark that though the single-component fluid quantities
$
u_i,u_e,\theta_i,\theta_e
$
are not macroscopic in the two-component sense, the differences of them with the corresponding two-component fluid quantities
$
u,\theta
$
turn out to be microscopic in the two-component sense. Namely, after direct computations, for $\CA=i,e$,
\begin{eqnarray}
u_\CA -u&= & \frac{1}{n_\CA}\int_{\R^3}\xi G_\CA\,d\xi,\notag \\
\theta_\CA -\theta& = & \frac{1}{3k_\CA} |u-u_\CA|^2 +\frac{1}{3k_\CA n_\CA}\int_{\R^3} |\xi-u_\CA|^2 G_\CA\,d\xi.\notag
\end{eqnarray}
This observation is a key point for understanding the dissipation of macroscopic quantities of the two-component system.

We begin to introduce the two-component projection operators $\FP_0^\FM$ and $\FP_1^\FM$. For this purpose, one has to first introduce an orthonormal basis related to an arbitrary bi-Maxwellian
$$
\widehat{\FM}=\left[\begin{array}{rll}\widehat{M}_i\\[3mm]
\widehat{M}_e\end{array}\right].
$$
Associated with $\widehat{\FM}$, we define an inner
product in $\xi$ variable as
$$
\langle \FF,\FH\rangle_{\widehat{\FM}}\equiv\int_{{\R}^3}\frac{F_{i}(\xi)H_{i}(\xi)}{\widehat{M}_i}\,d\xi
+\int_{{\R}^3}\frac{F_{e}(\xi)H_{e}(\xi)}{\widehat{M}_e}\,d\xi,
$$
for functions $\FF=[F_{i},F_{e}]^{\rm{T}}$ and $\FH=[H_{i},H_{e}]^{\rm{T}}$ such that the integrals above is well defined.
Using the  inner product with respect to the bi-Maxwellian $\widehat{\FM}$, the
following functions spanning the macroscopic subspace are pairwise orthogonal:
\begin{eqnarray*}
\chi^{\widehat{\FM}}_{1}\left(\xi;\widehat{n}_{i},\widehat{u},\widehat{\theta}\right)&\equiv& \left[\begin{array}{cc}
\begin{split}\frac{1}{\sqrt{\widehat{n}_i}}M_i\\
0
\end{split}\end{array}\right],\\[3mm]
\chi^{\widehat{\FM}}_{2}\left(\xi;\widehat{n}_{e},\widehat{u},\widehat{\theta}\right)&\equiv& \left[\begin{array}{cc}
\begin{split}0\\
\frac{1}{\sqrt{\widehat{n}_e}}M_e
\end{split}\end{array}\right],
\\[3mm]
\chi^{\widehat{{\bf
M}}}_{j}\left(\xi;\widehat{n}_i,\widehat{n}_e,\widehat{u},\widehat{\theta}\right)&\equiv&
\left[\begin{array}{cc}
\begin{split}\frac{\sqrt{m_i}}{\sqrt{m_i\widehat{n}_i+m_e\widehat{n}_e}}\frac{\xi_j-\widehat{u}_j}
{\sqrt{k_i\widehat{\theta}}}\widehat{M}_i\\
\frac{\sqrt{m_e}}{\sqrt{m_i\widehat{n}_i+m_e\widehat{n}_e}}\frac{\xi_j-\widehat{u}_j}
{\sqrt{k_e\widehat{\theta}}}\widehat{M}_e
\end{split}
\end{array}\right],\ \ j=3,4,5,\\[3mm]
\chi^{\widehat{{\bf
M}}}_{6}\left(\xi;\widehat{n}_i,\widehat{n}_e,\widehat{u},\widehat{\theta}\right)&\equiv&
\left[\begin{array}{cc}
\begin{split}\frac{1}{\sqrt{6(\widehat{n}_i+\widehat{n}_e)}}\left(\frac{|\xi-\widehat{u}|^2}
{k_i\widehat{\theta}}-3\right)\widehat{M}_i\\
\frac{1}{\sqrt{6(\widehat{n}_i+\widehat{n}_e)}}\left(\frac{|\xi-\widehat{u}|^2}
{k_e\widehat{\theta}}-3\right)\widehat{M}_e
\end{split}
\end{array}\right],\\[3mm]
\left\langle\chi^{\widehat{\FM}}_j,\chi^{\widehat{{\bf
M}}}_k\right\rangle_{\widehat{\FM}}&=&\delta_{jk},\ \ \text{for}\ \ j,k=1,2,\cdots,6,
\end{eqnarray*}
where $\delta_{jk}$ is the Kronecker delta.  With the above orthonormal basis, the two-component macroscopic projection ${\bf P}^{\widehat{\FM}}_0$ and the two-component  microscopic
projection ${\bf P}^{\widehat{\FM}}_1$ can be defined as
$$
\left\{
\begin{array}{rll}{\bf P}^{\widehat{{\bf
M}}}_0\FF&\equiv&\sum\limits_{j=1}^6\left\langle \FF,\chi^{\widehat{{\bf
M}}}_j\right\rangle_{\widehat{\FM}}\chi^{\widehat{\FM}}_j,\\[3mm]
{\bf P}^{\widehat{\FM}}_1\FF&\equiv&\FF-{\bf P}^{\widehat{\FM}}_0\FF.
\end{array} \right.
$$
Notice that the operators ${\bf P}^{\widehat{\FM}}_0$ and ${\bf
P}^{\widehat{\FM}}_1$ are orthogonal (and thus self-adjoint)
projections with respect to the inner product
$\langle\cdot,\cdot\rangle_{\widehat{\FM}}$, i.e.,
$$
{\bf P}^{\widehat{\FM}}_0{\bf P}^{\widehat{\FM}}_0={\bf
P}^{\widehat{\FM}}_0,\ \ {\bf P}^{\widehat{\FM}}_1{\bf
P}^{\widehat{\FM}}_1={\bf P}^{\widehat{\FM}}_1,\ \ {\bf
P}^{\widehat{\FM}}_0{\bf P}^{\widehat{\FM}}_1={\bf
P}^{\widehat{\FM}}_1{\bf P}^{\widehat{\FM}}_0=0.
$$
Moreover, it is straightforward to check that
$$
\left\langle{\bf P}^{\widehat{\FM}}_0\FF,
{\bf P}^{\widehat{\widehat{\bf M}}}_1\FF\right\rangle_{\widehat{\FM}}=\left\langle{\bf P}^{\widehat{\widehat{\bf
M}}}_0\FF, {\bf P}^{\widehat{\FM}}_1\FF\right\rangle_{\widehat{\widehat{\bf M}}}=0
$$
for any two bi-Maxwellians $\widehat{\FM}$ and $\widehat{\widehat{\bf M}}$. Finally we remark that due to the definitions of ${\bf P}^{\widehat{\FM}}_0$ and ${\bf
P}^{\widehat{\FM}}_1$, one has
$$
{\bf P}^{\FM}_0\FF=\FM,\ \ {\bf P}^{\FM}_1\FF=\FG,
$$
whenever the bi-Maxwellian $\widehat{\FM}=\FM$ is  the macroscopic part of $\FF$.


With the two-component macro-micro decomposition of the solution $\FF$ to the VPB system \eqref{VPB}, \eqref{PE}, one may derive the dynamical equations of the fluid part $\FM$ and the non-fluid part $\FG$. For this, in the sequel we denote
\begin{eqnarray*}
\FQ(\FF,\FH)=\left[\begin{array}{c}
Q_i(\FF,\FH)\\[3mm]
Q_e(\FF,\FH)
\end{array}
\right]=\left[\begin{array}{c}
Q_{ii}(F_i,H_i)+Q_{ie}(F_i,H_e)\\[3mm]
Q_{ee}(F_e,H_e)+Q_{ei}(F_e,H_i)
\end{array}
\right].
\end{eqnarray*}
For convenience, we rewrite \eqref{VPB} as the following vector form
\begin{equation}\label{v.F}
\pa_t\FF+\xi_1\pa_x\FF+q_0\pa_x\phi\pa_{\xi_1}\FF=\FQ(\FF,\FF),
\end{equation}
where $q_0$ denotes the $2 \times2$ diagonal matrix
${\rm diag} (-q_i/m_i, -q_e/m_e)$.
Upon using the macro-micro decomposition \eqref{dec.2}, the VPB system \eqref{v.F}
can be further rewritten as
\begin{eqnarray}
\pa_t(\FM+\FG)+\xi_1
\pa_x(\FM+\FG)+
q_0\pa_x\phi
(\FM+\FG)\notag
=\FL_{\FM}\FG+\FQ(\FG,\FG).
\end{eqnarray}
Here,
$\FL_{\FM}$ is the two-component linearized Boltzmann collision operator given by
\begin{eqnarray}
\FL_{\FM}\FG&=&\left[\begin{array}{cc}
Q_{i}(\FM,\FG)+Q_{i}(\FG,\FM)
\\[3mm]
Q_{e}(\FM,\FG)+Q_{e}(\FG,\FM)
\end{array}\right]\notag\\
&=&\left[\begin{array}{cc}
Q_{ii}(M_i,G_i)+Q_{ii}(G_i,M_i)+Q_{ie}(M_i,G_e)+Q_{ie}(G_i,M_e)
\\[3mm]
Q_{ee}(M_e,G_e)+Q_{ee}(G_e,M_e)+Q_{ei}(M_e,G_i)+Q_{ei}(G_e,M_i)
\end{array}\right],\label{def.L}
\end{eqnarray}
and the nonlinear part
$\FQ(\FG,\FG)$ is defined as
\begin{equation*}
\FQ(\FG,\FG)=\left[\begin{array}{cc}
Q_{i}(\FG,\FG)
\\[3mm]
Q_{e}(\FG,\FG)
\end{array}\right]=\left[\begin{array}{cc}
Q_{ii}(G_i,G_i)+Q_{ie}(G_i,G_e)
\\[3mm]
Q_{ee}(G_e,G_e)+Q_{ei}(G_e,G_i)
\end{array}\right].
\end{equation*}
Applying ${\bf P}^{\FM}_0$ and ${\bf P}^{\FM}_1$ to
\eqref{v.F} respectively, one has
\begin{equation*}
\begin{split}
\pa_t\FM&+{\bf P}^{\FM}_0\left(\xi_1\pa_x{\bf M}\right)
+{\bf P}^{\FM}_0\left(\xi_1\pa_x\FG\right)
+{\bf P}^{\FM}_0\left(q_0\pa_x\phi\pa_{\xi_1}{\bf M}\right)
+{\bf P}^{\FM}_0\left(q_0\pa_x\phi\pa_{\xi_1}{\bf G}\right)=0,
\end{split}
\end{equation*}
and
\begin{eqnarray}
\pa_t\FG+{\bf P}^{\FM}_1(\xi_1\pa_x{\bf
M})
&+&{\bf P}^{\FM}_1\left(\xi_1\pa_x{\bf
G}\right)
+{\bf P}^{\FM}_1\left(q_0 \pa_x\phi\pa_{\xi_1}{\bf
M}\right)\notag\\
&+&{\bf P}^{\FM}_1\left(q_0\pa_x\phi\pa_{\xi_1}{\bf
G}\right)
=\FL_{\FM}\FG+\FQ(\FG,\FG).\label{micBE}
\end{eqnarray}

Moreover, one also may derive the fluid-type system of the macroscopic quantities of the fluid part $\FM$ by using six two-component collision invariants $\psi_j(\xi)$ $(1\leq j\leq 6)$. For later use, we start from two component equations of \eqref{VPB}. Taking the inner product of equations of $\CA=i$ and $\CA=e$ with  $\psi_{j\CA}$ over $\xi\in \R^3$ respectively, it follows that
\begin{equation}\label{co.i}
\int_{{\R}^3}\psi_{ji}\left(\pa_t F_{i} +\xi_1\pa_x F_{i}
-\frac{q_i\pa_x\phi}{m_i}\pa_{\xi_1}F_{i}\right)d\xi=\int_{{\R}^3}\psi_{ji}Q_i(\FF,\FF)\,d\xi,\quad j=1,3,4,5,6,
\end{equation}
and
\begin{equation}\label{co.e}
\int_{{\R}^3}\psi_{je}\left(\pa_t F_{e} +\xi_1\pa_x F_{e}
-\frac{q_e\pa_x\phi}{m_e}\pa_{\xi_1}F_{e}\right)d\xi=\int_{{\R}^3}\psi_{je}Q_e(\FF,\FF)\,d\xi,\quad j=2,3,4,5,6.
\end{equation}
Applying the component forms $F_\CA=M_\CA+G_\CA$ $(\CA=i,e)$ of the macro-micro decomposition $\FF=\FM+\FG$ as well as the definition of the bi-Maxwellian $\FM$, one further deduces
\begin{eqnarray}\label{cons.law.i}
\left\{
\begin{array}{clll}
&\dis\pa_tn_i+\pa_x (n_iu_1)=-\int_{{\R}^3}\xi_1\pa_xG_i \,d\xi,\\[3mm]
&\dis m_in_i\pa_t u_1+m_in_iu_1\pa_xu_1+\frac{2}{3}\pa_x \left(n_i\ta\right)+q_in_i\pa_x\phi
\\[3mm]&\dis \quad=-\int_{{\R}^3}\psi_{3i}\pa_tG_i \,d\xi-\int_{{\R}^3}\psi_{3i}\xi_1\pa_x G_i \,d\xi
+\int_{{\R}^3}\psi_{3i}Q_i(\FF,\FF)\,d\xi+u_1\int_{{\R}^3}m_i\xi_1\pa_xG_i \,d\xi,\\[3mm]
&\dis m_in_i\pa_t u_j+m_in_iu_1\pa_xu_j=-\int_{{\R}^3}\psi_{(j+2)i}\pa_tG_i \,d\xi-\int_{{\R}^3}\psi_{(j+2)i}\xi_1\pa_x G_i \,d\xi
\\[3mm]
&\dis \qquad\qquad\qquad\qquad\qquad\qquad+\int_{{\R}^3}\psi_{(j+2)i}Q_i(\FF,\FF)\,d\xi
+u_j\int_{{\R}^3}m_i\xi_1\pa_xG_i \,d\xi,\
\ j=2,3,\\[3mm]
&\dis n_i\pa_t \theta+n_iu_1\pa_x\ta+\frac{2n_i\ta}{3}\pa_x u_1 \\[3mm]
&\dis\quad
=-\int_{{\R}^3}\psi_{6i}\pa_tG_i \,d\xi
-\int_{{\R}^3}\psi_{6i}\xi_1\pa_x G_i \,d\xi
+\int_{{\R}^3}\psi_{6i}Q_i(\FF,\FF)\,d\xi
\\[3mm]&\dis \qquad+\sum\limits_{j=1}^3u_j\int_{{\R}^3}\psi_{(j+2)i}\xi_1\pa_xG_i \,d\xi
-\frac{1}{2}\sum\limits_{j=1}^3u^2_j\int_{{\R}^3}m_i\xi_1\pa_xG_i \,d\xi
+\sum\limits_{j=1}^3u_j\int_{{\R}^3}\psi_{(j+2)i}\pa_tG_i \,d\xi
\\[3mm]
&\dis \qquad-\sum\limits_{j=1}^3u_j\int_{{\R}^3}\psi_{(j+2)i}Q_i(\FF,\FF)\,d\xi
+\ta\int_{{\R}^3}\xi_1\pa_xG_i \,d\xi
+\frac{q_i\pa_x\phi}{m_i}\int_{{\R}^3}\psi_{6i}\pa_{\xi_1}G_i\,d\xi,
\end{array}
\right.
\end{eqnarray}
and
\begin{eqnarray}\label{cons.law.e}
\left\{
\begin{array}{clll}
&\dis \pa_tn_e+\pa_x (n_eu_1)=-\int_{{\R}^3}\xi_1\pa_xG_e \,d\xi,\\[3mm]
&\dis m_en_e\pa_t u_1+m_en_eu_1\pa_xu_1+\frac{2}{3}\pa_x \left(n_e\ta\right)+q_en_e\pa_x\phi
\\[3mm]&\dis \quad=-\int_{{\R}^3}\psi_{3e}\pa_tG_e \,d\xi
-\int_{{\R}^3}\psi_{3e}\xi_1\pa_x G_e \,d\xi
+\int_{{\R}^3}\psi_{3e}Q_e(\FF,\FF)\,d\xi+u_1\int_{{\R}^3}m_e\xi_1\pa_xG_e \,d\xi,\\[3mm]
&\dis m_en_e\pa_t u_j+m_en_eu_1\pa_xu_j=-\int_{{\R}^3}\psi_{(j+2)e}\pa_tG_e \,d\xi
-\int_{{\R}^3}\psi_{(j+2)e}\xi_1\pa_x G_e \,d\xi
\\[3mm]
&\dis \qquad\qquad\qquad\qquad\qquad\qquad+\int_{{\R}^3}\psi_{(j+2)e}Q_e(\FF,\FF)d\xi
+u_j\int_{{\R}^3}m_e\xi_1\pa_xG_e \,d\xi,\
\ j=2,3,\\[3mm]
&\dis n_e\pa_t \theta+n_eu_1\pa_x\ta+\frac{2n_e\ta}{3}\pa_x u_1 \\[3mm]
&\dis \quad
=-\int_{{\R}^3}\psi_{6e}\pa_tG_e \,d\xi
-\int_{{\R}^3}\psi_{6e}\xi_1\pa_x G_e \,d\xi
+\int_{{\R}^3}\psi_{6e}Q_e(\FF,\FF)\,d\xi
\\[3mm]
&\dis \qquad+\sum\limits_{j=1}^3u_j\int_{{\R}^3}\psi_{(j+2)e}\xi_1\pa_xG_e \,d\xi
-\frac{1}{2}\sum\limits_{j=1}^3u^2_j\int_{{\R}^3}m_e\xi_1\pa_xG_e \,d\xi
+\sum\limits_{j=1}^3u_j\int_{{\R}^3}\psi_{(j+2)e}\pa_tG_e \,d\xi
\\[3mm]
&\dis \qquad-\sum\limits_{j=1}^3u_j\int_{{\R}^3}\psi_{(j+2)e}Q_e(\FF,\FF)\,d\xi
+\ta\int_{{\R}^3}\xi_1\pa_xG_e \,d\xi
+\frac{q_e\pa_x\phi}{m_e}\int_{{\R}^3}\psi_{6e}\pa_{\xi_1}G_e\,d\xi.
\end{array}
\right.
\end{eqnarray}
Here the self-consistent potential $\phi$ satisfies the Poisson equation
\begin{equation}
\label{co.po}
-\pa_x^2\phi=q_in_i+q_en_e.
\end{equation}
Note that if one only considers the macroscopic balance laws of \eqref{v.F} in terms of six collision invariants, one can obtain six dynamical equations of fluid quantities $n_i,n_e,u_1,u_2,u_3,\theta$ which correspond to the above two systems \eqref{cons.law.i} and \eqref{cons.law.e} after both the equations of momentums and the equations of temperatures are taken summation, respectively. Namely, one has
\begin{eqnarray}\label{cons.law.s}
\left\{
\begin{array}{clll}
&\dis (m_in_i+m_en_e)(\pa_t u_1+u_1\pa_xu_1)+\frac{2}{3}\pa_x \left[(n_i+n_e)\ta\right]+(q_in_i+q_en_e)\pa_x\phi\\[3mm]
&\dis \qquad=-\pa_x \int_{{\R}^3}\xi_1\psi_{3}\cdot \FG\,d\xi,
\\[3mm]
&\dis (m_in_i+m_en_e)(\pa_t u_j+u_1\pa_xu_j)=-\pa_x \int_{{\R}^3}\xi_1\psi_{(j+2)}\cdot \FG\, d\xi, \ \ j=2,3,\\[3mm]
&\dis (n_i+n_e)(\pa_t \theta+u_1\pa_x\ta)+\frac{2}{3}(n_i+n_e)\ta\pa_x u_1 \\[3mm]&\quad
=\dis
-\pa_x\int_{{\R}^3}\xi_1\psi_{6}\cdot \FG\, d\xi
+\sum\limits_{j=1}^3u_j\pa_x \int_{{\R}^3}\xi_1\psi_{(j+2)}\cdot \FG\, d\xi
\\[3mm]
&\dis \qquad
+\ta\int_{{\R}^3}[\xi_1,\xi_1]^{\rm T}\cdot \pa_x\FG\, d\xi
+\pa_x\phi \int_{{\R}^3}\frac{|\xi|^2}{2}[q_i,q_e]^{\rm T}\cdot \pa_{\xi_1}\FG\,d\xi.
\end{array}
\right.
\end{eqnarray}
Moreover, if one further ignores those terms involving the non-fluid part $\FG$, one has the closed fluid-type system of six knowns $n_i,n_e,u_1,u_2,u_3,\theta$:
\begin{eqnarray}\label{cons.law.f}
\left\{
\begin{array}{clll}
&\dis \pa_tn_i+\pa_x (n_iu_1)=0,\\[3mm]
&\dis \pa_tn_e+\pa_x (n_eu_1)=0,\\[3mm]
&\dis (m_in_i+m_en_e)(\pa_t u_1+u_1\pa_xu_1)+\frac{2}{3}\pa_x \left[(n_i+n_e)\ta\right]+(q_in_i+q_en_e)\pa_x\phi=0,\\[3mm]
&\dis (m_in_i+m_en_e)(\pa_t u_j+u_1\pa_xu_j)=0,\quad j=2,3,\\[3mm]
&\dis (n_i+n_e)(\pa_t \theta+u_1\pa_x\ta)+\frac{2}{3}(n_i+n_e)\ta\pa_x u_1=0,\\[3mm]
&\dis -\pa_x^2\phi=q_in_i+q_en_e.
\end{array}
\right.
\end{eqnarray}
Note that \eqref{cons.law.f} could be thought to be the zero-order fluid dynamic approximation of the VPB system \eqref{VPB}, \eqref{PE}.


\subsection{Diffusion and heat-conductivity}

As in \cite{LYY}, in order to further consider the first-order fluid dynamic approximation of the VPB system, one has to find out diffusion and heat-conductivity corresponding to velocity function $u$ and temperature function $\theta$, respectively. One way for that is to formally solve $\FG$ through the microscopic equation \eqref{micBE} as
\begin{equation*}
\FG=\FL^{-1}_{\FM}\left\{{\bf P}^{\FM}_1\left(\xi_1\pa_x{\bf
M}\right)\right\}+\FR,
\end{equation*}
with
\begin{equation*}
\begin{split}
\FR=&\FL^{-1}_{\FM}\left\{\pa_t\FG
+{\bf P}^{\FM}_1\left(\xi_1\pa_x{\bf
G}\right)+{\bf P}^{\FM}_1\left(q_0\pa_x\phi\pa_{\xi_1}{\bf
G}\right)-\FQ(\FG,\FG)\right\}
\\&+\underbrace{\FL^{-1}_{\FM}\left\{{\bf P}^{\FM}_1\left(q_0\pa_x\phi\pa_{\xi_1}{\bf
M}\right)\right\}}_{\FR_\phi},
\end{split}
\end{equation*}
and then plug it into \eqref{cons.law.s} so that those terms related to diffusion and heat-conductivity could be obtained by computing
\begin{equation}\notag
-\pa_x \int_{{\R}^3}\xi_1\psi_{j}\cdot \FL^{-1}_{\FM}\left\{{\bf P}^{\FM}_1\left(\xi_1\pa_x{\bf
M}\right)\right\}\,d\xi,\quad 3\leq j\leq 6.
\end{equation}
We remark that such treatment may not be a good way because it is unknown whether or not the above integrals with $\FL^{-1}_{\FM}\left\{{\bf P}^{\FM}_1\left(\xi_1\pa_x{\bf
M}\right)\right\}$ replaced by $\FR_\phi$ are vanishing, and thus the right-hand terms of \eqref{cons.law.s} could involve $\phi$ in a linear way which should give much trouble to estimates on $\phi$.

Therefore we turn to another way for obtaining the effect of diffusion and heat-conductivity on the basis of two single-component equations of the VPB system \eqref{VPB}. The key point is to introduce single-component projection operators $P_0^{M_\CA}$ and  $P_1^{M_\CA}$ for $\CA=i$ and $e$, where we recall that $M_i$, $M_e$ are the component functions of the bi-Maxwellian $\FM$ defined in the two-component macro-micro decomposition \eqref{dec.2}.

To do so, similarly as before, for any given local Maxwellian
$$
\widehat{M}_\CA=M_{\left[\widehat{n}_\CA,\widehat{u}_\CA,\widehat{\theta}_\CA\right]},
$$
we define an inner
product in $\xi\in{\R}^3$ as
$$
\langle f,g\rangle_{\widehat{M}_{\CA}}\equiv\int_{{\R}^3}\frac{f(\xi)g(\xi)}{\widehat{M}_{\CA}}d\xi,
$$
for two scalar functions $f$ and $g$ such that the integral on the right is well defined. Applying the above inner product with respect to the single Maxwellian $\widehat{M}_{\CA}$, the
following five functions are also orthonormal:
\begin{eqnarray*}
\chi^{\widehat{M}_{\CA}}_0
\left(\xi;\widehat{n}_{\CA},\widehat{u}_{\CA},\widehat{\theta}_{\CA}\right)&\equiv& \frac{1}{\sqrt{\widehat{n}_{\CA}}}\widehat{M}_{\CA},\\[3mm]
\chi^{\widehat{M}_{\CA}}_{i}\left(\xi;\widehat{n}_{\CA},\widehat{u}_{\CA},\widehat{\theta}_{\CA}\right)&\equiv&\frac{\xi_j-\widehat{u}_j}
{\sqrt{k_\CA\widehat{n}_\CA\,\widehat{\theta}_\CA}}\widehat{M}_{\CA},\ \ j=1,2,3,\\[3mm]
\chi^{\widehat{M}_{\CA}}_{4}\left(\xi;\widehat{n}_{\CA},\widehat{u}_{\CA},\widehat{\theta}_{\CA}\right)
&\equiv&\frac{1}{\sqrt{6\widehat{n}_{\CA}}}\left(\frac{|\xi-\widehat{u}|^2}
{k_\CA\widehat{\theta}_{\CA}}-3\right)\widehat{M}_{\CA},\\[3mm]
\left\langle\chi^{\widehat{M}_{\CA}}_j,\chi^{\widehat{M}_{\CA}}_k\right\rangle_{\widehat{M}_{\CA}}&=&\delta_{jk},\ \ \text{for}\ \ j,k=0,1,2,3,4.
\end{eqnarray*}
With the above orthonormal set, we can also define the
macroscopic projection $P^{\widehat{M}_{\CA}}_0$ and the microscopic
projection $P^{\widehat{M}_{\CA}}_1$ as follows
$$
\left\{
\begin{array}{rll}P^{\widehat{M}_{\CA}}_0h&\equiv&\sum\limits_{j=0}^4
\left\langle h,\chi^{\widehat{M}_{\CA}}_j\right\rangle_{\widehat{M}_{\CA}}\chi^{\widehat{M}_{\CA}}_j,\\[3mm]
P^{\widehat{M}_{\CA}}_1h&\equiv&h-P^{\widehat{M}_{\CA}}_0h.
\end{array} \right.
$$
Note that the operators $P^{\widehat{M}_{\CA}}_0$ and
$P^{\widehat{M}_{\CA}}_1$ enjoy the similar properties as ${\bf P}^{\widehat{\FM}}_0$ and ${\bf P}^{\widehat{\FM}}_1$ given in the previous subsection.

Using notations above and recalling the decomposition \eqref{dec.2}, the solution $F_{\CA}(t,x,\xi)$ $(\CA=i,e)$ of \eqref{VPB}
satisfies
$$
P^{M_{\CA}}_0F_{\CA}=M_\CA+P^{M_{\CA}}_0G_\CA,\quad \ P^{M_{\CA}}_1F_{\CA}=P^{M_{\CA}}_1G_{\CA}.
$$
Noticing that
$$
P^{M_{\CA}}_1\left\{\frac{q_\CA\pa_x\phi}{m_{\CA}}\pa_{\xi_1}M_\CA\right\}=0.
$$
Acting  $P^{M_{i}}_1$ and $P^{M_{e}}_1$  to two equations of \eqref{VPB} respectively, one has that for $\CA=i,e$,
\begin{equation}\label{mmBE}
\begin{split}
P^{M_{\CA}}_1&\pa_tG_{\CA}+P^{M_{\CA}}_1\left\{\xi_1\pa_xM_\CA\right\}+P^{M_{\CA}}_1\left\{\xi_1\pa_xG_\CA\right\}
-P^{M_{\CA}}_1\left\{\frac{q_\CA\pa_x\phi}{m_{\CA}}\pa_{\xi_1}G_\CA\right\}
\\&=L_{M_{\CA}}P^{M_{\CA}}_1G_{\CA}+P^{M_{\CA}}_1\overline{Q}_{\CA}(\FG,\FG),
\end{split}
\end{equation}
where for the $\CA$-component, we have defined  the linearized Boltzmann collision operator around the local Maxwellian $M_{\CA}$ by
$$
L_{M_{\CA}}P^{M_{\CA}}_1G_{\CA}=L_{M_{\CA}}G_{\CA}=Q_{\CA\CA}(M_{\CA},G_{\CA})+Q_{\CA\CA}(G_{\CA},M_{\CA}),
$$
and the remaining term by
$$
\overline{Q}_{\CA}(\FG,\FG)=Q_{\CA\CB}(M_\CA,G_\CB)+Q_{\CA\CB}(G_\CA,M_\CB)+Q_{\CA\CA}(G_\CA,G_\CA)+Q_{\CA\CB}(G_\CA,G_\CB),
$$
with $\CA\neq \CB$. Moreover, from \eqref{mmBE}, it follows that
\begin{equation}\label{PGA}
P^{M_{\CA}}_1G_{\CA}=L_{M_{\CA}}^{-1}\left\{P^{M_{\CA}}_1\left\{\xi_1\pa_xM_\CA\right\}\right\}+\overline{R}_{\CA},
\end{equation}
with
\begin{eqnarray}
\overline{R}_{\CA}&=&L_{M_{\CA}}^{-1}\left \{P^{M_{\CA}}_1\pa_tG_{\CA}
+P^{M_{\CA}}_1\left\{\xi_1\pa_xG_\CA\right\}
-P^{M_{\CA}}_1\left\{\frac{q_\CA\pa_x\phi}{m_{\CA}}\pa_{\xi_1}G_\CA\right\}\right\}\notag\\
&&-L_{M_{\CA}}^{-1}\left\{P^{M_{\CA}}_1\overline{Q}_{\CA}(\FG,\FG)\right\}.\label{def.rbar}
\end{eqnarray}
Back to \eqref{cons.law.i} and \eqref{cons.law.e}, we rewrite $G_\CA$ in the right-hand second terms of momentum and temperature equations as
\begin{equation}\notag
G_\CA=P_0^{M_\CA}G_\CA+P_1^{M_\CA}G_\CA,
\end{equation}
and then use \eqref{PGA} to replace $P_1^{M_\CA}G_\CA$ so as to obtain by some further calculations:
\begin{eqnarray}\label{BE-NSi}
\left\{
\begin{array}{clll}
&\dis \pa_tn_i+\pa_x (n_iu_1)=-\int_{{\R}^3}\xi_1\pa_xG_i \,d\xi,\\[3mm]
&\dis m_in_i(\pa_t u_1+u_1\pa_xu_1)+\frac{2}{3}\pa_x (n_i\ta)+q_in_i\pa_x\phi
\\[3mm]&\dis \quad=3\pa_x\left(\mu_i(\theta)\pa_x u_1\right)-\int_{{\R}^3}\psi_{3i}\pa_tG_i \,d\xi
-\int_{{\R}^3}\psi_{3i}\xi_1\pa_x (P_0^{M_i}G_i) \,d\xi
\\[3mm]
&\dis \qquad+\int_{{\R}^3}\psi_{3i}Q_i(\FF,\FF)\,d\xi+u_1\int_{{\R}^3}m_i\xi_1\pa_xG_i \,d\xi,
-\int_{{\R}^3}\psi_{3i}\xi_1\pa_x\overline{R}_i \,d\xi,\\[3mm]
&\dis m_in_i(\pa_t u_j+u_1\pa_xu_j)\\[3mm]
&\dis \quad =\pa_x\left(\mu_i(\theta)\pa_x u_j\right)-\int_{{\R}^3}\psi_{(j+2)i}\pa_tG_i \,d\xi
-\int_{{\R}^3}\psi_{(j+2)i}\xi_1\pa_x (P_0^{M_i}G_i) \,d\xi
\\[3mm]
&\dis \qquad
+\int_{{\R}^3}\psi_{(j+2)i}Q_i(\FF,\FF)\,d\xi+u_j\int_{{\R}^3}m_i\xi_1\pa_xG_i \,d\xi,
-\int_{{\R}^3}\psi_{(j+2)i}\xi_1\pa_x\overline{R}_i \,d\xi,\ j=2,3,\\[3mm]
&\dis n_i\pa_t \theta+n_iu_1\pa_x\ta+\frac{2n_i\ta}{3}\pa_x u_1
\\&\dis \quad=\pa_x\left(\kappa_i(\theta)\pa_x \theta\right)+3\mu_i(\theta)(\pa_x u_1)^2+
\sum\limits_{j=2}^3\mu_i(\theta)(\pa_x u_j)^2
-\int_{{\R}^3}\xi_1(\psi_{6i}-\sum\limits_{j=1}^3u_j \psi_{(j+2)i})\pa_x\overline{R}_i \,d\xi
\\[3mm]&\dis \qquad
-\int_{{\R}^3}\psi_{6i}\pa_tG_i \,d\xi-\int_{{\R}^3}\psi_{6i}\xi_1\pa_x(P_0^{M_i}G_i) \,d\xi
+\sum\limits_{j=1}^3u_j\int_{{\R}^3}\psi_{(j+2)i}\xi_1\pa_x (P_0^{M_i}G_i) \,d\xi
\\[3mm]&\dis \qquad
-\frac{1}{2}\sum\limits_{j=1}^3u^2_j\int_{{\R}^3}m_i\xi_1\pa_xG_i \,d\xi
+\sum\limits_{j=1}^3u_j\int_{{\R}^3}\psi_{(j+2)i}\pa_tG_i \,d\xi
+\int_{{\R}^3}\psi_{6i}Q_i(\FF,\FF)\,d\xi
\\[3mm]&\dis \qquad
-\sum\limits_{j=1}^3u_j\int_{{\R}^3}\psi_{(j+2)i}Q_i(\FF,\FF)\,d\xi
+\ta\int_{{\R}^3}\xi_1\pa_xG_i \,d\xi
+q_i\pa_x\phi\int_{{\R}^3}\frac{|\xi|^2}{2}\pa_{\xi_1}G_i \,d\xi,
\end{array}
\right.
\end{eqnarray}
and
\begin{eqnarray}\label{BE-NSe}
\left\{
\begin{array}{clll}
&\dis \pa_tn_e+\pa_x (n_eu_1)=-\int_{{\R}^3}\xi_1\pa_xG_e \,d\xi,\\[3mm]
&\dis m_en_e(\pa_t u_1+u_1\pa_xu_1)+\frac{2}{3}\pa_x (n_e\ta)+q_en_e\pa_x\phi
\\[3mm]&\dis \quad=3\pa_x\left(\mu_e(\theta)\pa_x u_1\right)-\int_{{\R}^3}\psi_{3e}\pa_tG_e \,d\xi
-\int_{{\R}^3}\psi_{3e}\xi_1\pa_x (P_0^{M_e}G_e) \,d\xi
\\[3mm]
&\dis \qquad+\int_{{\R}^3}\psi_{3e}Q_e(\FF,\FF)\,d\xi+u_1\int_{{\R}^3}m_e\xi_1\pa_xG_e \,d\xi,
-\int_{{\R}^3}\psi_{3e}\xi_1\pa_x\overline{R}_e \,d\xi,\\[3mm]
&\dis m_en_e(\pa_t u_j+u_1\pa_xu_j)\\[3mm]
&\dis \quad =\pa_x\left(\mu_e(\theta)\pa_x u_j\right)-\int_{{\R}^3}\psi_{(j+2)e}\pa_tG_e \,d\xi
-\int_{{\R}^3}\psi_{(j+2)e}\xi_1\pa_x (P_0^{M_e}G_e) \,d\xi
\\[3mm]
&\dis \qquad
+\int_{{\R}^3}\psi_{(j+2)e}Q_e(\FF,\FF)\,d\xi+u_j\int_{{\R}^3}m_i\xi_1\pa_xG_e \,d\xi,
-\int_{{\R}^3}\psi_{(j+2)e}\xi_1\pa_x\overline{R}_e \,d\xi,\ j=2,3,\\[3mm]
&\dis n_e\pa_t \theta+n_eu_1\pa_x\ta+\frac{2n_e\ta}{3}\pa_x u_1
\\[3mm]
&\dis \quad=\pa_x\left(\kappa_e(\theta)\pa_x \theta\right)+3\mu_e(\theta)(\pa_x u_1)^2+
\sum\limits_{j=2}^3\mu_e(\theta)(\pa_x u_j)^2
-\int_{{\R}^3}\xi_1(\psi_{6e}-\sum\limits_{j=1}^3u_j \psi_{(j+2)e})\pa_x\overline{R}_e \,d\xi
\\[3mm]&\dis \qquad
-\int_{{\R}^3}\psi_{6e}\pa_tG_e \,d\xi-\int_{{\R}^3}\psi_{6e}\xi_1\pa_x(P_0^{M_e}G_e) \,d\xi
+\sum\limits_{j=1}^3u_j\int_{{\R}^3}\psi_{(j+2)e}\xi_1\pa_x (P_0^{M_e}G_e) \,d\xi
\\[3mm]&\qquad
-\frac{1}{2}\sum\limits_{j=1}^3u^2_j\int_{{\R}^3}m_e\xi_1\pa_xG_e \,d\xi
+\sum\limits_{j=1}^3u_j\int_{{\R}^3}\psi_{(j+2)e}\pa_tG_e \,d\xi
+\int_{{\R}^3}\psi_{6e}Q_e(\FF,\FF)\,d\xi
\\[3mm]&\dis \qquad
-\sum\limits_{j=1}^3u_j\int_{{\R}^3}\psi_{(j+2)i}Q_e(\FF,\FF)\,d\xi
+\ta\int_{{\R}^3}\xi_1\pa_xG_e \,d\xi
+q_e\pa_x\phi\int_{{\R}^3}\frac{|\xi|^2}{2}\pa_{\xi_1}G_e \,d\xi,
\end{array}
\right.
\end{eqnarray}
where for $\CA=i$ and $e$ the viscosity coefficient $\mu_{\CA}(\theta)$ and
heat-conductivity coefficient $\kappa_{\CA}(\theta)$ are represented by
\begin{eqnarray}
\mu_{\CA}(\ta)&=&-\frac{1}{3k_{\CA}\theta}\int_{{\R}^3}m_{\CA}\xi_1^2L^{-1}_{M_{[1,u,\ta;m_\CA]}}
\left(m_{\CA}\xi_1^2M_{[1,u,\ta;m_{\CA}]}\right)\,d\xi\notag\\
&=&-\frac{1}{k_\CA\theta}\int_{{\R}^3}m_\CA\xi_1\xi_jL^{-1}_{M_{[1,u,\ta;m_{\CA}]}}
\left(m_{\CA}\xi_1\xi_jM_{[1,u,\ta;m_{\CA}]}\right)\,d\xi>0,\
j=2, 3,\notag
\end{eqnarray}
and
\begin{equation}\notag
\ka_{\CA}(\ta)=-\frac{1}{4k_{\CA}\ta^2}\int_{{\R}^3}m_{\CA}|\xi-u|^2\xi_jL^{-1}_{M_{[1,u,\ta;m_{\CA}]}}
\left(m_{\CA}|\xi-u|^2\xi_jM_{[1,u,\ta;m_{\CA}]}\right)\,d\xi>0,\
j=1, 2, 3,
\end{equation}
respectively. Here $L^{-1}_{M_{[1,u,\ta;m_{\CA}]}}$ is defined in the same way as $L^{-1}_{M_{\CA}}$.

Similarly for obtaining \eqref{cons.law.s}, from \eqref{BE-NSi} and \eqref{BE-NSe}, one has the equations of momentum for $u=(u_1,u_2,u_3)$:
\begin{eqnarray}\label{BE-NS-m}
\left\{
\begin{array}{clll}
&\dis (m_in_i+m_en_e)(\pa_t u_1+u_1\pa_xu_1)+\frac{2}{3}\pa_x [(n_i+n_e)\ta]+\left(q_in_i+q_en_e\right)\pa_x\phi
\\[3mm]
&\dis \quad=3\pa_x\left[(\mu_i(\theta)+\mu_e(\theta))\pa_x u_1\right]\\[3mm]
&\dis \qquad-\int_{{\R}^3}\psi_{3i}\xi_1\pa_x \left(P_0^{M_i}G_i\right) d\xi
-\int_{{\R}^3}\psi_{3e}\xi_1\pa_x \left(P_0^{M_e}G_e\right) d\xi
-\int_{{\R}^3}\xi_1\psi_{3}\cdot\pa_x\overline{\FR} \,d\xi,\\[3mm]
&(m_in_i+m_en_e)(\pa_t u_j+u_1\pa_xu_j)
\\[3mm]
&\dis \quad=\pa_x\left[(\mu_i(\theta)+\mu_e(\theta))\pa_x u_j\right]-\int_{{\R}^3}\psi_{(j+2)i}\xi_1\pa_x \left(P_0^{M_i}G_i\right) \,d\xi\\[3mm]
&\dis\qquad
-\int_{{\R}^3}\psi_{(j+2)e}\xi_1\pa_x \left(P_0^{M_e}G_e\right) d\xi
-\int_{{\R}^3}\xi_1\psi_{j+2}\cdot\pa_x\overline{\FR} \,d\xi,\
\ j=2,3,
\end{array}
\right.
\end{eqnarray}
and the equation of temperature for $\theta$:
\begin{eqnarray}\label{BE-NS-t}
\begin{array}{clll}
&\dis (n_i+n_e)(\pa_t \theta+u_1\pa_x\ta)+\frac{2}{3}\left(n_i+n_e\right)\ta\pa_x u_1
\\[3mm]
&\dis =\pa_x\left((\kappa_i(\theta)+\kappa_e(\theta))\pa_x \theta\right)+
3(\mu_i(\theta)+\mu_e(\theta))(\pa_x u_1)^2+\sum\limits_{j=2}^3(\mu_i(\theta)+\mu_e(\theta))(\pa_x u_j)^2
\\[3mm]
&\quad\dis-\int_{{\R}^3}\xi_1\left(\psi_{6}-\sum\limits_{j=1}^3u_j \psi_{j+2}\right)\cdot\pa_x\overline{\FR}\,d\xi
-\int_{{\R}^3}\psi_{6i}\xi_1\pa_x\left(P_0^{M_i}G_i\right)\,d\xi\\[3mm]
&\quad\dis-\int_{{\R}^3}\psi_{6e}\xi_1\pa_x\left(P_0^{M_e}G_e\right) d\xi
+\sum\limits_{j=1}^3u_j\int_{{\R}^3}\psi_{(j+2)i}\xi_1\pa_x \left(P_0^{M_i}G_i\right)\,d\xi\\[3mm]
&\quad\dis+\sum\limits_{j=1}^3u_j\int_{{\R}^3}\psi_{(j+2)e}\xi_1\pa_x \left(P_0^{M_e}G_e\right)\, d\xi
+\ta\int_{{\R}^3}[\xi_1,\xi_1]^{\rm T}\cdot\pa_x\FG\,d\xi\\[3mm]
&\quad\dis+\pa_x\phi\int_{{\R}^3}\frac{1}{2}|\xi|^2\left[q_i,q_e\right]^{\rm T}\cdot\pa_{\xi_1}\FG\,d\xi,
\end{array}
\end{eqnarray}
where we have denoted $\overline{\FR}=[\overline{R}_i,\overline{R}_e]^{\rm T}$. Note that $n_i,n_e$ satisfy equations of mass conservation:
\begin{align}\label{BE-NS-mass}
\begin{aligned}
\pa_tn_i+\pa_x (n_iu_1)&=-\int_{{\R}^3}\xi_1\pa_xG_i \,d\xi,\\
\pa_tn_e+\pa_x (n_eu_1)&=-\int_{{\R}^3}\xi_1\pa_xG_e \,d\xi.
\end{aligned}
\end{align}
Moreover, as for considering \eqref{cons.law.f}, if one further ignores those terms involving the non-fluid part $\FG$, one has the closed viscous fluid-type system of six knowns $n_i,n_e,u_1,u_2,u_3,\theta$:
\begin{eqnarray}\label{cons.law.fv}
\left\{
\begin{array}{clll}
& \pa_tn_i+\pa_x (n_iu_1)=0,\\[3mm]
& \pa_tn_e+\pa_x (n_eu_1)=0,\\[3mm]
& (m_in_i+m_en_e)(\pa_t u_1+u_1\pa_xu_1)+\frac{2}{3}\pa_x \left[(n_i+n_e)\ta\right]\\[3mm]
& \qquad\qquad\qquad\qquad+(q_in_i+q_en_e)\pa_x\phi=3\pa_x\left[(\mu_i(\theta)+\mu_e(\theta))\pa_x u_1\right],\\[3mm]
& (m_in_i+m_en_e)(\pa_t u_j+u_1\pa_xu_j)=\pa_x\left[(\mu_i(\theta)+\mu_e(\theta))\pa_x u_j\right],\quad j=2,3,\\[3mm]
& (n_i+n_e)(\pa_t \theta+u_1\pa_x\ta)+\frac{2}{3}(n_i+n_e)\ta\pa_x u_1 =\pa_x\left((\kappa_i(\theta)+\kappa_e(\theta))\pa_x \theta\right)\\[3mm]
&\qquad\qquad\qquad\qquad\qquad\qquad+
3(\mu_i(\theta)+\mu_e(\theta))(\pa_x u_1)^2+\sum\limits_{j=2}^3(\mu_i(\theta)+\mu_e(\theta))(\pa_x u_j)^2,\\[3mm]
& -\pa_x^2\phi=q_in_i+q_en_e.
\end{array}
\right.
\end{eqnarray}
Note that \eqref{cons.law.fv} could be thought to be the first-order fluid dynamic approximation of the VPB system \eqref{VPB}, \eqref{PE}.





For later use, we also introduce the entropy quantity and the corresponding equation. For given densities $n_i$, $n_e$  and temperature $\ta$, we define the entropy $S$ by
\begin{equation}\label{BE.enp.}
S=-\frac{2}{3}\ln (n_i+n_e)+\ln \left(\frac{4\pi}{3}  \theta\right)+1.
\end{equation}
According to \eqref{BE-NS-m} and \eqref{BE-NS-t} as well as \eqref{BE-NS-mass}, one deduces that $S$ satisfies
\begin{equation*}
\begin{split}
\pa_t S+u_1\pa_xS=&\frac{1}{\overline{n}\ta}\pa_x\left((\kappa_i(\theta)+\kappa_e(\theta))\pa_x \theta\right)+\frac{3(\mu_i(\theta)+\mu_e(\theta))}{\overline{n}\ta}(\pa_x u_j)^2+
\sum\limits_{j=2}^3\frac{(\mu_i(\theta)+\mu_e(\theta))}{\overline{n}\ta}(\pa_x u_j)^2
\\&+\left(1-\frac{2}{3\overline{n}}\right)\int_{{\R}^3}[\xi_1,\xi_1]^{\rm T}\cdot\pa_x\FG  \,d\xi
-\frac{1}{\overline{n}\ta}\int_{{\R}^3}\xi_1\left(\psi_{6}-\sum\limits_{j=1}^3u_j \psi_{j+2}\right)\cdot\pa_x\overline{\FR} \,d\xi
\\&-\frac{1}{\overline{n}\ta}\int_{{\R}^3}\psi_{6i}\xi_1\pa_x\left(P_0^{M_i}G_i\right) \,d\xi-\frac{1}{\overline{n}\ta}\int_{{\R}^3}\psi_{6e}\xi_1\pa_x\left(P_0^{M_e}G_e\right) \,d\xi
\\&+\frac{1}{\overline{n}\ta}\sum\limits_{j=1}^3u_j\int_{{\R}^3}\psi_{(j+2)i}\xi_1\pa_x \left(P_0^{M_i}G_i\right) \,d\xi
+\frac{1}{\overline{n}\ta}\sum\limits_{j=1}^3u_j\int_{{\R}^3}\psi_{(j+2)e}\xi_1\pa_x \left(P_0^{M_e}G_e\right) \,d\xi
\\&+\frac{\pa_x\phi}{\overline{n}\ta}\int_{{\R}^3}\frac{|\xi|^2}{2}\left[q_i,q_e\right]^{\rm T}\cdot\pa_{\xi_1}\FG \,d\xi,
\end{split}
\end{equation*}
where  we have denoted $\overline{n}=n_i+n_e$. For later use, we also introduce the pressure function $P$ by
\begin{equation}\notag
P=\frac{2}{3}\overline{n}\ta.
\end{equation}
Note that from \eqref{BE.enp.}, one has
$$
\theta=\frac{3}{2}k e^S\overline{n}^{2/3},\ \ P=\frac{2}{3}\overline{n}\ta=ke^S\overline{n}^{5/3},
$$
with the constant $k$ given by $k:= \frac{1}{2\pi e}$.



\section{Quasineutral Euler equations and rarefaction waves}\label{sec.rw}

Recall that \eqref{cons.law.f} and \eqref{cons.law.fv} are thought to be the zero-order and first-order fluid dynamic approximation of the VPB system \eqref{VPB}, \eqref{PE}, respectively, if the two-component non-fluid part $\FG$ is ignored. Inspired by this, one may expect to justify in a rigorous way the large-time asymptotics of the VPB system  \eqref{VPB}, \eqref{PE} toward   \eqref{cons.law.f} or \eqref{cons.law.fv}. The goal of this paper is to treat this in the setting of rarefaction waves. Instead of directly using \eqref{cons.law.f} and \eqref{cons.law.fv}, the expected large-time asymptotic system is the quasineutral Euler system in the form of
\begin{eqnarray}\label{cons.law.f.qe}
\left\{
\begin{array}{clll}
&\dis \pa_tn_i+\pa_x (n_iu_1)=0,\\[3mm]
&\dis \pa_tn_e+\pa_x (n_eu_1)=0,\\[3mm]
&\dis (m_in_i+m_en_e)(\pa_t u_1+u_1\pa_xu_1)+\frac{2}{3}\pa_x \left[(n_i+n_e)\ta\right]
=0,\\[3mm]
&\dis (n_i+n_e)(\pa_t \theta+u_1\pa_x\ta)+\frac{2}{3}(n_i+n_e)\ta\pa_x u_1=0,\\[3mm]
&\dis q_in_i+q_en_e=0.
\end{array}
\right.
\end{eqnarray}
For simplicity, by letting
\begin{equation}\notag
n_e=n,\quad n_i=-\frac{q_i}{q_e} n_e=-\frac{q_i}{q_e}n,
\end{equation}
in terms of the quasineutral assumption, \eqref{cons.law.f.qe} reduces to
\begin{eqnarray}\label{cons.law.f.as}
\left\{
\begin{array}{clll}
&\dis \pa_tn+\pa_x (nu_1)=0,\\[3mm]
&\dis \pa_t u_1+u_1\pa_xu_1+\frac{1}{n}\pa_xP(n,S)=0\\[3mm]
&\dis \pa_t S+u_1\pa_x S=0,
\end{array}
\right.
\end{eqnarray}
where
\begin{equation}\notag
P(n,S)=\frac{2(q_i-q_e)}{3(m_eq_i-m_iq_e)}n\theta,\quad S=-\frac{2}{3}\ln \left(\frac{q_i-q_e}{q_i}n\right)+\ln\left(\frac{4\pi
R}{3}\theta\right)+1.
\end{equation}

To construct the large-time asymptotic rarefaction wave of the VPB system through \eqref{cons.law.f.qe} or \eqref{cons.law.f.as}, one has to assign some appropriate far-field data from \eqref{bd.be}, \eqref{bd.phi} and \eqref{id.be.bim}.
Recall that we have set $n_{e\pm}=n_{\pm}$ and hence $n_{i\pm}=-\frac{q_e}{q_i}n_{\pm}$.
In terms of $n_\pm$ and $\theta_{\pm}$, recalling \eqref{BE.enp.}, we define constants $S_\pm$ by
\begin{eqnarray*}
S_{\pm}=-\frac{2}{3}\ln \left(n_{i\pm}+n_{e\pm}\right)+\ln\left(\frac{4\pi
R}{3}\theta_{\pm}\right)+1,
\end{eqnarray*}
that is,
\begin{eqnarray*}
S_{\pm}=-\frac{2}{3}\ln \left(\frac{q_i-q_e}{q_i}n_{\pm}\right)+\ln\left(\frac{4\pi
R}{3}\theta_{\pm}\right)+1,
\end{eqnarray*}
%
To the end, we assume $S_+=S_-$
or equivalently
\begin{equation*}
\frac{\theta_+}{n_+^{2/3}}=\frac{\theta_-}{n_-^{2/3}}=\frac{3}{2}ke^{S_\pm}\left(\frac{q_i-q_e}{q_i}\right)^{2/3}:=A.
\end{equation*}

Under the above settings on the far-field values of initial data \eqref{id.be} for the VPB system \eqref{VPB}, \eqref{PE},  we then expect that
%
%
%
the solution $\FF(t,x,\xi)$ to the Cauchy problem tends time-asymptotically to the local bi-Maxwellian
\begin{equation*}
\FM_{R}
=\left[\begin{array}{cc}
      M_{Ri}(\xi)    \\[3mm]
      M_{Re}(\xi)
\end{array}\right]
=\left[\begin{array}{c}
      M_{[-\frac{q_e}{q_i}n^R,u^{R},\theta^{R};m_i]}(\xi)    \\[3mm]
      M_{[n^{R},u^{R},\theta^{R};m_e]}(\xi)
\end{array}\right],
\end{equation*}
where $[n^R,u^R,\theta^R]$ with $u^{R}=[u_1^{R},0,0]$ is the rarefaction wave solution of the Riemann problem on the  quasi-neutral Euler system \eqref{cons.law.f.as}
with Riemann initial data given by
\begin{equation}
\label{MEtid}
[n, u_1,\theta](0,x)=\left[n^{R}_0,u^{R}_{1,0},\theta_0\right](x):= \left\{\begin{array}{rll}[n_-,u_{1-},\theta_-],&\ \ x<0,\\[3mm]
[n_+,u_{1+},\theta_+],&\ \ x>0.
\end{array}
\right.
\end{equation}
The Riemann problem can be solved in the usual way (cf.~\cite{LX}). Indeed, the quasineutral Euler system \eqref{cons.law.f.as}
has three characteristics
\begin{eqnarray}
\left\{\begin{array}{rll}
\la_1&=&\la_1(n,u_1,S):= u_1-\sqrt{\pa_n P (n,S)},\\[3mm]
\la_2&=&\la_2(n,u_1,S):= u_1,\\[3mm]
\la_3&=&\la_3(n,u_1,S):= u_1+\sqrt{\pa_n P (n,S)}.
\end{array}\right.\notag
\end{eqnarray}
In terms of  two Riemann invariants of the third eigenvalue $\la_3(n,u_1,S)$,
we define the set of right constant states  $[n_+,u_{1+},\theta_+]$ to which a given left constant state $[n_-,u_{1-},\theta_-]$ with $n_->0$ and $\theta_->0$ is connected  through the $3$-rarefaction wave to be
\begin{multline}\label{def.r3}
R_3(n_-,u_{1-}, \theta_-)\equiv\bigg\{[n,u_1,\theta]\in\R_+\times\R \times\R_+\ \Big|\
\frac{n^{2/3}}{\theta}=\frac{n_-^{2/3}}{\theta_-},
\\
u_1-u_{1-}=\int_{n_-}^n
\frac{\sqrt{\pa_n P (\eta,S_-) }}{\eta}
d\eta,
\ n>n_-,\ {u}_1>u_{1-}\bigg\}.
\end{multline}
%
%
Now, letting $[n_+,u_{1+},\theta_+]\in R_3(n_-,u_{1-}, \theta_-)$,
the Riemann problem \eqref{cons.law.f.as}, \eqref{MEtid} admits a self-similar solution, the $3$-rarefaction wave
$\left[n^R,u_1^R,\ta^R\right](z)$ with $z=x/t\in \R$, explicitly defined by
\begin{eqnarray}
\begin{split}
\left\{\begin{array}{rll}
&\la_3\left(n^R(z),u_1^R(z),S_-\right)=
\left\{\begin{array}{ll}
\la_3(n_-,u_{1-},S_-)&\quad \mbox{for}\  z<\la_3(n_-,u_{1-},S_-),\\[3mm]
z&\quad \mbox{for}\ \la_3(n_-,u_{1-},S_-)\leq z\leq  \la_3(n_+,u_{1+},S_-),\\[3mm]
\la_3(n_+,u_{1+},S_-) &\quad \mbox{for}\ z> \la_3(n_+,u_{1+},S_-),
\end{array}\right.\\[10mm]
&u^R_{1}(z)=u_{1-}+
\dis{\int_{n_-}^{n^R(z)}} \frac{\sqrt{\pa_n P (\eta,S_-) }}{\eta}
d\eta,
\\[5mm]
&
\ta^R(z)=A(n^R(z))^{2/3},\ \ A=\frac{3}{2}ke^{S_-}\left(\frac{q_i-q_e}{q_i}\right)^{2/3}.
\end{array}
\right.
\end{split}\notag
\end{eqnarray}

Since $[n^{R},u_1^{R},\ta^{R}]$ is a weak solution of the Riemann problem \eqref{cons.law.f.as} and \eqref{MEtid} and lack of regularity, one has establish a smooth approximation to the rarefaction wave $[n^{R},u_1^{R},\ta^{R}]$. To do this, in the usual way,  the smooth rarefaction wave $[n^r,u^r,\theta^r](t,x)$ with $u^r(t,x)=[u_1^r(t,x),0,0]$ is defined by
\begin{eqnarray}\label{1-RW.def.}
\begin{split}
\left\{\begin{array}{rll}
&
\la_3(n^r(t,x),u^r_{1},S_-)=w(t,x),\\[3mm]
&u^r_1(t,x)=u_{1-}
+\dis{\int_{n_-}^{n^r(t,x)}}  \frac{\sqrt{\pa_n P (\eta,S_-) }}{\eta}
d\eta,
\\[4mm]
&\ta^r(t,x)=A(n^r(t,x))^{2/3},
\\[3mm]
&\lim\limits_{x\to \pm\infty}[n^r,u_1^r,\theta^r](t,x)=[n_\pm,u_{1\pm},\theta_\pm],\quad
[n_+,u_{1+},\theta_+]\in R_3(n_-,u_{1-}, \theta_-),
\end{array}
\right.
\end{split}
\end{eqnarray}
with $w=w(t,x)$ being the solution to the Burgers' equation
\begin{equation}\label{cl.Re.cons.in}
\left\{\begin{array}{l}
\dis \pa_tw+w\pa_xw=0,\\[3mm]
w(0,x)=w_{0}(x):=\frac{1}{2}(w_{+}+w_{-})+\frac{1}{2}(w_{+}-w_{-})\tanh x,\quad w_\pm:= \la_3(n_\pm,u_{1\pm},S_-).
\end{array}\right.
\end{equation}
We remark that by letting $n_e^r=n_r$ and
$n_i^r=-\frac{q_e}{q_i}n^r$, in view of the construction of the smooth rarefaction wave above,  
$[n_i^{r},n_e^{r},u_1^{r},\ta^{r}]$  satisfies
\begin{eqnarray}\label{Euler3}
\left\{
\begin{array}{clll}
\begin{split}
&\pa_tn_{i}^r+\pa_x (n_{i}^ru^r_1)=0,\\
&\pa_tn_{i}^r+\pa_x (n_{i}^ru^r_1)=0,\\
&(m_in_i^r+m_en_e^r)(\pa_t u^r_1+u_1^r\pa_xu_1^r)+\frac{2}{3}\pa_x((n_i^r+n_e^r)\ta^r)=0,\\
&(q_im_in_i^r+q_em_en_e^r)(\pa_t u^r_1+u_1^r\pa_xu_1^r)
\\&\qquad=-\frac{2\ta^r}{3}\frac{q_im_in_i^r+q_em_en_e^r}{m_in^r_i+m_en^r_e}\pa_x(n_i^r+n_e^r)
-\frac{2}{3}\pa_x\ta^r\frac{q_im_in_i^r+q_em_en_e^r}{m_in^r_i+m_en^r_e}(n_i^r+n_e^r),\\[3mm]
&(n_i^r+n_e^r)(\pa_t \ta^r+u^r_1\pa_x\ta^r)+P^r\pa_xu^r_1=0.\\
\end{split}
\end{array}
\right.
\end{eqnarray}
Here $P^r=\frac{2}{3} (n_i^r+n_e^r)\ta^r=\frac{2}{3}A\frac{q_i-q_e}{q_i}(n^r)^{5/3}.$

The next lemma is devoted to the study of the properties of the smooth rarefaction wave $[n^r,u_1^r,\ta^r]$ constructed in \eqref{1-RW.def.} and \eqref{cl.Re.cons.in}.
\begin{lemma}\label{cl.RwRe}
It holds that

\noindent$(i)$ $\pa_xu_1^{r}(t,x)>0$ and $n_-<n^{r}(t,x)<n_+$,
$u_{1-}<u_1^{r}(t,x)<u_{1+}$ for $x\in\R$ and
$t\geq0$.

\noindent$(ii)$ For any $1\leq p\leq+\infty$, there exists a constant $C_p$ such that for $t>0$,
$$
\left\|\pa_x\left[n^{r}, u_1^{r},\ta^r\right]\right\|_{L^p}\leq C_p\min\left\{\delta_r, \delta_r^{1/p}t^{-1+1/p}\right\},
$$
$$
\left\|\pa^j_x\left[n^{r}, u_1^{r},\ta^r\right]\right\|_{L^p}\leq C_p\min\left\{\delta_r, t^{-1}\right\},\ \ j\geq2, 
$$
where we recall that $\de_r=|n_+-n_-|+|u_{1+}-u_{1-}|+|\ta_+-\ta_-|$ is the wave strength.

\noindent$(iii)$ $\lim\limits_{t\rightarrow+\infty}\sup\limits_{x\in\R}\left|\left[n^{r}, u_1^{r},\ta^r\right](t,x)-\left[n^R,u_1^R,\ta^R\right](x/t)\right|=0$.

\end{lemma}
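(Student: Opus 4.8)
The plan is to reduce every assertion to the corresponding property of the scalar Burgers solution $w(t,x)$ of \eqref{cl.Re.cons.in}, since by \eqref{1-RW.def.} the triple $[n^r,u_1^r,\theta^r]$ is obtained from $w$ through explicit, smooth, strictly increasing relations (with entropy frozen at $S=S_-$). First I would record the classical facts about $w$. Because $[n_+,u_{1+},\theta_+]\in R_3(n_-,u_{1-},\theta_-)$ and $\lambda_3$ increases along the $3$-rarefaction curve \eqref{def.r3}, one has $w_-<w_+$, and the datum $w_0$ in \eqref{cl.Re.cons.in} is smooth, strictly increasing, with $w_-<w_0<w_+$ on $\R$. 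The method of characteristics $x=y+w_0(y)t$, with $\pa x/\pa y=1+w_0'(y)t>0$, yields a unique global smooth solution with $w_-<w(t,x)<w_+$ and $\pa_x w=w_0'(y)/(1+w_0'(y)t)>0$, the bounds
\[
\|\pa_x w(t)\|_{L^p}\le C_p\min\{\de_r,\de_r^{1/p}t^{-1+1/p}\},\qquad
\|\pa_x^j w(t)\|_{L^p}\le C_p\min\{\de_r,t^{-1}\}\ \ (j\ge2),
\]
and $\sup_{x\in\R}|w(t,x)-w^R(x/t)|\to0$ as $t\to\infty$, where $w^R(z)$ is the centred Burgers rarefaction wave ($w^R(z)=w_-$ for $z\le w_-$, $=z$ for $w_-\le z\le w_+$, $=w_+$ for $z\ge w_+$). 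The $L^p$ bound follows from $\|\pa_x w(t)\|_{L^p}^p=\int w_0'(y)^p(1+w_0'(y)t)^{1-p}\,dy$ and the rapid decay of $w_0'=\tfrac12(w_+-w_-)\,\mathrm{sech}^2 x$; the higher-order bounds and the convergence are as in Matsumura--Nishihara \cite{MN86,MN92} and Liu--Xin \cite{LX}.

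Next I would transfer these to $[n^r,u_1^r,\theta^r]$. Since along $R_3$ one has $\theta=A n^{2/3}$, the quasineutral pressure law reads $P(n,S_-)=A'n^{5/3}$ with $A'>0$, hence $\sqrt{\pa_n P(n,S_-)}\propto n^{1/3}$, and integrating $du_1/dn=\sqrt{\pa_n P(n,S_-)}/n$ shows that $n\mapsto\la_3(n,u_1^r(n),S_-)$ is affine in $n^{1/3}$ with positive slope; in particular it is a global diffeomorphism, so $n^r=n^r(w)$ is smooth and strictly increasing with $dn^r/dw$ bounded above and below by positive constants uniformly on the compact interval $[w_-,w_+]$, and likewise $u_1^r=u_1^r(w)$ and $\theta^r=A(n^r(w))^{2/3}$ are smooth and strictly increasing, equal to $n_-,u_{1-},\theta_-$ at $w=w_-$ and to $n_+,u_{1+},\theta_+$ at $w=w_+$. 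Part $(i)$ is then immediate: $\pa_x u_1^r=(du_1^r/dn)(dn^r/dw)\pa_x w>0$ as all three factors are positive, while $n_-<n^r<n_+$ and $u_{1-}<u_1^r<u_{1+}$ follow from $w_-<w<w_+$ and monotonicity. For $(ii)$, the chain rule (Fa\`a di Bruno) expresses $\pa_x^j[n^r,u_1^r,\theta^r]$ as a finite sum of products of derivatives of the composition maps---all bounded on $[w_-,w_+]$---times factors $\pa_x^{j_1}w\cdots\pa_x^{j_\ell}w$ with $j_1+\cdots+j_\ell=j$, $j_m\ge1$; inserting the Burgers estimates above gives $\min\{\de_r,\de_r^{1/p}t^{-1+1/p}\}$ for $j=1$ and $\min\{\de_r,t^{-1}\}$ for $j\ge2$. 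For $(iii)$, the uniform convergence $w(t,\cdot)\to w^R(\cdot/t)$ together with uniform continuity of the composition maps on $[w_-,w_+]$ gives $\sup_x\bigl|[n^r,u_1^r,\theta^r](t,x)-[n^R,u_1^R,\ta^R](x/t)\bigr|\to0$, since $[n^R,u_1^R,\ta^R](z)$ is exactly $[n^r,u_1^r,\theta^r]$ evaluated at $w=w^R(z)$.

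I do not anticipate a genuine obstacle: this is the standard Matsumura--Nishihara smoothing of a centred rarefaction wave, and the only points needing care are (a) genuine nonlinearity of the $3$-field for the quasineutral Euler system, which here is explicit because the power law $P\propto n^{5/3}$ makes $\la_3$ affine, hence globally invertible, in $n^{1/3}$, so the composition maps $w\mapsto[n^r,u_1^r,\theta^r]$ are diffeomorphisms with uniformly bounded derivatives on $[w_-,w_+]$; and (b) carefully separating the two regimes in the Burgers estimate---the $\de_r$-bound for bounded times and the $t^{-1+1/p}$ decay for large times---which is precisely what produces the time-decay rates used later in the energy estimates. All constants depend only on $[n_\pm,u_{1\pm},\theta_\pm]$ and stay uniform as $\de_r\to0$.
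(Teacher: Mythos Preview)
The paper states this lemma without proof, treating it as a standard fact about the Matsumura--Nishihara smoothing of a centred rarefaction wave (cf.~the cited references \cite{MN86,MN92,LX}). Your proposal is exactly the standard argument one finds in those references: establish the Burgers estimates for $w$ via characteristics and the explicit $\tanh$ datum, then push them through the smooth monotone maps $w\mapsto[n^r,u_1^r,\theta^r]$ by the chain rule. Your observation that $\la_3$ is affine in $n^{1/3}$ along the $3$-rarefaction curve (because $P\propto n^{5/3}$ with entropy frozen) is correct and makes the inversion of $\la_3(n^r,u_1^r,S_-)=w$ completely explicit, so the composition maps have uniformly bounded derivatives on $[w_-,w_+]$; this is precisely the genuine nonlinearity check. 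Nothing is missing.
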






\section{Preliminary estimates on two-component collision operator}\label{sec4}

In this section, we list some basic inequalities on the two-component collision operator for later use. The first
lemma is concerned with the nonlinear
 collision operators  $Q_{\CA\CB}(\cdot, \cdot)$, whose proof can be found in \cite{GPS} when masses of the particles are normalised to be one.

\begin{lemma}\label{est.nonop}Let $\CA,\CB\in\{i,e\}$.
There exists a positive constant $C>0$ such
that
\begin{equation}\label{est.nonop.ine.}
\begin{split}
{\displaystyle\int_{{\R}^3}}&\frac{(1+|\xi|)^{-1}
\left|Q_{\CA\CB}(F_\CA,F_\CB)\right|^2}{\widehat{M}_\CA}\,d\xi
\\[3mm] \leq& C\left\{{\displaystyle\int_{{\R}^3}}
\frac{(1+|\xi|)F_{\CA}^2}{\widehat{ M}_\CA} \,d\xi\cdot
{\displaystyle\int_{{\R}^3}}\frac{F_{\CB}^2}{\widehat{ M}_\CB} \,d\xi
+{\displaystyle\int_{{\R}^3}}\frac{F_{\CA}^2}{\widehat{ M}_\CA}
\,d\xi\cdot{\displaystyle\int_{{\R}^3}}\frac{(1+|\xi|)F_{\CB}^2}{\widehat{M}_\CB} \,d\xi\right\},
\end{split}
\end{equation}
where we have defined
$$
\left[\widehat{M}_i, \widehat{M}_e\right]^{\rm{T}}\equiv\left[M_{[\widehat{n}_i,\widehat{u},\widehat{\theta};m_i]}(\xi),
M_{[\widehat{n}_e,\widehat{u},\widehat{\theta};m_e]}(\xi)\right]^{\rm{T}},
$$
to be any bi-Maxwellian such that the above
integrals are well defined.
\end{lemma}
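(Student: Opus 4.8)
The plan is to establish \eqref{est.nonop.ine.} as the hard-sphere bilinear collision estimate, following the single-species argument of \cite{GPS} while carrying the disparate masses $m_\CA,m_\CB$ through every step. I would first split $Q_{\CA\CB}(F_\CA,F_\CB)=Q^{+}_{\CA\CB}(F_\CA,F_\CB)-Q^{-}_{\CA\CB}(F_\CA,F_\CB)$ into its gain and loss parts,
$$
Q^{+}_{\CA\CB}(F_\CA,F_\CB)(\xi)=\int_{\R^3\times\S^2}B_{\CA\CB}\,F_\CA(\xi')F_\CB(\xi'_\ast)\,d\xi_\ast d\omega,\qquad Q^{-}_{\CA\CB}(F_\CA,F_\CB)(\xi)=F_\CA(\xi)\int_{\R^3\times\S^2}B_{\CA\CB}\,F_\CB(\xi_\ast)\,d\xi_\ast d\omega .
$$
The mechanism that absorbs the weight $(1+|\xi|)^{-1}$ on the left of \eqref{est.nonop.ine.} is the elementary bound $B_{\CA\CB}=\sigma^{2}|(\xi-\xi_\ast)\cdot\omega|\leq C(1+|\xi|)(1+|\xi_\ast|)$ together with the Gaussian decay of the reference bi-Maxwellian, which gives the collision-frequency estimate $\int_{\R^3\times\S^2}B_{\CA\CB}\,\widehat M_\CB(\xi_\ast)^{1/2}\,d\xi_\ast d\omega\leq C(1+|\xi|)$.

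For the loss term this already finishes the job: Cauchy--Schwarz in $(\xi_\ast,\omega)$ yields
$$
\frac{|Q^{-}_{\CA\CB}(F_\CA,F_\CB)(\xi)|^2}{\widehat M_\CA(\xi)}\leq\frac{F_\CA(\xi)^2}{\widehat M_\CA(\xi)}\Big(\int B_{\CA\CB}\,\widehat M_\CB(\xi_\ast)\,d\xi_\ast d\omega\Big)\Big(\int B_{\CA\CB}\,\frac{F_\CB(\xi_\ast)^2}{\widehat M_\CB(\xi_\ast)}\,d\xi_\ast d\omega\Big),
$$
and integrating in $\xi$, bounding the first bracket by $C(1+|\xi|)$ and keeping $B_{\CA\CB}\leq C(1+|\xi|)(1+|\xi_\ast|)$ in the second, produces exactly $C\int(1+|\xi|)F_\CA^2/\widehat M_\CA\cdot\int(1+|\xi_\ast|)F_\CB^2/\widehat M_\CB$. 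For the gain term the two-component structure enters through the conservation identity $\widehat M_\CA(\xi)\widehat M_\CB(\xi_\ast)=\widehat M_\CA(\xi')\widehat M_\CB(\xi'_\ast)$, which holds precisely because $\widehat M_\CA$ and $\widehat M_\CB$ have the common bulk velocity $\widehat u$ and temperature $\widehat\theta$ — this is the analytic content of property \textbf{[P1]}. Writing $g_\CA:=F_\CA/\sqrt{\widehat M_\CA}$, $g_\CB:=F_\CB/\sqrt{\widehat M_\CB}$ and inserting this identity turns $|Q^{+}_{\CA\CB}(F_\CA,F_\CB)(\xi)|^2/\widehat M_\CA(\xi)$ into $\big|\int B_{\CA\CB}\widehat M_\CB(\xi_\ast)^{1/2}g_\CA(\xi')g_\CB(\xi'_\ast)\,d\xi_\ast d\omega\big|^2$; one more Cauchy--Schwarz in $(\xi_\ast,\omega)$ bounds this by the collision frequency $\leq C(1+|\xi|)$ times $\int B_{\CA\CB}\widehat M_\CB(\xi_\ast)^{1/2}g_\CA(\xi')^2g_\CB(\xi'_\ast)^2\,d\xi_\ast d\omega$. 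Multiplying by $(1+|\xi|)^{-1}$, integrating in $\xi$, and making the change of variables $(\xi,\xi_\ast)\mapsto(\xi',\xi'_\ast)$ at fixed $\omega$ — which is measure preserving, because by the collision rules of the Introduction this linear map is an involution at fixed $\omega$, hence has $|\mathrm{Jac}|\equiv1$, and leaves $|(\xi-\xi_\ast)\cdot\omega|$ invariant — converts $g_\CA(\xi')^2,g_\CB(\xi'_\ast)^2$ into $F_\CA^2/\widehat M_\CA,F_\CB^2/\widehat M_\CB$ at the integration variables, while $B_{\CA\CB}$ and the residual $\widehat M_\CB(\xi_\ast)^{1/2}$ (re-expressed through the same conservation identity and controlled by Gaussian decay together with $B_{\CA\CB}\leq C(1+|\xi|)(1+|\xi_\ast|)$) supply the extra $(1+|\xi|)$. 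This gives the bound $C\int(1+|\xi|)F_\CA^2/\widehat M_\CA\cdot\int F_\CB^2/\widehat M_\CB$; distributing the $(1+|\xi_\ast|)$ factor to the $g_\CB$-integral instead gives the symmetric term, and adding the loss estimate yields \eqref{est.nonop.ine.}.

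The only genuine point of departure from the equal-mass proof in \cite{GPS}, and the main obstacle, is the disparate-mass bookkeeping. The collision map now takes the mass-weighted form $\xi'=\xi-\tfrac{2m_\CB}{m_\CA+m_\CB}[(\xi-\xi_\ast)\cdot\omega]\omega$, $\xi'_\ast=\xi_\ast+\tfrac{2m_\CA}{m_\CA+m_\CB}[(\xi-\xi_\ast)\cdot\omega]\omega$, so one must verify directly that $(\xi'-\xi'_\ast)\cdot\omega=-(\xi-\xi_\ast)\cdot\omega$ — whence the map is an involution at fixed $\omega$, the Jacobian has modulus one, and the hard-sphere kernel is invariant — and then re-read the Gaussian weight $\widehat M_\CB(\xi_\ast)$ in the new variables through the conservation identity. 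The unlike-particle case $m_\CA\neq m_\CB$ is the one where the change-of-variables step of \cite{GPS} genuinely has to be redone, while the like-particle case $m_\CA=m_\CB$ reduces to the classical situation. All constants produced along the way depend on the fixed mass ratio $m_i/m_e$; this is harmless but should be tracked so as not to be conflated with the generic constant $C$.
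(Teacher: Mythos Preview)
Your proposal is correct and follows essentially the same route as the paper's proof: gain/loss splitting, the bi-Maxwellian conservation identity $\widehat M_\CA(\xi)\widehat M_\CB(\xi_\ast)=\widehat M_\CA(\xi')\widehat M_\CB(\xi'_\ast)$, Cauchy--Schwarz in $\xi_\ast$, and the measure-preserving change of variables $(\xi,\xi_\ast)\mapsto(\xi',\xi'_\ast)$. The only cosmetic difference is that the paper splits the kernel as $\sqrt{B_{\CA\CB}\widehat M_\CB(\xi_\ast)}\cdot\sqrt{B_{\CA\CB}}$ so that the Gaussian factor is integrated out completely before the change of variables, whereas your split leaves a residual $\widehat M_\CB(\xi_\ast)^{1/2}$ to be bounded afterward; since Maxwellians are bounded this is harmless, but the paper's split is slightly cleaner and spares you the re-expression you mention.
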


\begin{proof}
Note that one can rewrite \eqref{g.cop} as
\begin{equation}
\label{Q.gl}
Q_{\CA\CB}(F_\CA,F_\CB)=Q^{\rm gain}_{\CA\CB}(F_\CA,F_\CB)+Q^{\rm loss}_{\CA\CB}(F_\CA,F_\CB),
\end{equation}
with the normal meaning for the gain part and the loss part.
To prove \eqref{est.nonop.ine.}, we first consider the gain part and thus compute
\begin{eqnarray}
&&\int_{{\R}^3}\frac{(1+|\xi|)^{-1}
\left|Q^{\rm gain}_{\CA\CB}(F_\CA,F_\CB)\right|^2}{\widehat{M}_\CA}\,d\xi\notag\\
&&=\si^2\int_{\R^3}(1+|\xi|)^{-1}\widehat{M}_\CA^{-1}
\left(\int_{{\R}^3}|(\xi-\xi_\ast)\cdot\omega|F_{\CA}(\xi')F_{\CB}(\xi_\ast')d\xi_\ast\right)^2\,d\xi.\notag
\end{eqnarray}
We now set
\begin{equation}
\label{def.fab}
F_\CA=\sqrt{\widehat{M}_\CA} f_\CA,\quad F_\CB=\sqrt{\widehat{M}_\CB} f_\CB,
\end{equation}
and use the identity
$$
\widehat{M}_\CA(\xi)\widehat{M}_\CB(\xi_\ast)=\widehat{M}_\CA(\xi')\widehat{M}_\CA(\xi'_\ast),
$$
so as to derive
\begin{equation*}
\begin{split}
&\int_{\R^3}(1+|\xi|)^{-1}\widehat{M}_\CA^{-1}
\left(\int_{{\R}^3}|(\xi-\xi_\ast)\cdot\omega|F_{\CA}(\xi')F_{\CB}(\xi_\ast')d\xi_\ast\right)^2d\xi
\\&=\int_{\R^3}(1+|\xi|)^{-1}\widehat{M}_\CA^{-1}
\left(\int_{{\R}^3}\sqrt{\widehat{M}_\CA(\xi)\widehat{M}_\CB(\xi_\ast)}
|(\xi-\xi_\ast)\cdot\omega|f_{\CA}(\xi')f_{\CB}(\xi_\ast')d\xi_\ast\right)^2d\xi
\\&\leq C\int_{\R^3}(1+|\xi|)^{-1}\left\{\int_{{\R}^3}\widehat{M}_\CB(\xi_\ast)
|(\xi-\xi_\ast)\cdot\omega|d\xi_\ast\int_{{\R}^3}|(\xi-\xi_\ast)\cdot\omega|\left|f_{\CA}(\xi')f_{\CB}(\xi_\ast')\right|^2d\xi_\ast\right\} d\xi
\\&\leq C\int_{\R^3\times \R^3}|\xi-\xi_\ast|\left|f_{\CA}(\xi')f_{\CB}(\xi_\ast')\right|^2d\xi_\ast d\xi,
\end{split}
\end{equation*}
where we have used the H\"{o}lder's inequality to obtain the first inequality above. In view of $|\xi-\xi_\ast|=|\xi'-\xi'_\ast|$
and by a change of variables
$(\xi,\xi_\ast)\rightarrow (\xi',\xi_\ast')$, one further has
\begin{equation}\label{gop.es}
\begin{split}
&\int_{\R^3\times \R^3}|\xi-\xi_\ast|\left|f_{\CA}(\xi')f_{\CB}(\xi_\ast')\right|^2\,d\xi_\ast d\xi
\\ &\leq C\int_{\R^3\times \R^3}(1+|\xi|+|\xi_\ast|)|f_{\CA}(\xi)f_{\CB}(\xi_\ast)|^2\,d\xi_\ast d\xi
\\& \leq C\int_{\R^3}(1+|\xi|)|f_{\CA}(\xi)|^2\,d\xi \int_{\R^3}|f_{\CB}(\xi_\ast)|^2\,d\xi_\ast+
C\int_{\R^3}|f_{\CA}(\xi)|^2\,d\xi \int_{\R^3}(1+|\xi_\ast|)|f_{\CB}(\xi_\ast)|^2\,d\xi_\ast,
\end{split}
\end{equation}
where the fact that
$$
\left|\frac{\pa (\xi',\xi'_\ast)}{\pa (\xi,\xi_\ast)}\right|=1,
$$
has been used. Rewriting \eqref{gop.es} in terms of  \eqref{def.fab} gives \eqref{est.nonop.ine.} for the contribution from the gain part in \eqref{Q.gl}.
As to the loss part in \eqref{Q.gl}, the proof is similar and details are omitted for brevity. This then completes the proof of Lemma \ref{est.nonop}.
\end{proof}

In order to obtain the energy estimates for the
Boltzmann equation \eqref{v.F}, for ${\bf P}_1^{\FM}\FF$ which means the
microscopic projection of its solution $\FF(t,x,\xi)$ with respect
to a given bi-Maxwellian
$$\FM=[M_i,M_e]^{\rm T}=[M_{[n_i(t,x),u(t,x),\theta(t,x);m_i]}(\xi),
 M_{[n_e(t,x),u(t,x),\theta(t,x);m_e]}(\xi)]^{\rm T},$$
one need to find out its dissipative effect through
the microscopic $H$-theorem. Like the single-component case, the
microscopic $H$-theorem states that the linearized collision
operator $\FL_{\FM}$ around a fixed bi-Mawellian $\FM$ is also negative definite on the non-fluid element ${\bf
P}_1^{\FM}\FF$, cf.~\cite{BGPS}.

\begin{lemma}\label{co.est0.}
It holds that
\begin{equation}\label{co.est00.}
 -\int_{{\R}^3}{\bf P}_1^{\FM}\FF\cdot\left\{\FM^{-1}\left(\FL_{\FM}{\bf
P}_1^{\FM}\FF\right)\right\}d\xi \geq \de\int_{{\R}^3}(1+|\xi|
\left|\FM^{-1/2}{\bf P}_1^{\FM}\FF \right|^2d\xi,
\end{equation}
for a positive constant $\de>0$ depending on $[n_i,n_e,u,\ta]$. In fact, $\de$ also depends on $m_i$ and $m_e$, and in what follows we shall omit pointing out such dependence for brevity.
\end{lemma}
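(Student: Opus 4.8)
The plan is to freeze the bi-Maxwellian and then follow the standard microscopic $H$-theorem scheme (cf.~the single-component case in \cite{LYY}) combined with the compactness argument of \cite{Guo3}. Since $\de$ is allowed to depend on $[n_i,n_e,u,\ta]$ and on $m_i,m_e$, a translation $\xi\mapsto\xi-u$ reduces us to $\FM=[M_{[n_i,0,\ta;m_i]},M_{[n_e,0,\ta;m_e]}]^{\rm T}$ with $n_i,n_e,\ta$ fixed. Set $\FG={\bf P}_1^{\FM}\FF$, $g=\FM^{-1/2}\FG$, $b_\CA=G_\CA/M_\CA$, and $\CL g\eqdef\FM^{-1/2}\FL_{\FM}(\sqrt{\FM}\,g)$ on $L^2_\xi\times L^2_\xi$, so the left side of \eqref{co.est00.} equals $-\langle\CL g,g\rangle_{L^2_\xi\times L^2_\xi}$. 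Polarising {\bf [P2]} around $\FM$ (using $\FQ(\FM,\FM)=0$ from {\bf [P1]}, valid because the two species of a bi-Maxwellian share $u$ and $\ta$) and invoking the conservation identities $M_\CA(\xi)M_\CA(\xi_*)=M_\CA(\xi')M_\CA(\xi_*')$ and $M_i(\xi_*)M_e(\xi)=M_i(\xi_*')M_e(\xi')$, one rewrites
$$
-\langle\CL g,g\rangle_{L^2_\xi\times L^2_\xi}=\tfrac14\,\CD_{ii}(b_i)+\tfrac12\,\CD_{ie}(b_i,b_e)+\tfrac14\,\CD_{ee}(b_e),
$$
where $\CD_{\CA\CA}(b)=\int M_\CA(\xi)M_\CA(\xi_*)\big(b(\xi')+b(\xi_*')-b(\xi)-b(\xi_*)\big)^2B\,d\xi\,d\xi_*\,d\omega\ge0$ and $\CD_{ie}(b_i,b_e)=\int M_i(\xi_*)M_e(\xi)\big(b_i(\xi_*')+b_e(\xi')-b_i(\xi_*)-b_e(\xi)\big)^2B\,d\xi\,d\xi_*\,d\omega\ge0$; in particular $\CL$ is self-adjoint and $\le0$.

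\emph{Identifying the kernel.} Next I would show $\mathrm{Ker}\,\CL=\mathrm{Ran}\,{\bf P}_0^{\FM}$ (exactly six-dimensional). The three Dirichlet forms vanish simultaneously iff each $b_\CA$ is a like-particle invariant, hence $b_\CA=\alpha_\CA+\beta_\CA\cdot\xi+\gamma_\CA|\xi|^2$, and in addition $b_i(\xi_*')+b_e(\xi')=b_i(\xi_*)+b_e(\xi)$ along every $i$-$e$ collision. Substituting the polynomial form into the last constraint and using $m_i\xi_*+m_e\xi=m_i\xi_*'+m_e\xi'$, $m_i|\xi_*|^2+m_e|\xi|^2=m_i|\xi_*'|^2+m_e|\xi'|^2$ forces $\beta_i/m_i=\beta_e/m_e$ and $\gamma_i/m_i=\gamma_e/m_e$, while $\alpha_i,\alpha_e$ stay free; this is exactly the six-dimensional space $\mathrm{span}\{\FM^{-1}\psi_1,\dots,\FM^{-1}\psi_6\}$ after $\sqrt{\FM}$-conjugation, i.e.\ $\mathrm{Ran}\,{\bf P}_0^{\FM}$. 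Hence $\CL$ is negative \emph{definite} on $\mathrm{Ran}({\rm Id}-{\bf P}_0^{\FM})$, which already gives a strict — though not yet $(1+|\xi|)$-weighted — lower bound.

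\emph{Grad splitting and compactness.} To upgrade to the velocity weight I would use the Grad decomposition $\CL=-\nu(\xi)+\CK$ on $L^2_\xi\times L^2_\xi$, with $\nu(\xi)=\mathrm{diag}(\nu_i(\xi),\nu_e(\xi))$ the collision-frequency matrix coming from the loss parts of $Q_{\CA\CA}$ and $Q_{\CA\CB}$. For the hard-sphere kernel $B=\si^2|(\xi-\xi_*)\cdot\omega|$ one checks $\nu_\CA(\xi)\sim 1+|\xi|$ for both $\CA=i,e$ (with constants depending on $n_i,n_e,\ta,m_i,m_e$), so the weight in \eqref{co.est00.} is comparable to $\nu$. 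The remainder $\CK$ is the integral operator built from the gain parts and the cross terms $Q_{ie}(M_i,\cdot),Q_{ei}(M_e,\cdot)$; adapting the Grad-type kernel bounds — for the unlike-particle pieces invoking Lemma \ref{est.nonop} and the change-of-variables estimates in its proof — one shows $\nu^{-1/2}\CK\nu^{-1/2}$ is compact on $L^2_\xi\times L^2_\xi$. Then \eqref{co.est00.} follows by the usual contradiction argument: were it false, one would find $\FG^{(n)}\in\mathrm{Ran}\,{\bf P}_1^{\FM}$ with $\|(1+|\xi|)^{1/2}\FM^{-1/2}\FG^{(n)}\|_{L^2_\xi\times L^2_\xi}=1$ and $-\int_{\R^3}\FG^{(n)}\cdot\FM^{-1}\FL_{\FM}\FG^{(n)}\,d\xi\to0$; writing $h=\nu^{1/2}\FM^{-1/2}\FG^{(n)}$ and using the energy identity $-\langle\CL g,g\rangle_{L^2_\xi\times L^2_\xi}=\|h\|^2-\langle\nu^{-1/2}\CK\nu^{-1/2}h,h\rangle$, the compactness of $\nu^{-1/2}\CK\nu^{-1/2}$, and weak lower semicontinuity of the $L^2$-norm, a weak subsequential limit is forced to have norm $1$ and yet to lie in $\mathrm{Ker}\,\CL\cap\mathrm{Ran}({\rm Id}-{\bf P}_0^{\FM})=\{0\}$ after undoing the $\nu^{-1/2}$-rescaling — impossible. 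Undoing $\FG=\sqrt{\FM}\,g$ then yields \eqref{co.est00.}.

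\emph{Main obstacle.} The delicate point is the compactness of $\nu^{-1/2}\CK\nu^{-1/2}$ in the disparate-mass two-component setting: the mixed collisions $Q_{ie},Q_{ei}$ have kinematics governed by the mass ratios $2m_\CB/(m_\CA+m_\CB)$, so the Grad decay of the kernel and the control of the near-grazing region $|\xi-\xi_*|\to0$ must be re-derived keeping these ratios, and one must verify that the single weight $(1+|\xi|)$ — rather than a species-dependent $(1+\sqrt{m_\CA}\,|\xi|)$ — is simultaneously comparable to $\nu_i$ and $\nu_e$. This is precisely where the single-component argument of \cite{LYY} does not transfer verbatim and where Lemma \ref{est.nonop} (following \cite{GPS}, cf.~also \cite{BGPS}) is used.
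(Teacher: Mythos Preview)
Your proposal is correct and follows essentially the same approach as the paper: the Grad-type splitting $\FL_{\FM}=-\nu+\FK$ with $\nu_\CA\sim(1+|\xi|)$, compactness of the $\FK$-part (the paper cites \cite{BGPS} for this in the disparate-mass two-component setting), and then the standard coercivity argument by contradiction (the paper merely cites \cite{CIP,Car,G,ABT}). Your write-up is in fact considerably more detailed than the paper's own proof --- the explicit Dirichlet-form representation, the kernel identification, and the contradiction argument are all left implicit in the paper --- but the structural ingredients are identical.
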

\begin{proof}
Recall \eqref{dec.2}. We denote
\begin{equation*}
{\bf P}_1^{\FM}\FF=\FG=\FG(t,x,\xi)=\left[\begin{array}{cc}
      G_i (\xi)\\[3mm]
      G_e (\xi)
\end{array}\right].
\end{equation*}
Further recall the definitions \eqref{def.L} and \eqref{g.cop}.  Let us decompose $\FL_{\FM}\FG$ as
\begin{equation*}
\FL_{\FM}\FG =-{\bf \nu} \FG+{\bf K}\FG,
\end{equation*}
with
\begin{eqnarray*}
-{\bf \nu} \FG
=\left[
\begin{array}{ccc}
\nu_{i}G_i\\[3mm] 
\nu_{e}G_e  
\end{array}\right]
,\ \ {\bf K}\FG=\left[
\begin{array}{ccc}
\sqrt{M_{i}}K_{i}\FG\\[3mm] 
\sqrt{M_{e}}K_{e}\FG
\end{array}\right]
=\left[
\begin{array}{ccc}
\sqrt{M_{i}}K_{i}(\FM^{-1/2}\FG)\\[3mm] 
\sqrt{M_{e}}K_{e}(\FM^{-1/2}\FG)
\end{array}\right],
\end{eqnarray*}
and
$$
\nu_\CA=-\sum\limits_{\CB\in\{\CA,\CB\}}Q_{\CA\CB}^{\rm{loss}}(1,M_\CB)
= \int_{\R^3\times\S^2_+} B_{\CA\CA} M_{\CA}(\xi_{\ast}) \,d\xi_{\ast} d\omega
 +\int_{\R^3\times\S^2_+} B_{\CA\CB} M_{\CB}(\xi_{\ast}) \,d\xi_{\ast} d\omega,
$$
\begin{equation}\label{K.def1}
K_\CA=K_\CA^1+K_\CA^2+K_\CA^3+K_\CA^4,
\end{equation}
and
\begin{eqnarray} \label{K.def2}
\left\{\begin{array}{rlll}
\begin{split}
K_{\CA}^{1} \FG =& M^{-\frac{1}{2}}_{\CA}\sum\limits_{\CB\in\{\CA,\CB\}}Q_{\CA\CB}^{\rm{loss}}(M_{\CA}, G_{\CB}) \\
            =& - \int_{\R^3\times\S^2_+} B_{\CA\CA} \sqrt{M_{\CA}(\xi)} \sqrt{M_{\CA}(\xi_{\ast})} \left(\frac{G_{\CA}}{\sqrt{M_{\CA}}}\right)(\xi_{\ast}) \,d\xi_{\ast} d\omega
             \\&- \int_{\R^3\times\S^2_+} B_{\CA\CB} \sqrt{M_{\CA}(\xi)} \sqrt{M_{\CB}(\xi_{\ast})}
             \left(\frac{G_{\CB}}{\sqrt{M_{\CB}}}\right)(\xi_{\ast}) \,d\xi_{\ast} d\omega,\ \ \CA\neq\CB,\\
K_{\CA}^{2} \FG =& M^{-\frac{1}{2}}_{\CA}\left\{Q_{\CA\CA}^{\rm {gain}}(M_{\CA}, G_{\CA})
+ Q_{\CA\CA}^{\rm{gain}}(G_{\CA}, M_{\CA}\right\} \\
            =& \int_{\R^3\times\S^2_+} B_{\CA\CA}  \sqrt{M_{\CA}(\xi_{\ast})}
               \left[\sqrt{M_{\CA}(\xi')} \left(\frac{G_{\CA}}{\sqrt{M_{\CA}}}\right)(\xi'_{\ast}) + \sqrt{M_{\CA}(\xi'_{\ast})}
             \left(\frac{G_{\CA}}{\sqrt{M_{\CA}}}\right)(\xi')\right] \,d\xi_{\ast} d\omega,\\
K_{\CA}^{3} \FG =& M^{-\frac{1}{2}}_{\CA}Q_{\CA\CB}^{\rm{gain}}(M_{\CA}, G_{\CB}) \\
            =& \int_{\R^3\times\S^2_+} B_{\CA\CB} \sqrt{M_{\CB}(\xi_{\ast})} \sqrt{M_{\CA}(\xi')}
             \left(\frac{G_{\CB}}{\sqrt{M_{\CB}}}\right)(\xi'_{\ast}) \,d\xi_{\ast} d\omega,\ \ \CA\neq\CB,\\
K_{\CA}^{4} \FG =& M^{-\frac{1}{2}}_{\CA}Q_{\CA\CB}^{\rm{gain}}(G_{\CA}, M_{\CB}) \\
            =& \int_{\R^3\times\S^2_+} B_{\CA\CB}  \sqrt{M_{\CB}(\xi_{\ast})} \sqrt{M_{\CB}(\xi'_{\ast})}
             \frac{G_{\CA}}{\sqrt{M_{\CA}}}(\xi') \,d\xi_{\ast} d\omega,\ \ \CA\neq\CB.
\end{split}
\end{array}\right.
\end{eqnarray}
One one hand, by performing the similar calculations as \cite{BGPS}, one can see that $K_i$ and $K_e$ defined by \eqref{K.def1} and \eqref{K.def2} are compact from
$$
L_\xi^2(\frac{1}{\sqrt{M_i}})\times L_\xi^2(\frac{1}{\sqrt{M_e}})
$$
to itself. 
One the other hand, as $B_{\CA\CB}=B_{\CB\CA}=\si^2|(\xi-\xi_\ast)\cdot\omega|$, one can be able to show
$$
\nu_i\thicksim (1+|\xi|),\quad \nu_e\thicksim (1+|\xi|).
$$
Then the coercivity estimate \eqref{co.est00.} follows from the standard argument as \cite{CIP,Car,G}; see also \cite{ABT}. This ends the proof of Lemma \ref{co.est0.}.
\end{proof}

Furthermore, one can vary the background for the linearisation and the weight function. In fact, basing on Lemma
\ref{est.nonop} as well as its proof, we also have the following result, cf.~\cite{LYYZ}.

\begin{lemma}\label{co.est.}
Let $\frac{\ta}{2}<\widehat{\ta}$. Then there exist two positive constants
$\de=\de(n_i,n_e, u,\theta;\widehat{n}_i,\widehat{n}_e,\widehat{u},\widehat{\theta})$
and $\eta_0=\eta_0(n_i,n_e,u,\theta;\widehat{n}_i,\widehat{n}_e,\widehat{u},\widehat{\theta})$ such that
if
$$
|n_i-\widehat{n}_i|+|n_e-\widehat{n}_e|+|u-\widehat{u}|+|\theta-\widehat{\theta}|<\eta_0,
$$
it holds  that for
$\FH(\xi)=[H_i(\xi),H_e(\xi)]^{\rm{T}}\in {\mathcal{N}}^\bot$,
\begin{eqnarray}\label{vb.coL}
-\int_{{\R}^3}\FH\cdot \left\{\widehat{\bf M}^{-1} (\FL_{\bf M} \FH) \right\}d\xi \geq
\de\int_{{\R}^3}(1+|\xi|)\left|\widehat{\bf M}^{-1/2} \FH\right|^2d\xi,
\end{eqnarray}
where  we have denoted
\begin{eqnarray}
&&{\bf M}\equiv \left[M_{[n_i ,u,\theta;m_i]}(\xi),M_{[n_e,u,\theta;m_e]}(\xi)\right]^{\rm{T}},\notag\\
&&\widehat{\bf
M}\equiv \left[\widehat{M}_i, \widehat{M}_e\right]\equiv\left[M_{[\widehat{n}_i,\widehat{u},\widehat{\theta};m_i]}(\xi),
M_{[\widehat{n}_e,\widehat{u},\widehat{\theta};m_e]}(\xi)\right]^{\rm{T}},\notag\\
&&{\mathcal{N}}^\bot=\left\{ \FH(\xi):\ \ \int_{{\R}^3}\psi_j(\xi)\cdot\FH(\xi)d\xi=0,\ j=1,2,\cdots,6\right\}.\notag
\end{eqnarray}
%
%
%
%
%
\end{lemma}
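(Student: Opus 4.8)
The plan is to deduce \eqref{vb.coL} from the coercivity already established in Lemma~\ref{co.est0.}, applied with the background bi-Maxwellian $\widehat{\FM}$ rather than $\FM$, and then to absorb the mismatch between the two linearisation backgrounds using the smallness of $\eta_0$. A preliminary observation is that the constraint space $\CN^\perp=\{\FH:\int_{\R^3}\psi_j\cdot\FH\,d\xi=0,\ j=1,\dots,6\}$ does not depend on the bi-Maxwellian used to build the projections: since $\langle\FH,\chi_j^{\widehat{\FM}}\rangle_{\widehat{\FM}}=\int_{\R^3}\FH\cdot(\widehat{\FM}^{-1}\chi_j^{\widehat{\FM}})\,d\xi$ and the six functions $\widehat{\FM}^{-1}\chi_j^{\widehat{\FM}}$ span exactly the space generated by $\psi_1,\dots,\psi_6$, one has ${\bf P}_0^{\widehat{\FM}}\FH=0$, i.e. ${\bf P}_1^{\widehat{\FM}}\FH=\FH$, for every $\FH\in\CN^\perp$. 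Hence, applying Lemma~\ref{co.est0.} with its bi-Maxwellian taken to be $\widehat{\FM}$ and with test function $\FH\in\CN^\perp$ gives a constant $\delta_1=\delta_1(\widehat n_i,\widehat n_e,\widehat u,\widehat\theta)>0$ such that
\begin{equation}\notag
-\int_{\R^3}\FH\cdot\big\{\widehat{\FM}^{-1}\big(\FL_{\widehat{\FM}}\FH\big)\big\}\,d\xi\ \ge\ \delta_1\int_{\R^3}(1+|\xi|)\,\big|\widehat{\FM}^{-1/2}\FH\big|^2\,d\xi.
\end{equation}

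Next I would control the difference $\FL_{\FM}-\FL_{\widehat{\FM}}$. By bilinearity of each $Q_{\CA\CB}$ and the definition \eqref{def.L}, the $\CA$-component of $(\FL_{\FM}-\FL_{\widehat{\FM}})\FH$ equals $Q_\CA(\FM-\widehat{\FM},\FH)+Q_\CA(\FH,\FM-\widehat{\FM})$, that is, a sum of collision terms each of which has one argument equal to $M_\CA-\widehat M_\CA$ or $M_\CB-\widehat M_\CB$. Applying the bilinear estimate \eqref{est.nonop.ine.} of Lemma~\ref{est.nonop} to each of these terms, with the reference bi-Maxwellian there chosen to be $\widehat{\FM}$, the whole contribution is controlled by $\int_{\R^3}(1+|\xi|)H_\CA^2/\widehat M_\CA\,d\xi\le\|(1+|\xi|)^{1/2}\widehat{\FM}^{-1/2}\FH\|_{L^2_\xi}^2$ together with the weighted smallness bound
\begin{equation}\notag
\int_{\R^3}\frac{(1+|\xi|)\,\big(M_\CA(\xi)-\widehat M_\CA(\xi)\big)^2}{\widehat M_\CA(\xi)}\,d\xi\ \le\ C\eta_0^2,\qquad \CA=i,e.
\end{equation}
This last bound is proved by writing $M_\CA-\widehat M_\CA=\widehat M_\CA(M_\CA/\widehat M_\CA-1)$ and expanding the exponent of $M_\CA/\widehat M_\CA$ in the parameters $(n_\CA,u,\theta)$ about $(\widehat n_\CA,\widehat u,\widehat\theta)$: the difference of the two Gaussian exponents is $O(\eta_0)(1+|\xi|^2)$, so $|M_\CA/\widehat M_\CA-1|\lesssim\eta_0(1+|\xi|^2)e^{C\eta_0|\xi|^2}$, and multiplication by $\widehat M_\CA(1+|\xi|)$ followed by integration yields the estimate once $\eta_0$ is small — the hypothesis $\theta/2<\widehat\theta$ being exactly what keeps $\widehat M_\CA^{-1}M_\CA^2$, and hence the integrand above, Gaussian-integrable. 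It follows that $\|(1+|\xi|)^{-1/2}\widehat{\FM}^{-1/2}(\FL_{\FM}-\FL_{\widehat{\FM}})\FH\|_{L^2_\xi}\le C\eta_0\,\|(1+|\xi|)^{1/2}\widehat{\FM}^{-1/2}\FH\|_{L^2_\xi}$, and then the Cauchy--Schwarz inequality in $L^2_\xi(\widehat{\FM}^{-1})$ gives $|\int_{\R^3}\FH\cdot\{\widehat{\FM}^{-1}((\FL_{\FM}-\FL_{\widehat{\FM}})\FH)\}\,d\xi|\le C\eta_0\int_{\R^3}(1+|\xi|)|\widehat{\FM}^{-1/2}\FH|^2\,d\xi$.

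To conclude, I would write $-\int_{\R^3}\FH\cdot\{\widehat{\FM}^{-1}(\FL_{\FM}\FH)\}\,d\xi=-\int_{\R^3}\FH\cdot\{\widehat{\FM}^{-1}(\FL_{\widehat{\FM}}\FH)\}\,d\xi-\int_{\R^3}\FH\cdot\{\widehat{\FM}^{-1}((\FL_{\FM}-\FL_{\widehat{\FM}})\FH)\}\,d\xi$, bound the first term from below by the displayed coercivity inequality and the second (in absolute value) by the estimate just obtained, and then fix $\eta_0$ so small that $C\eta_0\le\frac12\delta_1$; this gives \eqref{vb.coL} with $\delta=\frac12\delta_1$. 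The main obstacle is the second step: since $\FL_{\FM}$ is linearised about $\FM$ whereas the weight in \eqref{vb.coL} is $\widehat{\FM}^{-1}$, the coercivity of Lemma~\ref{co.est0.} cannot be used verbatim, and everything reduces to the weighted smallness of $M_\CA-\widehat M_\CA$ measured against $\widehat M_\CA^{-1}$ — precisely the place where the assumption $\theta/2<\widehat\theta$ enters, ensuring that the relevant Gaussian integrals converge.
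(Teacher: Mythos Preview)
Your proposal is correct and follows essentially the same approach as the paper: split $\FL_{\FM}=\FL_{\widehat{\FM}}+\FL_{\FM-\widehat{\FM}}$, invoke Lemma~\ref{co.est0.} with background $\widehat{\FM}$ for the first piece, control the second via Lemma~\ref{est.nonop} together with smallness of $\int(1+|\xi|)|\widehat{\FM}^{-1/2}(\FM-\widehat{\FM})|^2\,d\xi$, and absorb. The only cosmetic difference is that the paper obtains this last smallness by splitting into $\{|\xi|\ge C_2\}$ (where the tail of $M_\CA^2/\widehat M_\CA+\widehat M_\CA$ is made small using $\theta/2<\widehat\theta$, independently of $\eta_0$) and $\{|\xi|<C_2\}$ (where a direct Lipschitz bound in the parameters gives $C\eta_0^2$), whereas you argue via a global Taylor-type estimate on $M_\CA/\widehat M_\CA-1$; both routes are valid and lead to the same conclusion.
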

\begin{proof}
We first write
\begin{equation}\label{df.L}
\begin{split}
-\int_{{\R}^3}\FH\cdot \left\{\widehat{\bf M}^{-1}\FL_{\bf M} \FH \right\}d\xi
=-\int_{{\R}^3}\FH\cdot \left\{\widehat{\FM}^{-1} \FL_{\widehat{\bf M}} \FH \right\}d\xi
-\int_{{\R}^3}\FH\cdot \left\{\widehat{\bf M}^{-1} \FL_{\FM-\widehat{\FM}}\FH\right\}d\xi.
\end{split}
\end{equation}
In light of Lemma \ref{co.est0.} , one has
\begin{equation}\label{df.L1}
\begin{split}
-\int_{{\R}^3}\FH\cdot \left\{\widehat{\FM}^{-1}\FL_{\widehat{\bf M}} \FH \right\}d\xi
\geq \de(\widehat{n}_i,\widehat{n}_e,\widehat{u},\widehat{\theta})\int_{{\R}^3}(1+|\xi|
\left|\widehat{\FM}^{-1/2}\FH \right|^2d\xi.
\end{split}
\end{equation}
For the second term on the right hand side of \eqref{df.L}, noticing
\begin{equation*}
\FL_{\FM-\widehat{\FM}}\FH=\left[\begin{array}{cc}
Q_{ii}(M_i-\widehat{M}_i,H_i)+Q_{ii}(H_i,M_i-\widehat{M}_i)+Q_{ie}(M_i-\widehat{M}_i,H_e)+Q_{ie}(H_i,M_e-\widehat{M}_e)
\\[3mm]
Q_{ee}(M_e-\widehat{M}_e,H_e)+Q_{ee}(H_e,M_e-\widehat{M}_e)+Q_{ei}(M_e-\widehat{M}_e,H_i)+Q_{ei}(H_e,M_i-\widehat{M}_i)
\end{array}\right],
\end{equation*}
it follows from Cauchy-Schwarz inequality  and Lemma \ref{est.nonop} that
\begin{eqnarray}\label{df.L2}
\begin{split}
&\left|-\int_{{\R}^3}\FH\cdot \left\{\widehat{\bf M}^{-1}\FL_{\FM-\widehat{\FM}}\FH \right\}d\xi\right|
\\&\leq \frac{\de(\widehat{n}_i,\widehat{n}_e,\widehat{u},\widehat{\theta})}{4}\int_{{\R}^3}(1+|\xi|)\left|\widehat{\bf M}^{-1/2}\FH\right|^2d\xi\\
&\quad+\frac{4}{\de(\widehat{n}_i,\widehat{n}_e,\widehat{u},\widehat{\theta})}
\int_{{\R}^3}(1+|\xi|)^{-1}\left|\widehat{\bf M}^{-1/2}\FL_{\FM-\widehat{\FM}}\FH\right|^2d\xi
\\&\leq \frac{\de(\widehat{n}_i,\widehat{n}_e,\widehat{u},\widehat{\theta})}{4}\int_{{\R}^3}(1+|\xi|)\left|\widehat{\bf M}^{-1/2}\FH\right|^2d\xi
\\&\quad+\frac{C_1}{\de(\widehat{n}_i,\widehat{n}_e,\widehat{u},\widehat{\theta})}\int_{{\R}^3}(1+|\xi|)\left|\widehat{\bf M}^{-1/2}\FH\right|^2d\xi
\int_{{\R}^3}(1+|\xi|)\left|\widehat{\bf M}^{-1/2}(\FM-\widehat{\FM})\right|^2d\xi.
\end{split}
\end{eqnarray}
To treat the integral
$$
\dis{\int_{{\R}^3}}(1+|\xi|)\left|\widehat{\bf M}^{-1/2}(\FM-\widehat{\FM})\right|^2d\xi,
$$
we use $\frac{\ta}{2}<\widehat{\ta}$ and choose a large positive constant $C_2=C_2(n_i,n_e, u,\theta;\widehat{n}_i,\widehat{n}_e,\widehat{u},\widehat{\theta})$ such that
\begin{equation}\label{df.L3}
\begin{split}
\int_{|\xi|\geq C_2}(1+|\xi|)\left|\widehat{\bf M}^{-1/2}(\FM-\widehat{\FM})\right|^2d\xi
 \leq& C_3\int_{|\xi|\geq C_2}(1+|\xi|)\left(\frac{M^2_i+\widehat{M}^2_i}{\widehat{M}_i}
+\frac{M^2_e+\widehat{M}^2_e}{\widehat{M}_e}\right)
d\xi
\\ \leq&
\frac{\de^2(\widehat{n}_i,\widehat{n}_e,\widehat{u},\widehat{\theta})}{16C_1}.
\end{split}
\end{equation}
For the integral in the remaining domain, it follows that
\begin{equation}\label{df.L4}
\begin{split}
\int_{|\xi|<C_2}(1+|\xi|)\left|\widehat{\bf M}^{-1/2}(\FM-\widehat{\FM})\right|^2d\xi
 \leq \frac{C_4}{4C_1}
\left(|n_i-\widehat{n}_i|+|n_e-\widehat{n}_e|+|u-\widehat{u}|+|\theta-\widehat{\theta}|\right)^2,
\end{split}
\end{equation}
for some constant $C_4=C_4(n_i,n_e, u,\theta;\widehat{n}_i,\widehat{n}_e,\widehat{u},\widehat{\theta})$.
Finally, by letting
$$
\eta_0=\frac{\de(\widehat{n}_i,\widehat{n}_e,\widehat{u},\widehat{\theta})}{2C_4(n_i,n_e, u,\theta;\widehat{n}_i,\widehat{n}_e,\widehat{u},\widehat{\theta})},
$$
and inserting \eqref{df.L1},
\eqref{df.L2}, \eqref{df.L3} and \eqref{df.L4}
into \eqref{df.L}, one sees that \eqref{vb.coL} holds true. This completes the proof of Lemma \ref{co.est.}.
\end{proof}

A direct consequence of Lemma \ref{co.est.} with the help of the Cauchy inequality
is the following corollary, cf. \cite{LYYZ}.

\begin{corollary}\label{inv.L.} Under the assumptions in Lemma \ref{co.est.}, it holds that
for  $\FH(\xi)\in {\mathcal{N}}^\bot$,
\begin{eqnarray*}
{\displaystyle\int_{{\R}^3}}(1+|\xi|)\left|\widehat{\bf
M}^{-1/2}\FL^{-1}_{\bf M}\FH\right|^2d\xi\leq
\de^{-2}{\displaystyle\int_{{\R}^3}}(1+|\xi|)^{-1}|\widehat{\bf M}^{-1/2}\FH|^2(\xi)\,d\xi.
\end{eqnarray*}
\end{corollary}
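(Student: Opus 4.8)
The plan is to apply the coercivity estimate \eqref{vb.coL} of Lemma \ref{co.est.} to the particular element $\FG:=\FL^{-1}_{\bf M}\FH$ and then close the resulting inequality by a weighted Cauchy--Schwarz inequality. First I would recall that, under the hypotheses of Lemma \ref{co.est.}, the decomposition $\FL_{\bf M}=-{\bf \nu}+{\bf K}$ with ${\bf \nu}\sim(1+|\xi|)$ and ${\bf K}$ compact on $L^2_\xi(1/\sqrt{M_i})\times L^2_\xi(1/\sqrt{M_e})$ (used in the proof of Lemma \ref{co.est0.}), combined with the lower bound \eqref{vb.coL}, guarantees that $\FL_{\bf M}$ is invertible on $\mathcal{N}^\bot$ and that $\FL^{-1}_{\bf M}$ maps $\{\FH\in\mathcal{N}^\bot:\ \int_{\R^3}(1+|\xi|)^{-1}|\widehat{\bf M}^{-1/2}\FH|^2\,d\xi<\infty\}$ into $\{\FG\in\mathcal{N}^\bot:\ \int_{\R^3}(1+|\xi|)|\widehat{\bf M}^{-1/2}\FG|^2\,d\xi<\infty\}$. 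Thus for $\FH\in\mathcal{N}^\bot$ with finite right-hand side, $\FG=\FL^{-1}_{\bf M}\FH\in\mathcal{N}^\bot$ is well defined, lies in the latter weighted space, and satisfies $\FL_{\bf M}\FG=\FH$.

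Next I would substitute $\FG$ into \eqref{vb.coL} and use $\FL_{\bf M}\FG=\FH$ to rewrite its left-hand side. Since $\widehat{\bf M}^{-1}={\rm diag}(1/\widehat M_i,1/\widehat M_e)$ is diagonal, one has $-\int_{\R^3}\FG\cdot\{\widehat{\bf M}^{-1}(\FL_{\bf M}\FG)\}\,d\xi=-\int_{\R^3}(\widehat{\bf M}^{-1/2}\FG)\cdot(\widehat{\bf M}^{-1/2}\FH)\,d\xi$, and by splitting the weight as $1=(1+|\xi|)^{1/2}(1+|\xi|)^{-1/2}$ together with Cauchy--Schwarz this is bounded by $\left(\int_{\R^3}(1+|\xi|)|\widehat{\bf M}^{-1/2}\FG|^2\,d\xi\right)^{1/2}\left(\int_{\R^3}(1+|\xi|)^{-1}|\widehat{\bf M}^{-1/2}\FH|^2\,d\xi\right)^{1/2}$. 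Combining with \eqref{vb.coL} gives
\[
\de\int_{\R^3}(1+|\xi|)\left|\widehat{\bf M}^{-1/2}\FG\right|^2d\xi\leq \left(\int_{\R^3}(1+|\xi|)\left|\widehat{\bf M}^{-1/2}\FG\right|^2d\xi\right)^{1/2}\left(\int_{\R^3}(1+|\xi|)^{-1}\left|\widehat{\bf M}^{-1/2}\FH\right|^2d\xi\right)^{1/2}.
\]
If the common factor $\int_{\R^3}(1+|\xi|)|\widehat{\bf M}^{-1/2}\FG|^2\,d\xi$ vanishes the claim is trivial; otherwise dividing by its square root and squaring yields precisely $\int_{\R^3}(1+|\xi|)|\widehat{\bf M}^{-1/2}\FL^{-1}_{\bf M}\FH|^2\,d\xi\leq\de^{-2}\int_{\R^3}(1+|\xi|)^{-1}|\widehat{\bf M}^{-1/2}\FH|^2\,d\xi$, which is the assertion.

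There is no genuine difficulty here; the only point demanding a little care is the a priori statement, used implicitly when dividing above, that $\FL^{-1}_{\bf M}\FH$ indeed belongs to the $(1+|\xi|)$-weighted $L^2_\xi$ space on the left, so that the division is legitimate rather than merely formal. This is handled exactly as in the single-component setting of \cite{LYYZ}: the Fredholm alternative for $\FL_{\bf M}=-{\bf \nu}+{\bf K}$ identifies the range of $\FL_{\bf M}$ on $\mathcal{N}^\bot$, and \eqref{vb.coL} upgrades the inverse to a bounded operator from $\mathcal{N}^\bot$ with the $(1+|\xi|)^{-1}$-weighted norm to $\mathcal{N}^\bot$ with the $(1+|\xi|)$-weighted norm; the corollary is then just the quantitative form of this mapping property, with the explicit constant $\de^{-2}$ coming directly from the coercivity constant $\de$.
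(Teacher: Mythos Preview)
Your proof is correct and follows essentially the same approach as the paper, which simply states that the corollary is ``a direct consequence of Lemma~\ref{co.est.} with the help of the Cauchy inequality.'' You have filled in precisely those details: apply \eqref{vb.coL} with $\FG=\FL^{-1}_{\bf M}\FH$, rewrite the left side using $\FL_{\bf M}\FG=\FH$, split the weight and apply Cauchy--Schwarz, then cancel the common factor.
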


\section{Proof of the main result}\label{sec5}

With preparations in the previous sections, we begin to give the proof of Theorem \ref{main.Res.}. For later use we first introduce some notations. Recall that $[n^r,u_1^r,\theta^r]$ is the smooth $3$-family rarefaction wave to the quasineutral Euler system \eqref{cons.law.f.as} with far-field data $[n_\pm,u_{1\pm},\theta_\pm]$ connected by $[n_+,u_{1+},\theta_+]\in R_3(n_-,u_{1-},\theta_-)$. We define the local bi-Maxwellian:
\begin{eqnarray*}
\FM_r=\left[\begin{array}{cc}
      M_{ri}    \\[3mm]
      M_{re}
\end{array}\right]
=\left[\begin{array}{cc}
      M_{[n_i^r(t,x),u^r(t,x),\theta^r(t,x);m_i]}(\xi)    \\[3mm]
      M_{[n_e^r(t,x),u^r(t,x),\theta^r(t,x);m_e]}(\xi)
\end{array}\right].
\end{eqnarray*}
with $n_e^r(t,x)=n^r(t,x)$ and  $n_i^{r}(t,x)=-\frac{q_e}{q_i}n_e^{r}(t,x)=-\frac{q_e}{q_i}n^r(t,x)$. In terms of \eqref{def.mb} and \eqref{def.mbp},  we also define the global bi-Maxwellian:
\begin{eqnarray*}
\FM_*=\left[\begin{array}{cc}
      M_{\ast i}    \\[3mm]
      M_{\ast e}
\end{array}\right]
=\left[\begin{array}{cc}
      M_{[n_{*i},u_{*},\ta_{*};m_i]}(\xi)    \\[3mm]
      M_{[n_{*e},u_{*},\ta_{*};m_e]}(\xi)
\end{array}\right].
\end{eqnarray*}
For a vector-valued function $\FH=[H_i,H_e]^{\rm T}$, we write that
$$
\FH\in L^2_{\xi}(\frac{1}{\sqrt{\FM_{*}}}),\quad
\text{if}\ \frac{H_i}{\sqrt{M_{*i}}}\in L^2_\xi\ \ \text{and}\ \ \frac{H_e}{\sqrt{M_{*e}}}\in L^2_\xi.
$$
Now we define the function space in which we seek the solutions of the VPB system \eqref{VPB}, \eqref{PE}. For given $T\in (0,+\infty]$,
we set
\begin{multline}\notag
\widetilde{\CE}([0,T])=\{ \FH(t,x,\xi)|
\frac{\partial^{\al}\pa^\be
H_{\CA}(t,x,\xi)}{\sqrt{M_{*\CA}(\xi)}}
\in  C\left([0,T]; L^2_{x,\xi}({\R}\times{\R}^3)\right) \\
\textrm{for}\ |\al|+|\be|\leq 2, \al_0\leq1,\ \CA=i,e\},
\end{multline}
associated with the norm $\widetilde{\CE}_T(\cdot)$ defined by
$$
\widetilde{\CE}_T(\FH)\equiv \sum\limits_{|\al|+|\beta|\leq 2\atop{0\leq \al_0\leq1}}
\left\{\sup\limits_{0\leq t\leq T}\int_{{\R}\times{\R}^3}\frac{\left|\partial^\al\partial^\beta H_i(t,x,\xi)\right|^2}{M_{*i}}\,d\xi dx
+ \sup\limits_{0\leq t\leq T}\int_{{\R}\times{\R}^3}\frac{\left|\partial^\al\partial^\beta H_e(t,x,\xi)\right|^2}{M_{*e}}\,d\xi dx\right\}.
$$

The proof of Theorem \ref{main.Res.} is based on the energy estimates on both the fluid and non-fluid part of the solution $\FF(t,x,\xi)$. We first consider the fluid part. Recall that the macro quantities $[n_i,n_e,u,\theta]$ of the fluid part $\FM(t,x,\xi)$ satisfy the two-fluid Navier-Stokes-Poisson-type system \eqref{BE-NSi} and \eqref{BE-NSe}, and the macro quantities $[n_i^r,n_e^r,u^r,\theta^r]$ of the corresponding smooth approximate profile $\FM^r(t,x,\xi)$  satisfy \eqref{Euler3}.
We now define the perturbation
$$
\left[\widetilde{n}_i,\widetilde{n}_e,\widetilde{u},\widetilde{\ta}\right](t,x)
=\left[n_i-n_i^r,n_e-n_e^r,u-\left[u_1^r,0,0\right],\ta-\ta^r\right](t,x).
$$
Then one can deduce the perturbed equations for $\left[\widetilde{n}_i,\widetilde{n}_e,\widetilde{u},\widetilde{\ta}\right]$ through \eqref{BE-NSi}, \eqref{BE-NSe}, \eqref{cons.law.s}  and \eqref{Euler3} in the following way. For number densities $\widetilde{n}_i$ and $\widetilde{n}_e$, one has
\begin{eqnarray}
&&\pa_t\widetilde{n}_i+\pa_x\left(n_iu_1-n_i^ru_1^r\right)=-\int_{{\R}^3}\xi_1\pa_xG_i \,d\xi,\label{tvi}\\
&&\pa_t\widetilde{n}_e+\pa_x\left(n_eu_1-n_e^ru_1^r\right)=-\int_{{\R}^3}\xi_1\pa_xG_e \,d\xi.\label{tve}
\end{eqnarray}
For the momentum $\widetilde{u}=[\widetilde{u}_1,\widetilde{u}_2,\widetilde{u}_3]$, one has
\begin{eqnarray}
&&(m_in_i+m_en_e)(\pa_t \widetilde{u}_1+u_1\pa_x\widetilde{u}_1+\widetilde{u}_1\pa_xu_1^{r})+\pa_x (P-P^r)\notag\\
&&\quad+\left(1-\frac{m_in_i+m_en_e}{m_in_i^r+m_en_e^r}\right)\pa_xP^r+(q_in_i+q_en_e)\pa_x\phi\notag\\
&&=3\pa_x\left((\mu_i(\theta)+\mu_e(\theta))\pa_x \widetilde{u}_1\right)
+\pa_x\left((\mu_i(\theta)+\mu_e(\theta))\pa_x u_1^r\right)
\notag
\\
&&\quad-\int_{{\R}^3}\xi_1\psi_{3}\cdot\pa_x\overline{\FR} \,d\xi-\int_{{\R}^3}\psi_{3i}\xi_1\pa_x \left(P_0^{M_i}G_i\right) d\xi-\int_{{\R}^3}\psi_{3e}\xi_1\pa_x \left(P_0^{M_e}G_e\right) d\xi,\label{tu1}
\end{eqnarray}
and
\begin{multline}
(m_in_i+m_en_e)(\pa_t \widetilde{u}_j+u_1\pa_x\widetilde{u}_j)=\pa_x\left((\mu_i(\theta)+\mu_e(\theta))\pa_x \widetilde{u}_j\right)-\int_{{\R}^3}\xi_1\psi_{j+2}\cdot\pa_x\overline{\FR} \,d\xi
\\
-\int_{{\R}^3}\psi_{(j+2)i}\xi_1\pa_x \left(P_0^{M_i}G_i\right) d\xi-\int_{{\R}^3}\psi_{(j+2)e}\xi_1\pa_x \left(P_0^{M_e}G_e\right) d\xi,\
\ j=2,3.\label{tuj}
\end{multline}
For the equation of temperature $\widetilde{\ta}$, one has
\begin{eqnarray}
&&(n_i+n_e)(\pa_t\widetilde{\theta}+u_1\pa_x\widetilde{\ta}+\widetilde{u}_1\pa_x\ta^r)
+P\pa_x u_1-P^r\pa_x u^r+\left(1-\frac{n_i+n_e}{n_i^r+n_e^r}\right)P^r\pa_x u^r
\notag\\
&&=
\pa_x\left((\kappa_i(\theta)+\kappa_e(\theta))\pa_x \widetilde{\theta}\right)
+\pa_x\left((\kappa_i(\theta)+\kappa_e(\theta))\pa_x\ta^r\right)\notag
\\
&&\quad+3(\mu_i(\theta)+\mu_e(\theta))(\pa_x u_1)^2+
\sum\limits_{j=2}^3(\mu_i(\theta)+\mu_e(\theta))(\pa_x u_j)^2
\notag\\
&&\quad
-\int_{{\R}^3}\xi_1\left(\psi_{6}-\sum\limits_{j=1}^3u_j \psi_{j+2}\right)\cdot\pa_x\overline{\FR} d\xi-\int_{{\R}^3}\psi_{6i}\xi_1\pa_x\left(P_0^{M_i}G_i\right) d\xi-\int_{{\R}^3}\psi_{6e}\xi_1\pa_x\left(P_0^{M_e}G_e\right) d\xi
\notag\\
&&\quad+\sum\limits_{j=1}^3u_j\int_{{\R}^3}\psi_{(j+2)i}\xi_1\pa_x \left(P_0^{M_i}G_i\right) d\xi
+\sum\limits_{j=1}^3u_j\int_{{\R}^3}\psi_{(j+2)e}\xi_1\pa_x \left(P_0^{M_e}G_e\right) d\xi
\notag\\
&&\quad+\ta\int_{{\R}^3}[\xi_1,\xi_1]^{\rm T}\cdot\pa_x\FG \,d\xi
+\pa_x\phi\int_{{\R}^3}\frac{|\xi|^2}{2}\left[q_i,q_e\right]^{\rm T}\cdot\pa_{\xi_1}\FG \,d\xi.\label{tta}
\end{eqnarray}
Note that $\phi$ is coupled to the Poisson equation
\begin{equation}
-\pa^2_x\phi =q_in_i+q_en_e=q_i\widetilde{n}_i+q_e\widetilde{n}_e.\label{tphy}
\end{equation}
The above reformulated Cauchy problem on $\left[\widetilde{n}_i,\widetilde{n}_e,\widetilde{u},\widetilde{\ta}\right]$ is supplemented with initial data
\begin{multline}
\left[\widetilde{n}_i,\widetilde{n}_e,\widetilde{u}, \widetilde{\ta}\right](0,x)=\left[\widetilde{n}_{i0},\widetilde{n}_{e0},\widetilde{u}_0, \widetilde{\ta}_0\right](x)\\
=\big[n_{i0}(x)-n^r_{i0}(x),n_{e0}(x)-n^r_{e0}(x),u_0(x)-[u^r_{1,0}(x),0,0],\ta_0(x)-\ta^r_0(x)\big].\label{tid}
\end{multline}
Here we recall that $\overline{\FR}$ is defined in \eqref{def.rbar}. We also note that $\widetilde{u}_j=u_j$ for $j=2, 3$. As $\phi_+=\phi_-$, we further assume $\phi_+=0=\phi_-$ without loss of generality and let $\phi (t,x)$ be determined by
the elliptic equation \eqref{tphy} under the boundary condition that $\phi (t,x)\rightarrow 0$ as $x\rightarrow\pm\infty$.

For the non-fluid part $\FG(t,x,\xi)$, as in \cite{XZ}, we note that
$$
\left\|\frac{G_\CA}{\sqrt{M_\CA}}\right\|^2_{L^2_{x,\xi}}
$$
is not integrable with respect to the time variable, and hence it is necessary to consider the following perturbation
\begin{equation*}
\widetilde{\FG}=\left[\widetilde{G}_i,\widetilde{G}_e\right]^{\rm{T}}
=\left[G_i-\overline{G}_{i},G_e-\overline{G}_{e}\right]^{\rm{T}},
\end{equation*}
where
\begin{equation}\label{def.ng}
\begin{split}
&\overline{\FG}=\left[\overline{G}_i,\overline{G}_e\right]^{\rm{T}}\\
&=\frac{3}{2\ta}\FL^{-1}_{\FM}
\left\{{\bf P}_1^{\FM}\left[\left[m_iM_i,m_eM_e\right]^{\rm T}
\xi_1\left(\xi_1\pa_xu_1^r+\frac{|\xi-u|^2}{2\ta}\pa_x\ta^r\right)\right]\right\}\\
&\quad +\FL^{-1}_{\FM}
\left\{{\bf P}_1^{\FM}\left[\left[n^{-1}_iM_i\pa_xn_i^r,n^{-1}_eM_e\pa_xn_e^r\right]^{\rm T}
\xi_1\right]\right\}
-\frac{3}{2\ta}\FL^{-1}_{\FM}
\left\{{\bf P}_1^{\FM}\left[\left[M_i,M_e\right]^{\rm T}
\xi_1\pa_x\ta^r\right]\right\}
\\&=\frac{3}{2\ta}\pa_xu_1^r\FL^{-1}_{\FM}
\left\{\left[m_iM_i,m_eM_e\right]^{\rm T}|\xi_1-u_1|^2
-\frac{1}{3}(|\xi-u|^2-3)\left[M_i,M_e\right]^{\rm T}\right\}\\
&\quad+\frac{3}{2\ta}\pa_x\ta^r\FL^{-1}_{\FM}
\left\{(\xi_1-u_1)\left[\left[m_iM_i,m_eM_e\right]^{\rm T}
\left(\frac{|\xi-u|^2}{2\ta}-\frac{5}{3}\frac{n_i+n_e}{m_in_i+m_en_e}\right)\right]\right\}\\
&\quad+\FL^{-1}_{\FM}
\left\{(\xi_1-u_1)\left[\left[n^{-1}_iM_i\pa_xn_i^r,n^{-1}_eM_e\pa_xn_e^r\right]^{\rm T}
-\left[m_iM_i,m_eM_e\right]^{\rm T}\frac{\pa_x(n_i^r+n_e^r)}{m_in_i+m_en_e}\right]\right\}
\\&\quad-\frac{3}{2\ta}\pa_x\ta^r\FL^{-1}_{\FM}
\left\{(\xi_1-u_1)\left[\left[M_i,M_e\right]^{\rm T}
-\left[m_iM_i,m_eM_e\right]^{\rm T}\frac{n_i+n_e}{m_in_i+m_en_e}\right]\right\}.
\end{split}
\end{equation}

Now, to prove Theorem \ref{main.Res.}, the key point is to deduce the a priori energy estimates on the macroscopic part $\left[\widetilde{n}_i,\widetilde{n}_e, \widetilde{u}, \widetilde{\ta}, \phi \right]$ and the microscopic part $\FG$ and $\widetilde{\FG}$ based on the following a priori assumption:
\begin{equation}
\label{aps}
N^2(T)+\de_r\leq \eps_0^2,
\end{equation}
for an arbitrary positive time $T>0$,
where $\de_r$ is the wave strength of the rarefaction wave given in  \eqref{def.rws}, and $N(T)$ is defined by
\begin{eqnarray*}
N^2(T)&:=&\sup\limits_{0\leq t\leq T}
\left\|\left[\widetilde{n}_i,\widetilde{n}_e, \widetilde{u}, \widetilde{\ta}\right](t,x)\right\|^2
+\sup\limits_{0\leq t\leq T}\sum\limits_{|\al|\leq1}
\left\|\pa^\al\pa_x[n_i, n_e, u, \ta](t,x)\right\|^2
\\&&+\sup\limits_{0\leq t\leq T} \sum\limits_{1\leq|\al| \leq
2\atop{\al_0\leq1}} {\displaystyle\int_{\R\times{\R}^3}}\left|\FM_\ast^{-1/2}\partial^\al \FF(t,x,\xi)\right|^2d\xi dx
+\sup\limits_{0\leq t\leq T} {\displaystyle\int_{\R\times{\R}^3}}\left|\FM_\ast^{-1/2}\widetilde{\FG}(t,x,\xi)\right|^2d\xi dx
\\&&+\sup\limits_{0\leq t\leq T} \sum\limits_{|\al|+|\be|\leq
2\atop{|\be|\geq1}} {\displaystyle\int_{\R\times{\R}^3}}\left|\FM_\ast^{-1/2}\pa^\al \partial^\be\widetilde{\FG}(t,x,\xi)\right|^2d\xi dx
+\sup\limits_{0\leq t\leq T}\sum\limits_{|\al|\leq1}\left\|\pa^\al\pa_x\phi\right\|_{H^1}^2.
\end{eqnarray*}
 Here we first claim that the a priori bound of $N(T)$ immediately yields
\begin{equation}\label{aps4}
\begin{split}
 \sup\limits_{0\leq t\leq T}\sum\limits_{|\al|=
2\atop{\al_0\leq1}}\left\|\pa^\al[n_i,n_e,u,\ta]\right\|^2
+\sup\limits_{0\leq t\leq T}  \sum\limits_{|\al|=
2\atop{\al_0\leq1}}{\displaystyle\int_{\R\times{\R}^3}}\left|\FM_\ast^{-1/2}\partial^\al\FG(t,x,\xi)\right|^2d\xi dx
\leq C\eps_0^2,
\end{split}
\end{equation}
for a generic constant $C>0$. Indeed, due to the decomposition $\FF=\FM+\FG$, one may  notice
\begin{multline}
\sup\limits_{0\leq t\leq T}\sum\limits_{|\al|=
2\atop{\al_0\leq1}}
{\displaystyle\int_{\R\times{\R}^3}}\left|\FM^{-1/2}\partial^{\al}\FM\right|^2d\xi dx
+\sup\limits_{0\leq t\leq T} \sum\limits_{|\al|=
2\atop{\al_0\leq1}} {\displaystyle\int_{\R\times{\R}^3}}\left|\FM^{-1/2}\partial^{\al}\FG\right|^2d\xi dx\notag\\
\leq 2\sup\limits_{0\leq t\leq T} \sum\limits_{|\al|=
2\atop{\al_0\leq1}} \left|{\displaystyle\int_{\R\times{\R}^3}}(\FM^{-1}\pa^{\al}\FM)\cdot \partial^\al\FG \,d\xi dx\right|
+\sup\limits_{0\leq t\leq T}  \sum\limits_{|\al|=
2\atop{\al_0\leq1}}{\displaystyle\int_{\R\times{\R}^3}}\left|\FM^{-1/2}\partial^{\al} \FF\right|^2d\xi dx,
\end{multline}
where the right-hand first term is further bounded by
\begin{equation*}
C\sup\limits_{0\leq t\leq T} \sum\limits_{|\al'|=1,|\al''|=1} \left(\int_{{\R}}\left|\pa^{\al'}[u,\ta]\right|^2\left|\pa^{\al''}[u,\ta]\right|^2dx\right)^{1/2}
\sum\limits_{|\al|=
2\atop{\al_0\leq1}}\left(\int_{\R\times{\R}^3}|\FM^{-1/2}\partial^{\al}\FG(t,x,\xi)|^2d\xi dx\right)^{1/2}.
\end{equation*}
It then follows that
\begin{eqnarray}
&&\sup\limits_{0\leq t\leq T}\sum\limits_{|\al|=
2\atop{\al_0\leq1}}
{\displaystyle\int_{\R\times{\R}^3}}\left|\FM^{-1/2}\partial^{\al}\FM\right|^2d\xi dx
+\sup\limits_{0\leq t\leq T} \sum\limits_{|\al|=
2\atop{\al_0\leq1}} {\displaystyle\int_{\R\times{\R}^3}}\left|\FM^{-1/2}\partial^{\al}\FG\right|^2d\xi dx\notag\\
&&\leq C\eps_0\sup\limits_{0\leq t\leq T}\sum\limits_{1\leq|\al|\leq
2\atop{\al_0\leq1}}\left\|\pa^{\al}[u,\ta]\right\|^2
+C\eps_0\sum\limits_{|\al|=
2\atop{\al_0\leq1}}\int_{\R\times{\R}^3}|\FM^{-1/2}\partial^{\al}\FG|^2d\xi dx
\notag\\
&&\quad +\sup\limits_{0\leq t\leq T}\sum\limits_{|\al|=
2\atop{\al_0\leq1}}  {\displaystyle\int_{\R\times{\R}^3}}\left|\FM_\ast^{-1/2}\partial^{\al} \FF\right|^2d\xi dx.\label{aps2}
\end{eqnarray}
Moreover,
\begin{equation}\label{aps3}
\begin{split}
&\sup\limits_{0\leq t\leq T}\sum\limits_{|\al|=
2\atop{\al_0\leq1}}\left\|\pa^{\al}[n_i,n_e,u,\ta]\right\|^2
\\ &\leq C\sup\limits_{0\leq t\leq T}\sum\limits_{|\al|=
2\atop{\al_0\leq1}}
{\displaystyle\int_{\R\times{\R}^3}}\left|\FM^{-1/2}\partial^\al\FM(t,x,\xi)\right|^2d\xi dx
+C \sup\limits_{0\leq t\leq T}\sum\limits_{|\al'|=1}\left\|(\pa^{\al'}\left[n_i,n_e,u,\ta\right])^2\right\|^2\\
&\leq C\sup\limits_{0\leq t\leq T}\sum\limits_{|\al|=
2\atop{\al_0\leq1}}
{\displaystyle\int_{\R\times{\R}^3}}\left|\FM^{-1/2}\partial^\al\FM(t,x,\xi)\right|^2d\xi dx
+C\eps_0^2 \sup\limits_{0\leq t\leq T}\sum\limits_{1\leq|\al|\leq
2\atop{\al_0\leq1}}\left\|\pa^\al\left[n_i,n_e,u,\ta\right]\right\|^2.
\end{split}
\end{equation}
Therefore \eqref{aps3} together with \eqref{aps2} give \eqref{aps4}. In addition, one can also see that the following a priori bound holds true:
\begin{equation*}
\sup\limits_{0\leq t\leq T}\sum\limits_{|\al|=
2\atop{\al_0\leq1}}
\left\|\pa^\al\pa_x^2\phi(t)\right\|
\leq C\sup\limits_{0\leq t\leq T}\sum\limits_{|\al|=
2\atop{\al_0\leq1}}
\left\|\pa^\al(q_in_i+q_en_e)\right\|^2\leq C\eps_0^2.
\end{equation*}
This directly follows from the Poisson equation \eqref{tphy} as well as \eqref{aps} and \eqref{aps4}.
%


%


The a priori energy estimates under the assumption \eqref{aps} are divided into two steps. The first step is concerned with the estimates on
$\left[\widetilde{n}_i,\widetilde{n}_e, \widetilde{u}, \widetilde{\ta}, \pa_x\phi \right](t,x)$ basing on equations \eqref{tvi}, \eqref{tve}, \eqref{tu1}, \eqref{tuj}, \eqref{tta}, and \eqref{tphy}.

\begin{proposition}\label{mac.eng.lem.}
Assume that all the conditions in Theorem \ref{main.Res.} hold, and $\FF\in\widetilde{\CE}([0,T])$ for $T>0$. Let $\left[\widetilde{n}_i,\widetilde{n}_e, \widetilde{u}, \widetilde{\ta},  \phi \right](t,x)$
be a smooth solution to the Cauchy problem
\eqref{tvi}, \eqref{tve}, \eqref{tu1}, \eqref{tuj}, \eqref{tta}, \eqref{tphy} and \eqref{tid} on $0\leq t\leq T$
and satisfy \eqref{aps}. Then the following energy estimate holds:
\begin{equation}\label{macro.eng}
\begin{split}
\frac{d}{dt}&\sum\limits_{|\al| =1}\left\|\pa^\al\left[\widetilde{n}_i,\widetilde{n}_e, \widetilde{u}, \widetilde{\ta}\right](t)\right\|^2
+\frac{d}{dt}\left\|\pa_x \phi (t)\right\|^2
+\ka_0\frac{d}{dt}\sum\limits_{|\al| =1}\left(\pa^\al \widetilde{u}_1,\pa_x\pa^\al (\widetilde{n}_i+\widetilde{n}_e)\right)
\\&+\la\left\|\sqrt{\pa_xu^r}\left[\widetilde{n}_i,\widetilde{n}_e, \widetilde{u}, \widetilde{\ta}\right](t)\right\|^2
+\la\sum\limits_{1\leq|\al| \leq2}\left\|\pa^\al\left[\widetilde{n}_i,\widetilde{n}_e,\widetilde{u},\widetilde{\theta}\right](t)\right\|^2
+\la\left\|q_i\widetilde{n}_i+q_e\widetilde{n}_e\right\|^2
\\&+\la\sum\limits_{|\al| \leq1}\left\|\pa^{\al}\left[\pa_x\phi ,\pa^2_x\phi \right]\right\|^2
+\la\sum\limits_{|\al| =2}\left\|\pa^\al \pa^2_x\phi \right\|^2
\\
\lesssim&(1+t)^{-2}\left\|\left[\widetilde{n}_i,\widetilde{n}_e,\widetilde{u},\widetilde{\ta}\right](t)\right\|^2
+\de_r^{1/6}(1+t)^{-6/7}
+\sum\limits_{1\leq|\al| \leq2}\int_{\R\times{\R}^3}(1+|\xi|)\left|\FM^{-1/2}\pa^\al \FG\right|^2 d\xi dx
\\&+\eps_{0}\sum\limits_{|\al| \leq1}\int_{\R\times\R^3}\left|\FM^{-1/2}\pa_{\xi_1}\pa^\al \widetilde{\FG}\right|^2 d\xi dx
+\int_{{\R}\times{\R}^3}(1+|\xi|)\left|\FM_{\sharp}^{-1/2}\widetilde{\FG}\right|^2d\xi dx,
\end{split}
\end{equation}
for all $0\leq t\leq T$, where $\ka_0$ is a small positive constant. Here and in the sequel we also use $\FM_\sharp$ to denote $\FM_*$ or $\FM$ for brevity, whenever there is no confusion.
\end{proposition}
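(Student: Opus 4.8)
The plan is to run the energy method directly on the perturbed Navier--Stokes--Poisson-type system \eqref{tvi}, \eqref{tve}, \eqref{tu1}, \eqref{tuj}, \eqref{tta}, \eqref{tphy}, viewing the velocity- and temperature-moments of $\FG$, the remainder $\overline{\FR}$ (cf.~\eqref{def.rbar}) and the coupling with $\widetilde{\FG}$ as inhomogeneous source terms, and exploiting that the background $[n_i^r,n_e^r,u^r,\theta^r]$ is the smooth rarefaction wave studied in Lemma~\ref{cl.RwRe}. First I would put the system in a symmetrizable form: multiplying the mass equations \eqref{tvi}, \eqref{tve} by suitable background-dependent weights makes the convective part of the momentum/temperature block skew-symmetric up to remainders that are either lower order or carry a factor $\pa_x[n^r,u^r,\theta^r]$ controlled by Lemma~\ref{cl.RwRe}(ii). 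The electric term $(q_in_i+q_en_e)\pa_x\phi=(q_i\widetilde n_i+q_e\widetilde n_e)\pa_x\phi$ is handled via \eqref{tphy} and integration by parts, producing $\frac{1}{2}\frac{d}{dt}\|\pa_x\phi\|^2$ together with controllable remainders.

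Next I would apply $\pa^\al$ with $|\al|=1$ to the symmetrized system and take the $L_x^2$ inner product with the matching background-weighted combination of $\pa^\al\widetilde n_i,\pa^\al\widetilde n_e,\pa^\al\widetilde u,\pa^\al\widetilde\theta$. The time-derivative terms assemble into $\frac{d}{dt}\sum_{|\al|=1}\|\pa^\al[\widetilde n_i,\widetilde n_e,\widetilde u,\widetilde\theta]\|^2+\frac{d}{dt}\|\pa_x\phi\|^2$; the positive viscosity $\mu_\CA(\theta)$ and heat-conductivity $\kappa_\CA(\theta)$ give the parabolic dissipation of $\pa_x^2\widetilde u,\pa_x^2\widetilde\theta$; and differentiating the background, crucially with $\pa_xu_1^r>0$, produces the good expansion-wave term $\|\sqrt{\pa_xu^r}[\widetilde n_i,\widetilde n_e,\widetilde u,\widetilde\theta]\|^2$. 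Since the density has no parabolic dissipation of its own, I would add the interaction functional $\ka_0\sum_{|\al|=1}(\pa^\al\widetilde u_1,\pa_x\pa^\al(\widetilde n_i+\widetilde n_e))$ with $\ka_0$ small; differentiating it in time and inserting the momentum equation, whose pressure term $\pa_x(P-P^r)$ contains $\pa_x(\widetilde n_i+\widetilde n_e)$ to leading order, recovers the remaining dissipation $\sum_{1\le|\al|\le2}\|\pa^\al[\widetilde n_i,\widetilde n_e,\widetilde u,\widetilde\theta]\|^2$ at the cost of higher-order viscous contributions absorbed for $\ka_0$ small. The quantity $q_i\widetilde n_i+q_e\widetilde n_e$ and all the $\phi$-dissipation norms then follow from \eqref{tphy} and elliptic regularity, since $\|\pa_x^k\phi\|\lesssim\|\pa_x^{k-2}(q_i\widetilde n_i+q_e\widetilde n_e)\|$.

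It remains to bound the right-hand side. The rarefaction-background errors --- terms such as $(1-\frac{m_in_i+m_en_e}{m_in_i^r+m_en_e^r})\pa_xP^r$ and $\pa_x((\mu_i(\theta)+\mu_e(\theta))\pa_xu_1^r)$ --- are estimated by H\"older's inequality in $x$, interpolation, and the decay rates of Lemma~\ref{cl.RwRe}(ii); optimizing the choice of $L^p$ exponents yields the two terms $(1+t)^{-2}\|[\widetilde n_i,\widetilde n_e,\widetilde u,\widetilde\theta]\|^2$ and $\de_r^{1/6}(1+t)^{-6/7}$, both time-integrable under the a priori bound \eqref{aps}. The moments of $\FG$ and $\widetilde{\FG}$ appearing in \eqref{tu1}--\eqref{tta} --- for instance $\int\psi_{3i}\xi_1\pa_x(P_0^{M_i}G_i)$, $\int\xi_1\psi_3\cdot\pa_x\overline{\FR}$ and the field moments $\pa_x\phi\int\frac{|\xi|^2}{2}[q_i,q_e]^{\rm T}\cdot\pa_{\xi_1}\FG$ --- are controlled by the representation \eqref{PGA}--\eqref{def.rbar} of $\overline{R}_\CA$, Corollary~\ref{inv.L.}, Lemma~\ref{est.nonop} and the a priori assumption \eqref{aps}, yielding the three microscopic terms on the right of \eqref{macro.eng}; the nonlinear fluid terms, quadratic in the perturbation, are absorbed into the dissipation using $N(T)\le\eps_0\ll1$.

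The main obstacle is twofold. First, one must show that the leading quadratic form emerging from the $|\al|=1$ step, once the Poisson coupling among $\pa_x\phi$, the charge-weighted densities $q_i\widetilde n_i+q_e\widetilde n_e$ and the velocity is built in, is genuinely positive definite; this is precisely where the technical assumption $q_i\le9|q_e|$ is needed (positivity of the matrix in \eqref{def.matrM}), so the disparate-mass-and-charge cross terms have to be tracked carefully rather than symmetrized away routinely. Second, the time-decay bookkeeping for the background errors is delicate: each such term must be split, paired with the appropriate $L^p$ norm from Lemma~\ref{cl.RwRe}(ii), and estimated by Young's inequality so that the surviving powers of $(1+t)$ are integrable and the powers of $\de_r$ positive, while keeping the perturbation factors at the level that the left-hand dissipation can absorb. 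Coordinating the smallness of $\ka_0$, $\eps_0$ and $\de_r$ so that every error and nonlinear term is genuinely absorbed is the crux of the argument.
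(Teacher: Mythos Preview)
Your outline captures the overall strategy but misplaces two structural ingredients.

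First, the good expansion-wave term $\|\sqrt{\pa_xu^r}[\widetilde n_i,\widetilde n_e,\widetilde u,\widetilde\theta]\|^2$ involves the \emph{undifferentiated} perturbations and cannot be produced by an $|\al|=1$ energy estimate; differentiating and pairing with first derivatives yields at best $\int\pa_xu_1^r|\pa^\al[\widetilde n_i,\ldots]|^2\,dx$. In the paper this term comes from a separate \emph{zero-order} relative-entropy estimate (Section~\ref{sec6.1}): one introduces $\widetilde\eta=\tfrac{m_in_i+m_en_e}{2}|\widetilde u|^2+\tfrac{2}{3}n_i\theta^r\Phi(n_i^r/n_i)+\tfrac{2}{3}n_e\theta^r\Phi(n_e^r/n_e)+(n_i+n_e)\theta^r\Phi(\theta/\theta^r)$ with $\Phi(y)=y-1-\ln y$, and computes $\pa_t\widetilde\eta$ from the undifferentiated fluid equations. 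The coefficient of $\pa_xu_1^r$ in the resulting identity is the quadratic form with matrix $\mathbb M$ of \eqref{def.matrM}, and its positive definiteness under $q_i\le 9|q_e|$, $m_i\ge m_e$ is what delivers the rarefaction damping. So the sign condition enters at zeroth order, not at the $|\al|=1$ step as you state.

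Second, elliptic regularity alone does not yield the dissipation of $\pa_x\phi$ or of $q_i\widetilde n_i+q_e\widetilde n_e$. From $-\pa_x^2\phi=q_i\widetilde n_i+q_e\widetilde n_e$ one can trade $\|\pa_x^2\phi\|$ for the charge imbalance, but there is no estimate $\|\pa_x\phi\|\lesssim\|q_i\widetilde n_i+q_e\widetilde n_e\|$ in $L^2$. In the paper these dissipations come from a different equation: one forms the $q_i$-, $q_e$-weighted combination of the two single-species momentum laws in \eqref{cons.law.i}--\eqref{cons.law.e}, subtracts the rarefaction analogue, and obtains \eqref{visbve}, which contains the term $(q_i^2n_i+q_e^2n_e)\pa_x\phi$. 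Testing \eqref{visbve} against $\pa_x\phi$ gives $\|\pa_x\phi\|^2$ directly, and testing against $\pa_x(q_i\widetilde n_i+q_e\widetilde n_e)$ gives $\|q_i\widetilde n_i+q_e\widetilde n_e\|^2$ via the Poisson equation. A key point here is that the collision term $\int\psi_{3\CA}Q_\CA(\FF,\FF)\,d\xi$ must be split through $\FG=\widetilde{\FG}+\overline{\FG}$ and the explicit formula \eqref{def.ng}, so that the contribution of $\FL_\FM\overline{\FG}$ cancels the rarefaction inhomogeneities on the right of \eqref{visbve} up to terms carrying an extra factor of $[\widetilde n_i,\widetilde n_e,\widetilde\theta]$; without this cancellation one is left with a non-integrable $O(\|\pa_x[n^r,u^r,\theta^r]\|)\|\pa_x\phi\|$.

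With these two pieces restored, the remainder of your plan---the $|\al|=1$ energy estimate, the interaction functional for $\pa_x(\widetilde n_i+\widetilde n_e)$, and the decay bookkeeping via Lemma~\ref{cl.RwRe}---matches the paper's argument.
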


The second step is to deduce the energy estimates on the microscopic part $\FG$.
Our method for that is much similar to the one of \cite{DL-VPB}.
\begin{proposition}\label{g.eng.lem.}
Under the conditions listed in Proposition \ref{mac.eng.lem.}, it holds that
\begin{eqnarray}
&&\sum\limits_{|\al| \leq1}\left\|\pa^\al \left[\widetilde{n}_i,\widetilde{n}_e, \widetilde{u}, \widetilde{\ta}\right](t)\right\|^2
+\sum\limits_{|\al| \leq2\atop{\al_0\leq1}}
\left\|\pa^\al  \pa_x\phi (t)\right\|^2
+\int_{{\R}\times{\R}^3}\left|\FM^{-1/2}_\ast\widetilde{\FG}\right|^2d\xi dx
\notag\\
&&\quad+\sum\limits_{1\leq|\al| \leq2\atop{\al_0\leq1}}\int_{{\R}\times{\R}^3}\left|\FM_\ast^{-1/2}\pa^\al  \FF\right|^2d\xi dx
+\sum\limits_{|\al|+|\be|\leq 2\atop{|\be|\geq1}}
\int_{{\R}\times{\R}^3}\left|\FM^{-1/2}_\ast\pa^\al \pa^\be \widetilde{\FG}\right|^2d\xi dx
\notag\\
&&\quad+\sum\limits_{1\leq|\al| \leq2}\int_{0}^{T}\int_{\R\times{\R}^3}(1+|\xi|)\left|\FM^{-1/2}_\ast\pa^\al  \FG\right|^2d\xi dxdt
+\int_{0}^{T}\int_{{\R}\times{\R}^3}(1+|\xi|)\left|\FM_\ast^{-1/2}\widetilde{\FG}\right|^2d\xi dxdt
\notag\\
&&\quad+ \sum_{|\al|+|\be|\leq 2\atop{|\be|\geq1}}\int_{0}^{T}\int_{{\R}\times{\R}^3}
(1+|\xi|)\left|\FM^{-1/2}_\ast\pa^\al \pa^\be\widetilde{\FG}\right|^2d\xi dxdt
+\int_{0}^{T}\int_{\R}\left[\widetilde{n}_i,\widetilde{n}_e, \widetilde{u}, \widetilde{\ta}\right]^2\pa_xu^r_{1}\,dxdt
\notag\\
&&\quad+\sum\limits_{1\leq|\al| \leq 2}\int_{0}^{T}\left\|\pa^\al \left[\widetilde{n}_i,\widetilde{n}_e, \widetilde{u}, \widetilde{\ta}\right]\right\|^2dt
+\int_{0}^{T}\|q_i\widetilde{n}_i+q_e\widetilde{n}_e\|^2dt
+\sum\limits_{|\al| \leq1}\int_{0}^{T}\left\|\pa^\al \left[\pa_x\phi ,\pa^2_x\phi \right]\right\|^2dt
\notag\\
&&\leq C_0N^2(0)+C_0\de_r^{1/6},\label{g.eng.}
\end{eqnarray}
for all $0\leq t\leq T$.
\end{proposition}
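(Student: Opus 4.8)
The plan is to combine the macroscopic inequality of Proposition \ref{mac.eng.lem.} with a hierarchy of weighted $L^2_{x,\xi}$ estimates on the non-fluid quantities $\FG$, $\widetilde\FG$ and their $(x,\xi)$-derivatives, and then to close the sum by a Gronwall argument together with the decay properties of the rarefaction wave in Lemma \ref{cl.RwRe}. All weighted norms are taken against the fixed global bi-Maxwellian $\FM_*$; by \eqref{def.mbp} the local bi-Maxwellian $\FM$ stays within $\eta_0$ of $\FM_*$ along the flow, so $\FM_*^{-1/2}\FM^{1/2}$ and its inverse are bounded pointwise in $\xi$ and one may freely interchange the $\FM_*^{-1/2}$ and $\FM^{-1/2}$ weights, with Lemmas \ref{co.est0.}, \ref{co.est.} and Corollary \ref{inv.L.} available. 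The first step is to write the evolution equation for $\widetilde\FG=\FG-\overline\FG$: subtract from the microscopic equation \eqref{micBE} the equation obtained by applying $\pa_t$ and the leading transport/collision operators to the explicit formula \eqref{def.ng}, using $\FL_\FM\overline\FG$ and the profile system \eqref{Euler3}. The purpose of subtracting $\overline\FG$ is that the non-integrable source $\mathbf{P}_1^\FM(\xi_1\pa_x\FM)$ is cancelled to leading order, leaving a remainder whose norm is controlled by $\|\pa_x[n^r,u^r,\ta^r]\|$ (and $\|\pa_x^2[n^r,u^r,\ta^r]\|$), which decay by Lemma \ref{cl.RwRe}(ii), plus genuinely quadratic terms and terms linear in $\pa_x\phi$.

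\textbf{Microscopic estimates.} For the zero-order estimate, multiply the equation for $\widetilde\FG$ by $\FM_*^{-1}\widetilde\FG$ and integrate in $(x,\xi)$: the collision term, after passing to the local weight, gives via Lemma \ref{co.est.} a coercive contribution $\gtrsim\int(1+|\xi|)|\FM_*^{-1/2}\widetilde\FG|^2$ which dominates the bad right-hand term $\int(1+|\xi|)|\FM_\sharp^{-1/2}\widetilde\FG|^2$ in \eqref{macro.eng}; the transport term $\xi_1\pa_x\widetilde\FG$ is integrated by parts; the streaming term $q_0\pa_x\phi\,\pa_{\xi_1}\widetilde\FG$ is integrated by parts in $\xi$, producing $\pa_x\phi$ times lower-order $\widetilde\FG$-quantities, absorbed using the a priori bound \eqref{aps} and the Poisson control $\|\pa_x\phi\|\lesssim\|q_i\widetilde n_i+q_e\widetilde n_e\|$; the nonlinear term $\FQ(\FG,\FG)$ and the cross terms with $\overline\FG$ are handled by Lemma \ref{est.nonop} together with the $L^\infty_x$ smallness of $\FM_*^{-1/2}\pa^\al\FG$ from \eqref{aps}, \eqref{aps4} and Sobolev embedding; the profile remainder contributes terms bounded by $\|\pa_x[n^r,u^r,\ta^r]\|^2$ and $\|\pa_x^2[n^r,u^r,\ta^r]\|^2$, which are time-integrable with total mass $\lesssim\delta_r^{1/6}$ after interpolating the bounds $\min\{\delta_r,\delta_r^{1/p}t^{-1+1/p}\}$. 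For higher orders, apply $\pa^\al$, $1\le|\al|\le2$, $\al_0\le1$, to the vector equation \eqref{v.F}, test against $\FM_*^{-1}\pa^\al\FF$, and use $\pa^\al\FG=\mathbf{P}_1^\FM\pa^\al\FF+(\text{lower order})$ so that Lemma \ref{co.est0.} again supplies coercivity; commutators with $\pa^\al$, the term $\pa^\al(q_0\pa_x\phi\,\pa_{\xi_1}\FF)$, and $\pa^\al\FQ(\FF,\FF)$ are estimated with \eqref{aps}, \eqref{aps4} and Lemma \ref{est.nonop}. Finally apply $\pa^\al\pa^\be$ with $|\be|\ge1$ to the $\widetilde\FG$-equation; velocity derivatives do not see the cancellation, but all new terms are of higher order or carry a profile gradient, and close with the same tools. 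Summing these with small constants absorbs the dissipation $\sum\int(1+|\xi|)|\FM^{-1/2}\pa^\al\FG|^2$ and bounds $\eps_0\sum\int|\FM^{-1/2}\pa_{\xi_1}\pa^\al\widetilde\FG|^2$ by $\eps_0$ times the velocity-derivative dissipation.

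\textbf{Closing the argument.} Add a suitably small multiple of the macroscopic inequality \eqref{macro.eng} to the microscopic ones, so that all cross terms are absorbed into the dissipation and all left-hand energies appear with positive coefficients; integrate over $t\in[0,T]$, use $N(0)\le\eps_0$ and the smallness of $\eps_0$, $\delta_r$, and note $\int_0^{\infty}\bigl[\delta_r^{1/6}(1+t)^{-6/7}+\|\pa_x[n^r,u^r,\ta^r](t)\|^2+\|\pa_x^2[n^r,u^r,\ta^r](t)\|^2\bigr]\,dt\lesssim\delta_r^{1/6}$ from Lemma \ref{cl.RwRe}(ii). The residual term $(1+t)^{-2}\|[\widetilde n_i,\widetilde n_e,\widetilde u,\widetilde\ta]\|^2$ on the right has an integrable coefficient and is handled by Gronwall after bounding $\|[\widetilde n_i,\widetilde n_e,\widetilde u,\widetilde\ta]\|^2$ by the accumulated energy. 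This yields \eqref{g.eng.}.

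\textbf{Main obstacle.} The principal difficulty is the interplay between the self-consistent field and the degeneracy of the rarefaction-wave dissipation: $\pa_x\phi$ enters the temperature and microscopic equations linearly and is only controlled through $\|q_i\widetilde n_i+q_e\widetilde n_e\|$ and the weak $\sqrt{\pa_x u^r}$-weighted norm, so one must exploit the two-fluid coupling together with the Poisson equation (as in \cite{DL,DLiuYZ}) to see these contributions as genuinely dissipative rather than merely lower order. Since $\overline\FG$ in \eqref{def.ng} carries no $\phi$-term (consistent with the trivial large-time potential under $\phi_+=\phi_-$), tracking where the uncancelled $\phi$-linear pieces land in the $\widetilde\FG$-equation and bounding them without losing a power of $\delta_r$ or $t^{-1}$ is the delicate part; it is also where the structural hypothesis $q_i\le 9|q_e|$, inherited through Proposition \ref{mac.eng.lem.}, is used.
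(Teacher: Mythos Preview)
Your overall strategy is the paper's: combine Proposition \ref{mac.eng.lem.} with weighted $L^2_{x,\xi}$ estimates on $\widetilde\FG$, $\pa^\al\FF$, and $\pa^\al\pa^\be\widetilde\FG$, sum with small coefficients, and close by Gronwall using Lemma \ref{cl.RwRe}. The description of the zero-order $\widetilde\FG$-estimate and the mixed-derivative estimate is essentially what the paper does in Sections \ref{sec.est.nf}.1 and \ref{sec.est.nf}.3.

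However, there is a genuine gap in your treatment of the high-order estimate on $\pa^\al\FF$. Your claim that, by \eqref{def.mbp}, ``$\FM_*^{-1/2}\FM^{1/2}$ and its inverse are bounded pointwise in $\xi$ and one may freely interchange the weights'' is false: since $\frac12\sup\theta^r<\theta_*<\sup\theta^r$ and $\theta$ is close to $\theta^r$, one has $\theta_*<\theta$ in general, so $\FM/\FM_*$ grows exponentially in $|\xi|$. Only the inequality $\|\,\cdot\,\|_{L^2(\FM^{-1/2})}\lesssim\|\,\cdot\,\|_{L^2(\FM_*^{-1/2})}$ holds, not the reverse. This matters precisely for the force term $q_0\,\pa^\al\pa_x^2\phi\,\pa_{\xi_1}\FM$: when you test against $\FM_*^{-1}\pa_x\pa^\al\FF$, the factor $\FM_*^{-1}\pa_{\xi_1}\FM$ is \emph{not} a polynomial, so the exact cancellation that produces $\tfrac{d}{dt}\|\pa^\al\pa_x^2\phi\|^2$ (the paper's $\CK_2$) is unavailable, and the na\"ive Cauchy--Schwarz bound leaves $C_\eta\|\pa^\al\pa_x^2\phi\|^2$ on the right with an $O(1)$ coefficient. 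You then cannot close: absorbing this by a multiple $\kappa$ of \eqref{macro.eng} forces $\kappa\gg1$, while absorbing the resulting $\kappa\int(1+|\xi|)|\FM^{-1/2}\pa^\al\FG|^2$ back into the microscopic dissipation forces $\kappa\ll1$. The paper resolves this by running \emph{two} high-order estimates on $\pa_x\pa^\al\FF$: one with the local weight $k_B\theta\FM^{-1}$ (its \eqref{2d.F.sum}), where $\FM^{-1}\pa_{\xi_1}\FM$ is polynomial and the conservation laws turn $\CK_2$ into the energy term $\tfrac{d}{dt}\|\pa^\al\pa_x^2\phi\|^2$ with only $O(\eps_0)$ macroscopic remainders; and one with the global weight $\FM_*^{-1}$ (its \eqref{2d.F.sum2}), which supplies the stronger $\FM_*^{-1/2}$-weighted energy and dissipation required by the statement but whose macroscopic remainders are merely $O(1)$. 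The first is combined with a \emph{small} multiple $\kappa_7$ of \eqref{macro.eng}, the second is taken with a further small coefficient $\kappa_8$, and the hierarchy $K_0\gg 1\gg\kappa_7\gg\kappa_8,\kappa_9\gg\kappa_{10}$ is what makes the absorption go through. Your single-weight scheme, as written, does not close.

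A secondary error: you assert $\int_0^\infty\delta_r^{1/6}(1+t)^{-6/7}\,dt\lesssim\delta_r^{1/6}$, but $(1+t)^{-6/7}$ is not integrable. The detailed computations in the paper (Sections \ref{sec.est.f}--\ref{sec.est.nf}) consistently produce remainders of the form $\delta_r^{1/6}(1+t)^{-7/6}$, $\delta_r^{1/2}(1+t)^{-3/2}$, $\delta_r(1+t)^{-2}$, all with integrable time weight; the exponent $6/7$ in the statement of Proposition \ref{mac.eng.lem.} should be read as $7/6$. Likewise $\int_0^\infty\|\pa_x[n^r,u^r,\theta^r]\|^2\,dt$ is not finite on its own; such terms always appear multiplied by additional small factors or in interpolated form.
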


The proof of Proposition \ref{mac.eng.lem.} and Proposition \ref{g.eng.lem.} will be given in Section \ref{sec.est.f} and Section \ref{sec.est.nf}, respectively.  Assuming these two propositions, we are now in a position to complete

\begin{proof}[The proof of Theorem \ref{main.Res.}]

The local existence of the solution $\left[\widetilde{n}_i,\widetilde{n}_e,\widetilde{u},\widetilde{\ta},\phi ,\FG\right]$  of the
system \eqref{tvi}, \eqref{tve}, \eqref{tu1}, \eqref{tuj}, \eqref{tta} and \eqref{tphy}
 can be obtained by the standard iteration method, cf.~\cite{DL-VPB}, and its proof is omitted for brevity.

The existence of the solution of \eqref{VPB}, \eqref{id.be}, \eqref{bd.be} and \eqref{bd.phi}
 then follows from the standard continuation argument based on the local existence and the a priori
estimate in Proposition \ref{g.eng.lem.}.
Moreover, one sees that
\begin{eqnarray}
&&\sup\limits_{t\geq0}\sum\limits_{|\al|+|\be|\leq2\atop{\al_0\leq1}}\left\|\pa^\al \pa^\be\left(F_i(t,x,\xi)-\FM_{[-\frac{q_e}{q_i}n^r,u^r,\ta^r](t,x)}(\xi)\right)
\right\|_{L_x^2\left(L^2_\xi\left(\frac{1}{\sqrt{M_{\ast i}(\xi)}}\right)\right)}
\notag\\
&&\quad +\sup\limits_{t\geq0}\sum\limits_{|\al|+|\be|\leq2\atop{\al_0\leq1}}\left\|\pa^\al \pa^\be\left(F_e(t,x,\xi)-\FM_{[n^r,u^r,\ta^r](t,x)}(\xi)\right)
\right\|_{L_x^2\left(L^2_\xi\left(\frac{1}{\sqrt{M_{\ast e}(\xi)}}\right)\right)}+\sup\limits_{t\geq0}\sum\limits_{|\al|\leq1}\|\pa^\al\pa_x\phi(t,x)\|^2_{H^1}
\notag\\
&&\lesssim\sup\limits_{t\geq0}\sum\limits_{|\al| \leq2,\al_0\leq1}
\left\|\pa^\al \left[\widetilde{n}_i, \widetilde{n}_e,\widetilde{u}, \widetilde{\ta}\right](t,x)\right\|^2+
\sup\limits_{t\geq0} \sum\limits_{1\leq|\al| \leq
2\atop{\al_0\leq1}} {\displaystyle\int_{{\R}}\int_{{\R}^3}}\left|\FM_{\ast}^{-1/2}\partial^\al \FF(t,x,\xi)\right|^2d\xi dx
\notag\\
&&\quad +\sup\limits_{t\geq0} {\displaystyle\int_{{\R}}\int_{{\R}^3}}\left|\FM_{\ast}^{-1/2}\widetilde{\FG}(t,x,\xi)\right|^2d\xi dx
+\sup\limits_{t\geq0} \sum\limits_{|\al|+|\be|\leq
2\atop{|\be|\geq1}} {\displaystyle\int_{{\R}}\int_{{\R}^3}}\left|\FM_{\ast}^{-1/2}\pa^\al \partial^\be\widetilde{\FG}(t,x,\xi)\right|^2d\xi dx\notag\\
&&\quad
+\de^{1/6}_r.\label{latm0}
\end{eqnarray}
Then \eqref{VPB.sol.}  follows from \eqref{latm0} provided that $\de\ll \eps_0$.

In order to obtain the large time behavior of solutions as in \eqref{sol.Lab}, one sees
\begin{eqnarray}
&&\frac{d}{dt}\left\|\frac{\pa_x\left(F_{\CA}(t,x,\xi)-M_{[n_{\CA}^r,u^r,\ta^r;m_\CA]}(\xi)\right)}
{\sqrt{M_{\ast\CA}}}\right\|_{L^2_{x,\xi}}^2
\notag\\
&&=2\left(M_{\ast\CA}^{-1}\pa_t\pa_x\left(F_{\CA}(t,x,\xi)-M_{[n_{\CA}^r,u^r,\ta^r;m_{\CA}]}(\xi)\right),
\pa_x\left(F_{\CA}(t,x,\xi)-M_{[n_{\CA}^r,u^r,\ta^r;m_\CA]}(\xi)\right)\right),
\notag
\end{eqnarray}
and thus it follows from the Cauchy-Schwarz inequality that
\begin{eqnarray}
&&\int_{0}^{+\infty}\left|\frac{d}{dt}\left\|\frac{\pa_x\left(F_{\CA}(t,x,\xi)-M_{[n_{\CA}^r,u^r,\ta^r;m_\CA]}(\xi)\right)}
{\sqrt{M_{\ast\CA}}}
\right\|_{L^2_{x,\xi}}^2\right|dt
\notag\\
&&\leq \int_{0}^{+\infty}\left\|M_{\ast\CA}^{-1/2}\pa_t\pa_x\left(F_{\CA}(t,x,\xi)
-M_{[n_{\CA}^r,u^r,\ta^r;m_{\CA}]}(\xi)\right)\right\|_{L^2_{x,\xi}}^2dt
\notag\\
&&\quad+\int_{0}^{+\infty}\left\|M_{\ast\CA}^{-1/2}\pa_x\left(F_{\CA}(t,x,\xi)-M_{[n_{\CA}^r,u^r,\ta^r;m_{\CA}]}(\xi)\right)\right\|_{L^2_{x,\xi}}^2dt
\notag\\
&&\leq C\sum\limits_{1\leq|\al|\leq 2\atop{|\al_0|\leq1}}\int_{0}^{+\infty}\left\|\pa^{\al}\left[\widetilde{n}_{\CA},\widetilde{u},\widetilde{\theta}\right]\right\|^2dt
+C\sum\limits_{1\leq|\al|\leq 2\atop{|\al_0|\leq1}}\int_{0}^{+\infty}\left\|\frac{\pa^{\al}G_{\CA}}{\sqrt{M_{\ast\CA}}}\right\|_{L^2_{x,\xi}}^2dt\notag\\
&&<+\infty. \label{latm1}
\end{eqnarray}
From \eqref{latm0} and \eqref{latm1}, one sees that
\begin{equation}\label{latm2}
\lim\limits_{t\rightarrow+\infty}\left\|\frac{\pa_x\left(F_{\CA}(t,x,\xi)-M_{[n_{\CA}^r,u^r,\ta^r;m_\CA]}(\xi)\right)}{\sqrt{M_{\ast\CA}}}
\right\|_{L^2_{x,\xi}}^2= 0.
\end{equation}
Then \eqref{sol.Lab} follows from Sobolev's inequality, \eqref{latm2}
and 
Lemma \ref{cl.RwRe}.
This completes the proof of Theorem \ref{main.Res.}.
\end{proof}

\section{A priori estimates on the fluid part}\label{sec.est.f}


This section is devoted to the proof of Proposition \ref{mac.eng.lem.} on the energy estimates of the fluid part $\FM(t,x,\xi)$.  The proof is divided by three subsections.

\subsection{Estimate on zero-order energy}\label{sec6.1}

We set  $\Phi(y)=y-1-\ln y$, and define
$$
\widetilde{\eta}=\frac{m_{i}n_{i}+m_en_e}{2}\sum\limits_{j=1}^3\widetilde{u}_{j}^{2}+\frac{2}{3}n_{i}\theta^{r}\Phi\left(\frac{n_i^{r}}{n_{i}}\right)
+\frac{2}{3}n_{e}\theta^{r}\Phi\left(\frac{n_e^{r}}{n_{e}}\right)
+(n_{i}+n_e)\theta^{r}\Phi\left(\frac{\theta}{\theta^{r}}\right).
$$
By using \eqref{BE-NS-m}, \eqref{BE-NS-t}, \eqref{BE-NS-mass}, \eqref{Euler3}, \eqref{tu1}, \eqref{tuj} and \eqref{tta}, direct computations give
\begin{equation}\label{basic}
\begin{split}
\pa_{t}\widetilde{\eta}&+3(\mu_i(\ta)+\mu_e(\ta))(\pa_{x}\widetilde{u}_1)^{2}
+\sum\limits_{j=2}^3(\mu_i(\ta)+\mu_e(\ta))(\pa_{x}\widetilde{u}_j)^{2}
+\frac{\kappa_i(\ta)+\kappa_e(\ta)}{\theta}\left(\pa_{x}\widetilde{\ta}\right)^{2}
\\&+\pa_x\CM
+\CN_{1}=\sum\limits_{l=2}^8\CN_{l},
\end{split}
\end{equation}
where
\begin{equation*}
\begin{split}
\CM=u_1\widetilde{\eta}+(P-P^{r})\widetilde{u}_1
-\sum\limits_{j=1}^3\mu_j(\ta)\widetilde{u}_{j}\pa_x\widetilde{u}_{j}
-\kappa(\ta)\frac{\pa_x\widetilde{\ta}\,\widetilde{\ta}}{\theta},
\end{split}
\end{equation*}
\begin{equation*}
\begin{split}
\CN_{1}=&\pa_{x}u_1^{r}\left[(m_{i}n_{i}+m_en_e)\widetilde{u}_1^{2}+\frac{4}{9}\theta^{r}\left(n_{i}
\Phi\left(\frac{n_i^{r}}{n_{i}}\right)+n_{e}
\Phi\left(\frac{n_e^{r}}{n_{e}}\right)\right)+\frac{2}{3}(n_{i}+n_e)
\theta^{r}\Phi\left(\frac{\theta}{\theta^{r}}\right)\right]
\\[3mm]&+(n_i+n_e)\frac{\pa_x\theta^{r}}{\theta^{r}}\widetilde{u}_1\widetilde{\ta}
-\frac{2}{3}\pa_x\theta^r\frac{m_i\widetilde{n}_i+m_e\widetilde{n}_e}{m_in_i^r+m_en_e^r}(n_i^r+n_e^r)\widetilde{u}_1,
\end{split}
\end{equation*}
\begin{equation*}
\begin{split}
\CN_{2}=&\frac{2}{3}\pa_{x}\theta^{r}\widetilde{u}_1\left[n_{i}\Phi\left(\frac{n_i^{r}}{n_{i}}\right)
+n_{e}\Phi\left(\frac{n_e^{r}}{n_{e}}\right)\right]
+(n_{i}+n_e)\pa_x\theta^{r}\widetilde{u}_1\Phi\left(\frac{\theta}{\theta^{r}}\right),\
\\[3mm]
&+\frac{2}{3}\pa_xn_i^r\frac{m_e(n_e^r\widetilde{n}_i-n_i^r\widetilde{n}_e)}{n_i^r(m_in_i^r+m_en_e^r)}\widetilde{u}_1
+\frac{2}{3}\pa_xn_e^r\frac{m_i(n_i^r\widetilde{n}_e-n_e^r\widetilde{n}_i)}{n_e^r(m_in_i^r+m_en_e^r)}\widetilde{u}_1,
\end{split}
\end{equation*}
\begin{equation*}
\begin{split}
\CN_{3}=&
\pa_x((\mu_i(\ta)+\mu_e(\ta))\pa_xu_1^{r})\widetilde{u}_1+\sum\limits_{j=1}^3(\mu_i(\ta)+\mu_e(\ta))(\pa_x
u_{j})^{2}\frac{\widetilde{\ta}}{\theta}
\\&+(\kappa_i(\ta)+\kappa_e(\ta))\frac{\widetilde{\ta}(\pa_x\widetilde{\ta})^{2}}{\theta^{2}}
+(\kappa_i(\ta)+\kappa_e(\ta))\frac{\widetilde{\ta}\pa_x\widetilde{\ta}\pa_x\theta^{r}}{\theta^{2}}
+\pa_x\left((\kappa_i(\ta)+\kappa_e(\ta))\pa_x\ta^r\right)\frac{\widetilde{\ta}}{\theta},
\end{split}
\end{equation*}
\begin{equation*}
\begin{split}
\CN_{4}=
-\widetilde{u}_1(q_in_i+q_en_e)\pa_x\phi,
\end{split}
\end{equation*}
\begin{equation*}
\begin{split}
\CN_5=&
\frac{2}{3}\ta^r\ln \left(\frac{n_i^r}{n_i}\right)\int_{{\R}^3}\xi_1\pa_xG_i \,d\xi
+\frac{2}{3}\ta^r\ln \left(\frac{n_e^r}{n_e}\right)\int_{{\R}^3}\xi_1\pa_xG_e \,d\xi
+\widetilde{\ta}\int_{{\R}^3}[\xi_1,\xi_1]^{\rm T}\cdot\pa_x\FG \,d\xi
\\&-\sum\limits_{j=1}^3\widetilde{u}_j\int_{{\R}^3}\psi_{(j+2)i}\xi_1\pa_x \left(P_0^{M_i}G_i\right) d\xi
-\sum\limits_{j=1}^3\widetilde{u}_j\int_{{\R}^3}\psi_{(j+2)e}\xi_1\pa_x \left(P_0^{M_e}G_e\right) d\xi
\\&
-\frac{\widetilde{\ta}}{\ta}\int_{{\R}^3}\psi_{6i}\xi_1\pa_x\left(P_0^{M_i}G_i\right) d\xi-\frac{\widetilde{\ta}}{\ta}\int_{{\R}^3}\psi_{6e}\xi_1\pa_x\left(P_0^{M_e}G_e\right) d\xi
\\&+\sum\limits_{j=1}^3\frac{u_j\widetilde{\ta}}{\ta}\int_{{\R}^3}\psi_{(j+2)i}\xi_1\pa_x \left(P_0^{M_i}G_i\right) d\xi
+\sum\limits_{j=1}^3\frac{u_j\widetilde{\ta}}{\ta}\int_{{\R}^3}\psi_{(j+2)e}\xi_1\pa_x \left(P_0^{M_e}G_e\right) d\xi,
\end{split}
\end{equation*}
\begin{equation*}
\begin{split}
\CN_6=
-\sum\limits_{j=1}^3\widetilde{u}_j\int_{{\R}^3}\xi_1\psi_{j+2}\cdot\pa_x\overline{\FR} \,d\xi
-\frac{\widetilde{\ta}}{\ta}\int_{{\R}^3}\xi_1\left(\psi_{6}-\sum\limits_{j=1}^3u_j \psi_{j+2}\right)\cdot\pa_x\overline{\FR} \,d\xi,
\end{split}
\end{equation*}
\begin{equation*}
\begin{split}
\CN_7=
\frac{\widetilde{\ta}}{\ta}\pa_x\phi\int_{{\R}^3}\frac{|\xi|^2}{2}\left[q_i,q_e\right]^{\rm T}\cdot\pa_{\xi_1}\FG \,d\xi.
\end{split}
\end{equation*}
We now get by integrating \eqref{basic} respect to $x$ over $\R$ that
\begin{equation*}
\begin{split}
\frac{d}{dt}&\int_{\R}\widetilde{\eta}\,dx
+3\int_{\R}(\mu_i(\ta)+\mu_e(\ta))(\pa_{x}\widetilde{u}_1)^{2}\,dx
+\sum\limits_{j=2}^3\int_{\R}(\mu_i(\ta)+\mu_e(\ta))(\pa_{x}\widetilde{u}_j)^{2}\,dx\\&
+\int_{\R}\frac{\kappa_i(\ta)+\kappa_e(\ta)}{\theta}\left(\pa_{x}\widetilde{\ta}\right)^{2}\,dx
+\int_{\R}\CN_{1}\,dx
=\sum\limits_{l=2}^7\int_{\R}\CN_{l}\,dx.
\end{split}
\end{equation*}
To compute ${\int_{\R}}\,\CN_{1}dx$, we first note that
\begin{equation}\label{Bp.ep}
\Phi\left(\frac{n_\CA^{r}}{n_{\CA}}\right)=\frac{\widetilde{n}_\CA^{2}}{n_{\CA}^{2}}+O(1)\widetilde{n}_\CA^{3},\ \CA=i,e,\
\ \
\Phi\left(\frac{\theta}{\theta^{r}}\right)=\frac{\widetilde{\ta}^{2}}{(\theta^{r})^{2}}+O(1)\widetilde{\ta}^3,
\end{equation}
and from \eqref{1-RW.def.},  it follows that
\begin{equation}\label{ptar}
\pa_x\ta^r=\frac{2A}{3B}(n^r)^{1/3}\pa_x u_1^r,\ \ B:=\sqrt{\frac{10A}{9}\frac{q_i-q_e}{m_eq_i-m_iq_e}}.
\end{equation}
With \eqref{Bp.ep} and \eqref{ptar} in hand, we now write
$\CN_1$ as
\begin{equation*}
\begin{split}
\CN_{1}=&\pa_{x}u_1^{r}\left[(m_{i}n^r_{i}+m_en^r_e)\widetilde{u}_1^{2}+\frac{4}{9}\theta^{r}\left(\frac{1}{n^r_{i}}\widetilde{n}^2_i
+\frac{1}{n^r_{e}}\widetilde{n}^2_e\right)+\frac{2}{3}\frac{(n^r_{i}+n^r_e)}{\ta^r}
\widetilde{\ta}^2\right]
\\[3mm]&+\frac{2}{3B}(n^r_i+n^r_e)(n^r)^{-1/3}\widetilde{u}_1\widetilde{\ta}\pa_xu_1^r
-\frac{4A}{9B}(n^r)^{1/3}\widetilde{u}_1\pa_xu_1^r\frac{q_i-q_e}{m_eq_i-m_iq_e}(m_i\widetilde{n}_i+m_e\widetilde{n}_e)
\\[3mm]&+\pa_{x}u_1^{r}\left[(m_{i}\widetilde{n}_{i}+m_e\widetilde{n}_e)\widetilde{u}_1^{2}
-\frac{4}{9}\theta^{r}\left(\frac{\widetilde{n}_i^3}{n_in^r_{i}}
+\frac{\widetilde{n}_{e}^3}{n_en_e^r}\right)+\frac{2}{3}\frac{(\widetilde{n}_{i}+\widetilde{n}_e)}{\ta^r}
\widetilde{\ta}^2\right]
\\[3mm]&+(\widetilde{n}_i+\widetilde{n}_e)\frac{\pa_x\theta^{r}}{\theta^{r}}\widetilde{u}_1\widetilde{\ta}
+\pa_{x}u_1^{r}\left[O(1)\widetilde{n}_i^{3}+O(1)\widetilde{n}_e^{3}+O(1)\widetilde{\ta}^3\right]
\\[3mm]=&\left[\widetilde{n}_i,\widetilde{n}_e,\widetilde{u}_1,\widetilde{\ta}\right]\mathbb{M}
\left[\widetilde{n}_i,\widetilde{n}_e,\widetilde{u}_1,\widetilde{\ta}\right]^{\rm T}
\\[3mm]&+\pa_{x}u_1^{r}\left[(m_{i}\widetilde{n}_{i}+m_e\widetilde{n}_e)\widetilde{u}_1^{2}
+\frac{4}{9}\theta^{r}\left(\frac{-\widetilde{n}_i^3}{n_in^r_{i}}
+\frac{-\widetilde{n}_{e}^3}{n_en_e^r}\right)+\frac{2}{3}\frac{(\widetilde{n}_{i}+\widetilde{n}_e)}{\ta^r}
\widetilde{\ta}^2\right]
\\[3mm]&+(\widetilde{n}_i+\widetilde{n}_e)\frac{\pa_x\theta^{r}}{\theta^{r}}\widetilde{u}_1\widetilde{\ta}
+\pa_{x}u_1^{r}\left[O(1)\widetilde{n}_i^{3}+O(1)\widetilde{n}_e^{3}+O(1)\widetilde{\ta}^3\right],
\end{split}
\end{equation*}
where $\mathbb{M}$ is a $4\times 4$ symmetric matrix given by
\begin{eqnarray}\label{def.matrM}
\begin{aligned} &\left[\begin{array} {cccc}
-\frac{4}{9}A\frac{q_i}{q_e}(n^r)^{-\frac{1}{3}} \  \  \  & 0\  \  \ & -\frac{2A}{9B}\frac{m_i(q_i-q_e)}{m_eq_i-m_iq_e}(n^r)^{\frac{1}{3}} \  \  \ & 0 \\[3mm]
0 \  \  \ & \frac{4}{9}A(n^r)^{-\frac{1}{3}} \  \  \  & -\frac{2A}{9B}\frac{m_e(q_i-q_e)}{m_eq_i-m_iq_e}(n^r)^{\frac{1}{3}} \  \  \ & 0  \\[3mm]
\ast \  \  \ & \ast \  \  \ & \frac{m_eq_i-m_iq_e}{q_i}n^r\  \  \  & \frac{q_i-q_e}{3q_iB}(n^r)^{\frac{2}{3}} \\[3mm]
0 \  \  \ & 0\  \  \ & \ast \  \  \ & \frac{2(q_i-q_e)}{3q_iA}(n^r)^{\frac{1}{3}}
\end{array} \right].
\end{aligned}
\end{eqnarray}
We claim that $\mathbb{M}$ is positive-definite provided that $\frac{q_i}{|q_e|}\leq 9$ and $m_i\geq m_e$. To see this, we compute its four leading principal minors as follows:
\begin{eqnarray*}
&&\De_{11}>0,\\
&&\De_{22}=-\frac{16}{81}A^2\frac{q_i}{q_e} (n^r)^{-\frac{2}{3}}>0,\\
&&\De_{33}= -\frac{16}{81q_e}A^2(n^r)^{\frac{1}{3}}(m_eq_i-m_iq_e)+\frac{16}{810q_e}A^2(n^r)^{\frac{1}{3}}
\frac{(q_im_e^2-q_em_i^2)}{m_eq_i-m_iq_e}(q_i-q_e)\\
&&\qquad=-\frac{A^2}{q_e}\frac{(n^r)^{1/3}}{m_eq_i-m_iq_e}\left[\frac{16\times9}{810}(m^2_eq_i^2+m_i^2q_e^2)
-\frac{32}{81}m_im_eq_iq_e+\frac{16}{810}(m_i^2+m_e^2)q_iq_e\right],\\
&&\De_{44}= \frac{16}{810q_iq_e}A(n^r)^{\frac{2}{3}}(q_i-q_e)(m_eq_i-m_iq_e)-\frac{2(q_i-q_e)}{3q_iA}(n^r)^{\frac{1}{3}}
\De_{33}\\
&&\qquad=-\frac{q_i-q_e}{q_iq_e}\frac{A(n^r)^{2/3}}{m_eq_i-m_iq_e}\left[\frac{16\times5}{810}(m^2_eq_i^2+m_i^2q_e^2)
-\frac{32}{81}\times(\frac{2}{3}-\frac{1}{10})m_im_eq_iq_e\right.\\
&&\left.\qquad\qquad\qquad\qquad\qquad\qquad\qquad\qquad\qquad\qquad\qquad\qquad\qquad+\frac{16}{810}\times\frac{2}{3}(m_i^2+m_e^2)q_iq_e\right].
\end{eqnarray*}
One sees that whenever one has $\frac{q_i}{|q_e|}\leq9$ and $m_i\geq m_e$, it holds that $\De_{33}>0$ and $\De_{44}>0$ and hence $\mathbb{M}$ is positive-definite. As  $\mathbb{M}$ is positive definite,
we immediately get from \eqref{aps} that there exists $\la>0$ such that 
\begin{equation*}
\int_{\R} \CN_1\,dx\geq \la\int_{\R}\pa_xu_1^r\left[\widetilde{n}_i,\widetilde{n}_e,\widetilde{u}_1,\widetilde{\ta}\right]^2dx,
\end{equation*}
where we have also used the Cauchy-Schwarz inequality and Sobolev's inequality
\begin{equation}\label{so.ine}
\|h\|_{L^\infty}\leq \sqrt{2}\|h\|^{1/2}\|\pa_xh\|^{1/2},\ \text{for}\ h\in H^1(\R).
\end{equation}
Next,
by applying \eqref{so.ine} the a priori assumption \eqref{aps} and Lemma \ref{cl.RwRe},
we obtain
\begin{equation*}
\begin{split}
\left|\int_{\R}\CN_2\,dx\right|\lesssim& \eps_0\int_{\R}\pa_xu_1^r\left[\widetilde{n}_i,\widetilde{n}_e,\widetilde{u}_1,\widetilde{\ta}\right]^2dx
+\de_r^{1/2}\|q_i\widetilde{n}_i+q_e\widetilde{n}_e\|^2+\de_r^{-1/2}\|\pa_xn^r\|_{L^\infty}^2\|\widetilde{u}_1\|^2\\
\lesssim& \eps_0\int_{\R}\pa_xu_1^r\left[\widetilde{n}_i,\widetilde{n}_e,\widetilde{u}_1,\widetilde{\ta}\right]^2dx+
\de_r^{1/2}\|q_i\widetilde{n}_i+q_e\widetilde{n}_e\|^2+(1+t)^{-3/2}\|\widetilde{u}_1\|^2.
\end{split}
\end{equation*}
Likewise, one can see that
\begin{equation*}
\begin{split}
\left|\int_{\R}\CN_4\,dx\right| &\lesssim\int_{\R}\left|q_i\widetilde{n}_i+q_e\widetilde{n}_e\right|
\left|\pa_x\phi \right||\widetilde{u}_1|dx\\
&\lesssim\eps_0\left\|\pa_x\phi \right\|^2+
(\eps_0+\eta)\left\|q_i\widetilde{n}_i+q_e\widetilde{n}_e\right\|^2+C_\eta(1+t)^{-2}\left\|\widetilde{u}_1\right\|^2.
\end{split}
\end{equation*}
Utilizing Lemma \ref{cl.RwRe}, Cauchy-Schwarz inequality with $\eta>0$ and Sobolev's inequality \eqref{so.ine} repeatedly, we compute
\begin{equation*}
\begin{split}
\left|\int_{\R}\CN_3\,dx\right|\lesssim& \int_{\R}\left|\left[ \pa_x[u_1^r,\ta^r]\pa_x\left[\widetilde{n}_i,\widetilde{n}_e,\widetilde{\ta},n^r,\ta^r\right]\right]
\left[\widetilde{u}_1,\widetilde{\ta}\right]\right|dx
\\&+ \int_{\R}\left|\left[\pa_x^2u_1^r,\pa_x^2\ta^r\right]\left[\widetilde{u}_1,\widetilde{\ta}\right]\right|dx
+\int_{\R}\left|\left[\pa_x\widetilde{u},\pa_x\widetilde{\ta}\right]^2\left[\widetilde{u}_1,\widetilde{\ta}\right]\right|dx\\
\lesssim& (\eps_0+\de_r^{1/2})\left\|\pa_x\left[\widetilde{n}_i,\widetilde{n}_e,\widetilde{u}_1,\widetilde{\ta}\right]\right\|^{1/2}+
(1+t)^{-3/2}\left\|\left[\widetilde{u}_1,\widetilde{\ta}\right]\right\|^2
\\&+
\left\|\left[\widetilde{u}_1,\widetilde{\ta}\right]\right\|^{1/2}
\left\|\pa_x\left[\widetilde{u}_1,\widetilde{\ta}\right]\right\|^{1/2}\left\{\left\|\pa_x^2[u_1^r,\ta^r]\right\|_{L^1}
+\left\|\pa_x[u_1^r,\ta^r]\pa_x\left[n^r,\ta^r\right]\right\|_{L^1}\right\}
\\
\lesssim& (\eps_0+\de_r^{1/2})\left\|\pa_x\left[\widetilde{n}_i,\widetilde{n}_e,\widetilde{u}_1,\widetilde{\ta}\right]\right\|^{1/2}+
(1+t)^{-3/2}\left\|\left[\widetilde{u}_1,\widetilde{\ta}\right]\right\|^2+\de^{1/6}_r(1+t)^{-7/6}.
\end{split}
\end{equation*}
We now turn to estimate the terms involving $\CN_{5}$, $\CN_{6}$ and $\CN_{7}$.
Let us first consider $\int_{\R}\CN_{5}\,dx$ and $\int_\R\CN_{7}\,dx$. Recalling $\FG =\widetilde{\FG}+\overline{\FG}$ with $\overline{\FG}$
given by \eqref{def.ng},
we get from integration by parts, Cauchy-Schwarz inequality and Lemma \ref{cl.RwRe} that
\begin{equation*}
\begin{split}
\left|\int_{\R}\CN_5\,dx\right|\leq& (\eps_0+\eta)\left\|\pa_x\left[\widetilde{n}_i,\widetilde{n}_e,\widetilde{\ta}\right]\right\|^2
+\int_{\R}\left|\pa_x\left[\widetilde{n}_i,\widetilde{n}_e,\widetilde{u},\widetilde{\ta}\right]\right|
\left|\pa_x\left[n^r,u_1^r,\ta^r\right]\right|\left|\left[\widetilde{n}_i,\widetilde{n}_e,\widetilde{\ta}\right]\right|dx
\\&+\int_{\R}
\left|\pa_x\left[n^r,u_1^r,\ta^r\right]\right|^2\left|\left[\widetilde{n}_i,\widetilde{n}_e,\widetilde{\ta}\right]\right|dx
+\int_{\R}
\left|\pa^2_x\left[n^r,u_1^r,\ta^r\right]\right|\left|\left[\widetilde{n}_i,\widetilde{n}_e,\widetilde{\ta}\right]\right|dx
\\&+C_\eta\int_{{\R}\times{\R}^3}(1+|\xi|)\left|\FM^{-1/2}_\sharp\widetilde{\FG}\right|^2d\xi dx
\\ \leq& (\eps_0+\eta)\left\|\pa_x\left[\widetilde{n}_i,\widetilde{n}_e,\widetilde{u},\widetilde{\ta}\right]\right\|^2
+C_\eta(1+t)^{-2}\left\|\left[\widetilde{n}_i,\widetilde{n}_e,\widetilde{\ta}\right]\right\|^2
+\de_r^{1/6}(1+t)^{-7/6}
\\&+C_\eta\int_{{\R}\times{\R}^3}(1+|\xi|)\left|\FM^{-1/2}_\sharp\widetilde{\FG}\right|^2d\xi dx.
\end{split}
\end{equation*}
Next, by Cauchy-Schwarz inequality, one has
\begin{equation*}
\begin{split}
\left|\int_{\R}\CN_7\,dx\right|\lesssim&(\eps_0+\eta)\left\|\pa_x\phi \right\|^2+C_\eta(1+t)^{-2}\left\|\widetilde{\ta}\right\|^2
+\eps_{0}\left\|\pa_x\widetilde{\ta}\right\|^2
\\&+\de_r^{1/6}(1+t)^{-7/6}
+(\eps_0+\eta)
\int_{{\R}\times{\R}^3}(1+|\xi|)\left|\FM^{-1/2}\pa_{\xi_1}\widetilde{\FG}\right|^2d\xi dx.
\end{split}
\end{equation*}
It now remains to consider $\int_\R\CN_{6}\,dx$. 
Notice that
\begin{equation}\label{d.iL}
\pa^{\al}\pa^\be\left\{L^{-1}_{M_{\CA}}h\right\}=L^{-1}_{M_{\CA}}(\pa^{\al} \pa^\be h)
-\sum\limits_{j=1}^{\al+|\be|}\sum\limits_{|\al'|  +|\be'|=j}C^{\al,\be}_{\al',\be'}
L^{-1}_{M_{\CA}}\mathcal {M}_{\al\be},
\end{equation}
where $\CM_{\al\be}$ is given by
\begin{eqnarray*}
\CM_{\al\be}=
Q_{\CA\CA}\left(\pa^{\al-\al'}\pa^{\be-\be'}\left(L^{-1}_{M_{\CA}}h\right),\pa^{\al'}\pa^{\be'}M_{\CA}\right)
+Q_{\CA\CA}\left(\pa^{\al'}\pa^{\be'}M_{\CA},\pa^{\al-\al'}\pa^{\be-\be'}\left(L^{-1}_{M_{\CA}}h\right)\right).
\end{eqnarray*}
Utilizing \eqref{d.iL}, Lemma \ref{est.nonop}, Lemma \ref{cl.RwRe} and Corollary \ref{inv.L.} again, we now arrive at
\begin{equation*}
\begin{split}
\left|\int_\R\CN_{6}\,dx\right|
\lesssim &
\left|\sum\limits_{j=1}^3\left(\int_{{\R}^3}\xi_1\psi_{j+2}\cdot\pa_xL_{M_{\CA}}^{-1}\overline{Q}_{\CA}(\FG,\FG) \,d\xi,\widetilde{u}_j\right)\right|
\\&+\left|\left(\int_{{\R}^3}\xi_1\left(\psi_{6}-\sum\limits_{j=1}^3u_j\cdot\psi_{j+2}\right)
\cdot\pa_xL_{M_{\CA}}^{-1}\overline{Q}_{\CA}(\FG,\FG) \,d\xi,\ta^{-1}\widetilde{\ta}\right)\right|\\
&+C_\eta\sum\limits_{\CA\in\{i,e\}}\int_{\R\times\R^3}\left|M_\CA^{-1/2}
\left(\overline{R}_{\CA}-L_{M_{\CA}}^{-1}\overline{Q}_{\CA}(\FG,\FG)\right)\right|^2d\xi dx\\
&+{\eta\sum\limits_{j=1}^3\int_{\R\times\R^3}|\xi|^4|\FM||\pa_x\widetilde{u}_j|^2 d\xi dx }
\\&+\eta\int_{\R\times\R^3}|\xi|^6|\FM|\left|\pa_x\left(\frac{\widetilde{\theta}}{\theta}\right)\right|^2 d\xi dx
+\eta\sum\limits_{i=j}^3\int_{\R\times\R^3}|\xi|^4|\FM|
\left|\pa_x\left(\frac{\widetilde{\theta}}{\theta}\widetilde{u}_j\right)\right|^2 d\xi dx
\\&+\eta\int_{\R\times\R^3}|\xi|^4|\FM|\left|\pa_x\left(\frac{\widetilde{\theta}}{\theta}u^r_1\right)\right|^2 d\xi dx,
\end{split}
\end{equation*}
which further gives
\begin{equation*}
\begin{split}
\left|\int_\R\CN_{6}\,dx\right|
\leq&(\eta+\eps_0)\left\|\pa_x\left[\widetilde{n}_i,\widetilde{n}_e,\widetilde{u},\widetilde{\theta}\right]\right\|^2
+C(1+t)^{-2}\left\|\left[\widetilde{n}_i,\widetilde{n}_e,\widetilde{u},\widetilde{\theta}\right]\right\|^2\\
&+\eps_0\|\pa_x\phi\|^2+C_\eta\de^{1/6}_r(1+t)^{-7/6}
\\&+C_\eta\sum\limits_{|\al|=1}\int_{\R\times\R^3}(1+|\xi|)|\FM^{-1/2}\pa^\al\FG|^2 d\xi dx\\
&+C_\eta\int_{\R\times\R^3}|\pa_x\phi|^2|\FM^{-1/2}\pa_{\xi_1}(\widetilde{\FG}+\overline{\FG})|^2d\xi dx
\\&+C_\eta\int_{\R}\left(\int_{\R^3}(1+|\xi|)|\FM^{-1/2}_\sharp\FG|^2d\xi\right)
\left(\int_{\R^3}|\FM^{-1/2}_\sharp\FG|^2 d\xi\right) dx.
\end{split}
\end{equation*}
It then follows that
\begin{equation*}
\begin{split}
\left|\int_\R\CN_{6}\,dx\right|
\leq&(\eta+\eps_0)\left\|\pa_x\left[\widetilde{u},\widetilde{\theta}\right]\right\|^2+C_\eta\de_r\left\|\pa_x\phi \right\|^2
+C(1+t)^{-2}\left\|\left[\widetilde{n}_i,\widetilde{n}_e,\widetilde{u},\widetilde{\theta}\right]\right\|^2+C_\eta\de^{1/6}_r(1+t)^{-7/6}
\\&+C_\eta\eps_0\int_{\R\times\R^3}|\FM^{-1/2}\pa_{\xi_1}\widetilde{\FG}|^2 d\xi dx
+C_\eta\sum\limits_{|\al|=1}\int_{\R\times\R^3}(1+|\xi|)|\FM^{-1/2}\pa^\al\FG|^2 d\xi dx
\\&+C_\eta\int_{{\R}\times{\R}^3}(1+|\xi|)\left|\FM^{-1/2}_\sharp\widetilde{\FG}\right|^2 d\xi dx.
\end{split}
\end{equation*}
Here the following 
estimate has been used:
\begin{equation}\label{db.G}
\begin{split}
\int_{\R}&\left(\int_{\R^3}(1+|\xi|)|\FM^{-1/2}_\sharp\FG|^2d\xi\right)
\left(\int_{\R^3}|\FM^{-1/2}_\sharp\FG|^2 d\xi\right) dx
\\ \leq&
C\int_{{\R}}\left(\int_{{\R}^3}(1+|\xi|)\left|\FM_\sharp^{-1/2}(\widetilde{\FG}+\overline{\FG})\right|^2d\xi\right)
\left(\int_{{\R}^3}\left|\FM_\sharp^{-1/2}(\widetilde{\FG}+\overline{\FG})\right|^2d\xi\right) dx\\
\leq& \int_{{\R}\times{\R}^3}(1+|\xi|)\left|\FM^{-1/2}_\sharp\widetilde{\FG}\right|^2 d\xi dx
+C\int_{{\R}}\left|\pa_x[u^r,\ta^r]\right|^4 dx\\
\leq& \int_{{\R}\times{\R}^3}(1+|\xi|)\left|\FM^{-1/2}_\sharp\widetilde{\FG}\right|^2 d\xi dx
+C\de_r(1+t)^{-2}.
\end{split}
\end{equation}
Moreover,
Corollary \ref{inv.L.} and Lemma \ref{est.nonop} as well as similar calculations for obtaining the formula \eqref{d.iL} are also applied
to deduce
\begin{equation*}
\begin{split}
&\int_{\R^3}\left|\FM^{-1/2}_\sharp\pa_{\xi_1}\overline{\FG}\right|^2d\xi
\\ &\leq
C\sum_{|\be|\leq 1}\int_{{\R}^3}(1+|\xi|)\left|\FM^{-1/2}_\sharp \FL^{-1}_{\FM}\pa^\be \left\{{\bf P}_1^{\FM}\left[\left[m_iM_i,m_eM_e\right]^{\rm T}
\xi_1\left(\xi_1\pa_xu_1^r+\frac{|\xi-u|^2}{2\ta}\pa_x\ta^r\right)\right]\right\} \right|^2d\xi
\\&\quad+C\sum_{|\be|\leq 1}\int_{{\R}^3}(1+|\xi|)\left|\FM^{-1/2}_\sharp \FL^{-1}_{\FM}\pa^\be \left\{{\bf P}_1^{\FM}\left[\left[n^{-1}_iM_i\pa_xn_i^r,n^{-1}_eM_e\pa_xn_e^r\right]^{\rm T}
\xi_1\right]\right\} \right|^2d\xi
\\&\quad+C\sum_{|\be|\leq 1}\int_{{\R}^3}(1+|\xi|)\left|\FM^{-1/2}_\sharp \FL^{-1}_{\FM}\pa^\be \left\{{\bf P}_1^{\FM}\left[\left[M_i,M_e\right]^{\rm T}
\xi_1\pa_x\ta^r\right]\right\} \right|^2d\xi
\\ &\leq C\left|\pa_x\left[n^r,u_1^r,\ta^r\right]\right|^2.
\end{split}
\end{equation*}
Finally, by substituting the above estimates for $\int_\R\CN_{l}\,dx$ $(1\leq l\leq 7)$ into \eqref{basic}, we conclude that
\begin{equation}\label{zero.eng}
\begin{split}
\frac{d}{dt}&\widetilde{\eta}
+\la\left\|\pa_x\left[\widetilde{u},\widetilde{\ta}\right]\right\|^2
+\la\int_{\R}\pa_xu_1^r\left|\left[\widetilde{n}_i,\widetilde{n}_e,\widetilde{u}_1,\widetilde{\ta}\right]\right|^2dx
\\
\leq & (\eta+\eps_0)\left\|\pa_t\left[\widetilde{u},\widetilde{\ta}\right]\right\|^2
+(\eta+\eps_0)\left\|\pa_x\left[\widetilde{n}_i,\widetilde{n}_e,\phi \right]\right\|^2
+(\eps_0+\eta)\left\|q_i\widetilde{n}_i+q_e\widetilde{n}_e\right\|^2
\\&+C_\eta(1+t)^{-2}\left\|\left[\widetilde{n}_i,\widetilde{n}_e,\widetilde{u},\widetilde{\ta}\right]\right\|^2
+\de^{1/6}_r(1+t)^{-7/6}
+C_\eta\sum\limits_{|\al|=1}\int_{\R\times\R^3}(1+|\xi|)|\FM^{-1/2}\pa^\al\FG|^2 d\xi dx
\\&+C_\eta\int_{{\R}\times{\R}^3}(1+|\xi|)\left|\FM^{-1/2}_\sharp\widetilde{\FG}\right|^2 d\xi dx
+C_\eta\eps_0\int_{\R\times\R^3}|\FM^{-1/2}\pa_{\xi_1}\widetilde{\FG}|^2 d\xi dx.
\end{split}
\end{equation}

\subsection{Estimate on first-order dissipation}

One has to further consider the dissipation terms involving $\pa_x\left[\widetilde{n}_i,\widetilde{n}_e,\phi ,\pa_x\phi \right]$
and $\pa_t\left[\widetilde{n}_i,\widetilde{n}_e,\widetilde{u},\widetilde{\ta}\right]$. The computations are divided into three steps.

\medskip

\noindent{\it Step 1.} {\it Dissipation of $\pa_x(\widetilde{n}_i+\widetilde{n}_e)$:} We first differentiate \eqref{tvi} and \eqref{tve} with respect to $x$, respectively, to obtain
\begin{equation}\label{d.vi}
\pa_t\pa_x\widetilde{n}_i+\pa^2_x\left(n_iu_1-n_i^ru_1^r\right)
=-\pa^2_x\int_{{\R}^3}\xi_1G_i \,d\xi,
\end{equation}
and
\begin{equation}\label{d.ve}
\pa_t\pa_x\widetilde{n}_e+\pa^2_x\left(n_eu_1-n_e^ru_1^r\right)
=-\pa^2_x\int_{{\R}^3}\xi_1G_e \,d\xi.
\end{equation}
Then taking the inner products of \eqref{tu1}, \eqref{d.vi} and \eqref{d.ve} with terms
$$
\pa_x(\widetilde{n}_i+\widetilde{n}_e),\ \ \frac{3(\mu_i(\theta)+\mu_e(\theta))}{n_i^r}\pa_x\widetilde{n}_i,\ \
\text{and}\ \
\frac{3(\mu_i(\theta)+\mu_e(\theta))}{n_e^r}\pa_x\widetilde{n}_e,
$$
with  respect to $x$ over $\R$, respectively, one has
\begin{equation}\label{tu1.ip}
\begin{split}
&\big((m_in_i+m_en_e)\left(\pa_t \widetilde{u}_1+u_1\pa_x\widetilde{u}_1+\widetilde{u}_1\pa_xu_1^{r}\right),\pa_x(\widetilde{n}_i+\widetilde{n}_e)\big)
+\left(\pa_x P-\pa_xP^r,\pa_x(\widetilde{n}_i+\widetilde{n}_e)\right)
\\
&\quad+\left(\left(1-\frac{m_in_i+m_en_e}{m_in_i^r+m_en_e^r}\right)\pa_xP^r,\pa_x(\widetilde{n}_i+\widetilde{n}_e)\right)
+\left((q_in_i+q_en_e)\pa_x\phi ,
\pa_x(\widetilde{n}_i+\widetilde{n}_e)\right)
\\
&=3\left((\mu_i(\theta)+\mu_e(\theta))\pa^2_x \widetilde{u}_1,\pa_x(\widetilde{n}_i+\widetilde{n}_e)\right)
+3\left((\mu_i(\theta)+\mu_e(\theta))\pa^2_x u^r_1,\pa_x(\widetilde{n}_i+\widetilde{n}_e)\right)\\
&\quad +3\left(\pa_x(\mu_i(\theta)+\mu_e(\theta))\pa_x u_1,\pa_x(\widetilde{n}_i+\widetilde{n}_e)\right)
-\left(\int_{{\R}^3}\xi_1\psi_{3}\cdot\pa_x\overline{\FR} \,d\xi,\pa_x(\widetilde{n}_i+\widetilde{n}_e)\right)
\\&\quad
-\left(\int_{{\R}^3}\psi_{3i}\xi_1\pa_x \left(P_0^{M_i}G_i\right) d\xi,\pa_x(\widetilde{n}_i+\widetilde{n}_e)\right)
-\left(\int_{{\R}^3}\psi_{3e}\xi_1\pa_x \left(P_0^{M_e}G_e\right) d\xi,\pa_x(\widetilde{n}_i+\widetilde{n}_e)\right),
\end{split}
\end{equation}
and
\begin{equation}\label{vi.ip}
\begin{split}
&\left(\pa_t\pa_x\widetilde{n}_i+\pa_x^2(\widetilde{n}_i\widetilde{u}_1+\widetilde{n}_iu_1^r)+\pa_x^2n^r_i\widetilde{u}_1
+\pa_xn^r_i\pa_x\widetilde{u}_1
+n^r_i\pa^2_x\widetilde{u}_1,\frac{3(\mu_i(\theta)+\mu_e(\theta))}{n_i^r}\pa_x\widetilde{n}_i\right)
\\&=
-\left(\pa^2_x\int_{{\R}^3}\xi_1G_i \,d\xi,\frac{3(\mu_i(\theta)+\mu_e(\theta))}{n^r_i}\pa_x\widetilde{n}_i\right),
\end{split}
\end{equation}
and
\begin{equation}\label{ve.ip}
\begin{split}
&\left(\pa_t\pa_x\widetilde{n}_e+\pa_x^2(\widetilde{n}_e\widetilde{u}_1+\widetilde{n}_eu_1^r)+\pa_x^2n^r_e\widetilde{u}_1
+\pa_xn^r_e\pa_x\widetilde{u}_1
+n^r_e\pa^2_x\widetilde{u}_1,\frac{3(\mu_i(\theta)+\mu_e(\theta))}{n_e^r}\pa_x\widetilde{n}_e\right)
\\&=
-\left(\pa^2_x\int_{{\R}^3}\xi_1G_e \,d\xi,\frac{3(\mu_i(\theta)+\mu_e(\theta))}{n^r_e}\pa_x\widetilde{n}_e\right).
\end{split}
\end{equation}
Notice that
\begin{equation}\label{p-pr}
P-P^r=\frac{2}{3}\left((\widetilde{n}_i+\widetilde{n}_e)\widetilde{\ta}+(\widetilde{n}_i+\widetilde{n}_e)\ta^r
+(n^r_i+n^r_e)\widetilde{\ta}\right).
\end{equation}
We get from the summation of \eqref{tu1.ip}, \eqref{vi.ip} and \eqref{ve.ip} that
\begin{equation}\label{sum.d1}
\begin{split}
&-\frac{d}{dt}\left((m_in_i+m_en_e)\widetilde{u}_1,\pa_x(\widetilde{n}_i+\widetilde{n}_e)\right)
+\frac{3}{2}\frac{d}{dt}\left(\pa_x\widetilde{n}_i,\frac{\mu_i(\theta)+\mu_e(\theta)}{n_i^r}\pa_x\widetilde{n}_i\right)
\\&\quad +\frac{3}{2}\frac{d}{dt}\left(\pa_x\widetilde{n}_e,\frac{\mu_i(\theta)+\mu_e(\theta)}{n^r_e}\pa_x\widetilde{n}_e\right)
+\frac{2}{3}\left(\ta^r\pa_x(\widetilde{n}_i+\widetilde{n}_e),
\pa_x(\widetilde{n}_i+\widetilde{n}_e)\right)\\
&=\sum\limits_{l=1}^{9}\CI_{l},
\end{split}
\end{equation}
with
\begin{eqnarray*}
\left\{\begin{array}{rll}
\begin{split}
\CI_1=&-\left((m_in_i+m_en_e)(u_1\pa_x\widetilde{u}_1+\widetilde{u}_1\pa_xu_1^{r}),
\pa_x(\widetilde{n}_i+\widetilde{n}_e)\right),\\
\CI_2=&-\frac{2}{3}\left(\pa_x\ta^r(\widetilde{n}_i+\widetilde{n}_e),
\pa_x(\widetilde{n}_i+\widetilde{n}_e)\right)+\frac{2}{3}\left(\pa_x\left(\widetilde{\ta}\left(\widetilde{n}_i+\widetilde{n}_e\right)\right),
\pa_x(\widetilde{n}_i+\widetilde{n}_e)\right),\\
\CI_3=&\frac{2}{3}\left(\pa_x\left(\widetilde{\ta}(n_i^r+n_e^r)\right),
\pa_x(\widetilde{n}_i+\widetilde{n}_e)\right),\ \
\CI_4=\left((q_in_i+q_en_e)\pa_x\phi,
\pa_x(\widetilde{n}_i+\widetilde{n}_e)\right),\\
\CI_5=&
\left((m_in_i+m_en_e)\widetilde{u}_1,\pa_t\pa_x(\widetilde{n}_i+\widetilde{n}_e)\right)
+\left((m_i\pa_tn_i+m_e\pa_tn_e)\widetilde{u}_1,\pa_x(\widetilde{n}_i+\widetilde{n}_e)\right),\\
\CI_6=&\frac{3}{2}\left(\pa_t\left(\frac{\mu_i(\theta)+\mu_e(\theta)}{n_i^r}\right)\pa_x\widetilde{n}_i,\pa_x\widetilde{n}_i\right)
+
\frac{3}{2}\left(\pa_t\left(\frac{\mu_i(\theta)+\mu_e(\theta)}{n_e^r}\right)\pa_x\widetilde{n}_e,\pa_x\widetilde{n}_e\right),\\
\CI_7=&3\left((\mu_i(\theta)+\mu_e(\theta))\pa^2_x u^r_1,\pa_x(\widetilde{n}_i+\widetilde{n}_e)\right)
+3\left(\pa_x(\mu_i(\theta)+\mu_e(\theta))\pa_x u_1,\pa_x(\widetilde{n}_i+\widetilde{n}_e)\right),
\end{split}
\end{array}\right.
\end{eqnarray*}
and
\begin{eqnarray*}
\left\{\begin{array}{rll}
\begin{split}
\CI_8=&-\left(\pa_x^2(\widetilde{n}_i\widetilde{u}_1+\widetilde{n}_iu_1^r)+\pa_x^2n^r_i\widetilde{u}_1
+\pa_xn^r_i\pa_x\widetilde{u}_1
,\frac{3(\mu_i(\theta)+\mu_e(\theta))}{n_i^r}\pa_x\widetilde{n}_i\right)
\\&-\left(\pa_x^2(\widetilde{n}_e\widetilde{u}_1+\widetilde{n}_eu_1^r)+\pa_x^2n^r_e\widetilde{u}_1
+\pa_xn^r_e\pa_x\widetilde{u}_1
,\frac{3(\mu_i(\theta)+\mu_e(\theta))}{n_e^r}\pa_x\widetilde{n}_e\right),\\
\CI_9=&-\left(\int_{{\R}^3}\xi_1\psi_{3}\cdot\pa_x\overline{\FR} \,d\xi,\pa_x(\widetilde{n}_i+\widetilde{n}_e)\right)
-\left(\int_{{\R}^3}\psi_{3i}\xi_1\pa_x \left(P_0^{M_i}G_i\right) d\xi,\pa_x(\widetilde{n}_i+\widetilde{n}_e)\right)
\\&-\left(\int_{{\R}^3}\psi_{3e}\xi_1\pa_x \left(P_0^{M_e}G_e\right) d\xi,\pa_x(\widetilde{n}_i+\widetilde{n}_e)\right)
-\left(\pa_x^2\int_{{\R}^3}\xi_1G_i \,d\xi,\frac{3(\mu_i(\theta)+\mu_e(\theta))}{n_i^r}\pa_x\widetilde{n}_i\right)
\\&-\left(\pa^2_x\int_{{\R}^3}\xi_1G_e \,d\xi,\frac{3(\mu_i(\theta)+\mu_e(\theta))}{n_e^r}\pa_x\widetilde{n}_e\right).
\end{split}
\end{array}\right.
\end{eqnarray*}
We now turn to estimate
$\CI_l$ $(1\leq l\leq 9)$ term by term.
It follows from Lemma \ref{cl.RwRe}, Sobolev's inequality and Cauchy-Schwarz inequality with $0<\eta<1$ that
\begin{equation*}
|\CI_1|,\ |\CI_3|
\lesssim \eta\|\pa_x(\widetilde{n}_i+\widetilde{n}_e)\|^2+C_\eta(1+t)^{-2}\left\|\left[\widetilde{u}_1,\widetilde{\ta}\right]\right\|^2
+C_\eta\left\|\pa_x\left[\widetilde{u}_1,\widetilde{\ta}\right]\right\|^2,
\end{equation*}
\begin{equation*}
|\CI_2|
\lesssim (\eta+\eps_0)\|\pa_x(\widetilde{n}_i+\widetilde{n}_e)\|^2+\eps_0\left\|\pa_x\widetilde{\ta}\right\|^2
+C_\eta(1+t)^{-2}\|[\widetilde{n}_i,\widetilde{n}_e]\|^2,
\end{equation*}
\begin{equation*}
|\CI_4|
\lesssim \eps_0\|q_i\widetilde{n}_i+q_e\widetilde{n}_e\|^2
+\eps_0\|\pa_x(\widetilde{n}_i+\widetilde{n}_e)\|^2.
\end{equation*}
By integration by parts and the a priori estimates \eqref{aps}, we see that
\begin{equation*}
\begin{split}
|\CI_5|
\lesssim& (\eta+\eps_0)\|\pa_x [\widetilde{n}_i,\widetilde{n}_e]\|^2+(\eta+\eps_0)\|\pa_t [\widetilde{n}_i,\widetilde{n}_e]\|^2+C_\eta(1+t)^{-2}\|\widetilde{u}_1\|^2
+C_\eta\left\|\pa_x\widetilde{u}_1\right\|^2.
\end{split}
\end{equation*}
For the estimate on $\CI_6$, from  \eqref{aps}, it follows that
\begin{equation*}
\begin{split}
|\CI_6|
\lesssim \eps_0\|\pa_x[\widetilde{n}_i,\widetilde{n}_e]\|^2.
\end{split}
\end{equation*}
For $\CI_7$, Lemma \ref{cl.RwRe} and Cauchy-Schwarz inequality imply
\begin{equation*}
\begin{split}
|\CI_7|
\lesssim \eta\left\|\pa_x(\widetilde{n}_i+\widetilde{n}_e)\right\|^2
+C_\eta(\de_r+\eps_0)\left\|\pa_x\left[\widetilde{n}_i,\widetilde{n}_e,\widetilde{u}_1,\widetilde{\ta}\right]\right\|^2
+C_\eta\de_r^{1/2}(1+t)^{-3/2}.
\end{split}
\end{equation*}
As to $I_8$, one has
\begin{equation*}
\begin{split}
|\CI_8|\lesssim (\eps_0+\eta)\left\|\pa_x\left[\widetilde{n}_i,\widetilde{n}_e, \pa_x\widetilde{u}_1\right]\right\|^2
+\eta\left\|\pa_x\left[\widetilde{n}_i,\widetilde{n}_e\right]\right\|^2
+C_\eta(1+t)^{-2}\left\|\left[\widetilde{n}_i,\widetilde{n}_e\right]\right\|^2,
\end{split}
\end{equation*}
according to Lemma \ref{cl.RwRe}, Sobolev's inequality and Cauchy-Schwarz inequality with $0<\eta<1$ again.

Finally, for $\CI_9$,
by performing the similar calculations as for $\int_\R\CN_{6}\,dx$ in the previous step,
we have
\begin{equation*}
\begin{split}
\left|\CI_9\right|
 \lesssim& (\eps_0+\eta)\left\|\pa_x\left[\widetilde{n}_i,\widetilde{n}_e,\widetilde{u},\widetilde{\ta}\phi ,\pa_x\phi \right]\right\|^2
+\de_r(1+t)^{-2}
+C_\eta\sum\limits_{1\leq|\al| \leq2}\int_{\R\times\R^3}(1+|\xi|)|\FM^{-1/2}\pa^{\al}\FG|^2 d\xi dx
\\&+C_\eta\int_{{\R}\times{\R}^3}(1+|\xi|)\left|\FM^{-1/2}_\sharp\widetilde{\FG}\right|^2 d\xi dx
+C_\eta\eps_0\sum\limits_{|\al| \leq1}\int_{\R\times\R^3}|\FM^{-1/2}\pa^\al \pa_{\xi_1}\widetilde{\FG}|^2 d\xi dx.
\end{split}
\end{equation*}
We insert the above estimations for $\CI_l$ $(1\leq l\leq 9)$ into \eqref{sum.d1}, choose $\eps_0$ and $\eta$ to be suitably small and then obtain
that
\begin{equation}\label{1s.vive}
\begin{split}
&\frac{d}{dt}\left((m_in_i+m_en_e)\widetilde{u}_1,\pa_x(\widetilde{n}_i+\widetilde{n}_e)\right)
+\frac{3}{2}\frac{d}{dt}\left(\pa_x\widetilde{n}_i,\frac{\mu_i(\theta)+\mu_e(\theta)}{n^r_i}\pa_x\widetilde{n}_i\right)
\\&\quad +\frac{3}{2}\frac{d}{dt}\left(\pa_x\widetilde{n}_e,\frac{\mu_i(\theta)+\mu_e(\theta)}{n^r_e}\pa_x\widetilde{n}_e\right)
+\la\left\|\pa_x(\widetilde{n}_i+\widetilde{n}_e)\right\|^2
\\
&\lesssim
C_\eta\left\|\pa_x\left[\widetilde{u}_1,\widetilde{\ta}\right]\right\|^2+
(\eps_0+\eta)\left\|\pa_x\left[\widetilde{n}_i,\widetilde{n}_e,
\phi,\pa_x\widetilde{u}_1,\pa_x\phi \right]\right\|^2
+\de_r^{1/2}(1+t)^{-3/2}\\
&\quad +(\eps_0+\eta)\left\|\pa_t\left[\widetilde{n}_i,\widetilde{n}_e,\widetilde{u},\widetilde{\ta}\right]\right\|^2
+(\eps_0+\eta)\left\|\pa^2_x\left[\widetilde{n}_i,\widetilde{n}_e\right]\right\|^2
\\&\quad +C_\eta(1+t)^{-2}\left\|\left[\widetilde{n}_i,\widetilde{n}_e,\widetilde{u}_1,\widetilde{\ta}\right]\right\|^2
+C_\eta\sum\limits_{1\leq|\al| \leq2}\int_{\R\times\R^3}(1+|\xi|)|\FM^{-1/2}\pa^{\al}\FG|^2 d\xi dx
\\&\quad +C_\eta\int_{{\R}\times{\R}^3}(1+|\xi|)\left|\FM^{-1/2}_\sharp\widetilde{\FG}\right|^2 d\xi dx
+C_\eta\eps_0\sum\limits_{|\al| \leq1}\int_{\R\times\R^3}|\FM^{-1/2}\pa^\al \pa_{\xi_1}\widetilde{\FG}|^2 d\xi dx.
\end{split}
\end{equation}

\medskip

\noindent{\it Step 2.} {\it Dissipation of $\pa_x\left[\phi ,\pa_x\phi \right]$:}
To do this, we shall make use of the equations
\eqref{cons.law.i}, \eqref{cons.law.e} and \eqref{Euler3}. Specifically,
we get from the summation of the second equation of \eqref{cons.law.i} multiplied by $q_i$ and the second equation of  \eqref{cons.law.e} multiplied by $q_e$ that
\begin{equation}\label{vi-vr}
\begin{split}
&(q_im_in_i+q_em_en_e)(\pa_t u_1+u_1\pa_xu_1)+\frac{2}{3}\pa_x \left((q_in_i+q_en_e)\ta\right)
+(q^2_in_i+q_e^2n_e)\pa_x\phi
\\&=-q_i\int_{{\R}^3}\psi_{3i}\pa_tG_i \,d\xi-q_i\int_{{\R}^3}\psi_{3i}\xi\pa_xG_i \,d\xi
+q_i\int_{{\R}^3}\psi_{3i}Q_i(\FF,\FF)\,d\xi
+q_iu_1\int_{{\R}^3}\psi_{1i}\xi_1\pa_xG_i \,d\xi
\\&\quad -q_e\int_{{\R}^3}\psi_{3e}\pa_tG_e \,d\xi-q_e\int_{{\R}^3}\psi_{3e}\xi\pa_xG_e \,d\xi
+q_e\int_{{\R}^3}\psi_{3e}Q_e(\FF,\FF)\,d\xi
+q_eu_1\int_{{\R}^3}\psi_{2e}\xi_1\pa_xG_e \,d\xi.
\end{split}
\end{equation}
Then the difference of $\eqref{vi-vr}$ and the third equation of \eqref{Euler3} yields
\begin{equation}\label{visbve}
\begin{split}
&(q_im_in_i+q_em_en_e)(\pa_t \widetilde{u}_1+u_1\pa_x\widetilde{u}_1+\widetilde{u}_1\pa_xu_1^{r})
+(q_im_i\widetilde{n}_i+q_em_e\widetilde{n}_e)(\pa_t u^r_1+u^r_1\pa_xu_1^{r})
\\&\quad +\frac{2}{3}\pa_x \left(q_in_i\ta+q_en_e\ta\right)
+(q^2_in_i+q^2_en_e)\pa_x\phi
\\&=-q_i\int_{{\R}^3}\psi_{3i}\pa_tG_i \,d\xi-q_i\int_{{\R}^3}\psi_{3i}\xi\pa_xG_i \,d\xi
+q_i\int_{{\R}^3}\psi_{3i}Q_i(\FF,\FF)\,d\xi
+q_iu_1\int_{{\R}^3}\psi_{1i}\xi_1\pa_xG_i \,d\xi
\\&\quad -q_e\int_{{\R}^3}\psi_{3e}\pa_tG_e \,d\xi-q_e\int_{{\R}^3}\psi_{3e}\xi\pa_xG_e \,d\xi
+q_e\int_{{\R}^3}\psi_{3e}Q_e(\FF,\FF)\,d\xi
+q_eu_1\int_{{\R}^3}\psi_{2e}\xi_1\pa_xG_e \,d\xi
\\&\quad +\frac{2\ta^r}{3}\frac{q_im_in_i^r+q_em_en_e^r}{m_in^r_i+m_en^r_e}\pa_x(n_i^r+n_e^r)
+\frac{2}{3}\pa_x\ta^r\frac{q_im_in_i^r+q_em_en_e^r}{m_in^r_i+m_en^r_e}(n_i^r+n_e^r).
\end{split}
\end{equation}
Taking the inner product of \eqref{visbve} with $\pa_x\phi $ with respect to $x$ over $\R$, one has
\begin{equation}\label{vi-ve.ip1}
\begin{split}
&\left((q_im_in_i+q_em_en_e)(\pa_t \widetilde{u}_1+u_1\pa_x\widetilde{u}_1+\widetilde{u}_1\pa_xu_1^{r}),\pa_x\phi \right)\\
&\quad +\left((q_im_i\widetilde{n}_i+q_em_e\widetilde{n}_e)(\pa_t u^r_1+u^r_1\pa_xu_1^{r}),\pa_x\phi \right)
+\left((q^2_in_i+q^2_en_e)\pa_x\phi ,\pa_x\phi \right)
\\&\quad
-\frac{2}{3}\left(\ta\left(q_in_i+q_en_e\right),\pa^2_x\phi \right)
\\&=-\left(q_i\int_{{\R}^3}\psi_{3i}\pa_tG_i \,d\xi+q_e\int_{{\R}^3}\psi_{3e}\pa_tG_e \,d\xi,\pa_x\phi \right)\\
&\quad -\left(q_i\int_{{\R}^3}\psi_{3i}\xi\pa_xG_i \,d\xi+q_e\int_{{\R}^3}\psi_{3e}\xi\pa_xG_e \,d\xi,\pa_x\phi \right)
\\&\quad+\left(q_iu_1\int_{{\R}^3}\psi_{1i}\xi_1\pa_xG_i \,d\xi+q_eu_1\int_{{\R}^3}\psi_{2e}\xi_1\pa_xG_e \,d\xi,\pa_x\phi \right)
\\&\quad+\left(q_i\int_{{\R}^3}\psi_{3i}Q_i(\FF,\FF)\,d\xi+q_e\int_{{\R}^3}\psi_{3e}Q_e(\FF,\FF)\,d\xi,\pa_x\phi \right)
\\&\quad+\left(\frac{2\ta^r}{3}\frac{q_im_in_i^r+q_em_en_e^r}{m_in^r_i+m_en^r_e}\pa_x(n_i^r+n_e^r),\pa_x\phi \right)\\
&\quad+\left(\frac{2}{3}\pa_x\ta^r\frac{q_im_in_i^r+q_em_en_e^r}{m_in^r_i+m_en^r_e}(n_i^r+n_e^r),\pa_x\phi \right).
\end{split}
\end{equation}
Thanks to \eqref{cons.law.i}, \eqref{cons.law.e} and \eqref{Euler3}, we have
\begin{equation}\label{u1.cons}
\begin{split}
&(m_in_i+m_en_e)(\pa_t \widetilde{u}_1+u_1\pa_x\widetilde{u}_1+\widetilde{u}_1\pa_xu_1^{r})\\
&\quad+\pa_x P-\pa_xP^r+\left(1-\frac{m_in_i+m_en_e}{m_in_i^r+m_en_e^r}\right)\pa_xP^r
+\left(q_in_i+q_en_e\right)\pa_x\phi\\
&=
-\int_{{\R}^3}\xi_1\psi_{3}\cdot\pa_x\FG \,d\xi.
\end{split}
\end{equation}
In view of \eqref{u1.cons} and \eqref{aps}, we get from Cauchy-Schwarz inequality with $\eta>0$ that
\begin{equation*}
\begin{split}
&\left|\left((q_im_in_i+q_em_en_e)\pa_t \widetilde{u}_1,\pa_x\phi \right)\right|
\\&\lesssim (\eps_0+\de_r+\eta)\left\|\pa_x\phi \right\|^2
+(\eps_0+\de_r)\left\|q_in_i+q_en_e\right\|^2
+C_\eta(1+t)^{-2}\left\|\left[\widetilde{n}_i,\widetilde{n}_e,\widetilde{u}_1,\widetilde{\ta}\right]\right\|^2
\\&\quad+C_\eta\left\{\left\|\pa_x(\widetilde{n}_i+\widetilde{n}_e)\right\|^2
+\left\|\pa_x\left[\widetilde{u}_1,\widetilde{\ta}\right]\right\|^2\right\}\\
&\quad+C_\eta\sum\limits_{|\al|=1}\int_{\R\times\R^3}(1+|\xi|)|\FM^{-1/2}\pa^\al\FG|^2 d\xi dx.
\end{split}
\end{equation*}
Next, by Cauchy-Schwarz inequality with $\eta>0$ and Lemma \ref{cl.RwRe}, one has
\begin{equation*}
\begin{split}
&\left|\left((q_im_in_i+q_em_en_e)(u_1\pa_x\widetilde{u}_1+\widetilde{u}_1\pa_xu_1^{r}),\pa_x\phi \right)\right|
\lesssim \eta\left\|\pa_x\phi \right\|^2
+C_\eta(1+t)^{-2}\left\|\widetilde{u}_1\right\|^2
+C_\eta
\left\|\pa_x\widetilde{u}_1\right\|^2,
\end{split}
\end{equation*}
and
\begin{equation*}
\begin{split}
\left|\left((q_im_i\widetilde{n}_i+q_em_e\widetilde{n}_e)(\pa_t u^r_1+u^r_1\pa_xu_1^{r}),\pa_x\phi \right)\right|
\lesssim
\eta\left\|\pa_x\phi \right\|^2
+C_\eta(1+t)^{-2}\left\|\left[\widetilde{n}_i,\widetilde{n}_e\right]\right\|^2.
\end{split}
\end{equation*}
For the terms on the right-hand side of \eqref{vi-ve.ip1},
we only present the exact computations of those terms involving $Q_\CA(\FF,\FF)$. 
Note that
$$
Q_{\CA}(\FF,\FF)=(\FL_{\FM}\FG)_{\CA}+Q_{\CA}(\FG,\FG)
=(\FL_{\FM}\widetilde{\FG})_{\CA}+(\FL_{\FM}\overline{\FG})_{\CA}+Q_{\CA}(\FG,\FG).
$$
With this we write
\begin{equation}\label{e.d1}
\begin{split}
&\left(q_i\int_{{\R}^3}\psi_{3i}Q_i(\FF,\FF)\,d\xi+q_e\int_{{\R}^3}\psi_{3e}Q_e(\FF,\FF)\,d\xi,\pa_x\phi \right)
\\ &=\left(q_i\int_{{\R}^3}\psi_{3i}(\FL_{\FM}\widetilde{\FG})_{i}\,d\xi
+q_e\int_{{\R}^3}\psi_{3e}(\FL_{\FM}\widetilde{\FG})_{e}\,d\xi,\pa_x\phi \right)
\\&\quad+\left(q_i\int_{{\R}^3}\psi_{3i}(\FL_{\FM}\overline{\FG})_{i}\,d\xi
+q_e\int_{{\R}^3}\psi_{3e}(\FL_{\FM}\overline{\FG})_{e}\,d\xi,\pa_x\phi \right)
\\&\quad+\left(q_i\int_{{\R}^3}\psi_{3i}Q_{i}(\FG,\FG)\,d\xi+q_e\int_{{\R}^3}\psi_{3e}Q_{e}(\FG,\FG)\,d\xi,\pa_x\phi \right).
\end{split}
\end{equation}
Substituting  \eqref{def.ng} into the second term on the right-hand side of \eqref{e.d1}, we find
\begin{equation*}
\begin{split}
&\left(q_i\int_{{\R}^3}\psi_{3i}(\FL_{\FM}\overline{\FG})_{i}\,d\xi
+q_e\int_{{\R}^3}\psi_{3e}(\FL_{\FM}\overline{\FG})_{e}\,d\xi,\pa_x\phi \right)
\\&\quad+\left(\frac{2\ta^r}{3}\frac{q_im_in_i^r+q_em_en_e^r}{m_in^r_i+m_en^r_e}\pa_x(n_i^r+n_e^r),\pa_x\phi \right)
+\left(\frac{2}{3}\pa_x\ta^r\frac{q_im_in_i^r+q_em_en_e^r}{m_in^r_i+m_en^r_e}(n_i^r+n_e^r),\pa_x\phi \right)
\\&=
\left(\left\{-\frac{2\ta}{3}\frac{q_im_in_i+q_em_en_e}{m_in_i+m_en_e}
+\frac{2\ta^r}{3}\frac{q_im_in_i^r+q_em_en_e^r}{m_in^r_i+m_en^r_e}\right\}\pa_x(n_i^r+n_e^r),\pa_x\phi \right)
\\&\quad+\left(-\frac{2}{3}\pa_x\ta^r\frac{q_im_in_i+q_em_en_e}{m_in_i+m_en_e}(n_i+n_e)
+\frac{2}{3}\pa_x\ta^r\frac{q_im_in_i^r+q_em_en_e^r}{m_in^r_i+m_en^r_e}(n^r_i+n_e^r),\pa_x\phi \right),
\end{split}
\end{equation*}
whose absolute value can be bounded by
$$
\eta\left\|\pa_x\phi \right\|^2
+C_\eta(1+t)^{-2}\left\|\left[\widetilde{n}_i,\widetilde{n}_e,\widetilde{\ta}\right]\right\|^2.
$$
The remaining terms on the right-hand side of \eqref{vi-ve.ip1} and \eqref{e.d1} are bounded by
\begin{equation*}
\begin{split}
(\eps_0+\eta)&\left\|\pa_x\phi \right\|^2
+\eps_0\left\|q_i\widetilde{n}_i+q_e\widetilde{n}_e\right\|^2
+C_\eta\de_r(1+t)^{-2}+C_\eta\int_{{\R}\times{\R}^3}(1+|\xi|)\left|\FM^{-1/2}_\sharp\widetilde{\FG}\right|^2 d\xi dx
\\&+C_\eta\sum\limits_{|\al|=1}\int_{\R\times\R^3}(1+|\xi|)|\FM^{-1/2}\pa^\al\FG|^2 d\xi dx
+\eps_0\sum\limits_{|\al| \leq1}\int_{\R\times\R^3}|\FM^{-1/2}\pa^\al \pa_{\xi_1}\widetilde{\FG}|^2 d\xi dx,
\end{split}
\end{equation*}
according to Cauchy-Schwarz inequality and the estimate \eqref{db.G}.
Finally, substituting the above estimates into \eqref{vi-ve.ip1} and applying \eqref{tphy}, we arrive at
\begin{equation}\label{dis.phi}
\begin{split}
&\la\left(\pa_x\phi ,\pa_x\phi \right)
+\la\left(\pa^2_x\phi ,\pa^2_x\phi \right)
\\& \lesssim
(\eps_0+\eta)\left\|q_i\widetilde{n}_i+q_e\widetilde{n}_e\right\|^2
+C_\eta(1+t)^{-2}\left\|\left[\widetilde{n}_i,\widetilde{n}_e,\widetilde{\ta}\right]\right\|^2
+\de_r^{1/2}(1+t)^{-3/2}
\\&\quad+C_\eta\left\{\left\|\pa_x(\widetilde{n}_i+\widetilde{n}_e)\right\|^2
+\left\|\pa_x\left[\widetilde{u}_1,\widetilde{\ta}\right]\right\|^2\right\}
+C_\eta\sum\limits_{|\al|=1}\int_{\R\times\R^3}(1+|\xi|)|\FM^{-1/2}\pa^\al\FG|^2 d\xi dx
\\&\quad+C_\eta\int_{{\R}\times{\R}^3}(1+|\xi|)\left|\FM^{-1/2}_\sharp\widetilde{\FG}\right|^2 d\xi dx.
\end{split}
\end{equation}

\medskip

\noindent{\it Step 3.} {\it Dissipation of $\pa_x\left(q_i\widetilde{n}_i+q_e\widetilde{n}_e\right)$:}
To deduce this, we take the inner product of \eqref{visbve} with $\pa_x\left(q_i\widetilde{n}_i+q_e\widetilde{n}_e\right)$ with respect to $x$ over $\R$ to obtain
\begin{equation}\label{vi-ve.ip2}
\begin{split}
&((q_im_in_i+q_em_en_e)(\pa_t \widetilde{u}_1+u_1\pa_x\widetilde{u}_1+\widetilde{u}_1\pa_xu_1^{r}),\pa_x \left(q_i\widetilde{n}_i+q_e\widetilde{n}_e\right))\\
&\quad+\left((q^2_in_i+q^2_en_e)\pa_x\phi ,\pa_x \left(q_i\widetilde{n}_i+q_e\widetilde{n}_e\right)\right)
\\&\quad+\frac{2}{3}\left(\ta\pa_x \left(q_i\widetilde{n}_i+q_e\widetilde{n}_e\right),\pa_x \left(q_i\widetilde{n}_i+q_e\widetilde{n}_e\right)\right)
+\frac{2}{3}\left(\pa_x \ta \left(q_i\widetilde{n}_i+q_e\widetilde{n}_e\right),\pa_x \left(q_i\widetilde{n}_i+q_e\widetilde{n}_e\right)\right)
\\&\quad+\left(q_im_i\widetilde{n}_i+q_em_e\widetilde{n}_e)(\pa_t u^r_1+u^r_1\pa_xu_1^{r}),\pa_x \left(q_i\widetilde{n}_i+q_e\widetilde{n}_e\right)\right)
\\&=-\left(q_i\int_{{\R}^3}\psi_{3i}\pa_tG_i \,d\xi+q_e\int_{{\R}^3}\psi_{3e}\pa_tG_e \,d\xi,\pa_x \left(q_i\widetilde{n}_i+q_e\widetilde{n}_e\right)\right)
\\&\quad-\left(q_i\int_{{\R}^3}\psi_{3i}\xi_1\pa_xG_i \,d\xi+q_e\int_{{\R}^3}\psi_{3e}\xi_1\pa_xG_e \,d\xi,\pa_x \left(q_i\widetilde{n}_i+q_e\widetilde{n}_e\right)\right)
\\&\quad+\left(q_iu_1\int_{{\R}^3}\psi_{1i}\xi_1\pa_xG_i \,d\xi+q_eu_1\int_{{\R}^3}\psi_{2e}\xi_1\pa_xG_e \,d\xi,\pa_x\left(q_i\widetilde{n}_i+q_e\widetilde{n}_e\right) \right)
\\&\quad+\left(q_i\int_{{\R}^3}\psi_{3i}Q_i(\FF,\FF)\,d\xi+q_e\int_{{\R}^3}\psi_{3e}Q_e(\FF,\FF)\,d\xi,\pa_x\left(q_i\widetilde{n}_i+q_e\widetilde{n}_e\right) \right)
\\&\quad+\left(\frac{2\ta^r}{3}\frac{q_im_in_i^r+q_em_en_e^r}{m_in^r_i+m_en^r_e}\pa_x(n_i^r+n_e^r),\pa_x\left(q_i\widetilde{n}_i+q_e\widetilde{n}_e\right) \right)
\\&\quad+\left(\frac{2}{3}\pa_x\ta^r\frac{q_im_in_i^r+q_em_en_e^r}{m_in^r_i+m_en^r_e}(n_i^r+n_e^r),\pa_x\left(q_i\widetilde{n}_i+q_e\widetilde{n}_e\right) \right).
\end{split}
\end{equation}
With \eqref{vi-ve.ip2} in hand,  by performing the similar calculations as for obtaining \eqref{dis.phi}, one has
\begin{eqnarray}
&&\la\left(\pa_x \left(q_i\widetilde{n}_i+q_e\widetilde{n}_e\right),\pa_x \left(q_i\widetilde{n}_i+q_e\widetilde{n}_e\right)\right)
+\la\left(\left(q_i\widetilde{n}_i+q_e\widetilde{n}_e\right),\left(q_i\widetilde{n}_i+q_e\widetilde{n}_e\right)\right)
\notag\\
&& \lesssim
(\eps_0+\eta)\left\|\pa_x\left[\widetilde{n}_i,\widetilde{n}_e,\phi \right]\right\|^2
+C_\eta(1+t)^{-2}\left\|\left[\widetilde{n}_i,\widetilde{n}_e,\widetilde{\ta}\right]\right\|^2
+C_\eta\left\{\left\|\pa_x(\widetilde{n}_i+\widetilde{n}_e)\right\|^2
+\left\|\pa_x\left[\widetilde{u}_1,\widetilde{\ta}\right]\right\|^2\right\}
\notag\\
&&\quad+C_\eta\sum\limits_{|\al|=1}\int_{\R\times\R^3}(1+|\xi|)|\FM^{-1/2}\pa^\al\FG|^2 d\xi dx
+C_\eta\int_{{\R}\times{\R}^3}(1+|\xi|)\left|\FM^{-1/2}_\sharp\widetilde{\FG}\right|^2 d\xi dx\notag\\
&&\quad +\de_r^{1/2}(1+t)^{-3/2}.\label{dis.vi-ve}
\end{eqnarray}
We now get from \eqref{1s.vive}, \eqref{dis.phi} and \eqref{dis.vi-ve} that
\begin{eqnarray}
&&\frac{d}{dt}\left((m_in_i+m_en_e)\widetilde{u}_1,\pa_x(\widetilde{n}_i+\widetilde{n}_e)\right)
+\frac{3}{2}\frac{d}{dt}\left(\pa_x\widetilde{n}_i,\frac{\mu_i(\theta)+\mu_e(\theta)}{n^r_i}\pa_x\widetilde{n}_i\right)
\notag\\
&&\quad+\frac{3}{2}\frac{d}{dt}\left(\pa_x\widetilde{n}_e,\frac{\mu_i(\theta)+\mu_e(\theta)}{n^r_e}\pa_x\widetilde{n}_e\right)+\la\left\|\pa_x\left[\widetilde{n}_i,\widetilde{n}_e,\phi ,\pa_x\phi \right]\right\|^2
+\la\left\|q_i\widetilde{n}_i+q_e\widetilde{n}_e\right\|^2
\notag\\
&&\lesssim
C_\eta\left\|\pa_x\left[\widetilde{u}_1,\widetilde{\ta}\right]\right\|^2+(\eps_0+\eta)\|\pa_x^2\widetilde{u}_1\|^2
+\de_r^{1/2}(1+t)^{-3/2}
+C_\eta(1+t)^{-2}\left\|\left[\widetilde{n}_i,\widetilde{n}_e,\widetilde{u}_1,\widetilde{\ta}\right]\right\|^2
\notag\\
&&\quad+C_\eta\sum\limits_{1\leq\al \leq2}\int_{\R\times\R^3}(1+|\xi|)|\FM^{-1/2}\pa^{\al}\FG|^2 d\xi dx
+C_\eta\int_{{\R}\times{\R}^3}(1+|\xi|)\left|\FM^{-1/2}_\sharp\widetilde{\FG}\right|^2 d\xi dx
\notag\\
&&\quad
+C_\eta\eps_0\sum\limits_{|\al| \leq1}\int_{\R\times\R^3}|\FM^{-1/2}\pa^\al \pa_{\xi_1}\widetilde{\FG}|^2 d\xi dx.\label{1st.vp.diss}
\end{eqnarray}

Having obtained \eqref{1st.vp.diss}, one can see that $\pa_t\left[\widetilde{n}_i,\widetilde{n}_e,\widetilde{u},\widetilde{\ta}\right]$
also enjoys the similar estimate. In fact, we get from \eqref{cons.law.i}, \eqref{cons.law.e} and \eqref{Euler3} that
\begin{eqnarray}\label{pb.con.}
\left\{\begin{array}{rlll}
\begin{split}
&\pa_t\widetilde{n}_i+\pa_x(n_iu_1-n_i^ru_1^r)=-\int_{{\R}^3}\xi_1\pa_xG_i \,d\xi,\\
&\pa_t\widetilde{n}_e+\pa_x(n_eu_1-n_e^ru_1^r)=-\int_{{\R}^3}\xi_1\pa_xG_e \,d\xi,\\
&(m_in_i+m_en_e)\pa_t \widetilde{u}_1+
(m_in_i+m_en_e)(u_1\pa_x\widetilde{u}_1+\widetilde{u}_1\pa_xu_1^{r})+\pa_x P-\pa_xP^r+\left(q_i\widetilde{n}_i+q_e\widetilde{n}_e\right)\pa_x\phi
\\&\qquad
+\left(1-\frac{m_in_i+m_en_e}{m_in_i^r+m_en_e^r}\right)\pa_xP^r
=
-\int_{{\R}^3}\xi_1\psi_{3}\cdot\pa_x\FG d\xi,\\
&(m_in_i+m_en_e)\pa_t \widetilde{u}_j+(m_in_i+m_en_e)u_1\pa_x\widetilde{u}_j=-\int_{{\R}^3}\xi_1\psi_{j+2}\pa_x\FG d\xi,\
\ j=2,\ 3,\\
&(n_i+n_e)\pa_t\widetilde{\theta}+(n_i+n_e)(u_1\pa_x\widetilde{\ta}+\widetilde{u}_1\pa_x\ta^r)+P\pa_x u_1-P^r\pa_x u^r_1
+\left(1-\frac{n_i+n_e}{n_i^r+n_e^r}\right)P^r\pa_x u^r
\\&\qquad=-\int_{{\R}^3}\xi_1\left(\psi_{6}-\sum\limits_{j=1}^3u_j\cdot\psi_{j+2}\right)\cdot\pa_x\FG \,d\xi
+\ta\int_{{\R}^3}[\xi_1,\xi_1]^{\rm T}\cdot\pa_x\FG \,d\xi
\\&\qquad\quad+\pa_x\phi\int_{{\R}^3}\frac{|\xi|^2}{2}\left[q_i,q_e\right]^{\rm T}\cdot\pa_{\xi_1}\FG \,d\xi.
\end{split}
\end{array}\right.
\end{eqnarray}
This yields that
\begin{equation}\label{sum.d13}
\begin{split}
&\left\|\pa_t\left[\widetilde{n}_i,\widetilde{n}_e,\widetilde{u},\widetilde{\ta}\right]\right\|^2\\
&\lesssim\left\|\pa_x\left[\widetilde{n}_i,\widetilde{n}_e,\widetilde{u},\widetilde{\ta}, \phi \right]\right\|^2
+(1+t)^{-2}\left\|\left[\widetilde{n}_i,\widetilde{n}_e,\widetilde{u},\widetilde{\ta}\right]\right\|^2
+\de_r(1+t)^{-2}
\\&\quad +C_\eta\int_{\R\times\R^3}(1+|\xi|)|\FM^{-1/2}\pa_x\FG|^2 d\xi dx
+C_\eta\int_{{\R}\times{\R}^3}(1+|\xi|)\left|\FM^{-1/2}_\sharp\widetilde{\FG}\right|^2 d\xi dx
\\&\quad +C_\eta\eps_0\sum\limits_{|\al| \leq1}\int_{\R\times\R^3}|\FM^{-1/2}\pa^\al \pa_{\xi_1}\widetilde{\FG}|^2 d\xi dx.
\end{split}
\end{equation}
Letting $1\gg\ka_1\gg\ka_2>0$ and taking the summation
of \eqref{zero.eng}, $\eqref{1st.vp.diss}\times\ka_1$ and $\eqref{sum.d13}\times\ka_2$, one has that for suitably small constants $\eps_0>0$, $\de_r>0$ and $\eta>0$,
\begin{eqnarray}
&&\frac{d}{dt}\widetilde{\eta}
+\ka_1\frac{d}{dt}\left((m_in_i+m_en_e)\widetilde{u}_1,\pa_x(\widetilde{n}_i+\widetilde{n}_e)\right)
\notag\\
&&\quad+\ka_1\frac{3}{2}\frac{d}{dt}\left\{\left(\pa_x\widetilde{n}_i,\frac{\mu_i(\theta)+\mu_e(\theta)}{n^r_i}\pa_x\widetilde{n}_i\right)
+\left(\pa_x\widetilde{n}_e,\frac{\mu_i(\theta)+\mu_e(\theta)}{n^r_e}\pa_x\widetilde{n}_e\right)\right\}
\notag\\
&&\quad+\la\sum\limits_{|\al| =1}\left\|\pa^{\al}\left[\widetilde{n}_i,\widetilde{n}_e,\widetilde{u},\widetilde{\ta}\right]\right\|^2
+\la\left\|q_i\widetilde{n}_i+q_e\widetilde{n}_e\right\|^2+\la\left\|\pa_x\left[\phi ,\pa_x\phi \right]\right\|^2
+\la\int_{\R}\pa_xu_1^r\left[\widetilde{n}_i,\widetilde{n}_e,\widetilde{u}_1,\widetilde{\ta}\right]^2dx
\notag\\
&&\leq  (\eps_0+\eta)\left\|\pa_x^2\widetilde{u}_1\right\|^2
+C_\eta(1+t)^{-2}\left\|\left[\widetilde{n}_i,\widetilde{n}_e,\widetilde{u},\widetilde{\ta}\right]\right\|^2
+\de^{1/6}_r(1+t)^{-7/6}
\notag\\
&&\quad+C_\eta\sum\limits_{1\leq|\al| \leq2}\int_{\R\times\R^3}(1+|\xi|)|\FM^{-1/2}\pa^{\al}\FG|^2 d\xi dx
+C_\eta\int_{{\R}\times{\R}^3}(1+|\xi|)\left|\FM^{-1/2}_\sharp\widetilde{\FG}\right|^2 d\xi dx
\notag\\
&&\quad+C_\eta\eps_0\int_{\R\times\R^3}|\FM^{-1/2}\pa_{\xi_1}\widetilde{\FG}|^2 d\xi dx.
\label{1st.diss}
\end{eqnarray}

\subsection{Estimate on first-order energy}

Let $|\al|=1$. Taking the inner product of $\pa^\al \eqref{tvi}$, $\pa^\al \eqref{tve}$, $\pa^\al \eqref{tu1}$, $\pa^\al \eqref{tuj}$ and $\pa^\al \eqref{tta}$ with $\pa^\al \widetilde{n}_i$, $\pa^\al \widetilde{n}_e$, $\pa^\al \widetilde{u}_1$,
$\pa^\al \widetilde{u}_j$ $(2\leq j\leq3)$ and $\pa^\al \widetilde{\ta}$, respectively, and then taking the summation of the resulting equations,
one has
\begin{equation}\label{p2u1}
\begin{split}
&\frac{1}{2}\frac{d}{dt}\left\{\|\pa^\al \widetilde{n}_i\|^2
+\|\pa^\al \widetilde{n}_e\|^2
+\sum\limits_{j=1}^3\left\|\sqrt{m_in_i+m_en_e}\pa^\al \widetilde{u}_j\right\|^2
+\left\|\sqrt{n_i+n_e}\pa^\al \widetilde{\ta}\right\|^2\right\}\\
&\quad+3\left((\mu_i(\theta)+\mu_e(\theta))\pa_x \pa^\al \widetilde{u}_1,\pa_x\pa^\al \widetilde{u}_1\right)
+\sum\limits_{j=2}^3\left((\mu_i(\theta)+\mu_e(\theta))\pa_x \pa^\al \widetilde{u}_j,\pa_x\pa^\al \widetilde{u}_j\right)
\\&\quad+\left((\kappa_i(\theta)+\kappa_i(\theta))\pa_x \pa^\al \widetilde{\theta},\pa_x\pa^\al \widetilde{\ta}\right)=\sum\limits_{l=1}^{12}J_l,
\end{split}
\end{equation}
where the right-hand terms are given by
\begin{eqnarray*}
\left\{\begin{array}{rll}
\begin{split}
&J_1=-\left(\pa^\al\pa_x(n_iu_1-n_i^ru_1^r), \pa^\al \widetilde{n}_i\right)
+\left(\pa^\al\pa_x(n_eu_1-n_e^ru_1^r), \pa^\al \widetilde{n}_e\right),\\
&J_2=\frac{1}{2}\sum\limits_{j=1}^3\left(\pa_t(m_in_i+m_en_e)\pa^\al\widetilde{u}_j, \pa^\al \widetilde{u}_j\right)
+\frac{1}{2}\left(\pa_t(n_i+n_e)\pa^\al\widetilde{\theta},\pa^\al \widetilde{\theta}\right),\\
&J_3=-\left(\pa^\al\left\{(m_in_i+m_en_e)(u_1\pa_x\widetilde{u}_1+\widetilde{u}_1\pa_xu_1^{r})\right\}, \pa^\al \widetilde{u}_1\right)
-\sum\limits_{j=2}^3\left(\pa^\al\left\{(m_in_i+m_en_e)u_1\pa_x\widetilde{u}_j\right\}, \pa^\al \widetilde{u}_j\right),\\
&J_4=-\left(\pa^\al \left(\left(q_i\widetilde{n}_i+q_e\widetilde{n}_e\right)\pa_x\phi
\right), \pa^\al \widetilde{u}_1\right),\\
&J_5=\left(\pa^\al  (P-P^r),\pa^\al \pa_x\widetilde{u}_1\right)
-\left(\pa^\al \left\{\left(1-\frac{m_in_i+m_en_e}{m_in_i^r+m_en_e^r}\right)\pa_xP^r\right\},\pa^\al \widetilde{u}_1\right),\\
&J_{6}=-\left(\pa^\al \left\{(n_i+n_e)(u_1\pa_x\widetilde{\ta}+\widetilde{u}_1\pa_x\ta^r)\right\},\pa^\al \widetilde{\ta}\right)
-\left(\pa^\al \left(P\pa_x u_1-P^r\pa_x u^r_1\right),\pa^\al \widetilde{\ta}\right),
\\&\qquad-\left(\pa^\al \left\{\left(1-\frac{n_i+n_e}{n_i^r+n_e^r}\right)P^r\pa_x u^r\right\},\pa^\al \widetilde{\ta}\right),
\end{split}
\end{array}\right.
\end{eqnarray*}
\begin{eqnarray*}
\left\{\begin{array}{rll}
\begin{split}
&J_{7}=-3\left((\mu_i(\theta)+\mu_e(\theta))\pa^\al \pa_x u^r_1,\pa^\al \pa_x\widetilde{u}_1\right)
-\left((\ka_i(\theta)+\ka_e(\theta))\pa^\al \pa_x \theta^r,\pa^\al \pa_x\widetilde{\theta}\right),\\
&J_{8}=-3\left(\pa^\al(\mu_i(\theta)+\mu_e(\theta))\pa_x u^r_1,\pa^\al \pa_x\widetilde{u}_1\right)
-\sum\limits_{j=2}^3\left(\pa^\al (\mu_i(\theta)+\mu_e(\theta))\pa_x u_j,\pa^\al \pa_x\widetilde{u}_j\right)
\\&\qquad-\left(\pa^\al (\ka_i(\theta)+\ka_e(\theta))\pa_x \theta,\pa^\al \pa_x\widetilde{\theta}\right),\\
&J_{9}=3\left(\pa^\al \left((\mu_i(\theta)+\mu_e(\theta))(\pa_x u_1)^2\right),\pa^\al \widetilde{\ta}\right)+
\sum\limits_{j=2}^3\left(\pa^\al \left((\mu_i(\theta)+\mu_e(\theta))(\pa_x \widetilde{u}_j)^2\right),\pa^\al \widetilde{\ta}\right),
\end{split}
\end{array}\right.
\end{eqnarray*}
\begin{eqnarray*}
\begin{split}
J_{10}&=-\left( \int_{{\R}^3}\xi_1\pa^\al\pa_xG_i \,d\xi,\pa^\al \widetilde{n}_i\right)
-\left( \int_{{\R}^3}\xi_1\pa^\al\pa_xG_e \,d\xi,\pa^\al \widetilde{n}_e\right)
\\&\quad-\sum\limits_{j=1}^3\left(\int_{{\R}^3}\psi_{(j+2)i}\xi_1\pa^\al\pa_x \left(P_0^{M_i}G_i\right) d\xi,\pa^\al \pa_x\widetilde{u}_j\right)\\
&\quad-\sum\limits_{j=1}^3\left(\int_{{\R}^3}\psi_{(j+2)e}\xi_1\pa^\al\pa_x \left(P_0^{M_e}G_e\right) d\xi,\pa^\al \pa_x\widetilde{u}_j\right)
\\&\quad+\sum\limits_{j=1}^3\left(\pa^\al\left\{u_j\int_{{\R}^3}\psi_{(j+2)i}\xi_1\pa_x \left(P_0^{M_i}G_i\right) d\xi\right\},\pa^\al \widetilde{\ta}\right)
\\&\quad+\sum\limits_{j=1}^3\left(\pa^\al\left\{u_j\int_{{\R}^3}\psi_{(j+2)e}\xi_1\pa_x \left(P_0^{M_e}G_e\right) d\xi\right\},\pa^\al \widetilde{\ta}\right),
\end{split}
\end{eqnarray*}
and
\begin{eqnarray*}
\left\{\begin{array}{rll}
\begin{split}
&J_{11}=
\sum\limits_{j=1}^3\left(\int_{{\R}^3}\xi_1\psi_{j+2}\cdot\pa^\al \overline{\FR} \,d\xi,\pa^\al \pa_x\widetilde{u}_j\right)
+\left(\int_{{\R}^3}\xi_1\left(\psi_{6}-\sum\limits_{j=1}^3u_j\psi_{j+2}\right)\cdot\pa^\al \overline{\FR} \,d\xi,\pa_x\pa^\al \widetilde{\ta}\right),\\
&J_{12}=-\sum\limits_{j=1}^3\left(\int_{{\R}^3}\xi_1\pa_xu_j\psi_{j+2}\cdot\pa^\al \overline{\FR} \,d\xi,\pa^\al \widetilde{\ta}\right)
+\sum\limits_{j=1}^3\left(\int_{{\R}^3}\xi_1\pa^\al  u_j\psi_{j+2}\cdot\pa_x\overline{\FR} \,d\xi,\pa^\al \widetilde{\ta}\right)
\\&\qquad+\left(\pa^\al \left(\ta\int_{{\R}^3}[\xi_1,\xi_1]^{\rm T}\cdot\pa_x\FG \,d\xi\right),\pa^\al \widetilde{\ta}\right)
+\left(\pa^\al \left(\pa_x\phi\int_{{\R}^3}\frac{|\xi|^2}{2}\left[q_i,q_e\right]^{\rm T}\cdot\pa_{\xi_1}\FG \,d\xi\right),\pa^\al \widetilde{\ta}\right).
\end{split}
\end{array}\right.
\end{eqnarray*}
We now turn to estimate $J_l$ $(1\leq l\leq12)$ term by term. 
For brevity, we give straightforward calculations as follows:
\begin{equation*}
\begin{split}
|J_1|
\lesssim \eta\sum\limits_{|\al|=1}\left\|\pa_x\pa^\al \left[\widetilde{n}_i,\widetilde{n}_e\right]\right\|^2
+C_\eta\sum\limits_{|\al|=1}\left\|\pa^\al \left[\widetilde{n}_i,\widetilde{n}_e,\widetilde{u}_1\right]\right\|^2
+C_\eta(1+t)^{-2}\left\|\left[\widetilde{n}_i,\widetilde{n}_e,\widetilde{u}_1\right]\right\|^2,
\end{split}
\end{equation*}
\begin{equation*}
\begin{split}
|J_2|
\lesssim \eps_0\sum\limits_{|\al|=1}\left\|\pa^\al\left[\widetilde{u}_1,\widetilde{\ta}\right]\right\|^2,\ \
|J_3|
\lesssim \eta\sum\limits_{|\al|=1}\left\|\pa_x\pa^\al\widetilde{u}\right\|^2+C_\eta\sum\limits_{|\al|=1}\left\|\pa^\al\widetilde{u}\right\|^2+
C_\eta(1+t)^{-2}\left\|\widetilde{u}_1\right\|^2,
\end{split}
\end{equation*}
\begin{equation*}
\begin{split}
|J_4|
\lesssim \eps_0\sum\limits_{|\al|=1}\|\pa^\al \widetilde{u}_1\|^2
+\eps_0\sum\limits_{|\al|\leq1}\|\pa^\al (q_i\widetilde{n}_i+q_e\widetilde{n}_e)\|^2,
\end{split}
\end{equation*}
\begin{equation*}
\begin{split}
|J_5|\leq& \eta\sum\limits_{|\al|=1}\left\|\pa_x\pa^\al \widetilde{u}_1\right\|^2
+C_{\eta}\sum\limits_{|\al|=1}\left\|\pa^\al \left[\widetilde{n}_i,\widetilde{n}_e,\widetilde{\ta}\right]\right\|^2
+C_\eta(1+t)^{-2}\left\|\left[\widetilde{n}_i,\widetilde{n}_e,\widetilde{\ta}\right]\right\|^2,
\end{split}
\end{equation*}
\begin{equation*}
\begin{split}
|J_{6}|\lesssim&
\eta\sum\limits_{|\al|=1}\left\|\pa_x\pa^\al \left[\widetilde{u}_1,\widetilde{\ta}\right]\right\|^2+
C_\eta\sum\limits_{|\al| =1}\left\|\pa^\al \left[\widetilde{n}_i,\widetilde{n}_e,\widetilde{u}_1,\widetilde{\ta}\right]\right\|^2
+C_\eta(1+t)^{-2}\left\|\left[\widetilde{n}_i,\widetilde{n}_e,\widetilde{u}_1,\widetilde{\ta}\right]\right\|^2,
\end{split}
\end{equation*}
\begin{equation*}
\begin{split}
|J_{7}|\lesssim\eta\sum\limits_{|\al| =1}\left\|\pa_x\pa^\al \left[\widetilde{u},\widetilde{\ta}\right]\right\|^2+C_\eta\de^{1/2}_r(1+t)^{-3/2},
\end{split}
\end{equation*}
\begin{equation*}
\begin{split}
|J_{8}|\leq&\eta\sum\limits_{|\al| =1}\left\|\pa_x\pa^\al \left[\widetilde{u},\widetilde{\ta}\right]\right\|^2+
C_\eta\sum\limits_{|\al| =1}\left\|\pa^\al \left[\widetilde{n}_i,\widetilde{n}_e,\widetilde{\ta}\right]\pa_x\left[\widetilde{u},\widetilde{\ta}\right]\right\|^2
+C_\eta\sum\limits_{|\al| =1}\left\|\pa^\al [n^r,\ta^r]\pa_x\left[\widetilde{u},\widetilde{\ta}\right]\right\|^2
\\&+C_\eta\sum\limits_{|\al| =1}\left\|\pa^\al \left[\widetilde{n}_i,\widetilde{n}_e,\widetilde{\ta}\right]\pa_x[u^r,\ta^r]\right\|^2+
C_\eta\sum\limits_{|\al| =1}\|\pa^\al [n^r,\ta^r]\pa_x[u_1^r,\ta^r]\|^2\\
\leq&\eta\sum\limits_{|\al| =1}\left\|\pa_x\pa^\al \left[\widetilde{u},\widetilde{\ta}\right]\right\|^2
+C_\eta\sum\limits_{|\al| =1}\left\|\pa^\al \left[\widetilde{n}_i,\widetilde{n}_e,\widetilde{u},\widetilde{\ta}\right]\right\|^2+
+C_\eta\de_r^2(1+t)^{-2},
\end{split}
\end{equation*}
\begin{equation*}
\begin{split}
|J_{9}|\lesssim&\left|\left(\pa^\al \left[\widetilde{n}_i,\widetilde{n}_e,\widetilde{\ta}\right](\pa_x u)^2,\pa^\al \widetilde{\ta}\right)\right|+
\left|\left(\pa^\al [n^r,\ta^r](\pa_x \widetilde{u})^2,\pa^\al \widetilde{\ta}\right)\right|
+\left|\left(\pa^\al [n^r,\ta^r](\pa_x u_1^r)^2,\pa^\al \widetilde{\ta}\right)\right|
\\&+\left|\left(\pa^\al \pa_x\widetilde{u}\pa_x\widetilde{u},\pa^\al \widetilde{\ta}\right)\right|
+\left|\left(\pa^\al \pa_xu_1^r\pa_xu_1^r,\pa^\al \widetilde{\ta}\right)\right|\\
\lesssim& \sum\limits_{|\al| =1}\left\|\pa^\al \left[\widetilde{n}_i,\widetilde{n}_e,\widetilde{u},\widetilde{\ta}\right]\right\|^2
+\eps_0\sum\limits_{|\al| =1}\|\pa_x\pa^\al \widetilde{u}\|^2+\de^2_r(1+t)^{-2},
\end{split}
\end{equation*}
and
\begin{equation*}
\begin{split}
|J_{10}|+|J_{11}|+|J_{12}|
\leq&\eta\sum\limits_{|\al| =1}\left\|\pa_x\pa^\al \left[\widetilde{u},\widetilde{\ta}\right]\right\|^2
+C_\eta\sum\limits_{|\al| =1}\left\|\pa^\al \left[\widetilde{n}_i,\widetilde{n}_e,\widetilde{u},\widetilde{\ta}\right]\right\|^2
\\&+ C_\eta\sum\limits_{1\leq|\al|\leq2}\int_{\R\times{\R}^3}(1+|\xi|)|\FM^{-1/2}\pa^\al \FG|^2 d\xi dx\\
&+\eps_{0}\sum\limits_{|\al| \leq1}\int_{\R\times\R^3}|\FM^{-1/2}\pa_{\xi_1}\pa^\al \widetilde{\FG}|^2 d\xi dx
\\&+\eps_{0}\int_{{\R}\times{\R}^3}(1+|\xi|)\left|\FM^{-1/2}_{\sharp}\widetilde{\FG}\right|^2d\xi dx
+\de_r^{1/2}(1+t)^{-3/2}.
\end{split}
\end{equation*}
Plugging the above estimates for $J_l$ $(1\leq l\leq12)$ into \eqref{p2u1}, one thus has
\begin{equation}\label{p2u2}
\begin{split}
&\frac{d}{dt}\sum\limits_{|\al=1|}\left\{\|\pa^\al \widetilde{n}_i\|^2
+\|\pa^\al \widetilde{n}_e\|^2
+\sum\limits_{j=1}^3\left\|\sqrt{m_in_i+m_en_e}\pa^\al \widetilde{u}_j\right\|^2
+\left\|\sqrt{n_i+n_e}\pa^\al \widetilde{\ta}\right\|^2\right\}
\\&\quad+\sum\limits_{|\al| =1}\la\left\|\pa_x \pa^\al \left[\widetilde{u},\widetilde{\theta}\right]\right\|^2
\\ &
\lesssim\sum\limits_{|\al| =1}\left\|\pa^\al \left[\widetilde{n}_i,\widetilde{n}_e,\widetilde{u},\widetilde{\ta}\right]\right\|^2
+\|q_i\widetilde{n}_i+q_e\widetilde{n}_e\|^2
+(1+t)^{-2}\left\|\left[\widetilde{n}_i,\widetilde{n}_e,\widetilde{\ta}\right]\right\|^2
+\de_r^{1/2}(1+t)^{-3/2}
\\&\quad+ \sum\limits_{1\leq|\al| \leq2}\int_{\R\times{\R}^3}(1+|\xi|)|\FM^{-1/2}\pa^\al \FG|^2 d\xi dx
+\int_{{\R}\times{\R}^3}(1+|\xi|)\left|\FM^{-1/2}_{\sharp}\widetilde{\FG}\right|^2d\xi dx
\\&\quad+\eps_{0}\sum\limits_{\ga \leq1}\int_{\R\times\R^3}|\FM^{-1/2}\pa_{\xi_1}\pa^\al \widetilde{\FG}|^2 d\xi dx.
\end{split}
\end{equation}

Let us now deduce the second-order dissipation of $\widetilde{n}_i$ and $\widetilde{n}_e$. As it has been shown in the previous subsection, it may not be direct to obtain
the second-order dissipation of $\widetilde{n}_i$ and $\widetilde{n}_e$ in a separate way, and instead one has to consider $\pa^\al \pa_x\left(\widetilde{n}_i+\widetilde{n}_e\right)$
and $\pa^\al \pa_x\left(q_i\widetilde{n}_i+q_e\widetilde{n}_e\right)$ $(|\al| =1)$ in an equivalent way. In what follows,
we shall turn to derive these two kinds of dissipations by using different equations. In fact,
one can first take the inner product of $\pa^\al \eqref{u1.cons}$ with $\pa^\al \pa_x\left(\widetilde{n}_i+\widetilde{n}_e\right)$
$(|\al| =1)$ to obtain
\begin{equation*}
\begin{split}
&\left((m_in_i+m_en_e)\pa_t \pa^\al \widetilde{u}_1,\pa^\al \pa_x\left(\widetilde{n}_i+\widetilde{n}_e\right)\right)
+\left(\pa^\al(m_in_i+m_en_e)\pa_t  \widetilde{u}_1,\pa^\al \pa_x\left(\widetilde{n}_i+\widetilde{n}_e\right)\right)
\\&\quad+\left(\pa^\al\left\{(m_in_i+m_en_e)(u_1\pa_x\widetilde{u}_1+\widetilde{u}_1\pa_xu_1^{r})\right\},\pa^\al \pa_x\left(\widetilde{n}_i+\widetilde{n}_e\right)\right)
\\&\quad+\left(\pa^\al \pa_x P-\pa^\al \pa_xP^r,\pa^\al \pa_x\left(\widetilde{n}_i+\widetilde{n}_e\right)\right)
+\left(\pa^\al \left(\left(q_i\widetilde{n}_i+q_e\widetilde{n}_e\right)\pa_x\phi \right),
\pa^\al \pa_x\left(\widetilde{n}_i+\widetilde{n}_e\right)\right)
\\&
=-\left(\int_{{\R}^3}\xi_1\psi_{3}\cdot\pa^\al \pa_x\FG \,d\xi,\pa^\al \pa_x\left(\widetilde{n}_i+\widetilde{n}_e\right)\right),
\end{split}
\end{equation*}
from which as well as \eqref{p-pr}, it follows that
\begin{equation}\label{2-vipve.p1}
\begin{split}
&\frac{d}{dt}\left((m_in_i+m_en_e)\pa^\al \widetilde{u}_1,\pa^\al \pa_x\left(\widetilde{n}_i+\widetilde{n}_e\right)\right)+
\frac{2}{3}\left(\ta^r\pa^\al \pa_x\left(\widetilde{n}_i+\widetilde{n}_e\right),
\pa^\al \pa_x\left(\widetilde{n}_i+\widetilde{n}_e\right)\right)
\\&=\left((m_in_i+m_en_e)\pa^\al \widetilde{u}_1,
\pa_t \left\{\pa^\al \pa_x\left(\widetilde{n}_i+\widetilde{n}_e\right)\right\}\right)
+\left(\pa_t (m_in_i+m_en_e)\pa^\al \widetilde{u}_1,
\left\{\pa^\al \pa_x\left(\widetilde{n}_i+\widetilde{n}_e\right)\right\}\right)
\\&\quad-\left(\pa^\al(m_in_i+m_en_e)\pa_t  \widetilde{u}_1,\pa^\al \pa_x\left(\widetilde{n}_i+\widetilde{n}_e\right)\right)
-\frac{2}{3}\left(\pa^\al \ta^r\pa_x(\widetilde{n}_i+\pa_x\widetilde{n}_e),
\pa^\al \pa_x\left(\widetilde{n}_i+\widetilde{n}_e\right)\right)
\\&\quad-\frac{2}{3}\left(\pa^\al \left\{\pa_x\ta^r(\widetilde{n}_i
+\widetilde{n}_e)\right\},
\pa^\al \pa_x\left(\widetilde{n}_i+\widetilde{n}_e\right)\right)
-\frac{2}{3}\left(\pa^\al \pa_x\left\{\widetilde{\ta}(\widetilde{n}_i
+\widetilde{n}_e)\right\},
\pa^\al \pa_x\left(\widetilde{n}_i+\widetilde{n}_e\right)\right)
\\&\quad-\left(\pa^\al \pa_x\left(\widetilde{\ta}(n_i^r+n_e^r)\right),
\pa^\al \pa_x\left(\widetilde{n}_i+\widetilde{n}_e\right)\right)
-\left(\pa^\al \left(\left(q_i\widetilde{n}_i+q_e\widetilde{n}_e\right)\pa_x\phi \right),
\pa^\al \pa_x\left(\widetilde{n}_i+\widetilde{n}_e\right)\right)
\\
&\quad-\left(\int_{{\R}^3}\xi_1\psi_{3}\cdot\pa^\al \pa_x\FG \,d\xi,\pa^\al \pa_x\left(\widetilde{n}_i+\widetilde{n}_e\right)\right).
\end{split}
\end{equation}
Using integration by parts and applying \eqref{tvi} and \eqref{tve}, one can deduce
\begin{equation}\label{th.vipve}
\begin{split}
&\left|\left((m_in_i+m_en_e)\pa^\al \widetilde{u}_1,
\pa_t \pa^\al \pa_x\left(\widetilde{n}_i+\widetilde{n}_e\right)\right)\right|
\\&=\left|\left(\pa_x\left\{(m_in_i+m_en_e)\pa^\al \widetilde{u}_1\right\},
\pa_t \pa^\al \left(\widetilde{n}_i+\widetilde{n}_e\right)\right)\right|
\\&\lesssim \left|\left(\pa_x\left\{(m_in_i+m_en_e)\pa^\al \widetilde{u}_1\right\},
\pa^\al \pa_x\left(n_iu_1-n_i^ru_1^r+n_eu_1-n_e^ru_1^r\right)\right)\right|
\\&\quad+\left|\left(\pa_x\left\{(m_in_i+m_en_e)\pa^\al \widetilde{u}_1\right\},\int_{{\R}^3}\xi_1\pa^\al\pa_x(G_i+G_e) d\xi\right)\right|
\\&\lesssim C_\eta\sum\limits_{|\al| =1}\|\pa^\al \pa_x\widetilde{u}_1\|^2
+\eta\sum\limits_{|\al| =1}\left\|\pa^\al \pa_x\left(\widetilde{n}_i+\widetilde{n}_e\right)\right\|^2
+\sum\limits_{|\al| =1}\left\|\pa^\al \pa_x\left[\widetilde{n}_i,\widetilde{n}_e,\widetilde{u}_1,\widetilde{\ta}\right]\right\|^2
\\&\quad+(1+t)^{-2}\left\|\left[\widetilde{n}_i,\widetilde{n}_e,\widetilde{u}_1\right]\right\|^2
+\sum\limits_{1\leq|\al| \leq 2}\int_{\R\times{\R}^3}(1+|\xi|)|\FM^{-1/2}\pa^\al \FG|^2 d\xi dx.
\end{split}
\end{equation}
Furthermore, it is straightforward to show that the remaining terms on the right-hand side of \eqref{2-vipve.p1} are bounded by
\begin{equation}\label{Rm.2-vipve.p1}
\begin{split}
&(\eps_0+\eta)\left\|\pa^\al \pa_x\left(\widetilde{n}_i+\widetilde{n}_e\right)\right\|^2
+C_\eta\sum\limits_{|\al| =1}\left\|\pa^\al \left[\widetilde{n}_i,\widetilde{n}_e,\widetilde{u},\widetilde{\ta}\right]\right\|^2
+C_\eta\sum\limits_{|\al| =1}\left\|\pa^\al \pa_x\widetilde{\ta}\right\|^2\\
&\quad +(1+t)^{-2}\left\|\left[\widetilde{n}_i,\widetilde{n}_e,\widetilde{u}_1,\widetilde{\ta}\right]\right\|^2+\eps_0\sum\limits_{|\al| \leq1}\|\pa^\al(q_i\widetilde{n}_i+q_e\widetilde{n}_e)\|^2+C_\eta\de_r^{1/2}(1+t)^{-3/2}\\
&\quad+C_\eta\sum\limits_{1\leq|\al| \leq 2}\int_{\R\times{\R}^3}(1+|\xi|)|\FM^{-1/2}\pa^\al \FG|^2 d\xi dx.
\end{split}
\end{equation}
Substituting \eqref{th.vipve} and \eqref{Rm.2-vipve.p1} into \eqref{2-vipve.p1}, we arrive at
\begin{equation}\label{2-vipve.p2}
\begin{split}
&\frac{d}{dt}\sum\limits_{|\al| =1}\left((m_in_i+m_en_e)\pa^\al \widetilde{u}_1,\pa^\al \pa_x\left(\widetilde{n}_i+\widetilde{n}_e\right)\right)+
\la\sum\limits_{|\al| =1}\left(\pa^\al \pa_x\left(\widetilde{n}_i+\widetilde{n}_e\right),
\pa^\al \pa_x\left(\widetilde{n}_i+\widetilde{n}_e\right)\right)
\\& \lesssim
\sum\limits_{|\al| =1}\left\|\pa^\al \left[\widetilde{n}_i,\widetilde{n}_e,\widetilde{u},\widetilde{\ta}\right]\right\|^2
+\sum\limits_{|\al| =1}\left\|\pa^\al \pa_x\left[\widetilde{u}_1,\widetilde{\ta}\right]\right\|^2
+(1+t)^{-2}\left\|\left[\widetilde{n}_i,\widetilde{n}_e,\widetilde{u}_1,\widetilde{\ta}\right]\right\|^2
\\&\quad+\sum\limits_{|\al| \leq1}\|\pa^\al(q_i\widetilde{n}_i+q_e\widetilde{n}_e)\|^2+\de_r^{1/2}(1+t)^{-3/2}\\
&\quad+\sum\limits_{1\leq|\al| \leq 2}\int_{\R\times{\R}^3}(1+|\xi|)|\FM^{-1/2}\pa^\al \FG|^2 d\xi dx.
\end{split}
\end{equation}

One the other hand, taking the inner product of $\pa^\al \eqref{visbve}$ with $\pa^\al \pa_x\left(q_in_i+q_en_e\right)$ with respect to $x$ over $\R$, one has that
\begin{eqnarray*}
&&\left((q_im_in_i+q_em_en_e)\pa_t\pa^\al \widetilde{u}_1,\pa^\al \pa_x \left(q_i\widetilde{n}_i+q_e\widetilde{n}_e\right)\right)
+\left((q^2_in_i+q^2_en_e)\pa^\al \pa_x\phi ,\pa^\al \pa_x \left(q_i\widetilde{n}_i+q_e\widetilde{n}_e\right)\right)
\notag\\
&&\quad
+\frac{2}{3}\left(\ta\pa^\al \pa_x \left(q_i\widetilde{n}_i+q_e\widetilde{n}_e\right),\pa^\al \pa_x \left(q_i\widetilde{n}_i+q_e\widetilde{n}_e\right)\right)
+\left(\pa^\al \left(q^2_in_i+q^2_en_e\right)\pa_x\phi ,\pa^\al \pa_x \left(q_i\widetilde{n}_i+q_e\widetilde{n}_e\right)\right)
\notag\\
&&\quad+\frac{2}{3}\left(\pa^\al \ta\pa_x \left(q_i\widetilde{n}_i+q_e\widetilde{n}_e\right),\pa^\al \pa_x \left(q_i\widetilde{n}_i+q_e\widetilde{n}_e\right)\right)
+\frac{2}{3}\left(\pa^\al \left(\pa_x \ta \left(q_i\widetilde{n}_i+q_e\widetilde{n}_e\right)\right),\pa^\al \pa_x \left(q_i\widetilde{n}_i+q_e\widetilde{n}_e\right)\right)
\notag\\
&&\quad+\left(\pa^\al\left(q_im_i\widetilde{n}_i+q_em_e\widetilde{n}_e)(\pa_t u^r_1+u^r_1\pa_xu_1^{r})\right),\pa^\al\pa_x \left(q_i\widetilde{n}_i+q_e\widetilde{n}_e\right)\right)
\end{eqnarray*}
is equal to
\begin{eqnarray*}
&&-\left(q_i\int_{{\R}^3}\psi_{3i}\pa^\al \pa_tG_i \,d\xi+q_e\int_{{\R}^3}\psi_{3e}\pa^\al \pa_tG_e \,d\xi,\pa^\al \pa_x \left(q_i\widetilde{n}_i+q_e\widetilde{n}_e\right)\right)
\notag\\
&&
-\left(q_i\int_{{\R}^3}\psi_{3i}\xi_1\pa^\al \pa_xG_i \,d\xi+q_e\int_{{\R}^3}\psi_{3e}\pa^\al \xi_1\pa_xG_e \,d\xi,\pa^\al \pa_x \left(q_i\widetilde{n}_i+q_e\widetilde{n}_e\right)\right)
\notag\\
&&
+\left(q_i\pa^\al\left(u_1\int_{{\R}^3}\psi_{1i}\xi_1\pa_xG_i \,d\xi\right)+q_e\pa^\al\left(u_1\int_{{\R}^3}\psi_{2e}\xi_1\pa_xG_e \,d\xi\right),\pa^\al\pa_x\left(q_i\widetilde{n}_i+q_e\widetilde{n}_e\right) \right)
\notag\\
&&
+\left(q_i\int_{{\R}^3}\psi_{3i}\pa^\al Q_i(\FF,\FF)\,d\xi+q_e\int_{{\R}^3}\psi_{3e}\pa^\al Q_e(\FF,\FF)\,d\xi,
\pa^\al\pa_x\left(q_i\widetilde{n}_i+q_e\widetilde{n}_e\right) \right)
\notag\\
&&
+\left(\pa^\al\left(\frac{2\ta^r}{3}\frac{q_im_in_i^r+q_em_en_e^r}{m_in^r_i+m_en^r_e}\pa_x(n_i^r+n_e^r)\right),
\pa^\al\pa_x\left(q_i\widetilde{n}_i+q_e\widetilde{n}_e\right) \right)
\notag\\
&&
+\left(\pa^\al\left(\frac{2}{3}\pa_x\ta^r\frac{q_im_in_i^r+q_em_en_e^r}{m_in^r_i+m_en^r_e}(n_i^r+n_e^r)\right)
,\pa^\al\pa_x\left(q_i\widetilde{n}_i+q_e\widetilde{n}_e\right) \right). 
\end{eqnarray*}
Therefore, by the similar argument as for obtaining \eqref{dis.phi}, it follows that
\begin{equation}\label{hdis.vi-ve}
\begin{split}
&\la\sum\limits_{|\al| =1}\left(\pa^\al \pa_x \left(q_i\widetilde{n}_i+q_e\widetilde{n}_e\right),\pa^\al \pa_x \left(q_i\widetilde{n}_i+q_e\widetilde{n}_e\right)\right)
+\la\sum\limits_{|\al| =1}\left(\pa^\al \left(q_i\widetilde{n}_i+q_e\widetilde{n}_e\right),\pa^\al \left(q_i\widetilde{n}_i+q_e\widetilde{n}_e\right)\right)
\\& \lesssim
\sum\limits_{|\al| =1}\left\|\pa^\al \left[\widetilde{n}_i,\widetilde{n}_e,\widetilde{u},\widetilde{\ta}\right]\right\|^2
+\sum\limits_{|\al| =1}\left\|\pa^\al \pa_x\left[\widetilde{u}_1,\widetilde{\ta}\right]\right\|^2
+\eps_0\sum\limits_{|\al| \leq1}\|\pa^\al(q_i\widetilde{n}_i+q_e\widetilde{n}_e)\|^2
\\
&\quad+\eps_{0}\sum\limits_{|\al| \leq1}\left\|\pa^\al \pa_x\phi \right\|^2+\de_r^{1/2}(1+t)^{-3/2}
+(1+t)^{-2}\left\|\left[\widetilde{n}_i,\widetilde{n}_e,\widetilde{u}_1,\widetilde{\ta}\right]\right\|^2
\\
&\quad+\sum\limits_{1\leq|\al| \leq2}\int_{\R\times\R^3}(1+|\xi|)|\FM^{-1/2}\pa^{\al}\FG|^2 d\xi dx
+C_\eta\int_{{\R}\times{\R}^3}(1+|\xi|)\left|\FM^{-1/2}_\sharp\widetilde{\FG}\right|^2 d\xi dx.
\end{split}
\end{equation}
We are now in a position to derive the dissipation of $\pa^\al \left[\pa_x\phi ,\pa^2_x\phi \right]$ with $ |\al| = 1$. For this,
we take the inner product of $\pa^\al \eqref{visbve}$ with $\pa^\al \pa_x\phi $ with respect to $x$ over $\R$ to obtain that
\begin{equation*}
\begin{split}
&\left((q_im_in_i+q_em_en_e)\pa_t\pa^\al \widetilde{u}_1,\pa^\al \pa_x \phi\right)
+\left((q^2_in_i+q^2_en_e)\pa^\al \pa_x\phi ,\pa^\al \pa_x \phi\right)
\\&\quad
+\frac{2}{3}\left(\ta\pa^\al \pa_x \left(q_i\widetilde{n}_i+q_e\widetilde{n}_e\right),\pa^\al \pa_x \phi\right)
+\left(\pa^\al \left(q^2_in_i+q^2_en_e\right)\pa_x\phi ,\pa^\al \pa_x \phi\right)
\\&\quad+\frac{2}{3}\left(\pa^\al \ta\pa_x \left(q_i\widetilde{n}_i+q_e\widetilde{n}_e\right),\pa^\al \pa_x \phi\right)
+\frac{2}{3}\left(\pa^\al \left(\pa_x \ta \left(q_i\widetilde{n}_i+q_e\widetilde{n}_e\right)\right),\pa^\al \pa_x \phi\right)
\\&\quad+\left(\pa^\al\left(q_im_i\widetilde{n}_i+q_em_e\widetilde{n}_e)(\pa_t u^r_1+u^r_1\pa_xu_1^{r})\right),\pa^\al\pa_x \phi\right)
\end{split}
\end{equation*}
is equal to
\begin{equation*}
\begin{split}
&-\left(q_i\int_{{\R}^3}\psi_{3i}\pa^\al \pa_tG_i \,d\xi+q_e\int_{{\R}^3}\psi_{3e}\pa^\al \pa_tG_e \,d\xi,\pa^\al \pa_x \phi\right)
\\&
-\left(q_i\int_{{\R}^3}\psi_{3i}\xi_1\pa^\al \pa_xG_i \,d\xi+q_e\int_{{\R}^3}\psi_{3e}\pa^\al \xi_1\pa_xG_e \,d\xi,\pa^\al \pa_x \phi\right)
\\&
+\left(q_i\pa^\al\left(u_1\int_{{\R}^3}\psi_{1i}\xi_1\pa_xG_i \,d\xi\right)+q_e\pa^\al\left(u_1\int_{{\R}^3}\psi_{2e}\xi_1\pa_xG_e \,d\xi\right),\pa^\al\pa_x\phi \right)
\\&
+\left(q_i\int_{{\R}^3}\psi_{3i}\pa^\al Q_i(\FF,\FF)\,d\xi+q_e\int_{{\R}^3}\psi_{3e}\pa^\al Q_e(\FF,\FF)\,d\xi,
\pa^\al\pa_x\phi \right)
\\&
+\left(\pa^\al\left(\frac{2\ta^r}{3}\frac{q_im_in_i^r+q_em_en_e^r}{m_in^r_i+m_en^r_e}\pa_x(n_i^r+n_e^r)\right),
\pa^\al\pa_x\phi \right)
\\&
+\left(\pa^\al\left(\frac{2}{3}\pa_x\ta^r\frac{q_im_in_i^r+q_em_en_e^r}{m_in^r_i+m_en^r_e}(n_i^r+n_e^r)\right),\pa^\al\pa_x\phi \right).
\end{split}
\end{equation*}
In almost the same way as for obtaining \eqref{dis.phi}, one can further derive that
\begin{equation}\label{dis.htphi}
\begin{split}
&\la\sum\limits_{|\al| =1}\left(\pa^\al \pa_x\phi ,\pa^\al \pa_x\phi \right)
+\la\sum\limits_{|\al| =1}\left(\pa^\al \pa^2_x\phi ,\pa^\al \pa^2_x\phi \right)
\\&
\lesssim
\sum\limits_{|\al| \leq1}\left\|q_i\widetilde{n}_i+q_e\widetilde{n}_e\right\|^2
+\sum\limits_{|\al| =1}\left\|\pa^\al \pa_x\left[\widetilde{u}_1,\widetilde{\ta}\right]\right\|^2
+\sum\limits_{|\al| =1}\left\|\pa^\al \left[\widetilde{n}_i,\widetilde{n}_e,\widetilde{u},\widetilde{\ta}\right]\right\|^2
+\|\pa_x\phi\|^2
\\&\quad+(1+t)^{-2}\left\|\left[\widetilde{n}_i,\widetilde{n}_e,\widetilde{u}_1,\widetilde{\ta}\right]\right\|^2+\de^{1/2}_r(1+t)^{-3/2}
\\&\quad+
\int_{{\R}\times{\R}^3}(1+|\xi|)\left|\FM^{-1/2}\widetilde{\FG}\right|^2d\xi dx
+\sum\limits_{1\leq |\al| \leq2}
\int_{{\R}\times{\R}^3}(1+|\xi|)\left|\FM^{-1/2}\FG\right|^2d\xi dx.
\end{split}
\end{equation}
Taking the suitable linear combination of \eqref{2-vipve.p2}, \eqref{hdis.vi-ve} and \eqref{dis.htphi}, we conclude that
\begin{equation}\label{2-dis.viep}
\begin{split}
&\frac{d}{dt}\sum\limits_{|\al| =1}\left((m_in_i+m_en_e)\pa^\al \widetilde{u}_1,\pa_x\pa^\al (\widetilde{n}_i+\widetilde{n}_e)\right)+
\la\sum\limits_{|\al| =1}
\left\|\pa^\al \pa_x\left[\widetilde{n}_i,\widetilde{n}_e,\phi ,\pa_x\phi \right]\right\|^2
\\ &\lesssim
\sum\limits_{|\al| =1}\left\|\pa^\al \left[\widetilde{n}_i,\widetilde{n}_e,\widetilde{u},\widetilde{\ta}\right]\right\|^2
+\|\pa_x\phi\|^2
+\sum\limits_{|\al| =1}\left\|\pa^\al \pa_x\left[\widetilde{u}_1,\widetilde{\ta}\right]\right\|^2
+\sum\limits_{|\al| \leq1}\|q_i\widetilde{n}_i+q_e\widetilde{n}_e\|^2
\\&\quad+(1+t)^{-2}\left\|\left[\widetilde{n}_i,\widetilde{n}_e,\widetilde{u}_1,\widetilde{\ta}\right]\right\|^2+\de_r^{1/2}(1+t)^{-3/2}
\\&\quad+C_\eta\sum\limits_{1\leq|\al| \leq2}\int_{\R\times\R^3}(1+|\xi|)|\FM^{-1/2}\pa^{\al}\FG|^2 d\xi dx\\
&\quad+C_\eta\int_{{\R}\times{\R}^3}(1+|\xi|)\left|\FM^{-1/2}_\sharp\widetilde{\FG}\right|^2 d\xi dx.
\end{split}
\end{equation}
As to the second-order time derivative of $\left[\widetilde{n}_i,\widetilde{n}_e,\widetilde{u},\widetilde{\ta}\right]$, one has by \eqref{pb.con.} that
\begin{equation}\label{sed.t}
\begin{split}
&\left\|\pa^2_t\left[\widetilde{n}_i,\widetilde{n}_e,\widetilde{u},\widetilde{\ta}\right]\right\|^2\\
&\lesssim\sum\limits_{|\al| =1}
\left\|\pa^\al \pa_x\left[\widetilde{n}_i,\widetilde{n}_e,\widetilde{u}_1,\widetilde{\ta},\phi \right]\right\|^2
+(1+t)^{-2}\left\|\left[\widetilde{n}_i,\widetilde{n}_e,\widetilde{u}_1,\widetilde{\ta}\right]\right\|^2
+\de_r^{1/2}(1+t)^{-3/2}
\\&\quad+\sum\limits_{1\leq|\al| \leq2}\int_{\R\times{\R}^3}(1+|\xi|)|\FM^{-1/2}\pa^\al \FG|^2 d\xi dx
+\eps_{0}\sum\limits_{|\al| \leq1}\int_{\R\times\R^3}|\FM^{-1/2}\pa_{\xi_1}\pa^\al \widetilde{\FG}|^2 d\xi dx .
\end{split}
\end{equation}
In addition, in light of \eqref{tphy}, one can see that $\phi $ enjoys much higher order dissipative property, namely,
\begin{equation}\label{ptphyL22}
\begin{split}
\sum\limits_{|\al| =2}\left\|\pa^\al \pa^2_x\phi \right\|^2\lesssim&
\sum\limits_{|\al| =2}\left\|\pa^\al \left[q_i\widetilde{n}_i+q_e\widetilde{n}_e\right]\right\|^2.
\end{split}
\end{equation}

Finally, letting $\ka_2\gg \ka_3\gg \ka_4\gg \ka_5\gg\ka_6>0$, we get from the summation of \eqref{1st.diss}, \eqref{p2u2}$\times\ka_3$, \eqref{2-dis.viep}$\times\ka_4$, \eqref{sed.t}$\times\ka_5$ and \eqref{ptphyL22}$\times\ka_6$ that
\begin{eqnarray}
&&\frac{d}{dt}\widetilde{\eta}
+\ka_1\frac{d}{dt}\left\{\left((m_in_i+m_en_e)\widetilde{u}_1,\pa_x(\widetilde{n}_i+\widetilde{n}_e)\right)
+\frac{3}{2}\left(\pa_x\widetilde{n}_i,\frac{\mu_i(\theta)+\mu_e(\theta)}{n^r_i}\pa_x\widetilde{n}_i\right)\right.
\notag\\
&&\left.\hspace{9cm}+\frac{3}{2}\left(\pa_x\widetilde{n}_e,\frac{\mu_i(\theta)+\mu_e(\theta)}{n^r_e}\pa_x\widetilde{n}_e\right)\right\}
\notag\\
&&\quad+\ka_3\frac{d}{dt}\sum\limits_{|\al| =1}\left\{\|\pa^\al \widetilde{n}_i\|^2
+\|\pa^\al \widetilde{n}_e\|^2
+\sum\limits_{j=1}^3\left\|\sqrt{m_in_i+m_en_e}\pa^\al \widetilde{u}_j\right\|^2
+\left\|\sqrt{n_i+n_e}\pa^\al \widetilde{\ta}\right\|^2\right\}
\notag\\
&&\quad+\ka_4\frac{d}{dt}\sum\limits_{|\al| =1}\left((m_in_i+m_en_e)\pa^\al \widetilde{u}_1,\pa_x\pa^\al (\widetilde{n}_i+\widetilde{n}_e)\right)
\notag\\
&&\quad+\la\sum\limits_{1\leq |\al| \leq2}\left\|\pa^{\al}\left[\widetilde{n}_i,\widetilde{n}_e,\widetilde{u},\widetilde{\ta}\right]\right\|^2
+\la\left\|q_i\widetilde{n}_i+q_e\widetilde{n}_e\right\|^2
+\la\sum\limits_{ |\al| \leq1}\left\|\pa^{\al}\left[\pa_x\phi ,\pa^2_x\phi \right]\right\|^2
\notag\\
&&\quad+\la\sum\limits_{|\al| =2}\left\|\pa^\al \pa^2_x\phi \right\|^2
+\la\int_{\R}\pa_xu_1^r\left[\widetilde{n}_i,\widetilde{n}_e,\widetilde{u},\widetilde{\ta}\right]^2dx
\notag\\
&&\leq
C(1+t)^{-2}\left\|\left[\widetilde{n}_i,\widetilde{n}_e,\widetilde{u},\widetilde{\ta}\right]\right\|^2
+C\de^{1/6}_r(1+t)^{-7/6}
+C_\eta\sum\limits_{1\leq|\al| \leq2}\int_{\R\times\R^3}(1+|\xi|)|\FM^{-1/2}\pa^{\al}\FG|^2 d\xi dx
\notag\\
&&\quad+C\int_{{\R}\times{\R}^3}(1+|\xi|)\left|\FM^{-1/2}_\sharp\widetilde{\FG}\right|^2 d\xi dx
+C\eps_0\int_{\R\times\R^3}|\FM^{-1/2}\pa_{\xi_1}\widetilde{\FG}|^2 d\xi dx. \label{mac.diss}
\end{eqnarray}
Noticing that
$$
\widetilde{\eta}\sim \left\|\left[\widetilde{n}_i,\widetilde{n}_e,\widetilde{u},\widetilde{\ta}\right]\right\|^2,\quad
\left|\frac{d}{dt}\|\pa_x\phi\|^2\right|\lesssim \left|(\pa_t\pa_x\phi,\pa_x\phi)\right|,
$$
we see that
\eqref{macro.eng} follows from \eqref{mac.diss}. This concludes the proof of Proposition \ref{mac.eng.lem.}.\qed



\section{A priori estimates on the non-fluid part}\label{sec.est.nf}

With estimates on the fluid part in Proposition \ref{mac.eng.lem.}, this section is further devoted to the proof of Proposition \ref{g.eng.lem.} on the non-fluid part. In a way similar to the previous section, the proof is divided by three subsections.

\subsection{Estimate on zero-order dissipation}
The goal of this subsection is to obtain the dissipation of $\FM_\ast^{-1/2}\widetilde{\FG}$.
Notice that $\widetilde{\FG}$ solves
\begin{eqnarray}
&&\pa_t\widetilde{\FG}+\frac{3\pa_x\phi(\xi_1-u_1)(q_im_e-q_em_i)}{2\ta(m_in_i+m_en_e)}\left[n_eM_i,-n_iM_e\right]^{\rm T}-\FL_{\FM}\widetilde{\FG}
\notag\\
&&=
-\frac{3}{2\ta}\FP_1^{\FM}\left\{\xi_1\left[m_iM_i,m_eM_e\right]^{\rm T}
\left(\xi\cdot\pa_x\widetilde{u}+\frac{|\xi-u|^2}{2\ta}\pa_x\widetilde{\ta}\right)\right\}
-\FP_1^{\FM}\left\{\xi_1\left[n^{-1}_iM_i\pa_x\widetilde{n}_i,n^{-1}_eM_e\pa_x\widetilde{n}_e\right]^{\rm T}\right\}
\notag\\
&&\quad+\frac{3}{2\ta}\FP_1^{\FM}\left\{\left[M_i,M_e\right]^{\rm T}
\xi_1\right\}\pa_x\widetilde{\ta}
-\FP_1^{\FM}\left(\xi_1\pa_x\FG\right)
-{\bf P}^{\FM}_1\left(q_0\pa_x\phi\pa_{\xi_1}{\bf
G}\right)
+\FQ(\FG,\FG)-\pa_t\overline{\FG},\label{g.eq1.}
\end{eqnarray}
where we have used the fact that
\begin{equation*}
\begin{split}
\FP_1^{\FM}\left(\xi_1\pa_x\FM\right)-L_{\FM}\overline{\FG}
=&\frac{3}{2\ta}\FP_1^{\FM}\left\{\xi_1\left[m_iM_i,m_eM_e\right]^{\rm T}
\left(\xi\cdot\pa_x\widetilde{u}+\frac{|\xi-u|^2}{2\ta}\pa_x\widetilde{\ta}\right)\right\}
\\&+\FP_1^{\FM}\left\{\xi_1\left[n^{-1}_iM_i\pa_x\widetilde{n}_i,n^{-1}_eM_e\pa_x\widetilde{n}_e\right]^{\rm T}\right\}
+\frac{3}{2\ta}\FP_1^{\FM}\left\{\xi_1\left[M_i,M_e\right]^{\rm T}
\right\}\pa_x\widetilde{\ta},
\end{split}
\end{equation*}
and
\begin{equation*}
\begin{split}
{\bf P}^{\FM}_1\left(q_0\pa_x\phi\pa_{\xi_1}{\bf
M}\right)
=\frac{3\pa_x\phi(\xi_1-u_1)(q_im_e-q_em_i)}{2\ta(m_in_i+m_en_e)}\left[n_eM_i,-n_iM_e\right]^{\rm T}.
\end{split}
\end{equation*}

Let $\al_0=0$ or $1$.
Taking the inner product of $\pa_t^{\al_0}\eqref{g.eq1.}$ with $ (n^r)^{-1}\ta(m_in_i+m_en_e)\FM_\ast^{-1}\pa_t^{\al_0}\widetilde{\FG} $ over ${\R}\times{\R}^3$, one has
\begin{equation}
\label{zero.g.eng.}
\frac{1}{2}\frac{d}{dt}\int_{{\R}\times{\R}^3}(n^r)^{-1}\ta(m_in_i+m_en_e)\pa_t^{\al_0}\widetilde{\FG}\cdot  \left(\FM_\ast^{-1}\pa_t^{\al_0}\widetilde{\FG}\right) d\xi dx
+\CJ_1+\CJ_2=\sum_{l=3}^{10}\CJ_l,
\end{equation}
where $\CJ_l$ $(1\leq \CJ\leq 10)$ are given by
\begin{eqnarray*}
\CJ_1&=&-{\displaystyle\int_{{\R}\times{\R}^3}}(n^r)^{-1}\ta(m_in_i+m_en_e)\pa_t^{\al_0}\widetilde{\FG}\cdot \left(\FM_\ast^{-1}\FL_{\FM}\pa_t^{\al_0}\widetilde{\FG}\right)d\xi dx,\\
\CJ_2&=&\frac{3}{2}\left(\pa_t^{\al_0}\pa_x\phi(\xi_1-u_1)(q_im_e-q_em_i)\left[n_eM_i,-n_iM_e\right]^{\rm T}, (n^r)^{-1}\FM_\ast^{-1}\pa_t^{\al_0}\widetilde{\FG} \right),\\
\CJ_3&=&-\chi_{\al_0}\left(\pa_t\widetilde{\FG}\pa_t^{\al_0}\left((n^r)^{-1}\ta(m_in_i+m_en_e)\right), \FM_\ast^{-1}\pa_t^{\al_0}\widetilde{\FG} \right)\\
&&+\frac{1}{2}\left(\pa_t^{\al_0}\widetilde{\FG}\pa_t\left((n^r)^{-1}\ta(m_in_i+m_en_e)\right), \FM_\ast^{-1}\pa_t^{\al_0}\widetilde{\FG} \right),\\
\CJ_4&=&-\frac{3}{2}\left(\pa_x\phi(q_im_e-q_em_i)\pa_t^{\al_0}
\left\{\frac{\xi_1-u_1}{m_in_i+m_en_e}\left[n_eM_i,-n_iM_e\right]^{\rm T}\right\}, \frac{\ta}{n^r}(m_in_i+m_en_e)\FM_\ast^{-1}\pa_t^{\al_0}\widetilde{\FG} \right),\\
\CJ_5&=&-\frac{3}{2}\left(\pa_t^{\al_0}\left\{\frac{1}{\ta}\FP_1^{\FM}\left[\xi_1\left[m_iM_i,m_eM_e\right]^{\rm T}
\left(\xi\cdot\pa_x\widetilde{u}+\frac{|\xi-u|^2}{2\ta}\pa_x\widetilde{\ta}\right)\right]\right\}, \frac{\ta}{n^r}(m_in_i+m_en_e)\FM_\ast^{-1}\pa_t^{\al_0}\widetilde{\FG} \right),\\
\CJ_6&=&\left(\pa_t^{\al_0}\left\{\FP_1^{\FM}\left[\xi_1\left[\frac{M_i}{n_i}\pa_x\widetilde{n}_i,n^{-1}_eM_e\pa_x\widetilde{n}_e\right]^{\rm T}
+\frac{3}{2\ta}\left[M_i,M_e\right]^{\rm T}
\xi_1\pa_x\widetilde{\ta}\right]\right\}, \frac{\ta}{n^r}(m_in_i+m_en_e)\FM_\ast^{-1}\pa_t^{\al_0}\widetilde{\FG} \right),\\
\CJ_7&=&-\left(\pa_t^{\al_0}\FP_1^{\FM}\left[q_0\pa_x\phi\pa_{\xi_1}\FG\right], (n^r)^{-1}\ta(m_in_i+m_en_e)\FM_\ast^{-1}\pa_t^{\al_0}\widetilde{\FG} \right),\\
\CJ_8&=&-\left(\pa_t^{\al_0}\pa_t\overline{\FG}, (n^r)^{-1}\ta(m_in_i+m_en_e)\FM_\ast^{-1}\pa_t^{\al_0}\widetilde{\FG} \right)\\
&&-\left(\pa_t^{\al_0}\FP_1^{\FM}\left[\xi_1\pa_x\FG\right], (n^r)^{-1}\ta(m_in_i+m_en_e)\FM_\ast^{-1}\pa_t^{\al_0}\widetilde{\FG} \right),\\
\CJ_9&=&\chi_{\al_0}\left(\FQ(\pa_t \FM, \FG)+\FQ( \FG, \pa_t\FM), (n^r)^{-1}\ta(m_in_i+m_en_e)\FM_\ast^{-1}\pa_t\widetilde{\FG}\right),\\
\CJ_{10}&=&\left(\pa_t^{\al_0}\FQ(\FG,\FG), (n^r)^{-1}\ta(m_in_i+m_en_e)\FM_\ast^{-1}\pa_t^{\al_0}\widetilde{\FG} \right).
\end{eqnarray*}
Here we have used the notation
$$
\chi_{\al_0}=\left\{\begin{array}{rll}
0,& \ \ \al_0=0,\\
1,& \ \ \al_0>0.
\end{array}\right.
$$
From Lemma \ref{co.est.}, we see that
\begin{eqnarray*}
\begin{array}{rl}
\CJ_1\gtrsim \de{\displaystyle
\int_{{\R}\times{\R}^3}}(1+|\xi|)\left|\FM_\ast^{-1/2}\pa_t^{\al_0}\widetilde{\FG}\right|^2d\xi dx.
\end{array}
\end{eqnarray*}
For $\CJ_2$, if $\al_0=0$, it is bounded by
$$
\eta\int_{{\R}\times{\R}^3}(1+|\xi|)\left|\FM_\ast^{-1/2}\widetilde{\FG}\right|^2d\xi+C_\eta\|\pa_x\phi\|^2.
$$
If $\al_0=1$,
we first rewrite $\CJ_2$ as
\begin{equation*}
\begin{split}
\CJ_2=&\frac{3}{2}\left(\pa_t\pa_x\phi\xi_1(q_im_e-q_em_i)\left[n_eM_i,-n_iM_e\right]^{\rm T},~ (n^r)^{-1}\FM^{-1}\pa_t\widetilde{\FG} \right)
\\&+\frac{3}{2}\left(\pa_t\pa_x\phi\xi_1(q_im_e-q_em_i)\left[n_eM_i,-n_iM_e\right]^{\rm T},~ (n^r)^{-1}\left(\FM_\ast^{-1}-\FM^{-1}\right)\pa_t\widetilde{\FG} \right)\\
=&\frac{3}{2}\left(\pa_t\pa_x\phi\xi_1(q_im_e-q_em_i)\left[(n_e-n_e^r)M_i,-(n_i-n_i^r)M_e\right]^{\rm T},~ (n^r)^{-1}\FM^{-1}\pa_t\widetilde{\FG} \right)
\\&+\frac{3}{2}\left(\pa_t\pa_x\phi\xi_1(q_im_e-q_em_i)\left[M_i,\frac{q_e}{q_i}M_e\right]^{\rm T},~ \FM^{-1}\pa_t\widetilde{\FG} \right)
\\&+\frac{3}{2}\left(\pa_t\pa_x\phi\xi_1(q_im_e-q_em_i)\left[n_eM_i,-n_iM_e\right]^{\rm T},~ (n^r)^{-1}\left(\FM_\ast^{-1}-\FM^{-1}\right)\pa_t\widetilde{\FG} \right)
\\:= & \CJ_{2,1}+\CJ_{2,2}+\CJ_{2,3}.
\end{split}
\end{equation*}
Notice that
\begin{equation*}
\begin{split}
&|n_i(t,x)-n_{*i}|+|n_e(t,x)-n_{*e}|+|u(t,x)-u_*|+|\ta(t,x)-\ta_*|\\
&\leq
|n_i^r(t,x)-n_{i}(t,x)|+|n_e^r(t,x)-n_{e}(t,x)|+|u^r(t,x)-u(t,x)|+|\ta^r(t,x)-\ta(t,x)|
\\&\quad+
|n_i^r(t,x)-n_{*i}|+|n_e^r(t,x)-n_{*e}|+|u^r(t,x)-u_*|+|\ta^r(t,x)-\ta_*|\\
&
\lesssim\left|\left[\widetilde{n}_i,\widetilde{n}_e,\widetilde{u},\widetilde{\ta}\right]\right|+\eta_0.
\end{split}
\end{equation*}
From this together with the Cauchy-Schwarz inequality, it follows that
\begin{equation*}
\begin{split}
|\CJ_{2,3}|\lesssim& (\eps_0+\eta_0)\left\|\pa_t\pa_x\phi\right\|^2
+(\eps_0+\eta_0)\int_{{\R}\times{\R}^3}(1+|\xi|)\left|\FM_\ast^{-1/2}\pa_t\widetilde{\FG}\right|^2d\xi.
\end{split}
\end{equation*}
Moreover, one can see that $\CJ_{2,1}$ also enjoys the same upper bound as $\CJ_{2,3}$.

As to $\CJ_{2,2}$, from integration by parts and using the first equations of \eqref{cons.law.i} and \eqref{cons.law.e} as well as \eqref{tphy}, one has
\begin{equation*}
\begin{split}
\CJ_{2,1}=&-\frac{3}{2q_i}\left(\pa_t\phi\xi_1(q_im_e-q_em_i)\left[q_i,q_e\right]^{\rm T},~ \pa_t\pa_x\widetilde{\FG} \right)
\\=&-\frac{3}{2q_i}\left(\pa_t\phi\xi_1(q_im_e-q_em_i)\left[q_i,q_e\right]^{\rm T},~ \pa_t\pa_x\FG \right)
\\&+\frac{3}{2q_i}\left(\pa_x\pa_t\phi\xi_1(q_im_e-q_em_i)\left[q_i,q_e\right]^{\rm T},~ \pa_t\overline{\FG} \right)
\\=&\frac{3(q_im_e-q_em_i)}{2q_i}\left(\pa_t\phi,(\pa_t(q_in_i+q_en_e)+\pa_x((q_in_i+q_en_e)u_1)\right)
\\&+\frac{3}{2q_i}\left(\pa_x\pa_t\phi\xi_1(q_im_e-q_em_i)\left[q_i,q_e\right]^{\rm T},~ \pa_t\overline{\FG} \right)
\\=&-\frac{3(q_im_e-q_em_i)}{4q_i}\frac{d}{dt}\|\pa_t\pa_x\phi\|^2
+\frac{3(q_im_e-q_em_i)}{2q_i}\left(\pa_x\pa_t\phi,(q_in_i+q_en_e)u_1\right)
\\&+\frac{3}{2q_i}\left(\pa_x\pa_t\phi\xi_1(q_im_e-q_em_i)\left[q_i,q_e\right]^{\rm T},~ \pa_t\overline{\FG} \right).
\end{split}
\end{equation*}
Thus it holds that
\begin{equation*}
\begin{split}
\left|\CJ_{2,1}+\frac{3(q_im_e-q_em_i)}{4q_i}\frac{d}{dt}\|\pa_t\pa_x\phi\|^2\right|\lesssim \left\|\pa_x\pa_t\phi\right\|^2
+\|q_in_i+q_en_e\|^2+\de_r^{1/2}(1+t)^{-3/2}.
\end{split}
\end{equation*}
Next,  we get from 
\eqref{aps} that
\begin{eqnarray*}
\begin{array}{rl}
|\CJ_3|\leq& \eps_0\sum\limits_{\al_0\leq1}{\displaystyle
\int_{{\R}\times{\R}^3}}(1+|\xi|)\left|\FM_\ast^{-1/2}\pa_t^{\al_0}\widetilde{\FG}\right|^2d\xi dx.
\end{array}
\end{eqnarray*}
By applying Lemma \ref{cl.RwRe} 
together with \eqref{aps}, one can see that
$\CJ_4$, $\CJ_5$  and $\CJ_6$ can be bounded as follows:
\begin{eqnarray*}
\begin{array}{rl}
|\CJ_4|+|\CJ_5|+|\CJ_6|\leq& \eta\sum\limits_{\al_0\leq1}{\displaystyle
\int_{{\R}\times{\R}^3}}(1+|\xi|)\left|\FM_\ast^{-1/2}\pa_t^{\al_0}\widetilde{\FG}\right|^2d\xi dx
+C_\eta\sum\limits_{\al_0\leq1}\left\|\pa_x\pa_t^{\al_0}
\left[\widetilde{n}_i,\widetilde{n}_e,\widetilde{u},\widetilde{\ta},\phi\right]\right\|^2,
\end{array}
\end{eqnarray*}
\begin{equation*}
\begin{split}
|\CJ_7|\leq& \sum\limits_{\al_0\leq1}\left|\left(\pa_x\pa_t^{\al_0}\phi\pa_{\xi_1}\widetilde{\FG}, \FM^{-1}\pa_t^{\al_0}\widetilde{\FG} \right)\right|+\sum\limits_{\al_0\leq1}
\left|\left(\pa_x\pa_t^{\al_0}\phi\pa_{\xi_1}\overline{\FG}, \FM^{-1}\pa_t^{\al_0}\widetilde{\FG} \right)\right|
\\&+\sum\limits_{\al_0\leq1}\left|\left(\pa_x\phi\pa_{\xi_1}\widetilde{\FG}, \FM^{-1}\pa_t^{\al_0}\widetilde{\FG} \right)\right|+
\left|\left(\pa_x\phi\pa_{\xi_1}\overline{\FG}, \FM^{-1}\pa_t^{\al_0}\widetilde{\FG} \right)\right|
\\&+\sum\limits_{\al_0\leq1}\eta\int_{{\R}\times{\R}^3}(1+|\xi|)\left|\FM_\ast^{-1/2}\pa_t^{\al_0}\widetilde{\FG}\right|^2d\xi dx
+C_\eta\sum\limits_{\al_0\leq1}\left\|\pa_x\pa_t^{\al_0}\phi\right\|^2
\\
\leq& (\eps_{0}+\eta)\sum\limits_{\al_0\leq1}\int_{{\R}\times{\R}^3}(1+|\xi|)\left|\FM_\ast^{-1/2}\widetilde{\FG}\right|^2d\xi dx
+C_\eta\sum\limits_{\al_0\leq1}\left\|\pa_x\pa_t^{\al_0}\phi\right\|^2,
\end{split}
\end{equation*}
and
\begin{equation*}
\begin{split}
|\CJ_8|
\leq& \eta\sum\limits_{\al_0\leq1}\int_{{\R}\times{\R}^3}(1+|\xi|)\left|\FM_\ast^{-1/2}\pa_t^{\al_0}\widetilde{\FG}\right|^2d\xi dx
+C_\eta\sum\limits_{\al_0\leq1}\int_{{\R}\times{\R}^3}(1+|\xi|)\left|\FM_\ast^{-1/2}\pa_x\pa_t^{\al_0}\widetilde{\FG}\right|^2d\xi dx
\\&
+C_\eta\de^{1/2}_r(1+t)^{-3/2}.
\end{split}
\end{equation*}
As to $\CJ_{9}$ and $\CJ_{10}$, it follows from \eqref{db.G}, Lemma \ref{est.nonop} and Cauchy-Schwarz's inequality with $\eta$ that
\begin{equation*}
\begin{split}
|\CJ_{9}|\leq& \eta\sum\limits_{\al_0\leq1}\int_{{\R}\times{\R}^3}(1+|\xi|)\left|\FM_\ast^{-1/2}\pa_t\widetilde{\FG}\right|^2d\xi dx
\\&+C_\eta\sum\limits_{\al_0\leq1}\int_{\R}\left(\int_{\R^3}(1+|\xi|)\left|\FM_\ast^{-1/2}\pa_t\FM\right|^2d\xi\right) \left(\int_{\R^3}(1+|\xi|)\left|\FM_\ast^{-1/2}\widetilde{\FG}\right|^2d\xi\right) dx
\\&+C_\eta\sum\limits_{\al_0\leq1}\int_{\R}\left(\int_{\R^3}(1+|\xi|\left|\pa_t\FM_\ast^{-1/2}\FM\right|^2d\xi\right) \left(\int_{\R^3}(1+|\xi|)\left|\FM_\ast^{-1/2}\overline{\FG}\right|^2d\xi\right) dx
\\
\leq& (\eps_{0}+\eta)\sum\limits_{\al_0\leq1}\int_{{\R}\times{\R}^3}(1+|\xi|)\left|\FM_\ast^{-1/2}\pa_t^{\al_0}\widetilde{\FG}\right|^2d\xi dx\\
&+C_\eta\de_r\left\|\pa_t\left[\widetilde{n}_i,\widetilde{n}_e,\widetilde{u},\widetilde{\ta}\right]\right\|^2
+C_\eta\de^{1/2}_r(1+t)^{-3/2},
\end{split}
\end{equation*}
and
\begin{equation*}
\begin{split}
|\CJ_{10}|\leq& \eta\sum\limits_{\al_0\leq1}\int_{{\R}\times{\R}^3}(1+|\xi|)\left|\FM_\ast^{-1/2}\pa_t^{\al_0}\widetilde{\FG}\right|^2d\xi dx
\\&+C_\eta\sum\limits_{\al_0\leq1}\int_{\R}\left(\int_{\R^3}(1+|\xi|)\left|\FM_\ast^{-1/2}\pa_t^{\al_0}\FG\right|^2d\xi\right) \left(\int_{\R^3}\left|\FM_\ast^{-1/2}\FG\right|^2d\xi\right) dx
\\&+C_\eta\sum\limits_{\al_0\leq1}\int_{\R}\left(\int_{\R^3}(1+|\xi|\left|\FM_\ast^{-1/2}\FG\right|^2d\xi\right) \left(\int_{\R^3})\left|\FM_\ast^{-1/2}\pa_t^{\al_0}\FG\right|^2d\xi\right) dx
\\
\leq& (\eps_{0}+\eta)\sum\limits_{\al_0\leq1}\int_{{\R}\times{\R}^3}(1+|\xi|)\left|\FM_\ast^{-1/2}\pa_t^{\al_0}\widetilde{\FG}\right|^2d\xi dx
+C\de_r (1+t)^{-2}.
\end{split}
\end{equation*}
Now substituting all the above estimates into \eqref{zero.g.eng.}, we arrive at
\begin{equation}\label{zero.g.eng1.}
\begin{split}
&\frac{d}{dt}\sum\limits_{\al_0\leq1}\int_{{\R}\times{\R}^3}\left|\FM^{-1/2}_\ast\pa_t^{\al_0}\widetilde{\FG}\right|^2d\xi dx
+\frac{d}{dt}\|\pa_t\pa_x\phi\|^2\\
&\quad+ \la\sum\limits_{\al_0\leq1}{\displaystyle
\int_{{\R}\times{\R}^3}}(1+|\xi|)\left|\FM_\ast^{-1/2}\pa_t^{\al_0}\widetilde{\FG}\right|^2d\xi dx\\
&\lesssim \sum\limits_{\al_0\leq1}\int_{{\R}\times{\R}^3}\frac{(1+|\xi|)\left|\pa_x\pa_t^{\al_0}\FG\right|^2}{\FM_{*}}d\xi dx
+\sum\limits_{1\leq|\al|\leq2}\left\|\pa^{\al}\left[\widetilde{n}_i,\widetilde{n}_e,\widetilde{u},\widetilde{\ta}\right]\right\|^2
\\&\quad+\sum\limits_{|\al|\leq1}\left\|\pa^{\al}\left[\pa_x\phi,\pa^2_x\phi\right]\right\|^2
+\de_r^{1/2}(1+t)^{-3/2}.
\end{split}
\end{equation}

Furthermore, it follows from \eqref{g.eq1.} that
\begin{equation}\label{g.eq2.}
\begin{split}
\pa^2_t\widetilde{\FG}=&\pa_t\left\{\frac{3\pa_x\phi(\xi_1-u_1)(q_im_e-q_em_i)}{2\ta(m_in_i+m_en_e)}\left[n_eM_i,-n_iM_e\right]^{\rm T}\right\}
\\&-\pa_t\left\{\frac{3}{2\ta}\FP_1^{\FM}\left\{\xi_1\left[m_iM_i,m_eM_e\right]^{\rm T}
\left(\xi\cdot\pa_x\widetilde{u}+\frac{|\xi-u|^2}{2\ta}\pa_x\widetilde{\ta}\right)\right\}\right\}
\\&-\pa_t\left\{\FP_1^{\FM}\left\{\xi_1\left[n^{-1}_iM_i\pa_x\widetilde{n}_i,n^{-1}_eM_e\pa_x\widetilde{n}_e\right]^{\rm T}\right\}\right\}
\\
&+\pa_t\left\{\frac{3}{2\ta}\FP_1^{\FM}\left\{\left[M_i,M_e\right]^{\rm T}
\xi_1\right\}\pa_x\widetilde{\ta}\right\}
-\pa_t\left\{\FP_1^{\FM}\left(\xi_1\pa_x\FG\right)
+{\bf P}^{\FM}_1\left(q_0\pa_x\phi\pa_{\xi_1}{\bf
G}\right)\right\}
\\&+\pa_t(\FQ(\FM,\FG)+\FQ(\FG,\FM))+\pa_t\FQ(\FG,\FG)-\pa^2_t\overline{\FG}.
\end{split}
\end{equation}
Then  \eqref{zero.g.eng1.} and \eqref{g.eq2.} give rise to
\begin{equation}\label{zero.g.eng2.}
\begin{split}
&\frac{d}{dt}\sum\limits_{\al_0\leq1}\int_{{\R}\times{\R}^3}\left|\FM^{-1/2}_\ast\pa_t^{\al_0}\widetilde{\FG}\right|^2d\xi dx
+\frac{d}{dt}\left\|\pa_t\pa_x\phi\right\|^2\\
&\quad+ \la\sum\limits_{\al_0\leq2}{\displaystyle
\int_{{\R}\times{\R}^3}}(1+|\xi|)\left|\FM_\ast^{-1/2}\pa_t^{\al_0}\widetilde{\FG}\right|^2d\xi dx\\
&\lesssim \sum\limits_{\al_0\leq1}\int_{{\R}\times{\R}^3}\frac{(1+|\xi|)\left|\pa_x\pa_t^{\al_0}\FG\right|^2}{\FM_{*}}d\xi dx
+\sum\limits_{1\leq|\al|\leq2}\left\|\pa^{\al}\left[\widetilde{n}_i,\widetilde{n}_e,\widetilde{u},\widetilde{\ta}\right]\right\|^2
\\&\quad+\sum\limits_{|\al|\leq1}\left\|\pa^{\al}\left[\pa_x\phi,\pa^2_x\phi\right]\right\|^2
+\de_r^{1/2}(1+t)^{-3/2}.
\end{split}
\end{equation}

\subsection{Estimate on high-order energy}
In this subsection, let us now deduce estimates on the higher order energy of $\FF$.
The desired estimates will be obtained by the interplay of two kinds of weighted energy estimates. Let $|\al| \leq1$. Taking the $L^2\times L^2$ inner product of \eqref{v.F} with $k_B\ta \FM^{-1}\pa_x\pa^\al  \FF  $
with respect to $x$ and $\xi$ over $\R\times\R^3$, one has
\begin{equation}
\label{2d.F}
\frac{1}{2}\frac{d}{dt}\int_{{\R}\times{\R}^3}k_B\ta\pa_x\pa^\al \FF \cdot  \left(\FM^{-1}\pa_x\pa^\al  \FF\right)  d\xi dx+\CK_1+\CK_2=\sum_{l=3}^9\CK_l,
\end{equation}
where all terms $\CK_l$ $(1\leq l\leq 9)$ are given by
\begin{eqnarray*}
\CK_1&=&-\left(\FL_{\FM}\pa_x\pa^\al \FG, k_B\ta\FM^{-1}\pa_x\pa^\al  \FG \right),\\
\CK_2&=&\left(q_0\pa^\al \pa^2_x\phi\pa_{\xi_1}\FM, k_B\ta\FM^{-1}\pa_x\pa^\al  \FF  \right),
\end{eqnarray*}
and
\begin{eqnarray*}
\CK_3&=&\frac{1}{2}\left(\pa_x\pa^\al  \FF, k_B\ta\pa_t\left(\FM^{-1}\right)\pa_x\pa^\al  \FF\right)+\frac{1}{2}\left(\pa_x\pa^\al  \FF,k_B\pa_t\ta\FM^{-1}\pa_x\pa^\al  \FF\right),\\
\CK_4&=&\sum\limits_{\al'\leq\al }\left(\FQ(\pa^{\al'} \pa_x\FM, \pa^{\al-\al'}\FG)+\FQ( \pa^{\al-\al'}\FG, \pa^{\al'}\pa_x\FM), k_B\ta\FM^{-1}\pa_x\pa^\al  \FF  \right),\\
\CK_5&=&\left(\FL_{\FM}\pa_x\pa^\al \FG, k_B\ta\FP_1^{\FM}\left(\FM^{-1}\pa_x\pa^\al  \FM\right)\right),\\
\CK_6&=&-\left(\xi_1\pa^2_x\pa^\al \FF, k_B\ta\FM^{-1}\pa_x\pa^\al  \FF  \right),\\
\CK_7&=&-\sum\limits_{\al'\leq\al }
C_{\al'}^{\al}\left(q_0\pa^{\al-\al'} \pa_x\phi \pa^{\al'}\pa_x\pa_{\xi_1}\FF, k_B\ta\FM^{-1}\pa_x\pa^\al  \FF  \right),\\
\CK_8&=&-\left(q_0\pa^\al \pa^2_x\phi\pa_{\xi_1}\FG, k_B\ta\FM^{-1}\pa_x\pa^\al  \FF  \right),\\
\CK_9&=&\left(\pa_x\pa^\al  \FQ(\FG,\FG), k_B\ta\FM^{-1}\pa_x\pa^\al  \FF  \right).
\end{eqnarray*}
Here we have used the decomposition $\FF=\FM+\FG$.
First of all, for $\CK_1$, Lemma \ref{co.est.} implies that
\begin{equation*}
\begin{split}
\CK_{1}\geq \de\int_{{\R}\times{\R}^3}(1+|\xi|)\left|\FM^{-1/2}\pa_x\pa^\al \FG\right|^2d\xi dx.
\end{split}
\end{equation*}

For $\CK_{2}$,
from the first equations of \eqref{cons.law.i} and \eqref{cons.law.e}, we claim that
\begin{equation}\label{CK2}
\begin{split}
\left|\CK_{2}
-\frac{1}{2}\frac{d}{dt}\sum\limits_{|\al|\leq1}\left\|\pa^\al \pa^2_x\phi\right\|^2
\right|
\lesssim
\eps_{0}\sum\limits_{|\al| \leq1}\left\|\pa^\al \left[\pa_x\phi,\pa_x^2\phi\right]\right\|^2.
\end{split}
\end{equation}
In fact, to show \eqref{CK2},
we notice
\begin{equation*}
\begin{split}
\CK_2=&-\left(\pa^\al \pa^2_x\phi[q_i,q_e]^{\rm T},(\xi_1-u_1)\pa^\al\pa_x\FM\right)-\left(\pa^\al \pa^2_x\phi[q_i,q_e]^{\rm T},(\xi_1-u_1)\pa^\al\pa_x\FG\right)\\[3mm]
&-\left(\pa^\al \pa^2_x\phi[q_i,q_e]^{\rm T},(\xi_1-u_1)\pa^\al\pa_x\FM\right)-\left(\pa^\al \pa^2_x\phi[q_i,q_e]^{\rm T},\xi_1\pa^\al\pa_x\FG\right),
\end{split}
\end{equation*}
with  $|\al|\leq 1$.
Here, by direct computations, it holds that
\begin{eqnarray*}
\pa_x M_{\CA} =\frac{\pa_x n_{\CA}}{n_{\CA}}M_{\CA} +\frac{\xi-u}{k_{\CA}\theta}\cdot \pa_x uM_{\CA} +\left(\frac{|\xi-u|^2}{2k_{\CA}\theta}-{\frac{3}{2}}\right)\frac{\pa_x \theta}{\theta}M_{\CA},
\end{eqnarray*}
for $\CA=i,e$. Then, from the first equations of \eqref{cons.law.i} and \eqref{cons.law.e}, it follows that for $|\al|=0$,
\begin{equation*}
\begin{split}
\CK_{2}=&-\left(\pa^2_x\phi,(q_in_i+q_en_e)\pa_x u_1+[q_i,q_e]^{\rm T}\cdot\xi_1\pa_x\FG\right)
\\=&\left(\pa^2_x\phi,\pa_t(q_in_i+q_en_e)+\pa_x(q_in_i+q_en_e)u_1\right)
\\=&-\left(\pa^2_x\phi,\pa_t\pa^2_x\phi\right)-\left(\pa^2_x\phi,\pa^3_x\phi u_1\right),
\end{split}
\end{equation*}
and hence one has
from integration by parts and \eqref{aps} that
\begin{equation}\label{CK20}
\begin{split}
\left|\CK_{2}+\frac{d}{d}\|\pa^2_x\phi\|^2\right|\lesssim \eps_0\|\pa^2_x\phi\|^2,
\end{split}
\end{equation}
for $|\al=0|$. Furthermore, for $|\al|=1$, one can also obtain from direct calculations that
\begin{equation*}
\begin{split}
\CK_{2}=&-\left(\pa^{\al}\pa^2_x\phi, (q_in_i+q_en_e)\pa^{\al} \pa_xu_1\right)\\
&-\left(\pa^{\al}\pa^2_x\phi, (q_i\pa^\al n_i+q_e\pa^\al n_e)\pa_x u_1+(q_i\pa_xn_i+q_e\pa_xn_e)\pa^\al u_1\right)
\\&-\left(\pa^\al\pa^2_x\phi,[q_i,q_e]^{\rm T}\cdot\xi_1\pa^\al\pa_x\FG\right)
\\=&\left(\pa^\al\pa^2_x\phi,\pa^\al\pa_t(q_in_i+q_en_e)+(q_i\pa^\al\pa_xn_i+q_e\pa^\al\pa_xn_e)u_1\right)
\\=&-\left(\pa^\al\pa^2_x\phi,\pa_t\pa^\al\pa^2_x\phi+\pa^\al\pa^3_x\phi u_1\right),
\end{split}
\end{equation*}
which implies
\begin{equation}\label{CK21}
\begin{split}
\left|\CK_{2}+\frac{d}{d}\|\pa^\al\pa^2_x\phi\|^2\right|\lesssim \eps_0\|\pa^\al\pa^2_x\phi\|^2,
\end{split}
\end{equation}
for $|\al|=1$.
Therefore \eqref{CK2} follows from \eqref{CK20} and \eqref{CK21}. This completes the estimate on $\CK_2$.

For the remaining terms in \eqref{2d.F},
we only give estimates in the case of $|\al| =0$ as the proof in the case of $|\al| =1$ is similar.
For this, by applying Lemma \ref{cl.RwRe}, Sobolev's inequality and Cauchy Schwarz inequality,  we have that for $|\al|=0$,
\begin{equation*}
\begin{split}
|\CK_{3}|\lesssim&
\int_{\R\times{\R}^3}(1+|\xi|)|\pa_t[n_i,n_e,u,\ta]|\left(\left|\FM_\ast^{-1/2}\pa_x
\FM\right|^2+\left|\FM_\ast^{-1/2}\pa_x  \FG\right|^2\right)d\xi dx
\\
\lesssim& \int_{\R}\left|\pa_x \left[\widetilde{n}_i,\widetilde{n}_e,\widetilde{u},\widetilde{\ta}\right]\right|^2
|\pa_t\left[n_i,n_e,u,\ta \right]|\,dx+
\int_{\R}|\pa_x [n^r,u^r,\ta^r]|^2
|\pa_t[n^r,u^r,\ta^r]|\,dx
\\&+\int_{\R}|\pa_x [n^r,u^r,\ta^r]|^2
\left|\pa_t\left[\widetilde{n}_i,\widetilde{n}_e,\widetilde{u},\widetilde{\ta}\right]\right|dx
+\eps_{0}\int_{\R\times{\R}^3}(1+|\xi|)\left|\FM_\ast^{-1/2}\pa_x \FG\right|^2d\xi dx\\
\lesssim&
\left\|\pa_x \left[\widetilde{n}_i,\widetilde{n}_e,\widetilde{u},\widetilde{\ta}\right]\right\|^2\|\pa_t[n^r,u^r,\ta^r]\|_{L^\infty}
+
\|\pa_t[n^r,u^r,\ta^r]\|_{L^\infty}\|\pa_x [n^r,u^r,\ta^r]\|^2
\\&+\left\|\pa_t\left[\widetilde{n}_i,\widetilde{n}_e,\widetilde{u},\widetilde{\ta}\right]\right\|^{1/2}
\left\|\pa_x\pa_t\left[\widetilde{n}_i,\widetilde{n}_e,\widetilde{u},\widetilde{\ta}\right]\right\|^{1/2}
\|\pa_x [n^r,u^r,\ta^r]\|^2
\\&+\eps_{0}\int_{\R\times{\R}^3}(1+|\xi|)\left|\FM_\ast^{-1/2}\pa_x \FG\right|^2d\xi dx
\\ \lesssim &\left(\eps_0+\de_r\right)\sum\limits_{|\al|=1}
\left\|\pa^\al \left[\widetilde{n}_i,\widetilde{n}_e,\widetilde{u},\widetilde{\ta}\right]\right\|^2
+\eps_{0}\int_{\R\times{\R}^3}(1+|\xi|)\left|\FM_\ast^{-1/2}\pa_x \FG\right|^2d\xi dx
\\&+\de_r^{1/6}(1+t)^{-7/6}.
\end{split}
\end{equation*}
For $\CK_4$, one sees that for $|\al|=0$, $\CK_{4}$ reduces to
$$
\left(\FQ(\pa_x \FM, \FG)+\FQ( \FG, \pa_x\FM), \FM^{-1}\pa_x  \FG \right),
$$
and hence we have
\begin{equation*}
\begin{split}
|\CK_{4}|\lesssim& \eta\int_{\R\times{\R}^3}(1+|\xi|)\left|\FM^{-1/2}\pa_x \FG\right|^2d\xi dx
\\&+C_\eta\int_{\R}\left(\int_{\R^3}(1+|\xi|)|\FM^{-1/2}\pa_x \FM|^2d\xi\right)
\left(\int_{\R^3}|\FM^{-1/2}(\widetilde{\FG}+\overline{\FG})|^2d\xi\right) dx
\\&+C_\eta\int_{\R}\left(\int_{\R^3}|\FM^{-1/2}\pa_x \FM|^2d\xi\right)
\left(\int_{\R^3}(1+|\xi|)|\FM^{-1/2}\widetilde{\FG}+\overline{\FG}|^2d\xi\right) dx
\\ \lesssim& \eta\int_{\R\times{\R}^3}(1+|\xi|)\left|\FM^{-1/2}\pa_x \FG\right|^2d\xi dx+\eps_{0}\int_{{\R}\times{\R}^3}(1+|\xi|)\left|\FM_\ast^{-1/2}\widetilde{\FG}\right|^2d\xi dx
\\&+\eps_{0}\left\|\pa_x \left[\widetilde{n}_i,\widetilde{n}_e,\widetilde{u},\widetilde{\ta}\right]\right\|^2
+\de_r(1+t)^{-2}.
\end{split}
\end{equation*}
For $\CK_{5}$, it should vanish for $|\al| =0$. For $\CK_6$, by using integration by parts and performing the similar calculations as for $\CK_{3}$, one sees that for $|\al| =0$,
$|\CK_{6}|$ is bounded by
\begin{equation*}
\begin{split}
&\int_{\R\times{\R}^3}(1+|\xi|)\left|\pa_x \left[n_i,n_e,u,\ta\right]\right|\left(\left|\FM_\ast^{-1/2}\pa_x  \FM\right|^2+\left|\FM_\ast^{-1/2}\pa_x  \FG\right|^2\right)d\xi dx
\\ &\lesssim \sum\limits_{|\al| =1}\left(\eps_0+\de_r\right)
\left\|\pa_x \left[\widetilde{n}_i,\widetilde{n}_e,\widetilde{u},\widetilde{\ta}\right]\right\|^2
+\eps_{0}\int_{\R\times{\R}^3}(1+|\xi|)\left|\FM_\ast^{-1/2}\pa_x \FG\right|^2d\xi dx
+\de_r^{1/6}(1+t)^{-7/6}.
\end{split}
\end{equation*}
For $\CK_{7}$ with $|\al| =0$, using $\FF=\FM+\FG$ again, one has
\begin{equation*}
\begin{split}
|\CK_{7}|\lesssim&\left|
\left(q_0\pa_x\phi\pa_x \pa_{\xi_1}\FM,\FM^{-1}\pa_x \FM  \right)\right|
+\left|\left(q_0\pa_x\phi\pa_x \pa_{\xi_1}\FG,\FM^{-1}\pa_x \FG \right)\right|
\\[3mm]&+\left|\left(q_0\pa_x\phi\pa_x \pa_{\xi_1}\FM,\FM^{-1}\pa_x \FG \right)\right|
\\
\lesssim &\int_{\R}|\pa_x\phi||\pa_x  [n_i,n_e,u,\ta]|^2 dx
+\int_{\R\times{\R}^3}|\pa_x\phi|
|\FM^{-1/2}\pa_x\pa_{\xi_1} \overline{\FG}|^2d\xi dx
\\&+\int_{\R\times{\R}^3}|\pa_x\phi|
|\FM^{-1/2}\pa_x\pa_{\xi_1} \widetilde{\FG}|^2d\xi dx
+\sum\limits_{|\al| =1}\int_{\R\times{\R}^3}|\pa_x\phi|
\left|\FM^{-1/2}\pa_x  \FG\right|^2d\xi dx
\\&+C_\eta\sum\limits_{|\al| =1}\left\|\pa_x\phi\pa_x  [n_i,n_e,u,\ta]\right\|^2+\eta\sum\limits_{|\al| =1}\int_{\R\times{\R}^3}(1+|\xi|)\left|\FM^{-1/2}\pa_x \FG\right|^2d\xi dx
\\ \lesssim &\sum\limits_{|\al| =1}(\eps_0+\eta)\left\|\pa_x \left[\widetilde{n}_i,\widetilde{n}_e,\widetilde{u},\widetilde{\ta}\right]\right\|^2
+\eps_0\left\|\pa_x\phi\right\|^2+\eps_{0}\int_{\R\times{\R}^3}(1+|\xi|)\left|\FM^{-1/2}\pa_x\pa_{\xi_1}\widetilde{\FG}\right|^2d\xi dx
\\&+(\eps_{0}+\eta)\sum\limits_{|\al| =1}\int_{\R\times{\R}^3}(1+|\xi|)\left|\FM^{-1/2}\pa_x \FG\right|^2d\xi dx
+C_\eta\de^{1/6}_r(1+t)^{-7/6}.
\end{split}
\end{equation*}

Likewise, for $\CK_8$ with $|\al|=0$, it follows that
\begin{equation*}
\begin{split}
|\CK_{8}|\leq &\left|-\left(q_0 \pa^2_x\phi\pa_{\xi_1}\FG,\FM^{-1}\pa_x \FG \right)\right|
+\left|-\left(q_0\pa^2_x\phi\pa_{\xi_1}\FG,\FM^{-1}\pa_x \FM  \right)\right|\\
\lesssim &C_\eta\int_{\R\times{\R}^3}|\pa^2_x\phi|^2\left|\FM^{-1/2}\pa_{\xi_1}\overline{\FG}\right|^2d\xi dx
+(\eps_{0}+\eta)\int_{\R\times{\R}^3}(1+|\xi|)
\left|\FM^{-1/2}\pa_{\xi_1}\widetilde{\FG}\right|^2d\xi dx
\\&+(\eps_{0}+\eta)\int_{\R\times{\R}^3}(1+|\xi|)\left|\FM^{-1/2}\pa_x \FG\right|^2d\xi dx
+C_\eta\int_{\R}| \pa^2_x\phi|^2|\pa_x  [n_i,n_e,u,\ta]|^2 dx
\\&+\int_{\R\times{\R}^3}|\pa^2_x\phi||\pa_x  [n_i,n_e,u,\ta](1+|\xi|)|\pa_{\xi_1}\overline{\FG}|d\xi dx
\\ \lesssim &(\eps_0+\eta)
\left\|\pa_x \left[\widetilde{n}_i,\widetilde{n}_e,\widetilde{u},\widetilde{\ta},\pa_x\phi\right]\right\|^2
+\eps_{0}\int_{\R\times{\R}^3}(1+|\xi|)
\left|\FM^{-1/2}\pa_x \pa_{\xi_1}\widetilde{\FG}\right|^2d\xi dx
\\&+(\eps_{0}+\eta)\int_{\R\times{\R}^3}(1+|\xi|)\left|\FM^{-1/2}\pa_x \FG\right|^2d\xi dx
+\de_r(1+t)^{-2}.
\end{split}
\end{equation*}
As to the last term $\CK_{9}$ with $\al=0$, we get from Lemma \ref{est.nonop} and
Cauchy Schwarz inequality that
\begin{equation*}
\begin{split}
\CK_{9}\lesssim& \eta\int_{\R\times{\R}^3}(1+|\xi|)\left|\FM^{-1/2}\pa_x \FG\right|^2d\xi dx
\\&+\int_{\R}\left(\int_{\R^3}(1+|\xi|)\left|\FM^{-1/2}\pa_x \FG\right|^2d\xi\right)
\left(\int_{\R^3}\left|\FM^{-1/2}\FG\right|^2d\xi\right) dx
\\&+\int_{\R}\left(\int_{\R^3}\left|\FM^{-1/2}\pa_x  \FG\right|^2d\xi\right)
\left(\int_{\R^3}(1+|\xi|)\left|\FM^{-1/2}\FG\right|^2d\xi\right) dx
\\ \lesssim&(\eta+\eps_0)\int_{\R\times{\R}^3}(1+|\xi|)\left|\FM^{-1/2}\pa_x \FG\right|^2d\xi dx
+\eps_0\int_{\R\times{\R}^3}(1+|\xi|)\left|\FM^{-1/2} \widetilde{\FG}\right|^2d\xi dx.
\end{split}
\end{equation*}
Substituting all the above estimates for $\CK_{l}$ $(1\leq l\leq 9)$ into \eqref{2d.F} and performing the similar calculation as above for the case $|\al|=1$, one sees that
\begin{equation}\label{2d.F.sum}
\begin{split}
&\frac{d}{dt}\left\{\sum\limits_{|\al|\leq1}\int_{{\R}\times{\R}^3}\left|\FM^{-1/2}\pa_x\pa^\al  \FF\right|^2d\xi dx
+\sum\limits_{|\al|\leq1}\left\|\pa^\al \pa^2_x\phi \right\|\right\}
\\&\quad+\la\sum\limits_{|\al|\leq1}\int_{\R\times{\R}^3}(1+|\xi|)\left|\FM^{-1/2}\pa_x\pa^\al \FG\right|^2d\xi dx\\
&\lesssim(\eps_{0}+\eta)\sum\limits_{|\al|\leq1}\int_{\R\times{\R}^3}(1+|\xi|)\left|\FM_\ast^{-1/2}\pa_x\pa^\al \FG\right|^2d\xi dx
+\eps_{0}\int_{{\R}\times{\R}^3}(1+|\xi|)\left|\FM^{-1/2}\widetilde{\FG}\right|^2d\xi dx
\\&
\quad+(\eps_{0}+\eta)\sum\limits_{|\al| \leq1}\int_{\R\times{\R}^3}(1+|\xi|)\left|\FM^{-1/2}\pa^\al \pa_{\xi_1} \widetilde{\FG}\right|^2d\xi dx
+(\eps_{0}+\eta)\sum\limits_{1\leq\ga \leq2}
\left\|\pa^\al \left[\widetilde{n}_i,\widetilde{n}_e,\widetilde{u},\widetilde{\ta}\right]\right\|^2
\\&\quad+(\eps_{0}+\eta)\sum\limits_{|\al|\leq1}
\left\|\pa^\al\left[\pa_x\phi,\pa^2_x\phi\right]\right\|^2+\de_r^{1/6}(1+t)^{-7/6}.
\end{split}
\end{equation}
Similarly, one can obtain the following energy estimates for $\pa_x\pa^\al  \FF$ $(|\al|\leq1)$ with respect to the global Maxwellian $\FM_*$:
\begin{equation}\label{2d.F.sum2}
\begin{split}
\frac{d}{dt}&\sum\limits_{|\al|\leq1}\int_{{\R}\times{\R}^3}\left|\FM_\ast^{-1/2}\pa_x\pa^\al  F\right|^2d\xi dx
+\la\sum\limits_{|\al|\leq1}\int_{\R\times{\R}^3}(1+|\xi|)\left|\FM^{-1/2}_\ast\pa_x\pa^\al  \FG\right|^2d\xi dx\\
\lesssim&
(\eps_{0}+\eta)\sum\limits_{|\al|\leq1}\int_{\R\times{\R}^3}(1+|\xi|)\left|\FM^{-1/2}_\ast\pa^\al \pa_{\xi_1} \widetilde{\FG}\right|^2d\xi dx+\eps_{0}\int_{{\R}\times{\R}^3}(1+|\xi|)\left|\FM_\ast^{-1/2}\widetilde{\FG}\right|^2d\xi dx
\\&+C_\eta\sum\limits_{1\leq|\al| \leq2}
\left\|\pa^\al \left[\widetilde{n}_i,\widetilde{n}_e,\widetilde{u},\widetilde{\ta}\right]\right\|^2
+C_\eta\sum\limits_{|\al|\leq1}
\left\|\pa^\al\left[\pa_x\phi,\pa^2_x\phi\right]\right\|^2
+\de_r^{1/6}(1+t)^{-7/6}.
\end{split}
\end{equation}
Note that one may not require the smallness of the coefficient of
$$
\left\|\pa^\al\left[\widetilde{n}_i,\widetilde{n}_e,\widetilde{u},\widetilde{\ta},\pa_x\phi,\pa^2_x\phi\right]\right\|^2,
$$
and this implies that the derivation of  \eqref{2d.F.sum2} is much simpler than the one of \eqref{2d.F.sum}. 
Due to this we would omit details of 
the proof of \eqref{2d.F.sum2} for brevity.

With \eqref{2d.F.sum} in hand, by letting $1\gg\ka_7>0$, we get from the summation of \eqref{2d.F.sum} and $\eqref{macro.eng}\times\ka_7$ that
\begin{equation}\label{mi.diss2}
\begin{split}
&\ka_7\frac{d}{dt}\left\{\sum\limits_{\ga \leq1}\left\|\pa^\al \left[\widetilde{n}_i,\widetilde{n}_e, \widetilde{u}, \widetilde{\ta}\right](t)\right\|^2
+\left\| \pa_x\phi (t)\right\|^2\right\}
-\ka_7\ka_0\frac{d}{dt}\sum\limits_{|\al| =1}\left(\pa^\al \widetilde{u}_1,\pa^\al \pa_x\widetilde{v}_i+\pa^\al \pa_x\widetilde{v}_e\right)
\\
&\quad+\frac{d}{dt}\sum\limits_{|\al| \leq1}\left\{\int_{{\R}\times{\R}^3}\left|\FM^{-1/2}\pa_x\pa^\al  \FF\right|^2d\xi dx
+\left\|\pa^\al \pa^2_x\phi \right\|^2\right\}
\\&\quad+\la \left\|\sqrt{\pa_xu^r}\left[\widetilde{n}_i,\widetilde{n}_e,\widetilde{u},\widetilde{\theta}\right](t)\right\|^2
+\la\sum\limits_{1\leq\ga \leq 2}\left\|\pa^\al \left[\widetilde{n}_i,\widetilde{n}_e,\widetilde{u},\widetilde{\theta}\right](t)\right\|^2
+\la\left\|q_i\widetilde{n}_i+q_e\widetilde{n}_e\right\|^2
\\&\quad+\la\sum\limits_{ |\al|\leq1}\left\|\pa^{\al}\left[\pa_x\phi,\pa^2_x\phi \right]\right\|^2
+\la \sum\limits_{|\al|\leq1}\int_{{\R}\times{\R}^3}(1+|\xi|)\left|\FM^{-1/2}\pa_x\pa^\al \FG\right|^2d\xi dx
\\
&\lesssim(1+t)^{-2}\left\|\left[\widetilde{n}_i,\widetilde{n}_e,\widetilde{u},\widetilde{\ta}\right]\right\|^2
+\de_r^{1/6}(1+t)^{-7/6}
+\sum\limits_{|\al| \leq1}\eps_{0}\int_{\R\times\R^3}|\FM_\ast^{-1/2}\pa_{\xi_1}\pa^\al \widetilde{\FG}|^2 d\xi dx
\\&\quad+\ka_7\sum\limits_{1\leq|\al_0|\leq2}\int_{\R\times{\R}^3}(1+|\xi|)\left|\FM_\ast^{-1/2}\pa_t^{\al_0} \FG\right|^2d\xi dx
+\ka_7\int_{{\R}\times{\R}^3}(1+|\xi|)\left|\FM_\ast^{-1/2}\widetilde{\FG}\right|^2d\xi dx.
\end{split}
\end{equation}
On the other hand, by choosing $1\gg\ka_{8}\gg\ka_{9}>0$, it follows from the summation of $\eqref{zero.g.eng2.}\times\ka_{9}$
 and $\eqref{2d.F.sum2}\times\ka_{8}$ that
\begin{eqnarray}
&&\ka_{9}\frac{d}{dt}\sum\limits_{\al_0\leq1}\int_{{\R}\times{\R}^3}\left|\FM^{-1/2}_\ast\pa_t^{\al_0}\widetilde{\FG}\right|^2d\xi dx
+\ka_{9}\frac{d}{dt}\left\|\pa_t\pa_x\phi\right\|^2
+\ka_{8}\frac{d}{dt}\sum\limits_{1\leq|\al| \leq2}\int_{{\R}\times{\R}^3}\left|\FM_\ast^{-1/2}\pa^\al  \FF\right|^2d\xi dx
\notag\\
&&\quad+\la\sum\limits_{1\leq|\al| \leq2}\int_{\R\times{\R}^3}(1+|\xi|)\left|\FM^{-1/2}_\ast\pa^\al  \FG\right|^2d\xi dx
+\la{\displaystyle
\int_{{\R}\times{\R}^3}}(1+|\xi|)\left|\FM_\ast^{-1/2}\widetilde{\FG}\right|^2d\xi dx
\notag\\
&&\lesssim (\ka_{8}+\ka_{9})\sum\limits_{1\leq\al \leq2}
\left\|\pa^\al \left[\widetilde{n}_i,\widetilde{n}_e,\widetilde{u},\widetilde{\ta}\right]\right\|^2
+(\ka_{8}+\ka_{9})\sum\limits_{ |\al|\leq1}\left\|\pa^{\al}\left[\pa_x\phi ,\pa^2_x\phi \right]\right\|^2
\notag\\
&&\quad+\de^{1/6}_r(1+t)^{-7/6}+\sum\limits_{|\al| \leq1}\eps_{0}\int_{\R\times\R^3}|\FM^{-1/2}\pa_{\xi_1}\pa^\al \widetilde{\FG}|^2 d\xi dx, \label{mi.diss3}
\end{eqnarray}
where we have  used the fact that
\begin{equation*}
\begin{split}
&\sum\limits_{1\leq|\al|\leq2}{\displaystyle
\int_{{\R}\times{\R}^3}}(1+|\xi|)\left|\FM_\ast^{-1/2}\pa^{\al}\FG\right|^2d\xi dx
\\
&\lesssim \sum\limits_{1\leq|\al|\leq2}{\displaystyle
\int_{{\R}\times{\R}^3}}(1+|\xi|)\left|\FM_\ast^{-1/2}\pa^{\al}\widetilde{\FG}\right|^2d\xi dx
+\de_r\sum\limits_{1\leq|\al|\leq2}\left\|\pa^\al\left[\widetilde{n}_i,\widetilde{n}_e,\widetilde{u},\widetilde{\ta}\right]\right\|^2
+\de_r^{1/2}(1+t)^{-3/2}.
\end{split}
\end{equation*}

\subsection{Estimate on energy with mixed derivatives}
In what follows, we deduce the energy estimates on the mixed derivative terms $\pa^\al \pa^\be \widetilde{\FG}$. To do so, let $|\be|\geq1$ and $|\al|+|\be|\leq 2$. Acting $\pa^\al \pa^\be $ to \eqref{g.eq1.} and taking the inner product of the resulting equation with $\FM^{-1}_\ast\pa^\al \pa^\be \widetilde{\FG}$  over ${\R}\times{\R}^3$, one has
\begin{equation*}
\begin{split}
&\frac{1}{2}\frac{d}{dt}\int_{{\R}\times{\R}^3}\left|\FM^{-1/2}_\ast\pa^\al \pa^\be \widetilde{\FG}\right|^2d\xi dx
-{\displaystyle\int_{{\R}\times{\R}^3}}\pa^\al \pa^\be \widetilde{\FG} \cdot\left(\FM_\ast^{-1}\FL_{\FM}\pa^\al \pa^\be \widetilde{\FG}\right)d\xi dx
\\
&=\sum\limits_{1\leq|\al'|+|\be'|\atop{\al'\leq\al,\be'\leq \be}}\left(\FQ(\pa^{\al'}\pa^{\be'}\FM,\pa^{\al-\al'}\pa^{\be-\be'}\widetilde{\FG}),
\FM^{-1}_\ast\pa^\al \pa^\be\widetilde{\FG}\right)
\\&\quad +\sum\limits_{1\leq|\al'|+|\be'|\atop{\al'\leq\al,\be'\leq \be}}\left(\FQ(\pa^{\al'}\pa^{\be'}\widetilde{\FG}
,\pa^{\al-\al'}\pa^{\be-\be'}\FM),
\FM^{-1}_\ast\pa^\al \pa^\be\widetilde{\FG}\right)\\
&\quad -\left(\pa^\al \pa^\be\FP_1^\FM\left(\frac{3}{2\ta}\xi_1\left[m_iM_i,m_eM_e\right]^{\rm T}
\left(\xi\cdot\pa_x\widetilde{u}+\frac{|\xi-u|^2}{2\ta}\pa_x\widetilde{\ta}\right)\right)
,\FM^{-1}_\ast\pa^\al \pa^\be\widetilde{\FG}\right)
\\&\quad +\left(\pa^\al \pa^\be\FP_1^\FM\left(\xi_1
\left[n_i^{-1}M_i\pa_x\widetilde{n}_i,n_e^{-1}M_e\pa_x\widetilde{n}_e\right]^{\rm T}
+\frac{3}{2\ta}\left[\left[M_i,M_e\right]^{\rm T}\xi_1\pa_x\widetilde{\ta}\right]\right)
,\FM^{-1}_\ast\pa^\al \pa^\be\widetilde{\FG}\right)
\\&\quad
-\left(\pa^\al \pa^\be\left(\FP_1^{\FM}\left(\xi_1\pa_x\FG\right)\right),\FM^{-1}_\ast\pa^\al \pa^\be\widetilde{\FG}\right)
-\left(\pa^\al \pa^\be\left(q_0\pa_x\phi \pa_{\xi_1}{\FM}\right),\FM^{-1}_\ast\pa^\al \pa^\be\widetilde{\FG}\right)
\\
&\quad -\left(\pa^\al \pa^\be\left(q_0\pa_x\phi\pa_{\xi_1}\FG\right),\FM^{-1}_\ast\pa^\al \pa^\be\widetilde{\FG}\right)
+\left(\pa^\al \pa^\be \FQ(\FG,\FG),\FM^{-1}_\ast\pa^\al \pa^\be\widetilde{\FG}\right)\\
&\quad -\left(\pa_t\pa^\al \pa^\be\overline{\FG},\FM^{-1}_\ast\pa^\al \pa^\be\widetilde{\FG}\right).
\end{split}
\end{equation*}
Similar to those calculations in the previous subsection, 
it holds that
\begin{equation}\label{mi.diss4}
\begin{split}
\frac{d}{dt}&\sum\limits_{|\al|+|\be|\leq 2\atop{|\be|\geq1}}\int_{{\R}\times{\R}^3}
\left|\FM^{-1/2}_\ast\pa^\al \pa^\be \widetilde{\FG}\right|^2d\xi dx
+\la \sum_{|\al|+|\be|\leq 2\atop{|\be|\geq1}}\int_{{\R}\times{\R}^3}(1+|\xi|)
\left|\FM_\ast^{-1/2}\pa^\al \pa^\be\widetilde{\FG}\right|^2d\xi dx\\
\lesssim&\sum\limits_{1\leq|\al| \leq2}\int_{{\R}\times{\R}^3}(1+|\xi|)\left|\FM_\ast^{-1/2}\pa^\al \FG\right|^2d\xi dx
+\int_{{\R}\times{\R}^3}(1+|\xi|)\left|\FM_\ast^{-1/2}\widetilde{\FG}\right|^2d\xi dx\\&+ \sum\limits_{1\leq|\al| \leq2}
\left\|\pa^\al \left[\widetilde{n}_i,\widetilde{n}_e,\widetilde{u},\widetilde{\ta}\right]\right\|^2
+\sum\limits_{|\al| \leq1}\left\|\pa_x\phi\right\|^2+\de_r^{1/6}(1+t)^{-7/6}.
\end{split}
\end{equation}
Consequently, it follows from \eqref{mi.diss2}, \eqref{mi.diss3} and \eqref{mi.diss4} that
\begin{eqnarray}
&&K_0\ka_7\frac{d}{dt}\left\{\sum\limits_{|\al| \leq1}\left\|\pa^\al \left[\widetilde{n}_i, \widetilde{n}_e, \widetilde{u}, \widetilde{\ta}\right](t)\right\|^2
+\left\|\pa_x \phi (t)\right\|^2\right\}
+K_0\ka_7\ka_0\frac{d}{dt}\sum\limits_{|\al| =1}\left(\pa^\al \widetilde{u}_1,\pa^\al \pa_x\widetilde{n}_i+\pa^\al \pa_x\widetilde{n}_e\right)
\notag\\
&&\quad+K_0\frac{d}{dt}\sum\limits_{|\al| \leq1}\left\{\int_{{\R}\times{\R}^3}\left|\FM^{-1/2}\pa_x\pa^\al  \FF\right|^2d\xi dx
+\left\|\pa^\al \pa^2_x\phi \right\|^2\right\}
\notag\\&&\quad+\ka_{9}\frac{d}{dt}\sum\limits_{\al_0\leq1}\int_{{\R}\times{\R}^3}\left|\FM^{-1/2}_\ast\pa_t^{\al_0}\widetilde{\FG}\right|^2d\xi dx
+\ka_{9}\frac{d}{dt}\left\|\pa_t\pa_x\phi\right\|^2
\notag\\&&
\quad+\ka_{8}\frac{d}{dt}\sum\limits_{|\al| \leq1}\int_{{\R}\times{\R}^3}\left|\FM_\ast^{-1/2}\pa_x\pa^\al  \FF\right|^2d\xi dx
+\ka_{10}\frac{d}{dt}\sum\limits_{|\al|+|\be|\leq 2\atop{|\be|\geq1}}
\int_{{\R}\times{\R}^3}\left|\FM^{-1/2}_\ast\pa^\al \pa^\be \widetilde{\FG}\right|^2d\xi dx
\notag\\&&\quad+\la\sum\limits_{1\leq|\al| \leq2}\int_{\R\times{\R}^3}(1+|\xi|)\left|\FM^{-1/2}_\ast\pa^\al  \FG\right|^2d\xi dx
+\la{\displaystyle
\int_{{\R}\times{\R}^3}}(1+|\xi|)\left|\FM_\ast^{-1/2}\widetilde{\FG}\right|^2d\xi dx
\notag\\&&\quad+\la \sum\limits_{1\leq|\al| \leq2}\int_{{\R}\times{\R}^3}(1+|\xi|)\left|\FM^{-1/2}\pa^\al \FG\right|^2d\xi dx
+\la \sum_{|\al|+|\be|\leq 2\atop{|\be|\geq1}}
\int_{{\R}\times{\R}^3}(1+|\xi|)\left|\FM^{-1/2}_\ast\pa^\al \pa^\be\widetilde{\FG}\right|^2d\xi dx
\notag\\&&\quad+\la \left\|\sqrt{\pa_xu^r}\left[\widetilde{n}_i,\widetilde{n}_e,\widetilde{u},\widetilde{\theta}\right](t)\right\|^2
+\la\sum\limits_{1\leq|\al| \leq 2}\left\|\pa^\al \left[\widetilde{n}_i,\widetilde{n}_e,\widetilde{u},\widetilde{\theta}\right](t)\right\|^2
\notag\\&&\quad+\la\left\|q_i\widetilde{n}_i+q_e\widetilde{n}_e\right\|^2+\la\sum\limits_{ |\al| \leq1}\left\|\pa^{\al}\left[\pa_x\phi ,\pa^2_x\phi \right]\right\|^2\notag\\
&&\lesssim (1+t)^{-2}\left\|\left[\widetilde{n}_i,\widetilde{n}_e,\widetilde{u},\widetilde{\ta}\right]\right\|^2
+\de_r^{1/6}(1+t)^{-7/6},\label{mi.diss5}
\end{eqnarray}
where $K_0$ is a positive large constant and $\ka_{10}$ is also a positive but suitably small constant. Therefore \eqref{g.eng.} follows from
\eqref{mi.diss5} with the help of the Gronwall's inequality. This completes the proof of Proposition \ref{g.eng.lem.}.\qed




\medskip
\noindent {\bf Conflict of Interest:}  Renjun Duan has received the General Research Fund (Project No.~409913) from RGC of Hong Kong. Shuangqian Liu has received grants from the National Natural Science Foundation of China under contracts 11471142 and 11271160.


\end{document}